\documentclass[11pt]{amsart}

\usepackage[utf8]{inputenc} 

\usepackage{amsfonts}
\usepackage{amssymb}
\usepackage{mathrsfs}
\usepackage{dsfont}
\usepackage{bbm}
\usepackage{latexsym}
\usepackage{bm}

\usepackage{amsmath}
\usepackage{amsthm}
\usepackage{mathtools}
\mathtoolsset{showonlyrefs} 
\allowdisplaybreaks
\usepackage{graphicx}
\usepackage{epsfig}
\usepackage{subfig}

\usepackage{tikz}
\usepgflibrary{patterns}
\usepgflibrary[patterns]
\usetikzlibrary{patterns}
\usetikzlibrary[patterns]

\usepackage{a4wide}
\usepackage{geometry}
\usepackage{color}
\usepackage{xcolor}
\usepackage{microtype}
\usepackage{caption}
\usepackage{url}
\usepackage{varioref}
\usepackage{verbatim}

\usepackage{hyperref}

\usepackage[font=small,format=hang,labelfont={sf,bf}]{caption}

\theoremstyle{plain}
\newtheorem{theorem}{Theorem}[section]
\newtheorem{lemma}[theorem]{Lemma}
\newtheorem{proposition}[theorem]{Proposition}

\newtheorem{definition}[theorem]{Definition}
\newtheorem{remark}[theorem]{Remark}

\newtheorem{coroll}[theorem]{Corollary}

\numberwithin{equation}{section}

\definecolor{ddorange}{rgb}{1,0.5,0}
\definecolor{ddcyan}{rgb}{0,0.2,1.0}

\renewcommand{\div}{\mathrm{div}}
\newcommand{\pa}{\partial}

\newcommand{\mrestr}{%
	\,\raisebox{-.127ex}{\reflectbox{\rotatebox[origin=br]{-90}{$\lnot$}}}\,%
}

\newcommand{\numberset}{\mathbb}
\newcommand{\R}{\numberset{R}}

\newcommand{\N}{\numberset{N}}

\newcommand{\medint}{-\kern -,40cm\int}
\newcommand{\medintinrigo}{-\kern -,34cm\int}

\newcommand{\fr}{\penalty-20\null\hfill\(\blacksquare\)}

\title[The anisotropic surface diffusion with elasticity in 3D]{The anisotropic surface diffusion with elasticity \\ in three dimensions via the Cahn-Taylor \\ minimizing movement scheme }

\author[A. Kubin]
{Andrea kubin}
\address[Andrea Kubin]{
	Department of Mathematics and Statistics,
P.O.\ Box 35 (MaD), FI-40014 University of Jyvaskyla}
\email{andrea.a.kubin@jyu.fi}

\author[A. Kubin]
{Anna kubin}
\address[Anna Kubin]{
TU Wien, Institute of Analysis and Scientific Computing,
Wiedner Hauptstraße 8-10, 1040, Vienna, Austria	
}
\email[Anna Kubin]{anna.kubin@asc.tuwien.ac.at}

\begin{document}

	\begin{abstract}
    In this paper, we introduce a suitable notion of flat solutions for the anisotropic surface diffusion equation with elasticity in three-dimensions, based on a minimizing movement scheme inspired by that introduced by Cahn and Taylor. Using this scheme, we prove the existence of classical solutions to the equation without using a curvature regularization term. Moreover we establish the consistency of the approximation method. 
    \noindent 
\vskip5pt
\noindent
\textsc{Keywords}: Geometric evolutions; variational methods; minimizing movement scheme.  
\vskip5pt
\noindent
\textsc{AMS subject classifications:}  
53E10; 53E40;
49Q20; 37E35; 74G65.  
	\end{abstract}
	
	\maketitle
	\tableofcontents
\section{Introduction}

In recent years, considerable attention has been devoted in both the physical and mathematical literature to the study of energy functionals involving the competition between interfacial surface energy and elastic energy. From a mathematical perspective, this problem can be formulated either as the analysis of local or global minimizers, or as the study of gradient flows associated with energies given by the sum of an elastic contribution and a surface tension term, typically modeled through isotropic or anisotropic perimeter functionals.
The corresponding static problem has been extensively investigated in the physical and numerical literature. Concerning the mathematical analysis, several works have addressed this topic:~\cite{BGZ2015, Bonacini2013,BonCham2002, FoFuLeMo2007, FM2012, GZ2014} establish existence, regularity, and stability results for variational models describing equilibrium configurations in two dimensions, while~\cite{Bonacini2015, CS2007} provide analogous results in the three-dimensional setting.
The dynamical problem can be regarded as a particular instance of the Einstein-Nernst equation. This equation describes the evolution of an interface driven by surface mass transport under the action of a chemical potential $\mu$. 
Since bulk mass transport may be neglected due to the much shorter timescale on which it occurs, see~\cite{Mullins1963}, the evolution is governed by the volume-preserving law
	\begin{equation}\label{EQEINNER}
		V_t= \Delta_{\pa E_t} \mu_t \text{ on } \pa E_t,
	\end{equation}
	where  $V_t$  is the normal velocity and $\Delta_{\pa E_t}$  is the Laplace-Beltrami operator on $\pa E_t$.  
    
We consider the free-energy functional defined by
\begin{equation}\label{Fintroeng}
		J(F)= \int_{\pa F} \varphi(\nu_F)\, d\mathcal{H}^2+ \frac{1}{2} \int_{\Omega \setminus F} Q(E(u_F))\, dx,
	\end{equation}
    where $\Omega \subset \mathbb{R}^3$ denotes the region occupied by the elastic body, while $F \subset \Omega$ represents the void formed inside the body; see~\cite{SMV2004}. In~\eqref{Fintroeng}, the function $u_F$ denotes the elastic equilibrium in $\Omega \setminus F$ subject to the Dirichlet boundary condition $u_F=w_0$ on $\partial \Omega$ and with Neumann condition on $\pa F$, namely,
\begin{equation}\label{eqelasticoliberointro}
		u_{F} \in \arg \!\min \left\{ \int_{\Omega \setminus F } Q(E(u))\, dx : u \in H^1(\Omega \setminus F,\R^3) , \, u |_{\pa \Omega}=w_0    \right\}.
	\end{equation}
The function $Q$ is the quadratic form defined by $Q(A):= \frac{1}{2} \mathbb{C}A: A$ for all $3 \times 3$-symmetric matrices $A$, where $\mathbb{C}$ is the elasticity tensor. Moreover, $E(u_F)$ stands for the symmetric part of the gradient $\nabla u_F$, namely $ E(u_{F})=\frac{1}{2} \big( \nabla u_{F}+ (\nabla u_{F})^t\big)$. Finally, $\varphi(\nu_F)$ denotes the anisotropic surface energy density evaluated at the outer unit normal $\nu_F$ to $\pa F$. 
We recall that the existence and regularity of volume-constrained minimizers of the functional $F \mapsto J(F)$ have been extensively investigated in the literature; see, for instance,~\cite{CJP2013, FFLMi2011} for the two-dimensional case. 

Under the assumption that $\mu_t$ coincides with the first variation of the free energy functional~\eqref{Fintroeng}, equation~\eqref{EQEINNER} takes the form
\begin{equation}\label{MAINEQ}
			\left\{
			\begin{aligned}
				& V_t= \Delta_{\pa E_t} \big(   H^\varphi_{E_t}-Q (E(u_{E_t}))\big), \text{ on } \pa E_t  \\
				&u_{E_t}\in \arg \!\min \left\{ \int_{\Omega \setminus E_t } Q(E(u))\, dx : u \in H^1(\Omega \setminus E_t, \R^3) , \, u |_{\pa \Omega}=w_0    \right\}.
			\end{aligned}
			\right.
		\end{equation} 
        where $H^\varphi_{E_t}$ denotes the anisotropic curvature of $\pa E_t$.
        We remark that anisotropic surface diffusion with elasticity may be regarded as a nonlocal perturbation of the anisotropic surface diffusion flow. While the latter, in the isotropic case, has been extensively investigated in the literature (see, e.g., \cite{DGDKK, DFM2023, EMS, GarckeGosswein, LeCroneSimonett, MaySim}), only a limited number of results are available for the general evolution \eqref{MAINEQ}.
        We first recall results on  the existence of solutions in the two-dimensional case. In~\cite{FFLM2012}, the authors investigate existence via a minimizing movements scheme with an additional curvature regularization term in the free energy, as well as asymptotic stability in $\mathbb{R}^2$. Subsequently,~\cite{FJM2018} establish the existence of classical solutions and the asymptotic stability of strictly stable stationary sets in the plane. Finally, in~\cite{Kub2025}, the author proves the existence of classical solutions by means of a minimizing movements-type algorithm, in the spirit of~\cite{CaTa}.    
In the three-dimensional setting, we recall that the authors of~\cite{FJM2020} investigate the existence and asymptotic stability of solutions to the isotropic surface diffusion equation with elasticity. In the three-dimensional and anisotropic setting,~\cite{FFLM2015} establish the existence of solutions to anisotropic surface diffusion in $\mathbb{R}^3$, incorporating a curvature regularization and employing a minimizing-movements type algorithm. We remark that adding a curvature regularization term into functionals of the form~\eqref{Fintroeng} is a well-established approach in the literature, see~\cite{AG1989,BHSV2007,DCGPG1992,GJ2002,H1951,RRV2006,SMV2004}. 
More precisely, in~\cite{FFLM2015}, the authors consider the $H^{-1}$  gradient flow of the functional
	\begin{equation}
		F \mapsto  \int_{\pa F} \varphi(\nu_F)\, d\mathcal{H}^2 +\frac{1}{2} \int_{\Omega \setminus F} Q(E(u_F))\, dx + \frac{\varepsilon}{p}\int_{\pa F}\vert H_{F}\vert^p \, d \mathcal{H}^2,
	\end{equation}
	where $\varepsilon>0$  and $p>2$. Their analysis focuses on periodic graph models describing the evolution of epitaxially strained elastic films in three dimensions. 
    Unlike our method, their approach employs a minimizing-movements-type algorithm restricted to sets with boundaries that can be represented as graphs of functions. In contrast, as will be explained shortly, our algorithm applies to general sets of finite perimeter. Furthermore, their approach crucially relies on the curvature regularization term. In particular, all the estimates derived therein are $\varepsilon$-dependent and degenerate as $\varepsilon \rightarrow 0^+$.
    
The algorithm we propose for modeling our equation (see Section~\ref{settingdelproblema180526} for the precise definition) is a modified version of that introduced by Cahn and Taylor in~\cite{CaTa}, based on the minimizing movements scheme applied to the energy~\eqref{Fintroeng}, with dissipation given by
    \begin{equation} \label{eq:def-distance-intro}
		d_{H^{-1}}(F,E):=\sup_{\|\nabla_{\pa E} f\|_{L^2(\pa E)}\leq1}\int_{\R^3}   f( \pi_{\pa E}(x))(\chi_F(x)-\chi_E(x))\,dx.
	\end{equation}
    Above, $\nabla_{\pa E}$ denotes the tangential gradient, $\chi_E$ the characteristic function of $E$ and $\pi_{\pa E}$ the projection on the boundary $\pa E$. 
    This algorithm allows to introduce a particular notion of flat flow solutions for the anisotropic surface diffusion with elasticity in $\mathbb{R}^3$.
    We recall that the consistency of the Cahn-Taylor algorithm was only recently established in~\cite{CFJKsd} in $\mathbb{R}^3$, in the case of isotropic perimeter and in the absence of an elastic term.

    Our main result, contained in Theorem~\ref{1THMMAIN1}, establishes the existence and uniqueness of classical solutions to equation~\eqref{MAINEQ} by means of the Cahn-Taylor algorithm, starting from a sufficiently regular initial datum $E_0$. This procedure also provides a constructive method for generating solutions to this differential equation. Furthermore, this result generalizes the ones in~\cite{CFJKsd}. This extension is nontrivial because, beyond the addition of the elasticity term, the regularity estimates developed in~\cite{CFJKsd} to ensure the convergence of the algorithm are insufficient in our setting.
    The main motivation for this problem arises from the structure of the linearization of~\eqref{MAINEQ}. Indeed, if we parametrize~\eqref{MAINEQ} with respect to the initial datum and consider its linearization, rigorously derived using the expansion of the anisotropic curvature of Lemma~\ref{10Expcurvvarphi}, we obtain
  \begin{equation}
        \pa_t u=- \Delta_{\pa E_0} L u \quad \text{ on } \pa E_0,
    \end{equation}
    where $L$ is an elliptic operator defined on $\pa E_0$. However, the operator $L$ does not commute with the Laplace–Beltrami operator, namely
    $$ \Delta_{\pa E_0} L \neq L \Delta_{\pa E_0} .$$
    In particular, this lack of commutativity creates greater difficulties than in~\cite{CFJKsd} when establishing, in Section~\ref{SEZIONEDELLESTIME}, the $H^k$  estimates required to prove the convergence of the scheme.


    Additionally, in Section~\ref{sezioneconvglobaleee}, we establish the consistency of the algorithm, namely we prove that the minimizing movements scheme converges to the solution of problem~\eqref{MAINEQ} over the entire interval of existence.

    The paper is organized as follows. In Section~\ref{sezioneprel}, we introduce the notation used throughout the paper. In Section~\ref{sez318052026}, we recall several preliminary lemmas that will be used later on, and we derive an expansion formula for the anisotropic curvature.
    In Section~\ref{settingdelproblema180526}, we introduce the function $d_{H^{-1}}(F,E)$, discuss several of its properties, we present the elasticity problems, and define the minimizing movement scheme used in the analysis. In Section~\ref{sezioniquasiminimi}, we recall some properties of almost minimizers of the anisotropic perimeter. In Section~\ref{serzquaisminH^-1}, we introduce a notion of almost minimizers of the anisotropic perimeter with respect to the function $d_{H^{-1}}$. In Section~\ref{SEZIONEDELLESTIME}, we establish various regularity estimates that will be used to prove the existence of solutions to equation~\eqref{MAINEQ}. Section~\ref{iterazionesezione} is devoted to the proof of the iterative argument, which is one of the main tools used to prove Theorem~\ref{1THMMAIN1} in Section~\ref{sezionefinale}. Finally, in Section~\ref{sezioneconvglobaleee}, we prove the convergence of the scheme to a global solution of~\eqref{MAINEQ}.

\section{Notation and definitions}\label{sezioneprel}
	We work in the three-dimensional Euclidean space $\R^3$. We denote by $ \{e_1,\, e_2,\, e_3\}$ the canonical basis of $\R^3$, by $\vert \cdot \vert $ the Euclidean norm, and by $\cdot$ the inner product in $\R^3$. If $V \subset \R^3$ is a linear subspace, we denote by $V^{\perp}:= \{ x \in \R^3 : x\cdot v=0 \text{ for all } v\in V\}$. We denote by $\nabla$ the gradient operator in $\R^3$ and by $\nabla'$ the gradient operator in $\R^2$.  Throughout the paper, $\varphi$ denotes a regular strictly convex norm, namely, $\varphi \in C^{\infty}(\R^3 \setminus \{0 \})$ and  \begin{equation}\label{unifell}
		\exists J>0 \text{ such that } \nabla^2 \varphi (\nu) \xi \cdot \xi \geq J \vert \xi \vert^2 \text{ for every } \nu ,\,\xi \in \R^3 \text{ with } \vert \nu \vert=1 \text{ and } \nu \cdot \xi=0.
	\end{equation}
   We refer to $\varphi$ as the anisotropy.
	The dual norm $\varphi^0$ is defined by $\varphi^0(\xi)= \sup_{\eta \in \R^3 \setminus \{0\}} \frac{\xi \cdot \eta}{\varphi(\eta)}$.
    Given $a,b \in \R^3$, we define the linear map $a \otimes b : \R^3 \rightarrow \R^3$ by $ a\otimes b (x):=(x \cdot b)a$.  For $r>0$, we set $B_r(x)= \{ y \in \R^3 : \vert x- y \vert < r \}$. When $x=0$, we simply write $B_r:=B_r(0)$. We also use the notation $B'_r(x') \subset\R^2$ to denote the ball of radius $r$ centered at $x' \in \R^2$ and we write $B'_r:=B'_r(0)$. For every set $E\subset \R^3$, we denote by $\mathrm{cl}(E)$ and $\mathrm{int}(E)$ its topological closure and topological interior, respectively, with respect to the Euclidean topology. The Lebesgue measure of a Borel set $E\subset \R^3$ is denoted by $\vert E \vert$. We denote by $\mathcal{H}^2$ the $2$-dimensional Hausdorff measure. Given $A, B \subset \R^3$, we denote by $d_{\mathcal{H}}(A,B)$ the Hausdorff distance between $A$ and $B$. Given $E\subset\mathbb{R}^3$ and $x\in\mathbb{R}^3$, we denote by $\mathrm{dist}_E(x)$ the distance from $x$ to $E$, namely, $\mathrm{dist}_E(x):= \inf_{y \in E} \vert x-y \vert$. We define the signed distance function to $E$ by
	\begin{equation}\label{distsd}
		d_E(x):=
		\left\{
		\begin{aligned}
			& \mathrm{dist}_E(x) &  x \in \R^3 \setminus E \, , \\
			& -\mathrm{dist}_{\R^3 \setminus E}(x) & x \in E \, . 
		\end{aligned}
		\right.
	\end{equation}
Observe that $\vert d_{E}(x)\vert = {\rm dist}_{\pa E}(x)$ for every $x \in \R^3$. Let $x\in\mathbb{R}^3$ be such that there exists a unique point $y\in\partial E$ realizing $\mathrm{dist}_E(x)$, namely, ${\rm dist}_{E}(x)= \vert x-y \vert$. We call $y$ the projection of $x$ onto $\partial E$ and denote it by $\pi_{\pa E}(x)=y$. Given $\delta>0$, we define the tubular neighborhood of $E\subset\mathbb{R}^3$ by \begin{equation}\label{intornotubolare}
		\mathcal{I}_{\delta}( E):=\{x\in\R^3:\,{\rm dist}_{\pa E}(x)<\delta\}.  
	\end{equation}
    Given $i,j \in \N$, we denote by $\delta_{ij}$ the Kronecker delta, namely, $\delta_{ij}=1$ if $i=j$ and $\delta_{ij}=0$ if $i \neq j$. Throughout the paper, $C(*,\cdots,*)$ denotes a generic positive constant, which may vary from line to line and depends only on the parameters $*,\cdots,*$.
    \subsection{Regular sets}\label{26052026regularsets}
    In this subsection, we introduce regular sets, starting with the definition of uniform ball condition and $C^{k,\alpha}$-regular set.
    \begin{definition}
		We say that a set $E \subset \mathbb{R}^3$ satisfies the \emph{uniform ball condition} (\textsc{UBC}) with radius $r_0>0$ if, for every $x \in \partial E$, there exist balls $B_{r_0}(x_{+})$ and $B_{r_0}(x_{-})$ such that
		\begin{equation}
			B_{r_0}(x_{+}) \subset \R^3 \setminus E, \quad B_{r_0}(x_{-}) \subset E \text{ and } x \in \pa B_{r_0}(x_{+}) \cap \pa B_{r_0}(x_{-}).
		\end{equation}

        A set $E \subset \R^3$ is said to be $C^{k,\alpha}$-regular, for some integer $k \geq 1$ and $ \alpha \in [0,1]$, if its boundary is a $C^{k,\alpha}$-regular surface. More precisely, for every $x \in \pa E$, there exists $r>0$ and a function $f \in C^{k,\alpha}(\R^2)$ such that, up to an isometry of $\R^3$, 
    \begin{equation}
        B_r(x) \cap  E = B_r(x) \cap \left\{ (y,t) \in \R^3 : \, t < f(y),\, y \in \R^2  \right\}.
    \end{equation}
	\end{definition}
    If $E$ satisfies the \textsc{UBC} with radius $r_0$, then, up to isometry of $\R^3$, one has
    \begin{equation}\label{24112025form1}
        B_{\frac{r_0}{2}}(x) \cap E = B_{\frac{r_0}{2}}(x) \cap \left\{ (y,t) \in \R^3 : \, t < f(y),\, y \in \R^2  \right\}, 
    \end{equation}
    where $f \in C^{1,1}(\R^2)$ satisfies $ \| f \|_{C^{1,1}(B'_{r_0/2}) } \leq \frac{10}{r_0} $ (see, for instance,~\cite[Theorem 2.6]{Da2018},~\cite[Proposition 2.7]{JN},~\cite[Proposition 2.1]{MM2000}). We now recall the definition of uniform $C^{k,\alpha}$-regularity.
    \begin{definition}
        Let $k \geq 1$, $ \alpha \in [0,1]$. A set $E \subset \R^3$ is said to be uniformly $C^{k,\alpha}$-regular with constants $r_0>0,\,C_0>0$ if it satisfies the \textsc{UBC} with radius $r_0$ and the function $f $ appearing in~\eqref{24112025form1} satisfies $ \| f \|_{C^{k,\alpha}(B'_{\frac{r_0}{2}}) } \leq C_0$.
    \end{definition}
   We recall that if a set $E \subset \mathbb{R}^3$ is $C^1$-regular, then for every $x \in \partial E$ there exists an outward unit normal vector $\nu_E(x)$. We define the matrix-valued map
   \begin{equation}\label{LAPROIEZmatrix}
     P_{\pa E}: \pa E \rightarrow \R^{3 \times 3}, \quad  P_{\pa E}(x):= I- \nu_{E}(x) \otimes \nu_E(x).
   \end{equation}
    This map is the orthogonal projection onto the tangent plane $T_{x} \pa E := \langle \nu_E(x) \rangle^{\bot}$. Let $n \geq 1$ and $X \in C^1(\R^3, \R^n)$,  we define the tangential differential of $X$ as the function $ \nabla_{\pa E} X : \pa E \rightarrow \R^{n \times 3}$ defined by
   \begin{equation}
       \nabla_{\pa E} X= \nabla X P_{\pa E}= \nabla X- \nabla X \nu_E \otimes \nu_E.
   \end{equation} 
   We remark that, if $E$ is $C^2$-regular, the tangential differential of a map $X:\partial E\to \mathbb{R}^n$ can be defined by extending $X$ to a tubular neighborhood of $\partial E$ via $\widetilde{X}:= X \circ \pi_{\pa E}$ and then setting $ \nabla_{\pa E} X:= \nabla_{\pa E} \widetilde{X}$. In the case $n=3$, we define the tangential divergence of $X$ by $\div_{\pa E} X= {\rm tr} (\nabla_{\pa E} X)$. For every $x\in \partial E$, we define the natural inclusion of $T_x \pa E$ into $\R^3$ by the linear map $I_x : T_x \pa E \rightarrow \R^3$, $ I_x \tau := \tau$. The tangential Jacobian of $X$ at $x\in \partial E$ is then defined as
   \begin{equation}
       J_{\pa E} X(x):= \sqrt{ \det \left( \left( \nabla_{\pa E} X(x)I_x \right)^T \left( \nabla_{\pa E} X(x)I_x \right)  \right)  }.
   \end{equation}
   Given a $C^2$-regular set $E\subset \mathbb{R}^3$, we define the symmetric matrix associated with the second fundamental form by $B_E:= \nabla_{\pa E} \nu_E$. We emphasize that this is not the standard definition of the second fundamental form. Indeed, $B_E\in \mathbb{R}^{3\times 3}$ always has one zero eigenvalue; however, this formulation is the most convenient for our purposes.
For every $x \in \pa E$, we have
$$B_E(x)= \kappa_1 \tau_1 \otimes \tau_1 + \kappa_2 \tau_2 \otimes \tau_2,$$
where $\kappa_1,\, \kappa_2$ are the principal curvatures and $\tau_1,\, \tau_2$ are the associated principal directions in the tangent plane $T_x \pa E$. We define the mean curvature and the Gaussian curvature of $\pa E$ at $x$ by
\begin{equation}\label{mammaroma6}
    H_E(x):= {\rm tr} \left( B_E(x) \right), \quad K_E(x):= \det \left(  I_x^{T} B_E(x) I_x \right).
\end{equation}
Observe that $ H_E= \kappa_1+\kappa_2$ and $K_E= \kappa_1 \kappa_2$. We also recall that if $E$ satisfies the \textsc{UBC} with radius $r_0$, then for every $x \in \pa E$, $ \vert B_E (x) \vert \leq 2/r_0$. For every vector field $ X \in C^1(\R^3,\R^3)$, the divergence theorem on $\partial E$ reads
\begin{equation}
    \int_{\pa E}\div_{\pa E} X \, d \mathcal{H}^2= \int_{\pa E} H_E \left( X \cdot \nu_E \right) \, d \mathcal{H}^2.
\end{equation}
Let $f \in C^2(\pa E)$, we define the Laplace-Beltrami operator by
\begin{equation}\label{LaplaceBeltraimiop}
    \Delta_{\pa E} f:= \div_{\pa E} \nabla_{\pa E}f .
\end{equation}
If a bounded set $E$ satisfies the \textsc{UBC} with radius $r_0$, then the projection onto $\pa E$ is well defined in $\mathcal{I}_{r_0}(\pa E)$ and 
\begin{equation}
    x=\pi_{\pa E}(x)+ d_E(x)\nabla d_E(x) \text{ for every }x \in \mathcal{I}_{r_0}(\pa E).
\end{equation}
If, in addition $E $ is $C^2$-regular, then $d_E \in C^2( \mathcal{I}_{r_0}(\pa E))$ and
\begin{equation}\label{24112025primacrossfit}
    \nabla^2 d_E(x)= B_E \left( \pi_{\pa E}(x)  \right) \left( I+d_E(x)B_E(\pi_{\pa E}(x) )  \right)^{-1} \text{ for every }x\in \mathcal{I}_{r_0}(\pa E),
\end{equation}
see~\cite[formula 2.33]{JN} for a proof. We note that formula~\eqref{24112025primacrossfit} is well defined since $B_E= \nabla_{\pa E} \nu_E$ is an $\R^{3 \times 3}$ matrix. Moreover, $\pi_{\pa E} \in C^1(\mathcal{I}_{r_0}(\pa E))$ and for $x= \pi_{\pa E}(y)$ we have
\begin{equation}\label{canzone1}
    \nabla \pi_{\pa E}(y)= I-  \nu_E (x) \otimes  \nu_E (x)-d_E(y)  B_E(x)  \left( I + d_E(y) B_E (x) \right)^{-1}.
\end{equation}
Given $f \in C^{1}(\pa E)$, we define its extension $f \circ \pi_{\pa E} : \mathcal{I}_r(\pa E) \rightarrow \R$. Since $f \circ \pi_{\pa E}= f \circ \pi_{\pa E} \circ \pi_{\pa E}$, it follows that, for every $y \in \mathcal{I}_{r_0}(\pa E)$,
\begin{equation}\label{1012pcrossfit}
    \nabla \big( f \circ \pi_{\pa E} \big)(y)= \big( \nabla \pi_{\pa E}(y) \big)^t \nabla \big( f \circ \pi_{\pa E} \big) \circ \pi_{\pa E}(y)= \big( \nabla \pi_{\pa E}(y) \big)^t\nabla_{\pa E} f (\pi_{\pa E}(y)),
\end{equation}
where we have used that $\nabla (f \circ \pi_{\pa E})(z)= \nabla_{\pa E} f (z)$ for $z \in \pa E$.
\subsection{Riemannian geometry notation}
In this subsection, we recall standard notation from Riemannian geometry. For a complete treatment of the topic, we refer to~\cite{DoCarmobook,  LeeBook,LeeBook2018, KobayashiNomizu1963}. Given $E \subset \R^3$ be a bounded $C^\infty$-regular set, we set $\Sigma= \pa E$. We have that $\Sigma$ is an embedded manifold of $\R^3$ and carries a natural Riemannian structure $g$ induced by the Euclidean metric of $\mathbb{R}^3$. We will always identify the tangent plane $T_p \pa E \subset \R^3$ with the tangent space $T_p \Sigma$ as defined in differential Geometry. We denote by $\{E_1,E_2\}$ a base of $T_p \Sigma$ for $p \in \Sigma$. Given two vector $X_p, Y_p \in T_p \Sigma$ we denote their inner product in $T_p \Sigma$ by $ g( X_p, Y_p )$. In local coordinates, we have $ g( X_p, Y_p ) = \sum_{i,j=1}^2g_{ij}(p)X_p^iY_p^j$. Throughout the paper, we also use the notation $\langle \cdot, \cdot \rangle_p$ if there isn't ambiguity. We denote by $g^{ij}(p)$ the inverse of the matrix $g_{ij}(p)$, i.e., $ \sum_{j=1}^2g_{ij}(p) g^{jk}(p)= \delta_{ik}$. Since $\pa E$ is a $C^\infty$-regular surface, for every $x \in \partial E$ there exist a neighborhood $U \subset \mathbb{R}^3$ of $x$ and $r > 0$ such that $\partial E \cap U$ coincides, up to rigid motions of $\mathbb{R}^3$, with the graph of a smooth function, i.e. $ \{ (y,f(y)): y \in B'_r(x')\}$. In these coordinates, the metric coefficients are given by 
$$ g_{ij}((\cdot,f(\cdot)))= \delta_{ij}+ \frac{\pa f }{\pa y_i}  \frac{\pa f }{\pa y_j}, \quad g^{ij}((\cdot,f(\cdot)))= \delta_{ij}- \frac{1}{1+ \vert \nabla' f(\cdot) \vert^2}\frac{\pa f }{\pa y_i}  \frac{\pa f }{\pa y_j}.$$
  We denote by $\mathfrak{X}(\Sigma)$ the space of smooth vector fields on $\Sigma$ and by $\mathfrak{X}^*(\Sigma)$ the space of smooth $1$-forms. Given $h, k \in \N$, we denote by $T^{(h,k)}(T\Sigma)$ the space of $(h,k)$-tensor fields, i.e., $F \in T^{(h,k)}(T\Sigma)$ is a multilinear map
 \begin{equation}
     F :\underbrace{\mathfrak{X}^*(\Sigma) \times \cdots \times \mathfrak{X}^*(\Sigma)}_{h \text{ factors}} \times \underbrace{\mathfrak{X}(\Sigma) \times \cdots \times \mathfrak{X}(\Sigma)}_{k \text{ factors}} \rightarrow C^\infty(\Sigma).
 \end{equation}
 We extend the inner product in a natural way for tensors.  We denote the Levi-Civita connection on $\Sigma$ by $\overline{\nabla}$. It is a map $\overline{\nabla}: \mathfrak{X}(\Sigma) \times \mathfrak{X}(\Sigma) \rightarrow \mathfrak{X}(\Sigma)$, defined by $\overline{\nabla}_X Y:= P_{\pa E} \nabla_{\tilde{X}} \tilde{Y} = P_{\pa E} \sum_{k=1}^3 \sum_{i=1}^3 \tilde{X}_i \frac{\pa }{\pa x_i}(\tilde{Y}_k)e_k$ where $\tilde{X}= (\tilde{X}_1, \tilde{X}_2,\tilde{X}_3) $ and $\tilde{Y}= (\tilde{Y}_1, \tilde{Y}_2,\tilde{Y}_3) $ are extensions of $X$ and $ Y$ to $\R^3$ and $P_{\pa E}$ is the  projection onto $T_p \pa E$ as defined in~\eqref{LAPROIEZmatrix}. We recall that for $p \in \Sigma$ and $X,Y \in \mathfrak{X}(\Sigma)$, the value $\overline{\nabla}_X Y(p)$ depends only on $X(p)$ and on the restriction of $Y$ along any curve
   $c:(-\varepsilon,\varepsilon) \rightarrow \Sigma$ such that $c(0)=p$ and $ \frac{d}{d t} c(0)= X (p)$. In local coordinates near $p \in \Sigma$, the Levi-Civita connection can be expressed as
   \begin{equation}
       \overline{\nabla}_X Y= \big( X^i E_i(Y^k)+ X^i Y^j \Gamma_{ij}^k  \big) E_k,
   \end{equation}
   where $\Gamma^{i}_{jk}$ are the Christoffel symbols, which in a given local chart and frame can be computed from $g_{ij}, g^{ij}, E_k(g_{ij})$ and by the coefficients of the Lie brackets $[E_i,E_j]$ in the frame.  Moreover, for all $X \in \mathfrak{X}(\Sigma)$, we set
 $
     X^k_{\,;j}:= E_j(X^k)+\Gamma^k_{lj} X^l,
 $
 and hence obtain
$
     \overline{\nabla}_{E_j} X= X^k_{\,;j} E_k
 $.
Given a function $f \in C^\infty(\Sigma)$, we define 
\begin{equation}
    \overline{\nabla}f (X):= \overline{\nabla}_X f=  X (f) \text{ for every } X \in \mathfrak{X}(\Sigma),
\end{equation}
that is, $\overline{\nabla}f \in \mathfrak{X}^*(\Sigma)$. We denote the components of the covariant derivative of $f$  in local chart  by $ \overline{\nabla}_{i} f:= \overline{\nabla}_{E_i} f$.
Given $f \in C^\infty(\Sigma)$, we denote by ${\rm grad}\, f$ the intrinsic gradient on the manifold. In local coordinates it is given by
 \begin{equation}
     {\rm grad\,} f= \sum_{j,i=1}^2(g^{ij} \overline{\nabla}_{E_j} f )E_i.
 \end{equation} 
We denote its components by
 \begin{equation}
     \nabla^i f:= \sum_{j=1}^2g^{ij} \overline{\nabla}_{j} f \text{ for } i=1,2.
 \end{equation}
 Thanks to the identification of $T_p \Sigma$ with $ T_p \pa E \subset \R^3 $, it is natural to identify ${\rm grad} \, f$ with a vector of $\R^3$. Hence,
 \begin{equation}\label{nabladeE=grad}
    \nabla_{ \pa E } f = {\rm grad \,} f  . 
 \end{equation}
Let $X \in \mathfrak{X}(\Sigma)$, we define the divergence on $\Sigma$ by 
\begin{equation}\label{Sigmadiv07}
    \div_\Sigma X:= \sum_{i=1}^2 X^i_{\, ;i}= {\rm tr} (\overline{\nabla} X).
\end{equation}
\begin{remark}\label{08remakino}
{\rm
    Given $X \in \mathfrak{X}(\Sigma)$, there exists a unique function $\hat{X}: \Sigma \rightarrow \R^3$ such that $\hat{X}(p)\cdot\nu_E(p)=0$ for all $p \in \Sigma$, i.e. $ \hat{X}(p) \in T_p \pa E$ and
    \begin{equation}\label{05032026form1}
        \overline{\nabla}_X f = \nabla_{\pa E} f \cdot \hat{X} = g({\rm grad }f, X) \quad \forall f \in C^\infty(\R^3).
    \end{equation} \fr}
\end{remark} 
\begin{remark}
{\rm 
Let $f \in C^\infty(\R^3)$ and let $X=(X_1,X_2,X_3)$ be a smooth vector field on $\R^3$. Recall that the Euclidean gradient corresponds to the covariant derivative in $\R^3$ and satisfies
\begin{equation}
    \nabla f(X):= X(f)= \sum_{i=1}^3 X_i \frac{\pa }{\pa x_i} f =\nabla f \cdot X.
\end{equation}
Let $\nu_E$ be the unit normal vector of $\pa E$. We define $ N_E:= \nu_E \circ \pi_{\pa E}$, which provides a local extension of $\nu_E$. Since $\vert N_E \vert=1$, we obtain
   \begin{equation}
       0= \nabla \langle N_E, N_E \rangle (X)= 2 \langle \nabla_X N_E, N_E \rangle \text{ for every vector field }X \text{ of }\R^3,
   \end{equation}
   and hence $$ \nabla_X N_E=  P_{\pa E} \nabla_X N_E= B_E X.$$ Therefore, with a slight abuse of notation, we set
   \begin{equation}
       \overline{\nabla} _X\nu_E := \nabla_{\hat{X}} N_E= B_E \hat{X} \text{ for every }X \in \mathfrak{X}(\Sigma).
   \end{equation} \fr}
   \end{remark}
\begin{remark}{\rm 
    Let $f \in C^\infty(\R^3)$, $X \in \mathfrak{X}(\Sigma)$, and let $\hat{X}$ be the extension introduced in Remark~\ref{08remakino}. Using the divergence theorem and~\eqref{05032026form1}, we obtain
    \begin{equation}
        \int_{\pa E} f \div_{\pa E} \hat{X}\, d \mathcal{H}^2= -\int_{\pa E} \nabla_{\pa E}f \cdot \hat{X}\, d \mathcal{H}^2= -\int_{\pa E} g({\rm grad}\, f, X)\, d \mathcal{H}^2= \int_{\pa E} f \div_\Sigma X \, d \mathcal{H}^2.
    \end{equation}
   Consequently, $\div_{\pa E} \hat{X}= \div_\Sigma X$.  Moreover, the Laplace–Beltrami operator defined in~\eqref{LaplaceBeltraimiop} satisfies \begin{equation}
        \Delta_{\pa E} f= \div_{\pa E} \nabla_{\pa E} f = \div_{\Sigma} \, {\rm grad} \,f .
    \end{equation}
    This allows us to adopt a unified notation for the gradient (and, respectively, the divergence) on $\Sigma$, namely $\nabla_{\pa E}$ (respectively $\div_{\pa E}$). \fr}
\end{remark}   
  We denote the covariant derivative of a smooth tensor field $T \in T^{(h,k)}(T\Sigma)$ by $\overline{\nabla}T$, where $\overline{\nabla}T \in T^{(h,k+1)}(T\Sigma) $; for the definition, see~\cite{LeeBook}. We also recall that the $m$-th covariant derivative of a tensor field $T \in  T^{(h,k)}(T\Sigma)$ is $ \overline{\nabla}^{m} T  \in T^{(h,k+m)}(T\Sigma) $. Since $\Sigma$ is embedded, we denote its second fundamental form by $B_\Sigma$. Note that, for every $x \in \Sigma$, $B_\Sigma(x)$ is a bilinear form on $T_x \Sigma \times T_x \Sigma$. We recall that the Riemann curvature tensor of $(\Sigma,g)$ is a $(1,3)$-tensor field defined by
\begin{equation}\label{ILRiemann}
\begin{split}
   & R: \mathfrak{X}(\Sigma) \times \mathfrak{X}(\Sigma) \times \mathfrak{X}(\Sigma) \rightarrow \mathfrak{X}(\Sigma) \\
   & R(X,Y)Z= \overline{\nabla}_X \overline{\nabla}_Y Z - \overline{\nabla}_Y \overline{\nabla}_X Z - \overline{\nabla}_{[X,
Y]}Z.
    \end{split}
\end{equation}
In local coordinates, the Riemann curvature tensor is given by
\begin{equation}\label{ILGRANDERIEMANN}
    \begin{split}
        & R(E_i,E_j)E_k = R_{ijk}^l E_l\\
        & R_{ijk}^l = \overline{\nabla}_{E_i} \Gamma_{jk}^l -\overline{\nabla}_{E_j} \Gamma_{ik}^l + \Gamma_{jk}^m\Gamma_{im}^l- \Gamma_{ik}^m \Gamma_{jm}^l,
    \end{split}
\end{equation}
where  we have used the Einstein summation convention. We recall that the Riemann curvature tensor can be expressed in terms of the second fundamental form; see~\cite[Section 2]{Mantegazza2002} or~\cite{MMT2013, Mantegazzabook}.
 Consequently, an $L^\infty$ bound on $B_\Sigma$ or on $\overline{\nabla}^k B_\Sigma$, yields an $L^\infty$ bound on $R$ and on $\overline{\nabla}^k R$, respectively, and vice-versa.
\subsection{Space of functions} 
We define  the Sobolev space $W^{l,p}(\Sigma)$, for $p \in [1,\infty]$ and $k \in \N$, as in the book~\cite{AubinBook2}. Given $f \in W^{l,p}(\Sigma)$, we define  for $p < \infty$
\begin{equation}
    \| f\|_{W^{l,p}(\Sigma)}^p := \sum_{k = 0}^l \int_\Sigma |\overline{ \nabla}^k f|^p\, d \mathcal{H}^{n-1}
\end{equation}
and for $p = \infty$
\[
\| f\|_{W^{l,\infty}(\Sigma)} := \sum_{k = 0}^l \text{ess}\sup_{\!\!\!\!\!\!\!\!\!\!\!x \in \Sigma} |\overline{ \nabla}^k f|.
\]
We set $H^l(\Sigma) := W^{l,2}(\Sigma)$.
The above definition extends naturally to tensor fields. We set $\| u\|_{C^{m}(\Sigma)} := \| u\|_{W^{m,\infty}(\Sigma)}$ for $m \geq 0$. We remark that, in order to define the space $W^{k,p}(\Sigma)$ for $k \geq 2$, it is sufficient that $\Sigma$ is $C^{k-1,1}$-regular. Moreover, we note that the first-order Sobolev space for real-valued functions can be equivalently defined using tangential derivatives; we adopt this point of view throughout the paper. However, for higher-order Sobolev spaces and Sobolev spaces of tensor fields, it is necessary to work with covariant derivatives. We define the H\"older norm of a continuous function $u : \Sigma \to \R$ by 
\[
\| u\|_{C^\alpha(\Sigma)} =  \sup_{\substack{x \neq y \\ x,y \in \Sigma}} \frac{|u(y) - u(x)|}{d(y,x)^\alpha} + \|u\|_{L^\infty(\Sigma)},
\]
where $d(y,x)$ denotes the geodesic distance between $x$ and $y$ on $\Sigma$.  
Finally, following~\cite{JL2024}, we define the H\"older norm of a covariant tensor field $T$ of order $k$ by
\[
\| T\|_{C^\alpha(\Sigma)} = \sup \{ \| T(X_1, \dots, X_k) \|_{C^\alpha(\Sigma)} : X_i \in \mathfrak{X}(\Sigma) \, \text{ with } \, \|X_i \|_{C^1(\Sigma)} \leq 1\}.
\]
\begin{definition}\label{Omegaevrietadiriferimento}
 We denote by $\Omega \Subset \R^3$ an open and connected set with $C^\infty$-regular boundary. 
   Moreover we denote by $ E_0 \Subset \Omega$ an open and connected set such that $\pa E_0$ is uniformly $C^{6,1}$-regular. We shall use $E_0$ as the initial datum.    
\end{definition}
	Let $\sigma_0$ be a positive constant such that
	\begin{equation}\label{Laconstantesigmazerofin}
		\sigma_0 < \frac{1}{2}\min\{1/\| B_{E_0} \|_{L^\infty(\pa E_0)}, d_{\mathcal{H}}(\pa E_0, \pa \Omega)\}.
	\end{equation}
	Given $k \in \N$, $ \alpha \in [0,1]$,   $\sigma< \sigma_0$ and $K>0$  we define
	\begin{align}\label{22082024pom2}
		\mathfrak{C}^{k,\alpha}_{K,\sigma}( E_0):=
		\big\{E\subset\R^3:& \,\, \pa E  \subset {\rm cl} \big( \mathcal{I}_{\sigma}(\pa E_0) \big),\,\pa E=\{y+\varphi_E(y)\nu_{ E_0}(y)\!:\,y\in \pa E_0 \},\cr
		& \,\,\|\varphi_E\|_{L^\infty(\pa E_0)}\leq \sigma,\,\|\varphi_E\|_{C^{k,\alpha}(\pa E_0)}\leq K\big\}.
	\end{align}
	For every $k \in \{1,\cdots,6\}$, we define the set $\mathfrak{H}^{k}_{K,\sigma}(E_0)$ analogously to $ \mathfrak{C}_{K,\sigma}^{k,\alpha}(E_0)$ by replacing the norm $\|\varphi_E\|_{C^{k,\alpha}(\pa E_0)}$ with  $\|\varphi_E\|_{H^k(\pa E_0)}$. Let $\{ E_n \}_{n \in \N}$ and $ E$ be such that $ E_n \in \mathfrak{C}_{K,\sigma}^{k,\alpha}(E_0)$ (respectively $\mathfrak{H}^{k}_{K,\sigma}(E_0)$) for all $n \in \N$. We say that $E_n \rightarrow E $ in $\mathfrak{C}_{K,\sigma_0}^{k,\alpha}(E_0)$ (respectively in $\mathfrak{H}_{K,\sigma_0}^{k}(E_0)$) if $\varphi_{E_n}$ is uniformly bounded by $\sigma$ in $L^{\infty}(\pa E_0)$ and it is a Cauchy sequence in $C^{k,\alpha}(\pa E_0)$ (respectively $\mathfrak{H}^{k}(\pa E_0)$).\\
	Let $K \in (0, + \infty)$, $m \in \N$, $\alpha \in [0,1]$, we define
\begin{equation}
    \mathfrak{C}^{m,\alpha}_K (\Omega):= \left\{ u: \Omega \rightarrow \R :  u \in C^{m,\alpha}(\Omega) \text{ and } \| u \|_{C^{m,\alpha}(\Omega)} \leq K   \right\}.
\end{equation}
	\subsection{Sets of finite anisotropic perimeter and anisotropic curvature}  For $E \subset \R^3$ a Borel set, we define the anisotropic perimeter of $E$ as 
	\begin{equation}
		P_\varphi(E)= \sup \left\{ \int_{E} \div T \, dx : T \in C^1_c(\R^3,\R^3),\, \sup_{x \in \R^3} \varphi^0(T(x)) \leq 1   \right\}.
	\end{equation}
	In the particular case $\varphi(\cdot)= \vert \cdot \vert$, i.e. the Euclidean norm, we simply write $P(E)$ instead of $P_{\vert \,\, \vert }(\cdot)$. We say that a Borel set $E \subset \R^3$ has finite perimeter if $P(E) < + \infty$. Under the assumption \eqref{unifell}, it is straightforward to check that $$P_{\varphi}(E)< + \infty \iff P(E)< +\infty. $$  
	For every set $E  \subset \R^3$ with finite perimeter, we denote by $\partial^* E \subset \R^3$ its reduced boundary and by $\nu_{E}: \partial^* E \rightarrow \mathbb{R}^3$ the measure-theoretic outer unit normal vector field (see, for instance,~\cite[Definition 3.53]{AmbFuscPall}). By De Giorgi's structure theorem (see, for instance,~\cite[Theorem 3.59]{AmbFuscPall},~\cite[Theorem 15.19]{Maggibook}), if $E \subset \R^3$ has finite perimeter, then $$P(E)=\mathcal{H}^{2}(\partial^*E) \quad \text{ and }\quad P_\varphi(E):= \int_{\pa^* E} \varphi(\nu_E) \, d \mathcal{H}^2.$$ 
    
	Let $E \subset \R^3$ be a $C^2$-regular set. For any vector field $X \in C^1_c(\R^3,\R^3)$, let $(\Phi (t,\cdot))_{t \in (-\varepsilon,\varepsilon)}$ be the unique solution of the Cauchy problem
	\begin{equation}
		\begin{cases}
			\frac{\pa }{\pa t} \Phi (t,x)= X\circ \Phi(t,x) \quad \forall x \in \R^3\\
			\Phi(0,x)=x \quad \forall x \in \R^3. 
		\end{cases}
	\end{equation}
	Then, we have 
	\begin{equation}\label{FeulPANI}
		\frac{d}{d t} \bigg|_{t=0} \int_{\pa \Phi(t,E)} \varphi (\nu_{\Phi(t,E)})\, d \mathcal{H}^2 = \int_{\pa E} H_E^{\varphi} X \cdot \nu_E \, d \mathcal{H}^2
	\end{equation}
	where the anisotropic curvature $H^{\varphi}_E$ of $ \pa E$ is defined by $$H_E^{\varphi}:= \div_{\pa E} (\nabla \varphi (\nu_E)).$$ 
\section{Preliminary results}\label{sez318052026}
We start by recalling the interpolation inequalities for Sobolev norms on embedded surfaces. We use the result from~\cite[Proposition 6.5]{Mantegazza2002} (see also~\cite[Proposition 4.3]{DFM2023}), which states that, under a curvature bound, the standard interpolation inequalities hold with a uniform constant.
\begin{proposition}
\label{prop:interpolation}
Let $C_0>0$ and let $\Sigma$ be a $C^{m}$-regular manifold with $m \geq 2$. Assume $\|B_{\Sigma}\|_{L^{\infty}}, \mathcal{H}^{2}(\Sigma)\leq C_0$.  Then for $0\leq k < l \leq m$, $p \in [1,\infty)$, and  $q,r \in [1,\infty]$   there exists $\theta \in [k/l,1]$ such that for every $C^l$-regular  tensor field $T$ on $\Sigma$ it holds
\[
\|\overline{\nabla}^k T\|_{L^p(\Sigma)} \leq C \| T\|_{W^{l,q}(\Sigma)}^\theta \| T\|_{L^{r}(\Sigma)}^{1-\theta}
\]
for a constant $C=C(k,l,p,q,r,\theta,C_0)>0$,  provided that the following  condition is satisfied
\[
\frac{1}{p} = \frac{k}{2} + \theta \left( \frac{1}{q} - \frac{l}{2} \right) + \frac{1}{r}(1 - \theta).
\]
Moreover, if $f: \Sigma \to \R$ is a smooth function with $\int_{\Sigma} f\,d\mathcal{H}^{2}=0$, the above inequality can be written as 
\[
\|\overline{\nabla}^k f\|_{L^p(\Sigma)} \leq C \| \overline{\nabla}^l f\|_{L^{q}(\Sigma)}^\theta \| f\|_{L^{r}(\Sigma)}^{1-\theta}.
\]
\end{proposition}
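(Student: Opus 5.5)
Since Proposition~\ref{prop:interpolation} is quoted from \cite[Proposition 6.5]{Mantegazza2002}, the plan is to reproduce the standard Gagliardo--Nirenberg argument on a compact surface. The point to check carefully is that every constant depends only on the bound $C_0$ for $\|B_\Sigma\|_{L^\infty}$ and $\mathcal{H}^2(\Sigma)$.

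\textbf{Step 1: base inequalities.} I first establish two inequalities with uniform constants.
\begin{itemize}
\item Michael--Simon Sobolev inequality. For any Lipschitz function $u$ on $\Sigma\subset\R^3$,
\[
\|u\|_{L^2(\Sigma)}\le C\big(\|\nabla_{\pa E}u\|_{L^1(\Sigma)}+\|H_\Sigma u\|_{L^1(\Sigma)}\big).
\]
Here $C$ is universal, and $|H_\Sigma|\le C_0$.
\item Kato inequality. For a tensor $T$ we have $|\nabla|T||\le|\overline{\nabla}T|$, so the scalar inequality applies to $|T|$.
\end{itemize}
Applying the Sobolev inequality to $|T|^\gamma$ and using H\"older gives the $L^p$ Sobolev embeddings $W^{1,q}\hookrightarrow L^{q^*}$ for $q<2$. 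For $q>2$, a Moser-type iteration gives $L^\infty$ bounds. In each case the constants depend only on $C_0$.

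\textbf{Step 2: base interpolation by integration by parts.} Next I prove the first-order interpolation
\[
\int|\overline{\nabla}T|^{p}\le C\,\|T\|_{L^r}\,\|\overline{\nabla}^2T\|_{L^{q}}\,\|\overline{\nabla}T\|_{L^{s}}^{p-2}+\text{l.o.t.}
\]
To do this, write $|\overline{\nabla}T|^p=\langle\overline{\nabla}T,\overline{\nabla}T\rangle|\overline{\nabla}T|^{p-2}$ and integrate by parts once, using the divergence theorem on the closed surface $\Sigma$. The integration by parts produces no curvature terms at this order. Commutators of covariant derivatives appear only at higher order; they involve the Riemann tensor, which is bounded by $|B_\Sigma|^2\le C_0^2$ through the Gauss equation.

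\textbf{Step 3: higher orders and exponent bookkeeping.} Induction on $l$ and iteration in $k$, as in Aubin's or Hamilton's treatment, give the general form with $\theta$ fixed by the scaling relation. Step 1 is used to pass between Lebesgue exponents. The lower-order terms $\|T\|_{L^r}$ that appear are absorbed into the full norm $\|T\|_{W^{l,q}}$ on the right-hand side. This absorption uses $\mathcal{H}^2(\Sigma)\le C_0$ to compare $L^p$ norms.

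\textbf{Step 4: the zero-mean case.} For $\int_\Sigma f=0$ I need a Poincar\'e inequality $\|f\|_{L^q}\le C\|\overline{\nabla}f\|_{L^q}$, and iterated versions, to drop the lower-order part of $\|f\|_{W^{l,q}}$.

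\textbf{Main obstacle.} The Poincar\'e constant is the hardest step. A bound on curvature and area does not control the Poincar\'e constant for arbitrary surfaces, since thin necks can make it degenerate. So the Poincar\'e constant must rely on the implicit assumptions of the cited result, namely connectedness and a uniform ball condition. Under those assumptions, a covering by a bounded number of graph charts of size about $1/C_0$ gives a uniform constant via a chaining argument. I would cite \cite{Mantegazza2002,DFM2023} for this last point.
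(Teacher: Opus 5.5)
The paper does not prove this proposition; it quotes \cite[Proposition 6.5]{Mantegazza2002}. Your Steps 1--3 are essentially that argument: the Michael--Simon inequality with the $H_\Sigma$-term absorbed via $C_0$, Kato's inequality, and Hamilton/Aubin-type integration by parts iterated on $S=\overline{\nabla}^{j-1}T$. So the plan is fine for the first inequality. One caution: run the induction without ever commuting covariant derivatives. A single commutator only costs $R\star S$ with $|R|\le C|B_\Sigma|^2$. Commutators buried inside higher derivatives, however, produce $\overline{\nabla}^iR$, i.e.\ $\overline{\nabla}^iB_\Sigma$, which $C_0$ does not control. The scheme of your Step 2 never needs them.

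The real gap is Step 4, and your diagnosis of the obstacle is wrong. The bound $|B_\Sigma|\le C_0$ excludes thin necks: every intrinsic ball of radius $c/C_0$ is a small-slope graph over its tangent plane, so it has area at least $c'C_0^{-2}$. Together with $\mathcal{H}^2(\Sigma)\le C_0$, this bounds the intrinsic diameter of each component by $C(C_0)$. Since also $K_\Sigma\ge -C_0^2$, the Li--Yau eigenvalue estimate (or a chaining argument over these graph patches) gives a Poincar\'e constant depending only on $C_0$.

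Hence the only missing hypothesis is connectedness. That one is genuinely needed: constants of opposite sign on two components have zero mean and $\overline{\nabla}^lf=0$. The uniform ball condition you add is superfluous, and importing it means you prove less than what is stated (though enough for how the paper later uses the result).

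Second, the ``iterated versions'' of Poincar\'e are not automatic. For $j\ge1$, $\overline{\nabla}^jf$ is a tensor with no zero-mean normalization, so the scalar inequality does not apply to it. For $q\ge2$ you can close this with your own Step 2, using $\int_\Sigma|\overline{\nabla}^jf|^q\le C\|\overline{\nabla}^{j-1}f\|_{L^q}\|\overline{\nabla}^{j+1}f\|_{L^q}\|\overline{\nabla}^jf\|_{L^q}^{q-2}$. Induct on $j$, starting from the scalar Poincar\'e inequality $\|f\|_{L^q}\le C\|\overline{\nabla}f\|_{L^q}$ at $j=1$. The range $1\le q<2$ requires a separate argument.
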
 
Let $E \subset \R^3$ be a bounded and uniformly $C^{k,\alpha}$-regular set for some $k \in \N$, $ \alpha \in [0,1]$, with constants $C_0,r_0$. We set $\Sigma:= \pa E$. Then the classical interpolation inequality in H\"older norms holds:
 for $ 0 < \beta < \alpha \leq 1 $ and $0 \leq l \leq m \leq k$ we have
\begin{equation}\label{interHOLDER}
    \| f \|_{C^{l,\beta}(\Sigma)} \leq C \| f \|_{C^{m,\alpha}(\Sigma)}^{\theta} \|   f \|_{C^0(\Sigma)}^{1-\theta}, \quad \theta= \frac{l+ \beta}{m+ \alpha},
\end{equation}
where $C$ depends on $C_0,r_0,l,m,\alpha,\beta$. This result follows from the Euclidean case; see for example~\cite[Example 1.9]{Lunardi2018}. 

The interpolation inequality in Proposition~\ref{prop:interpolation} yields the following useful estimate. The proof is standard and we refer the reader to~\cite[Proposition 2.3]{JN}.   Before stating the result, we fix the following notation: given an index vector $\alpha \in \N^l$, we denote the sum of its components by $\vert \alpha \vert= \alpha_1+ \cdots+ \alpha_l.$

\begin{lemma}
\label{lem:Leibniz}
Let $C_0>0$, let $\Sigma$ be a $C^{m}$-regular manifold with $m \geq 2$, and let $T_1, \dots, T_l$ be $C^m$-regular covariant tensor fields. Assume $\|B_{\Sigma}\|_{L^{\infty}}, \mathcal{H}^{2}(\Sigma)\leq C_0$. Then for an index vector $\alpha \in \N^l$ with  $|\alpha|  \leq  k \leq m$ it holds 
\[
\| |\overline{\nabla}^{\alpha_1} T_1| \cdots  |\overline{\nabla}^{\alpha_l} T_l| \|_{L^2(\Sigma)} \leq C \sum_{\sigma \in S_l} \|T_{\sigma(1)}\|_{L^\infty(\Sigma)} \cdots  \|T_{\sigma(l-1)}\|_{L^\infty(\Sigma)}  \,    \|T_{\sigma(l)}\|_{H^k(\Sigma)}, 
\]
where the sum is over the permutation group $S_l$.  In particular, 
\[
\| |\overline{\nabla}^{k} \langle T_1 , T_2\rangle  | \|_{L^2(\Sigma)} \leq C \|T_1\|_{L^\infty(\Sigma)}   \|T_2\|_{H^k(\Sigma)}    + C \|T_2\|_{L^\infty(\Sigma)}   \|T_1\|_{H^k(\Sigma)}  . 
\] 
\end{lemma}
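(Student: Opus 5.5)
The plan is to expand the product by the Leibniz rule, bound each factor with Hölder's inequality, and control each resulting norm with the interpolation inequality of Proposition~\ref{prop:interpolation}. The Young-type inequality $\prod a_i^{\theta_i}\le\sum_i a_i$ for $\sum\theta_i=1$ then converts the product of interpolated norms into the claimed sum over permutations.

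First we dispose of trivial indices. Factors with $\alpha_i=0$ are bounded directly in $L^\infty$. We may therefore assume all $\alpha_i\ge1$, and we first treat $|\alpha|=k$; the case $|\alpha|<k$ follows by applying the result with $k$ replaced by $|\alpha|$, since $H^{|\alpha|}$ norms are dominated by $H^k$ norms. If only one index is nonzero the claim is immediate, because $\|\overline{\nabla}^{k}T\|_{L^2}\le\|T\|_{H^k}$.

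In the general case $|\alpha|=k$, set $p_i:=2k/\alpha_i$, so that $\sum_i 1/p_i=1/2$. Hölder's inequality gives
\[
\big\| |\overline{\nabla}^{\alpha_1}T_1|\cdots|\overline{\nabla}^{\alpha_l}T_l|\big\|_{L^2}\le \prod_i \|\overline{\nabla}^{\alpha_i}T_i\|_{L^{p_i}}.
\]
Next apply Proposition~\ref{prop:interpolation} to each factor, with the indices $k\to\alpha_i$, $l\to k$, $p\to p_i$, $q=2$, $r=\infty$. The scaling condition
\[
\frac{\alpha_i}{2k}=\frac{\alpha_i}{2}+\theta\Big(\frac12-\frac k2\Big)
\]
is satisfied by $\theta_i=\alpha_i/k\in[\alpha_i/k,1]$, which is admissible. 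This yields
\[
\|\overline{\nabla}^{\alpha_i}T_i\|_{L^{p_i}}\le C\|T_i\|_{H^k}^{\alpha_i/k}\|T_i\|_{L^\infty}^{1-\alpha_i/k}.
\]
The constant depends only on $C_0$ through the curvature and area bounds, which is exactly why those bounds appear in the hypotheses.

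Since $\sum_i\theta_i=1$, we rewrite the product as $\prod_i a_i^{\theta_i}$ with
\[
a_i:=\|T_i\|_{H^k}\prod_{j\ne i}\|T_j\|_{L^\infty}.
\]
To check this, note that $\|T_i\|_{L^\infty}$ then appears with total exponent $\sum_{j\neq i}\theta_j=1-\theta_i$, as required. Young's (weighted AM–GM) inequality gives $\prod_i a_i^{\theta_i}\le\sum_i\theta_i a_i\le\sum_i a_i$. This is bounded by the sum over permutations in the statement, each $a_i$ corresponding to permutations with $\sigma(l)=i$.

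The particular case follows from the general one. Expand $\overline{\nabla}^k\langle T_1,T_2\rangle$ by the Leibniz rule; metric compatibility of the Levi-Civita connection ensures that no curvature terms appear. Each resulting term is pointwise bounded by $|\overline{\nabla}^{j}T_1||\overline{\nabla}^{k-j}T_2|$, so the estimate follows with $l=2$.

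The only delicate point is the use of the interpolation inequality in the endpoint form with $r=\infty$, $q=2$, where $\theta=\alpha_i/k$ sits exactly at the lower endpoint. Proposition~\ref{prop:interpolation} permits $\theta\in[k/l,1]$, so this is covered; in the degenerate case $p_i=\infty$ (i.e. $\alpha_i=0$) we simply avoid interpolation, as arranged above.
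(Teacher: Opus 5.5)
Your proposal is correct and is essentially the standard argument the paper defers to (Proposition 2.3 of Julin--Niinikoski, which the paper cites for this lemma): H\"older with $p_i=2k/\alpha_i$, the interpolation inequality with $q=2$, $r=\infty$, $\theta_i=\alpha_i/k$, and then weighted AM--GM. The only point to tidy is the degenerate case $\alpha=0$: there you should keep one factor in $L^2$, bounded by its $H^k$ norm, rather than sending every factor to $L^\infty$.
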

We recall the following lemma; the proof can be found in~\cite[Lemma~2.5 and Proposition~2.6]{JN}.
\begin{lemma}
\label{lem:hilbert-norm}
Let $C>0$ and let $E \Subset \R^3 $ be such that  $ \pa E $ is $C^{2k+1}$-regular with $ \mathcal{H}^{2}(\pa E), \|B_{ E}\|_{L^\infty} \leq C$. We set $\Sigma= \pa E$. Then for all $f \in C^{2k}(\Sigma)$ it holds
\[
\begin{split}
&\|f\|_{H^{2k}(\Sigma)} \leq C_k\big( \|\Delta^k_{\pa E} f\|_{L^{2}(\pa E)} +  (1+  \| \nabla_{\pa E} \Delta^{k-1}_{\pa E}  H_{ E} \|_{L^{2}(\pa E)})  \|f\|_{L^{\infty}(\pa E)}\big),\\
&\|f\|_{H^{2k-1}(\Sigma)} \leq C_k\big( \| \nabla_{\pa E} \Delta^{k-1}_{\pa E} f\|_{L^{2}(\pa E)} +  (1+ \| \Delta^{k-1}_{\pa E} H_{ E} \|_{L^{2}(\pa E)}  )  \|f\|_{L^{\infty}(\pa E)}\big).
\end{split}
\]

Moreover, it holds 
\[
\begin{split}
&\|B_\Sigma \|_{H^{2k-2}(\Sigma)} \leq C_k (1 + \| \Delta^{k-1}_{\pa E} H_{ E} \|_{L^{2}(\pa E)} ), \\
&\|B_\Sigma \|_{H^{2k-1}(\Sigma)} \leq C_k (1 + \| \nabla_{\pa E} \Delta^{k-1}_{\pa E}  H_{ E} \|_{L^{2}(\pa E)} ).
\end{split}
\]
Here the constant $C_k$ depends only on $k$ and $C$.
\end{lemma}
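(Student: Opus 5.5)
The plan is an induction on $k$ that combines elliptic regularity for the Laplace–Beltrami operator with the interpolation inequalities of Proposition~\ref{prop:interpolation} and Lemma~\ref{lem:Leibniz}. Throughout, the constants depend on $C$ through the bounds on $\|B_E\|_{L^\infty}$ and $\mathcal{H}^2(\pa E)$, which Proposition~\ref{prop:interpolation} requires.

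\emph{Basic commutation identity.} For a function $f$ on $\Sigma$ we use the Ricci/Bochner identity
\[
\Delta\overline{\nabla} f=\overline{\nabla}\Delta f+\mathrm{Ric}(\overline{\nabla} f).
\]
For a tensor $T$, the commutator of $\overline{\nabla}$ and $\Delta$ is a sum of terms $R*\overline{\nabla} T+\overline{\nabla} R* T$. By the Gauss equation, $R=B_\Sigma*B_\Sigma$. Integrating by parts gives
\[
\|\overline{\nabla}^2 T\|_{L^2}^2=\|\Delta T\|_{L^2}^2+\int (B*B*\overline{\nabla} T*\overline{\nabla} T+B*\overline{\nabla} B*T*\overline{\nabla} T).
\]
Iterating, $\|\overline{\nabla}^{2k}f\|_{L^2}$ equals $\|\Delta^k f\|_{L^2}$ up to lower-order terms of the form $\overline{\nabla}^{a_1}B*\cdots*\overline{\nabla}^{a_j}B*\overline{\nabla}^{b}f$, with total order $a_1+\cdots+a_j+b+j\le 2k$ and $b\le 2k-1$.

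\emph{Curvature estimates first.} The second fundamental form satisfies Simons' identity
\[
\Delta B=\overline{\nabla}^2 H+B*B*B.
\]
Applying the scheme above to $T=B$ with $k-1$ Laplacians, the highest-order term is $\|\overline{\nabla}^{2k-2}H\|_{L^2}$, which is controlled by $\|\Delta^{k-1}H\|_{L^2}$ plus lower-order terms. Together with Codazzi ($\overline{\nabla} B$ is totally symmetric, so $\overline{\nabla} B$ is controlled by $\overline{\nabla} H$ up to curvature terms), this gives the $H^{2k-2}$ bound.

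The lower-order terms are products of derivatives of $B$. We estimate them with Lemma~\ref{lem:Leibniz} in $L^2$, using only $\|B\|_{L^\infty}\le C$. Proposition~\ref{prop:interpolation} then gives
\[
\|B\|_{H^{j}}\le C\|B\|_{H^{2k-2}}^{\theta}\|B\|_{L^\infty}^{1-\theta},\qquad \theta<1,
\]
so Young's inequality absorbs these terms into the left-hand side. The odd case $2k-1$ works the same way, carrying one extra $\overline{\nabla}$ and using $\nabla_{\pa E}\Delta^{k-1}H$.

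\emph{Function estimates.} For $f$ we proceed in the same way. The error terms contain $\overline{\nabla}^{a}B$ with $a\le 2k-2$ (even case) or $a\le 2k-1$, multiplied by derivatives of $f$. Lemma~\ref{lem:Leibniz} bounds each such product by
\[
C\big(\|B\|_{L^\infty}\|f\|_{H^{2k-1}}+\|f\|_{L^\infty}\|B\|_{H^{2k-1}}\big).
\]
The first piece is absorbed into the left-hand side by interpolating $\|f\|_{H^{2k-1}}$ between $\|f\|_{H^{2k}}$ and $\|f\|_{L^\infty}$, using Proposition~\ref{prop:interpolation} and Young. The second piece is controlled by the curvature bound just proved, which produces exactly the factor
\[
\big(1+\|\nabla_{\pa E}\Delta^{k-1}H_E\|_{L^2}\big)\|f\|_{L^\infty}.
\]
Note that the highest derivative of $B$ appearing is one order below the derivatives of $f$. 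This is why the even estimate involves $\nabla\Delta^{k-1}H$, and the odd estimate involves $\Delta^{k-1}H$. The mean-zero issue does not arise, since the $L^\infty$ norm of $f$ appears on the right-hand side.

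\emph{Main obstacle.} The difficulty is the bookkeeping of the commutator terms. Every term must have the highest derivative of $B$ at order at most $2k-1$ (even case), and the interpolation exponents must make each term linear in the top-order norm. Only then does the multiplicative structure $(1+\|\cdot H\|)\|f\|_{L^\infty}$ come out, rather than a power of it. The first thing to try is to place every factor except one in $L^\infty$ via the Gagliardo–Nirenberg form in Lemma~\ref{lem:Leibniz}, which is exactly designed to produce this linear structure.
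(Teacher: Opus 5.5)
Your argument is correct. The paper does not prove this lemma, however: it imports it verbatim from \cite[Lemma~2.5 and Proposition~2.6]{JN}. So what you have written is a self-contained reconstruction, not a variant of an argument given in the paper.

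The structure you chose is the natural one and, in substance, the standard way such estimates are proved. You proceed in three stages:
\begin{itemize}
\item First, a ``raw'' comparison of $\|\overline{\nabla}^m T\|_{L^2}$ with the iterated Laplacian, in which only norms of $B_\Sigma$ of order at most $m-1$ appear.
\item Then Simons' identity $\Delta_{\pa E} B_\Sigma=\overline{\nabla}^2 H_E+B_\Sigma\star B_\Sigma\star B_\Sigma$, together with the raw estimate for $H_E$ and interpolation, to replace $B_\Sigma$-norms by $H_E$-norms.
\item Only then, substitution into the function estimates.
\end{itemize}
Read in this order there is no circularity between the curvature and function bounds. It also makes transparent why the curvature norm sits one derivative below $f$ (hence $\nabla_{\pa E}\Delta^{k-1}_{\pa E}H_E$ in the even case and $\Delta^{k-1}_{\pa E}H_E$ in the odd case), which the bare citation does not.

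Two details need tidying.
\begin{itemize}
\item Proposition~\ref{prop:interpolation} with $p=q=2$, $r=\infty$ forces $\theta=(j-1)/(l-1)<j/l$, which violates the admissibility condition $\theta\ge k/l$. So every interpolation ``against $L^\infty$'' (for $f$ and for $B_\Sigma$) must be done against $L^2$, with $\theta=j/l$. You then close it with $\|f\|_{L^2(\Sigma)}\le \mathcal{H}^2(\Sigma)^{1/2}\|f\|_{L^\infty(\Sigma)}$; this is exactly where the area bound is used.
\item The integrations by parts in your Bochner-type identity need one derivative more than $f\in C^{2k}$ (and, for the odd curvature bound, $\Sigma\in C^{2k+1}$) provides. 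They should therefore be justified by a density argument.
\end{itemize}
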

The estimates for $\|f\|_{H^{2k}(\Sigma)}$ in Lemma~\ref{lem:hilbert-norm} are sharp with respect to the norm on the curvature. 

We now state a technical lemma; the proof relies on classical arguments and is obtained by combining~\cite[Lemma 4.8]{LeeBook} and~\cite[Lemma 2.4]{JN}.
   \begin{lemma}\label{pensobene}
      Let $E \subset \R^3$ be a bounded $C^{k+1}$-regular set. Let $A$ be a $l$-covariant tensor field on $\R^3$, i.e., for every $p \in \R^3$ we have $A(p)$ is a $l$-covariant tensor on $\R^3$ and $p \mapsto A(p)$ is $C^\infty$. Let $\Sigma= \pa E$ and  $p \in \Sigma \mapsto A(\nu_E(p)) $, then $A(\nu_E(\cdot))$ is a $l$-covariant tensor field on $\Sigma$ of class $C^{k}$ and for every $1 \leq m \leq k$
      \begin{equation}
          \vert \overline{\nabla}^m A(\nu_E) \vert \leq C_m \sum_{ 1 \leq \vert \alpha \vert \leq m} \big(  \vert \overline{\nabla}^{\alpha_1} B_\Sigma \vert + \dots + \vert \overline{\nabla}^{\alpha_{m-1}} B_\Sigma \vert  \big)  \vert \nabla^{\alpha_{m}} A \vert(\nu_E).
      \end{equation}
   \end{lemma}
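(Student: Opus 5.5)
The plan is an induction on $m$, built on the chain rule and on the fact that the covariant derivative of the Gauss map is the second fundamental form. Since $A$ is a smooth field on $\R^3$ and $\nu_E$ is $C^k$ as a map $\Sigma\to\R^3$ (because $E$ is $C^{k+1}$), the composition $A(\nu_E)$ is a $C^k$ covariant $l$-tensor on $\Sigma$. More precisely, we first restrict $A(\nu_E(p))$ to $T_p\Sigma$ via the inclusion $I_p$.

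\textbf{Base case $m=1$.} For $X\in\mathfrak{X}(\Sigma)$ and tangent fields $Y_1,\dots,Y_l$, we expand
\[
\overline{\nabla}_X \big(A(\nu_E)\big)(Y_1,\dots,Y_l)=X\big(A(\nu_E)(Y_1,\dots,Y_l)\big)-\sum_i A(\nu_E)(Y_1,\dots,\overline{\nabla}_XY_i,\dots,Y_l).
\]
We compute the first term with the ambient (Euclidean) connection. Its derivative falls on $\nu_E$, giving $\nabla A(\nu_E)[\overline{\nabla}_X\nu_E]=\nabla A(\nu_E)[B_E\hat X]$ by the remark defining $\overline{\nabla}_X\nu_E=B_E\hat X$. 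It also falls on the $Y_i$, where the Euclidean derivative equals $\overline{\nabla}_XY_i+B_\Sigma(X,Y_i)\nu_E$ (Gauss formula). The tangential parts cancel with the subtracted terms, leaving
\[
\overline{\nabla}A(\nu_E)=\nabla A(\nu_E)*B_\Sigma + A(\nu_E)*B_\Sigma*\nu_E ,
\]
where $*$ denotes contractions with bounded coefficients. This yields $|\overline{\nabla}A(\nu_E)|\le C(|\nabla A|+|A|)(\nu_E)\,|B_\Sigma|$. The term with $|A|$ is absorbed into the stated form after reading $\nabla^{\alpha_m}A$ with $\alpha_m=0$ allowed. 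This is the convention in \cite[Lemma 2.4]{JN}, which we follow.

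\textbf{Inductive step.} I will prove the structural statement that $\overline{\nabla}^m A(\nu_E)$ is a finite sum of contractions of the form
\[
\overline{\nabla}^{\beta_1}B_\Sigma*\cdots*\overline{\nabla}^{\beta_j}B_\Sigma*(\nabla^{i}A)(\nu_E)*\nu_E^{*s},
\]
with $\sum_r(\beta_r+1)=m$ and $i\le j$. Differentiating such a term once more with the Leibniz rule for $\overline{\nabla}$ gives three kinds of contribution. A derivative on a factor $\overline{\nabla}^{\beta_r}B_\Sigma$ raises $\beta_r$ by one. A derivative on $(\nabla^iA)(\nu_E)$ is handled by the base case applied to the tensor field $\nabla^iA$, producing one extra factor $B_\Sigma$ and $\nabla^{i+1}A$ or $\nabla^i A$. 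A derivative on $\nu_E$ produces a further factor $B_\Sigma$. In every case the weight $\sum(\beta_r+1)$ increases by exactly one. Taking norms then gives the stated inequality, with the constant $C_m$ counting the terms; the multi-index sum in the statement just records these products. Regularity is consistent throughout: $B_\Sigma\in C^{k-1}$, so up to $\overline{\nabla}^{m-1}B_\Sigma$ is available for $m\le k$. The identity $\overline{\nabla}^m$ commuting with restriction is exactly the content of \cite[Lemma 4.8]{LeeBook}, the coordinate formula for covariant derivatives of tensors.

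\textbf{Main obstacle.} The delicate point is the bookkeeping between tangential tensors on $\Sigma$ and ambient tensors. The normal components generated by the Gauss formula must be kept track of; they produce the extra $\nu_E$ factors and lower-order $A$ terms. This is why I would carry $\nu_E$ explicitly in the structural formula and only discard it, using $|\nu_E|=1$, at the final norm estimate.
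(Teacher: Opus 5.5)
Your induction (chain rule through $\nu_E$, $\overline{\nabla}_X\nu_E=B_E\hat X$, and the Gauss formula to track the normal components created by restricting to $T\Sigma$) is the classical argument the paper delegates to \cite[Lemma 4.8]{LeeBook} and \cite[Lemma 2.4]{JN}, so it is correct and follows the same route. Note that what it actually proves is the product bound $|\overline{\nabla}^m A(\nu_E)|\le C_m\sum|\overline{\nabla}^{\beta_1}B_\Sigma|\cdots|\overline{\nabla}^{\beta_j}B_\Sigma|\,|\nabla^{i}A|(\nu_E)$ over $\sum_r(\beta_r+1)=m$, $0\le i\le j$, and this is how the display has to be read: $\alpha_m=0$ must be allowed, as you noticed, and the $B_\Sigma$-factors must be multiplied rather than added (on a round sphere with $A=dx_3$ the literal right-hand side vanishes for $m=1,2$ while $\overline{\nabla}^mA(\nu_E)\neq 0$).
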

   
We now recall the definition of a normal graph.
	\begin{definition}\label{ngrafico}
		Let $E \subset \R^3$ be a bounded $C^2$-regular set that satisfies the \textsc{UBC} with radius $r$.  Let $F \subset \R^3$ be an open bounded set. We say that $\pa F$ is a normal graph over $\pa E$ if there exists a function $\psi: \pa E \rightarrow (-r,r)$, called the height function, such that 
		\begin{equation}\label{carloiolego}
			\pa F= \{ x + \psi(x)\nu_{E}(x) : x \in \pa E\} \text{ and } E \Delta F \subset {\rm cl} \big( \mathcal{I}_r(\pa E) \big).
		\end{equation}
	\end{definition}
  The following lemma gives an expansion of the anisotropic mean curvature with explicit dependence on the reference manifold. Before stating the lemma, we recall the following remark, which will be used in its proof.
   \begin{remark}{\rm 
      Let $E \subset \R^3$ be a $C^3$-regular set that satisfies the \textsc{UBC} with radius $r$ and let $F \subset \R^3$ be a $C^2$-regular set such that $\pa F$ is the normal graph over $\pa E$ with height function $f \in C^1(\pa E)$. Since the function $f \circ \pi_{\pa E} : \mathcal{I}_r(\pa E) \to \R$ is well-defined on $\pa F$, we can compute the tangential gradient of $f \circ \pi_{\pa E} : \pa F \to \R$   at any point $y \in \pa F$  using formula~\eqref{1012pcrossfit} and obtain
\begin{equation}
\label{regdecat12}
\nabla_{\pa F} (f \circ \pi_{\pa E})(y) =  (\nabla_{\pa F} \pi_{\pa E}(y))^T \nabla_{\pa E}f(\pi_{\pa E}(y)).
\end{equation} \fr}
   \end{remark}
\begin{lemma}\label{10Expcurvvarphi}
   Let $E \subset \mathbb{R}^3$ be a $C^3$-regular set satisfying the \textsc{UBC} with radius $r$, and let $F \subset \mathbb{R}^3$ be a $C^2$-regular set. Assume that $\partial F$ is the normal graph over $\partial E$ with height function $\psi \in C^2(\partial E)$ such that $\|\psi\|_{L^\infty(\partial E)} \leq \frac{r}{4}$.  Then, for all $x \in \Sigma= \pa E$, it holds that
    \begin{equation}\label{02expphicurv1}
        H_{F}^\varphi(x+ \psi(x)\nu_E(x))= -{\rm tr}\big((\nabla^2 \varphi )(\nu_E) \nabla_{\pa E}^2 \psi\big)(x)+ H_{E}^\varphi(x)+\mathcal{R}_0(x). 
    \end{equation}
    The error term is given by
    \begin{equation}\label{battiato1}
        \mathcal{R}_0= \langle \mathcal{A}_1(\psi B_\Sigma, \overline{\nabla} \psi, \nu_E), \overline{\nabla}^2 \psi \rangle+ \langle \mathcal{A}_2(\psi B_\Sigma, \overline{ \nabla} \psi, \nu_E), \overline{\nabla}(\psi B_\Sigma) \rangle+a_0(\psi, \overline{\nabla} \psi, B_\Sigma, \nu_E), 
    \end{equation}
    where $\mathcal{A}_1, \, \mathcal{A}_2$ are smooth tensor fields satisfying $\mathcal{A}_1(0,0,\cdot)=0$, $\mathcal{A}_2(0,0,\cdot)=0$, and $a_0$ is a smooth function satisfying  $a_0(0,0,\cdot,\cdot)=0$. 
\end{lemma}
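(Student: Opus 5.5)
The plan is to compute $\nu_F$ and $\nabla_{\pa F}$ at $y=x+\psi(x)\nu_E(x)$ explicitly in terms of quantities on $\pa E$. Then expand $H^\varphi_F=\div_{\pa F}(\nabla\varphi(\nu_F))$ and isolate the terms linear in $\nabla^2_{\pa E}\psi$ with zero-order coefficients.

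\textbf{Step 1: the normal and the tangential projection on $\pa F$.} Write $\Phi(x):=x+\psi(x)\nu_E(x)$, so that $\nabla_{\pa E}\Phi = P_{\pa E}+\psi B_E+\nu_E\otimes\nabla_{\pa E}\psi$. The tangent plane of $\pa F$ at $\Phi(x)$ is the image of $(I+\psi B_E)$ restricted to $T_x\pa E$, perturbed by $\nu_E\otimes\nabla_{\pa E}\psi$. From this one gets
\[
\nu_F(\Phi(x))=\frac{\nu_E-(I+\psi B_E)^{-1}\nabla_{\pa E}\psi}{\sqrt{1+|(I+\psi B_E)^{-1}\nabla_{\pa E}\psi|^2}}.
\]
This is well defined because $\|\psi\|_\infty\le r/4$ and $|B_E|\le 2/r$, which keep $I+\psi B_E$ invertible with uniform bounds. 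Hence $\nu_F\circ\Phi=\nu_E-\nabla_{\pa E}\psi+G(\psi B_E,\nabla_{\pa E}\psi,\nu_E)$ with $G$ smooth and vanishing to second order in $(\psi B_E,\nabla_{\pa E}\psi)$ jointly. More precisely, $G$ is at least quadratic, or bilinear in $\psi B_E$ and $\nabla\psi$. Similarly, $P_{\pa F}\circ\Phi$ is a smooth function of $(\psi B_E,\nabla_{\pa E}\psi,\nu_E)$ that equals $P_{\pa E}$ when these arguments vanish.

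\textbf{Step 2: transfer the divergence back to $\pa E$.} Use the chain rule: for a field $Y$ on $\pa F$, $\div_{\pa F}Y(\Phi(x))={\rm tr}\big(\nabla_{\pa E}(Y\circ\Phi)\,(\nabla_{\pa E}\Phi)^{\dagger}P_{\pa F}\big)$, where $(\nabla_{\pa E}\Phi)^\dagger$ is the inverse of $\nabla_{\pa E}\Phi$ restricted to $T_x\pa E\to T_{\Phi(x)}\pa F$. This inverse is again a smooth function $M(\psi B_E,\nabla_{\pa E}\psi,\nu_E)$ with $M(0,0,\nu_E)=P_{\pa E}$. Apply this to $Y=\nabla\varphi(\nu_F)$ and use Step 1:
\[
\nabla_{\pa E}\big(\nabla\varphi(\nu_F\circ\Phi)\big)=\nabla^2\varphi(\nu_F\circ\Phi)\,\nabla_{\pa E}(\nu_F\circ\Phi).
\]
The derivative $\nabla_{\pa E}(\nu_F\circ\Phi)$ equals $B_E-\nabla^2_{\pa E}\psi$ plus terms of the form (smooth coefficient vanishing at $(0,0)$)$\cdot\overline\nabla^2\psi$ and (smooth coefficient vanishing at $(0,0)$)$\cdot\overline\nabla(\psi B_\Sigma)$. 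The derivative of $\nu_E$ inside $G$ contributes further terms containing $B_E$ multiplied by coefficients vanishing at $(0,0)$.

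\textbf{Step 3: collect the terms.} Expand $\nabla^2\varphi(\nu_F\circ\Phi)=\nabla^2\varphi(\nu_E)+O(|\psi B_E|+|\nabla\psi|)$ and $M=P_{\pa E}+O(\cdot)$. The zeroth-order part is ${\rm tr}(\nabla^2\varphi(\nu_E)B_E)=\div_{\pa E}\nabla\varphi(\nu_E)=H^\varphi_E$. The part linear in the Hessian is $-{\rm tr}(\nabla^2\varphi(\nu_E)\nabla^2_{\pa E}\psi)$; here the projection can be dropped because $\nabla^2\varphi(\nu)\nu=0$ by $0$-homogeneity of $\nabla\varphi$. Every other term has one of three forms: a coefficient vanishing at $(0,0)$ contracted with $\overline\nabla^2\psi$, giving $\mathcal A_1$; a coefficient vanishing at $(0,0)$ contracted with $\overline\nabla(\psi B_\Sigma)$, giving $\mathcal A_2$; or a product of such a coefficient with $B_\Sigma$, giving $a_0$. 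To write everything in intrinsic terms, convert $\nabla^2_{\pa E}\psi$ to $\overline\nabla^2\psi$. The difference is a normal component $\nu_E\otimes B_E\nabla\psi$-type term, which is absorbed in $a_0$ since it is linear in $\nabla\psi$.

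\textbf{Main obstacle.} The delicate point is the bookkeeping in Step 3. One must check that every remainder term really has a coefficient vanishing at $(\psi B,\nabla\psi)=(0,0)$, and not merely one vanishing at $\psi=0$. One must also check that derivatives of $\psi$ and $B$ appear only through the combination $\overline\nabla(\psi B_\Sigma)$, with no bare $\overline\nabla B_\Sigma$. To secure this, I would treat $\psi B_E$ as a single variable from the start, and differentiate $(I+\psi B_E)^{-1}$ as a function of it. This makes $\overline\nabla(\psi B_\Sigma)$ appear automatically.
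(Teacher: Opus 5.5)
Your proposal is correct and follows essentially the same route as the paper: the paper also writes $\nu_F = N/|N|$ with $N=\nu_E-(I+\psi B_E)^{-1}\nabla_{\pa E}\psi$, and transfers $\div_{\pa F}$ back to $\pa E$ via $\nabla_{\pa F}\pi_{\pa E}=\nabla\pi_{\pa E}P_{\pa F}$. That operator is exactly your $(\nabla_{\pa E}\Phi)^\dagger P_{\pa F}$, since $\pi_{\pa E}|_{\pa F}=\Phi^{-1}$; the differences are cosmetic: the paper differentiates $\nabla\varphi(N)$ (via $0$-homogeneity) rather than the normalized normal, and splits it into $T_1=\nabla\varphi(\nu_E)\circ\pi_{\pa E}$ (treated through $\nabla^2 d_E$) and the remainder $T_2$, whereas you handle everything in one pass.
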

\begin{proof}
Let $\Psi: \pa E \rightarrow \pa F$ be defined by $ \Psi(x):= x+ \psi(x)\nu_E(x)$. We observe that $ \Psi^{-1}= \pi_{\pa E} \big|_{\pa F}: \pa F \rightarrow \pa E $.   Let $\tau \in T_x \pa E$, then we have $ \nabla_{\pa E} \Psi(x) \tau \in T_{\Psi(x)} \pa F$ and
\begin{equation}\label{09022026primacorss2}
    \nabla_{\pa E} \Psi (x)\tau = \big( I+\psi(x)B_E(x) \big) \tau+ \big( \nabla_{\pa E} \psi(x)  \cdot \tau \big) \nu_E(x).
\end{equation}
We define $ N: \pa E \rightarrow \R^3$ by
\begin{equation}\label{funzN}
    N(x):= -(I+ \psi(x)B_E(x))^{-1} \nabla_{\pa E} \psi(x)+ \nu_E(x).
\end{equation}
It holds that
\begin{equation}
    \big(  \nabla_{\pa E} \Psi(x)\tau \big) \cdot N(x)=0 \quad \forall \tau \in T_x \pa E.
\end{equation}
Using this and the fact that $\pa F= \{ x + \psi (x)\nu_E(x): x \in \pa E\}$, we obtain 
\begin{equation}\label{funznu_F}
     \nu_F(y)= \frac{ N(\pi_{\pa E} (y))}{\vert N(\pi_{\pa E} (y)) \vert } \quad \forall y \in \pa F.
\end{equation}
We observe that
\begin{equation}\label{vertNsvil}
    \vert N (x)\vert= 1+ a_1(\psi B_E, \nabla_{\pa E} \psi)
\end{equation}
where $a_1$ is a smooth function satisfying $a_1(0,0)=0$.
Recalling \eqref{LAPROIEZmatrix}, using formulas~\eqref{funzN},~\eqref{funznu_F} and~\eqref{vertNsvil}, for $x= \pi_{\pa E}(y)$, we obtain
\begin{equation}\label{02122025form1}
    P_{\pa F}(y)- P_{\pa E}(x)= -\hat{A}_1 \otimes \hat{A}_1+ \hat{A}_2 \otimes \nu_E(x)+ \nu_E(x) \otimes \hat{A}_2 + a_2 \nu_E(x) \otimes \nu_E(x)
\end{equation}
where 
\begin{equation}\label{06052026form2}
\hat{A}_1 = A_1(\psi B_E, \nabla_{\pa E} \psi) \circ \pi_{\pa E},\quad \hat{A}_2 = A_2(\psi B_E, \nabla_{\pa E} \psi) \circ \pi_{\pa E}    
\end{equation}
with
$A_1(\cdot, \cdot),\,A_2(\cdot, \cdot)$ smooth tangent vector fields satisfying $A_1(0,0)=0,\, A_2(0,0)=0$, and $a_2(\cdot, \cdot)$ is a smooth function such that $a_2(0,0)=0$.

We recall that $\nabla \varphi$ is a $0$-homogeneous function. Hence,
we get  
$$H_F^\varphi(y)=\div_{\pa F}\big( \nabla \varphi (\nu_F)\big)(y)= \div_{\pa F} \big( \nabla \varphi (N) \circ \pi_{\pa E} \big)(y)\text{ for all } y \in \pa F.$$
We set $T_1(y):= \nabla \varphi(\nu_E) \circ \pi_{\pa E}(y)$ and $T_2(y):= \nabla \varphi (N)\circ \pi_{\pa E}(y)- \nabla \varphi(\nu_E) \circ \pi_{\pa E}(y) $ for all $ y \in \pa F$, then
\begin{equation}
    H_F^\varphi(y)= \div_{\pa F} T_1(y)+ \div_{\pa F}T_2(y) \text{ for all } y \in \pa F.
\end{equation}
\textit{Claim 1:} It holds
\begin{equation}\label{0212tesic1}
 \div_{\pa F} T_1(y)= H_E^\varphi \circ \pi_{\pa E}(y)+ a_3 (\psi, \nabla_{\pa E} \psi, B_E, \nu_E) \circ \pi_{\pa E}(y) \text{ for all } y \in \pa F,
\end{equation}
where $a_3(\cdot, \cdot, \cdot, \cdot)$ is a smooth function satisfying $a_3(0,0,\cdot,\cdot)=0$.

Let $x= \pi_{\pa E}(y)$ for $y \in \pa F$. By the very definition of $T_1$, we have
\begin{equation}\label{flongclaim1}
\begin{split}
    \div_{\pa F} T_1(y)&= \div_{\pa F} \big( \nabla \varphi (\nu_E) \circ \pi_{\pa E}(y)  \big)= \div_{\pa F} \big( \nabla \varphi (\nabla d_E)(y)  \big) \\
    &= {\rm tr} \big(\nabla_{\pa F} \nabla \varphi ( \nabla d_E) (y)  \big) = { \rm tr} \big( \nabla \big( \nabla \varphi (\nabla d_E) \big)(y) P_{\pa F}(y)  \big) \\
    &= {\rm tr} \big( \nabla \big( \nabla \varphi (\nu_E(x)) \big) P_{\pa E}(x)  \big)+ {\rm tr} \big(  \nabla \big( \nabla \varphi (\nabla d_E) \big)(y) (P_{\pa F}(y)- P_{\pa E}(x) )  \big)\\
    & = H_E^\varphi(x)+{\rm tr} \big(  \nabla \big( \nabla \varphi (\nabla d_E) \big)(y) (P_{\pa F}(y)- P_{\pa E}(x) )  \big) 
    \end{split}
\end{equation}
where we have used $ \nabla d_E(y)= \nu_E(x)$ and the definition of the tangential divergence of a vector field. Now, using formulas~\eqref{24112025primacrossfit},~\eqref{02122025form1} and the identity $\nabla \big(  \nabla \varphi (\nabla d_E) \big)= \nabla^2 \varphi (\nabla d_E) \nabla^2 d_E$, we obtain, for $y \in \pa F$,
\begin{equation}\label{0212025form2}
    {\rm tr} \big(  \nabla \big( \nabla \varphi (\nabla d_E) \big)(y) (P_{\pa F}(y)- P_{\pa E}(\pi_{\pa E}(y)) )  \big) = a_3 (\psi, \nabla_{\pa E} \psi, B_E, \nu_E) \circ \pi_{\pa E}(y),
\end{equation}
where $a_3$ is a smooth function satisfying $ a_3(0,0,\cdot,\cdot)=0$. Therefore, combining~\eqref{flongclaim1} and~\eqref{0212025form2}, we obtain~\eqref{0212tesic1}.\\
\textit{Claim 2:} The following identity holds for $y \in \partial F$:
\begin{equation}\label{LaforperT_2}
\begin{split}
    \div_{\pa F} T_2(y)=& -{\rm tr}\big((\nabla^2 \varphi )(\nu_E) \nabla_{\pa E}^2 \psi\big)\circ \pi_{\pa F}(y) + \langle B_1(\psi B_E, \nabla_{\pa E} \psi, \nu_E), \nabla_{\pa E}^2 \psi \rangle\circ \pi_{\pa F}(y)\\
    &   + \langle B_2(\psi B_E, \nabla_{\pa E} \psi, \nu_E), \nabla_{\pa E} (\psi B_E) \rangle \circ \pi_{\pa F}(y)\\
    &+a_4(\psi, \nabla_{\pa E} \psi, B_E, \nu_E)\circ \pi_{\pa F}(y),
    \end{split}
\end{equation}
where  $B_1(\cdot, \cdot, \cdot),\, B_2 (\cdot, \cdot, \cdot) $ are smooth tensors and  $a_4(\cdot, \cdot, \cdot, \cdot)$ is a smooth function such that $B_1(0,0,\cdot,\cdot)= B_2(0,0,\cdot,\cdot)=0$ and $a_4(0,0,\cdot,\cdot)=0$.

Fix $x= \pi_{\pa E}(y)$, where $y \in \pa F$. By the definition of $T_2$ and of $N$, we have 
\begin{equation}
\begin{split}
    T_2(y)&= \nabla \varphi (N)\circ \pi_{\pa E}(y)- \nabla \varphi(\nu_E) \circ \pi_{\pa E}(y)\\
    &= \nabla \varphi \big( -(I+ \psi B_E)^{-1} \nabla_{\pa E} \psi + \nu_E  \big) (x)-\nabla \varphi (\nu_E) (x).
    \end{split}
\end{equation}
Using formula~\eqref{regdecat12}, we obtain 
\begin{equation}\label{pepedicaienna}
    \nabla_{\pa F} T_2(y) = \nabla_{\pa E} \big[ \nabla \varphi \big( -(I+ \psi B_E)^{-1} \nabla_{\pa E} \psi + \nu_E  \big) - \nabla \varphi(\nu_E) \big] (x)\nabla_{\pa F} \pi_{\pa E}(y).
\end{equation}
By the chain rule, we have
\begin{equation}
\begin{split}
    &\nabla_{\pa E} \big[ \nabla \varphi \big( -(I+ \psi B_E)^{-1} \nabla_{\pa E} \psi + \nu_E  \big) - \nabla \varphi(\nu_E) \big]\\
    &\,\,= \nabla^2 \varphi \big(   -(I+\psi B_E)^{-1} \nabla_{\pa E} \psi+ \nu_E\big) \big[- \nabla_{\pa E} \big( (I+ \psi B_E)^{-1} \nabla_{\pa E} \psi \big)+ \nabla_{\pa E} \nu_E \big] \\
    & \quad  - \nabla^2 \varphi(\nu_E) \nabla_{\pa E} \nu_E\\
    & \,\, = \big[ \nabla^2 \varphi \big(   -(I+\psi B_E)^{-1} \nabla_{\pa E} \psi+ \nu_E\big) - \nabla^2 \varphi (\nu_E)  \big] \big[  \nabla_{\pa E} \nu_E - \nabla_{\pa E} \big( (I+ \psi B_E)^{-1} \nabla_{\pa E} \psi \big) \big]\\
    & \quad+ \nabla^2 \varphi(\nu_E) \nabla_{\pa E} \big(-(I+\psi B_E)^{-1}\nabla_{\pa E} \psi  \big).
    \end{split}
\end{equation}
Combining the above formula with~\eqref{pepedicaienna}, and recalling that $\nabla_{\pa F} \pi_{\pa E}= \nabla \pi_{\pa E} P_{\pa F}$, we deduce 
\begin{equation}
    \begin{split}
        &\div_{\pa F}T_2(y)={\rm tr} \big(\nabla_{\pa F} T_2(y) \big) \\
        &=  {\rm tr} \bigg\{\bigg[ \big[ \nabla^2 \varphi \big(   -(I+\psi B_E)^{-1} \nabla_{\pa E} \psi+ \nu_E\big) - \nabla^2 \varphi (\nu_E)  \big] (x)\nabla_{\pa E} \nu_E(x)  \\
        &\quad -\big[ \nabla^2 \varphi \big(   -(I+\psi B_E)^{-1} \nabla_{\pa E} \psi+ \nu_E\big) - \nabla^2 \varphi (\nu_E)  \big] (x) \nabla_{\pa E} \big( (I+ \psi B_E)^{-1} \nabla_{\pa E} \psi \big) (x) \\
        & \quad  + \nabla^2 \varphi(\nu_E) \nabla_{\pa E} \big(-(I+\psi B_E)^{-1}\nabla_{\pa E} \psi  \big) (x) \bigg] \nabla \pi_{\pa E}(y) P_{\pa F}(y) \bigg\}.
    \end{split}
\end{equation}
From the above expression, we are led to analyze the trace of three terms. We start by considering the trace of 
 $$ Z_1:= \big[ \nabla^2 \varphi \big(   -(I+\psi B_E)^{-1} \nabla_{\pa E} \psi+ \nu_E\big) - \nabla^2 \varphi (\nu_E)  \big]  \nabla_{\pa E} \nu_E.$$ 
Using formula~\eqref{02122025form1}, it is straightforward to verify that   
\begin{equation}\label{01022026ppale1}
\begin{split}
   {\rm tr} \left(  Z_1(x)   \nabla \pi_{\pa E}(y) P_{\pa F}(y)\right) &= {\rm tr} \left( Z_1(x) \nabla\pi_{\pa E}(y)(P_{\pa F}(y)-P_{\pa E}(x)+ P_{\pa E}(x))   \right)\\
   &= a_4(\psi, \nabla_{\pa E} \psi, B_{E}, \nu_E),
   \end{split}
\end{equation}
where $a_4$ is a smooth function satisfying $a_4(0,0,\cdot,\cdot)=0$. 

We compute the trace of the second term $$Z_2:=\big[ \nabla^2 \varphi \big(   -(I+\psi B_E)^{-1} \nabla_{\pa E} \psi+ \nu_E\big) - \nabla^2 \varphi (\nu_E)  \big]  \nabla_{\pa E} \big( (I+\psi B_E)^{-1} \nabla_{\pa E} \psi \big) .$$
Recalling that $(I+\psi B_E)^{-1}= \sum_{k=0}^\infty (-1)^k\big( \psi B_E\big)^k$, we obtain
\begin{equation}\label{01022026fomz}
    \nabla_{\pa E} \big( (I+\psi B_E)^{-1} \nabla_{\pa E} \psi \big)=  W(\psi B_E ,\nabla_{\pa E} \psi) \tilde{\star} \nabla_{\pa E}(\psi B_E)   + (I+ \psi B_E)^{-1} \nabla_{\pa E}^2 \psi  ,
\end{equation}
where $W$ is a smooth matrix-valued function satisfying $W(\cdot,0)=0$ and $\tilde{\star}$ denotes bilinear operation between a matrix and a vector producing a matrix.
Arguing as in the proof of~\eqref{01022026ppale1} and using~\eqref{01022026fomz}, we conclude that
\begin{equation}
\begin{split}
    {\rm tr} \left( Z_2(x) \nabla \pi_{\pa E}(y) P_{\pa F}(y)  \right)= & \langle \tilde{A}_1(\psi B_E, \nabla_{\pa E} \psi, \nu_E), \nabla_{\pa E}^2 \psi \rangle(x) \\
    & + \langle \tilde{A}_2(\psi B_E, \nabla_{\pa E} \psi, \nu_E), \nabla_{\pa E}(\psi B_E) \rangle(x),
    \end{split}
\end{equation}
where $\tilde{A}_1,\tilde{A}_2$ are smooth tensor-valued functions such that $\tilde{A}_1(0,0,\cdot)=0$ and $\tilde{A}_2(0,0,\cdot)=0$.

We consider the trace of the last term
\begin{equation}
    Z_3:=\nabla^2 \varphi(\nu_E) \nabla_{\pa E} \big(-(I+\psi B_E)^{-1}\nabla_{\pa E} \psi  \big) .
\end{equation}
Using similar arguments as before, we obtain
\begin{equation}
    \begin{split}
  {\rm tr} \left( Z_3(x) \nabla \pi_{\pa E}(y)P_{\pa F}(y) \right)  =& -{\rm tr}\big((\nabla^2 \varphi )(\nu_E) \nabla_{\pa E}^2 \psi\big)(x) +\langle \bar{A}_1(\psi B_E, \nabla_{\pa E} \psi, \nu_E), \nabla_{\pa E}^2 \psi \rangle(x) \\
    &  + \langle \bar{A}_2(\psi B_E, \nabla_{\pa E} \psi, \nu_E), \nabla_{\pa E}(\psi B_E) \rangle(x),
    \end{split}
\end{equation}
where $\bar{A}_1,\bar{A}_2$ are smooth tensor fields satisfying $\bar{A}_1(0,0,\cdot)=0$ and $\bar{A}_2(0,0,\cdot)=0$. 

Therefore, combining the above calculations we obtain \eqref{LaforperT_2}.

Finally, by Claim 1 and Claim 2, and using the identifications $\overline{\nabla} \equiv \nabla_{\pa E}$ and $B_E \equiv B_\Sigma$, we deduce~\eqref{02expphicurv1}. 
\end{proof}
The following two remarks yield an alternative representation of the anisotropic curvature $F$, which will be useful for our purposes.
\begin{remark}\label{battiato2}
{\rm
    Under the assumptions of the above lemma, we can rewrite  $ {\rm tr} (\nabla^2 \varphi(\nu_E) \nabla_{\pa E}^2 \psi)$  as a divergence operator plus an error term. We set
    \begin{equation}\label{derivsecvarphi}
        A(\nu_E):= \nabla^2 \varphi (\nu_E).
    \end{equation}
    Recalling that $ \nu_E \circ \pi_{\pa E} = \nabla d_E$ and that $\nabla_{\pa E} f(x)= \nabla f \circ \pi_{\pa E}(x)$ for all $x \in \pa E$, we obtain on $\pa E$  \begin{equation} 
    \begin{split}
        \div_{\pa E} \big( A(\nu_E) \nabla_{\pa E} \psi \big) &= {\rm tr} \big( \nabla_{\pa E} (A(\nu_E) \nabla_{\pa E} \psi) \big)= {\rm tr} \big( \nabla( A(\nabla d_E) \nabla_{\pa E} \psi \circ \pi_{\pa E} )\big)\\
        &={\rm tr } \big(  \nabla ( \sum_{i,j=1}^3 a_{ij}(\nabla d_E)(\nabla_{\pa E} \psi \circ \pi_{\pa E} \cdot e_j)e_i) \big)\\
        &= {\rm tr} \big( A(\nu_E) \nabla^2_{\pa E} \psi \big)+{\rm tr} \big( \sum_{i,j=1}^3 (\nabla_{\pa E} \psi \circ \pi_{\pa E} \cdot e_j) e_i \otimes \nabla^2 d_E\nabla a_{ij}(\nabla d_E)    \big)  \\
        & = {\rm tr} \big( A(\nu_E) \nabla^2_{\pa E} \psi \big)+ \langle\nabla_{\pa E} \psi, A_1(\nu_E, B_E) \rangle,
    \end{split}
    \end{equation}
    where $A_1$ is a smooth tensor field, and in the last line we have used formula~\eqref{24112025primacrossfit} on $\pa E$.
     Hence, using the identification of $\nabla_{\pa E} \psi$ with $\overline{\nabla}\psi$, we can write
     \begin{equation}\label{trindivform}
       {\rm tr} \big( A(\nu_E) \nabla^2_{\pa E} \psi\big)=   \div_{\pa E} \big( A(\nu_E) \nabla_{\pa E} \psi \big) - \langle\overline{\nabla} \psi, A_1(\nu_E, B_E) \rangle.
     \end{equation} \fr}
\end{remark}
\begin{remark}\label{battiato3}
{\rm
    We observe that the function $A(\nu_E)$ defined in~\eqref{derivsecvarphi} cannot be identified with a $(1,1)$-tensor filed. Indeed, it maps tangent vectors of $T_x \pa E$ into vectors in $\R^3$.
    We decompose $A(\nu_E)$ in order to apply classical tensor calculus. Specifically, we write
    \begin{equation}
    \begin{split}
        A(\nu_E)= (I- \nu_E \otimes \nu_E) A(\nu_E)+ \nu_E \otimes \nu_E A(\nu_E)
      \end{split}  
    \end{equation}
    where \begin{equation}
        (I- \nu_E \otimes \nu_E)A(\nu_E)(x)\big|_{T_x \pa E}: T_x \pa E \rightarrow T_x \pa E, \quad \nu_E \otimes \nu_E A(\nu_E)(x)\big|_{T_x \pa E}: T_x \pa E \rightarrow (T_x\pa E)^{\perp}.
    \end{equation}
    We set 
    \begin{equation}\label{07032026posdef}
        \mathcal{A}(\nu_E)(x):= (I- \nu_E \otimes \nu_E)A(\nu_E)(x) \text{ for all } x \in \pa E.
    \end{equation}
    Therefore, we have
    \begin{equation}\label{mammaroma5}
        \div_{\pa E} \big( A(\nu_E) \nabla_{\pa E} \psi  \big)= \div_{\pa E} \big( \mathcal{A}(\nu_E) \nabla_{\pa E} \psi \big)+ \div_{\pa E} \big( \nu_E \otimes \nu_E A(\nu_E) \nabla_{\pa E} \psi \big).
    \end{equation}

Using the symmetry of $A(\nu_E)$, we obtain for all $ v \in \R^3$
\begin{equation}
\begin{split}
    (\nu_E \otimes \nu_E ) A(\nu_E) v&= \nu_E \otimes \nu_E (A(\nu_E)v)= \nu_E (\nu_E \cdot A(\nu_E)v)\\
    &= \nu_E (A(\nu_E) \nu_E \cdot v)= (\nu_E \otimes A(\nu_E)\nu_E) v  .
    \end{split}
\end{equation}
Recalling that $\nabla_{\pa E}^2 \psi(x) : T_x \pa E \rightarrow T_x \pa E$ is symmetric, we obtain
\begin{equation}\label{mammaroma1}
    \begin{split}
        \nabla_{\pa E}& \big( (\nu_E \otimes A(\nu_E) \nu_E) \nabla_{\pa E} \psi   \big) = \nabla_{\pa E} \big( (A(\nu_E) \nu_E \cdot \nabla_{\pa E} \psi ) \nu_E    \big)\\
        &=  \nu_E \otimes \nabla_{\pa E} \big( A(\nu_E)\nu_E \cdot \nabla_{\pa E} \psi  \big)+ \big( A(\nu_E)\nu_E \cdot \nabla_{\pa E} \psi \big)\nabla_{\pa E} \nu_E          \\
        &= \nu_E \otimes \big[ \nabla_{\pa E} (A(\nu_E)\nu_E) \nabla_{\pa E} \psi + \nabla^2_{\pa E} \psi A(\nu_E)\nu_E   \big]+ \big( (A(\nu_E)\nu_E) \cdot \nabla_{\pa E} \psi \big) \nabla_{\pa E} \nu_E .
    \end{split}
\end{equation}
Moreover, we have
\begin{equation}\label{mammaroma2}
    {\rm tr} \big( \nu_E \otimes \nabla^2_{\pa E} \psi A(\nu_E)\nu_E  \big)=  \nu_E \cdot \nabla^2_{\pa E} \psi A(\nu_E) \nu_E =0
\end{equation} and 
\begin{equation}\label{mammaroma3}
\begin{split}
    {\rm tr} \big( \nu_E \otimes \nabla_{\pa E} (A(\nu_E) \nu_E)\nabla_{\pa E } \psi  \big)&=  \nu_E \cdot \nabla_{\pa E} (A(\nu_E) \nu_E) \nabla_{\pa E} \psi \\
    &=  \nu_E \cdot \nabla \big(  A(\nabla d_E) \nabla d_E \big) \nabla_{\pa E} \psi \\
    &=  \nu_E \cdot \nabla \big(  \sum_{i,j=1}^3 a_{ij}(\nabla d_E) (\nabla d_E \cdot e_j)e_i\big) \nabla_{\pa E} \psi  \\
    &=  \langle A_2(\nu_E, B_E) ,  \nabla_{\pa E} \psi \rangle .
    \end{split}
\end{equation} 
Using that $B_E= \nabla_{\pa E} \nu_E$ and~\eqref{mammaroma6}, we obtain
\begin{equation}\label{mammaroma4}
 {\rm tr} \left( \big( (A(\nu_E)\nu_E) \cdot \nabla_{\pa E} \psi \big) \nabla_{\pa E} \nu_E \right) = \big( (A(\nu_E)\nu_E) \cdot \nabla_{\pa E} \psi \big) H_E.
\end{equation}
Combining~\eqref{mammaroma1}-\eqref{mammaroma4}, we conclude that
\begin{equation}\label{16052026form1}
\begin{split}
    \div_{\pa E} \big( \nu_E \otimes \nu_E A(\nu_E) \nabla_{\pa E} \psi \big)
    = \big( (A(\nu_E)\nu_E) \cdot \nabla_{\pa E} \psi \big) H_E+ \langle A_2(\nu_E, B_E) ,  \nabla_{\pa E} \psi \rangle.
    \end{split}
\end{equation}

Finally, from~\eqref{mammaroma5} and~\eqref{16052026form1}, we deduce
\begin{equation}\label{battiato4}
\begin{split}
   \div_{\pa E}& \big( A(\nu_E) \nabla_{\pa E} \psi  \big)\\
   &= \div_{\pa E} \big( \mathcal{A}(\nu_E) \nabla_{\pa E} \psi \big)+ \langle A_2(\nu_E, B_E) ,  \nabla_{\pa E} \psi \rangle  + \big( (A(\nu_E)\nu_E) \cdot \nabla_{\pa E} \psi \big) H_E \\
   & =\div_{\pa E} \big( \mathcal{A}(\nu_E) \nabla_{\pa E} \psi \big)+ \langle A_3(\nu_E, B_E),  \overline{\nabla} \psi \rangle,
   \end{split}
\end{equation}
where $A_3$ is a smooth tensor, and we have used the identification of $\nabla_{\partial E}\psi$ with $\overline{\nabla}\psi$. \fr}
\end{remark}

In the following corollary, we present the expansion of the anisotropic curvature which will be used in the subsequent analysis.

\begin{coroll}\label{zonzo}
   Let $E, F \subset \R^3$ be as in Lemma~\ref{10Expcurvvarphi}. Then, for every $x \in \Sigma= \pa E$, the following identity holds:
   \begin{equation}\label{expcurvcondiv}
       H_F^\varphi(x+ \psi(x)\nu_E(x))= - \div_{\pa E} \big( \mathcal{A}(\nu_E) \nabla_{\pa E} \psi(x) \big) + H^\varphi_E(x)+ \widetilde{\mathcal R}_0(x).
   \end{equation}
   Here, $\mathcal{A} \in T^{(1,1)}(T\Sigma)$ is positive definite and the error term $\widetilde{\mathcal R}_0$ is given by
   \begin{equation}\label{nuovoresto}
       \widetilde{\mathcal R}_0 =\langle \widetilde{\mathcal{A}}_1(\psi B_\Sigma, \overline{\nabla} \psi, \nu_E), \overline{\nabla}^2 \psi \rangle+ \langle \widetilde{\mathcal{A}}_2(\psi B_\Sigma, \overline{ \nabla} \psi, \nu_E), \overline{\nabla}(\psi B_\Sigma) \rangle+\tilde{a}_0(\psi, \overline{\nabla} \psi, B_\Sigma, \nu_E) ,
    \end{equation}
    where $\widetilde{\mathcal{A}}_1, \, \widetilde{\mathcal{A}}_2$ are smooth tensor fields satisfying $\widetilde{\mathcal{A}}_1(0,0,\cdot)=0$, $\widetilde{\mathcal{A}}_2(0,0,\cdot)=0$ and $a_0$ is a smooth function such that $\tilde{a}_0(0,0,\cdot,\cdot)=0$. Moreover it  holds
    \begin{equation}\label{26052026Gaeta}
    \begin{split}
        &\vert \overline{\nabla}\widetilde{\mathcal{A}}_i (\psi B_\Sigma, \overline{\nabla} \psi, \nu_E) \vert \leq C \big(  \vert \psi \vert + \vert \overline{\nabla} \psi \vert + \vert \overline{\nabla}^2 \psi \vert \big), \\
        &\vert \overline{\nabla}\tilde{a}_0 (\psi, \overline{\nabla}\psi, B_\Sigma, \nu_E) \vert \leq C \big( \vert \psi \vert+ \vert \overline{\nabla} \psi \vert + \vert \overline{\nabla}^2 \psi \vert   \big),
        \end{split}
    \end{equation}
    where $C=C(\| B_\Sigma \|_{L^\infty})$.
\end{coroll}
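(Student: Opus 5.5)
The identity \eqref{expcurvcondiv} follows by combining Lemma~\ref{10Expcurvvarphi} with the two rewriting steps in Remarks~\ref{battiato2} and~\ref{battiato3}. We start from \eqref{02expphicurv1}, which gives $H_F^\varphi(x+\psi\nu_E)=-{\rm tr}(A(\nu_E)\nabla^2_{\pa E}\psi)+H^\varphi_E+\mathcal R_0$ with $A=\nabla^2\varphi$.

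By \eqref{trindivform} and then \eqref{battiato4}, we can write
\[
{\rm tr}(A(\nu_E)\nabla^2_{\pa E}\psi)=\div_{\pa E}\big(\mathcal A(\nu_E)\nabla_{\pa E}\psi\big)+\langle A_3(\nu_E,B_\Sigma)-A_1(\nu_E,B_\Sigma),\overline\nabla\psi\rangle .
\]
Substituting this, we set
\[
\widetilde{\mathcal R}_0:=\mathcal R_0+\langle A_1-A_3,\overline\nabla\psi\rangle .
\]
We keep $\widetilde{\mathcal A}_i=\mathcal A_i$ and $\tilde a_0:=a_0+\langle A_1-A_3,\overline\nabla\psi\rangle$. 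The added term is a smooth function of $(\overline\nabla\psi,B_\Sigma,\nu_E)$, linear in $\overline\nabla\psi$, so it vanishes when $\psi=0,\overline\nabla\psi=0$. Hence \eqref{nuovoresto} holds with the required vanishing properties.

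Next we check that $\mathcal A$ is a positive definite $(1,1)$-tensor. By construction $\mathcal A(\nu_E)=P_{\pa E}A(\nu_E)$ restricted to $T_x\pa E$ maps $T_x\pa E$ into itself, so it is a $(1,1)$-tensor field. For $\tau\in T_x\pa E$ we have $g(\mathcal A\tau,\tau)=\nabla^2\varphi(\nu_E)\tau\cdot\tau\ge J|\tau|^2$ by \eqref{unifell}, since $\tau\perp\nu_E$. It is also symmetric, because $P_{\pa E}$ acts as the identity on tangent vectors and $\nabla^2\varphi$ is symmetric.

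The remaining and more delicate part is the derivative bound \eqref{26052026Gaeta}. We differentiate the composite $\widetilde{\mathcal A}_i(\psi B_\Sigma,\overline\nabla\psi,\nu_E)$ by the chain rule:
\[
\overline\nabla\widetilde{\mathcal A}_i=\pa_1\widetilde{\mathcal A}_i\star\overline\nabla(\psi B_\Sigma)+\pa_2\widetilde{\mathcal A}_i\star\overline\nabla^2\psi+\pa_3\widetilde{\mathcal A}_i\star\overline\nabla\nu_E ,
\]
where $\overline\nabla\nu_E=B_\Sigma$.

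The first two terms are bounded by $C(|\psi|+|\overline\nabla\psi|+|\overline\nabla^2\psi|)$, using $|\overline\nabla(\psi B_\Sigma)|\le|\overline\nabla\psi||B_\Sigma|+|\psi||\overline\nabla B_\Sigma|$. The term $|\psi||\overline\nabla B_\Sigma|$ is the obstacle: it involves a derivative of curvature, not only $\|B_\Sigma\|_{L^\infty}$. I would treat $\Sigma=\pa E$ as a fixed reference surface, so that $\overline\nabla B_\Sigma$ is bounded and its bound is absorbed into $C$. If the statement really requires $C=C(\|B_\Sigma\|_{L^\infty})$ only, the plan is to track the structure more carefully so that $\psi$ and $B_\Sigma$ enter as separate arguments and $\overline\nabla B_\Sigma$ never appears; failing that, one accepts a dependence on $\|\overline\nabla B_\Sigma\|_{L^\infty}$.

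For the third term, since $\widetilde{\mathcal A}_i(0,0,\cdot)=0$, also $\pa_3\widetilde{\mathcal A}_i(0,0,\cdot)=0$. By the mean value theorem, smoothness and the bound $\|\psi\|_{L^\infty}\le r/4$, we get $|\pa_3\widetilde{\mathcal A}_i|\le C(|\psi B_\Sigma|+|\overline\nabla\psi|)$, so this term is also controlled.

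The same argument applies to $\tilde a_0$. Its $B_\Sigma$-derivative contributes $\pa_3\tilde a_0\star\overline\nabla B_\Sigma$, and $\pa_3\tilde a_0$ vanishes at $(0,0)$, so it is bounded by $C(|\psi|+|\overline\nabla\psi|)$ with the same caveat about $\overline\nabla B_\Sigma$. The $\overline\nabla\psi$ argument contributes a term bounded by $C|\overline\nabla^2\psi|$.
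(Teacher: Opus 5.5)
Your proposal is correct and follows the paper's proof. Combining \eqref{trindivform} with \eqref{battiato4} gives ${\rm tr}\big(\nabla^2\varphi(\nu_E)\nabla^2_{\pa E}\psi\big)=\div_{\pa E}\big(\mathcal{A}(\nu_E)\nabla_{\pa E}\psi\big)+\langle A_3-A_1,\overline{\nabla}\psi\rangle$; this first-order term is absorbed into $\tilde a_0$, and \eqref{26052026Gaeta} is then a chain-rule computation that the paper only calls ``straightforward''. Your positivity check of $\mathcal{A}$ via \eqref{unifell} also supplies a step the paper leaves implicit.

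Your caveat about $\psi\,\overline{\nabla}B_\Sigma$ is justified, and you should simply take your second option. This term cannot be avoided in general: for constant $\psi$ one has $\overline{\nabla}\psi=\overline{\nabla}^2\psi=0$ but $\overline{\nabla}(\psi B_\Sigma)=\psi\,\overline{\nabla}B_\Sigma$. Moreover $\widetilde{\mathcal{A}}_1$ depends on $\psi B_\Sigma$ to first order, through the coefficient $\nabla^2\varphi(\nu_E)\big((I+\psi B_\Sigma)^{-1}-I\big)$ of $\overline{\nabla}^2\psi$. Hence the constant must depend on $\|B_\Sigma\|_{C^1}$, and also implicitly on an $L^\infty$ bound for $\overline{\nabla}\psi$, which your bounds on $\pa_1\widetilde{\mathcal{A}}_i,\pa_2\widetilde{\mathcal{A}}_i$ use as well. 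This matches how the paper actually applies such estimates, e.g.\ with $C=C(\|B_E\|_{C^1(\pa E)})$ in the proof of Lemma~\ref{Paperplane}.
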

\begin{proof}
   By Remarks~\ref{battiato2} and~\ref{battiato3},  in particular, by formulas~\eqref{trindivform} and~\eqref{battiato4}, we obtain
    \begin{equation}
        {\rm tr} \big( (\nabla^2 \varphi)(\nu_E) \nabla^2_{\pa E} \psi  \big)= \div_{\pa E} \big( \mathcal{A}(\nu_E) \nabla_{\pa E} \psi \big)+ \langle C(\nu_E, B_E), \overline{\nabla} \psi \rangle,
    \end{equation}
    where $\mathcal{A}$ is the $(1,1)$-tensor field defined in~\eqref{07032026posdef}, and $C$ is a smooth tensor. Therefore, combining this identity with Lemma~\ref{10Expcurvvarphi}, one readily obtains~\eqref{expcurvcondiv} and~\eqref{nuovoresto}. Finally, by  straightforward computations, one can derive~\eqref{26052026Gaeta}.
\end{proof}
\section{Setting of the problem}\label{settingdelproblema180526}
\subsection{Pseudo-pseudo-$H^{-1}$ distance}
In this subsection, we recall the definition and some basic properties of the pseudo-pseudo-$H^{-1}$ distance introduced in~\cite{CaTa} to model surface diffusion.
	\begin{definition}[Pseudo-pseudo-$H^{-1}$ metric]
		Let $ E \subset \R^3$ be a set of finite perimeter and let $ F \subset \R^3$ be a measurable set. We define the function $d_{H^{-1}}(F,E)$ as
		\begin{equation}\label{d_{H^{-1}}f}
			d_{H^{-1}}(F,E):= \sup_{ \| \nabla_{\pa E} f \|_{L^2(\pa E)} \leq 1} \int_{\R^3} f \circ \pi_{\pa E}(x) (\chi_F(x)-\chi_E(x))\, dx.
		\end{equation}
	\end{definition}
We observe that the projection $\pi_{\pa E}$  is well defined almost everywhere in $\R^3$, since it is unique at points where the signed distance function $d_E$ is differentiable. We also note that $d_{H^{-1}}$ is not symmetric and therefore does not define a metric, as it is constructed using the projection onto $\pa E$. 
If the set $E$ satisfies the \textsc{UBC} with radius $r_0$ and the set $F$ is such that $ F \Delta E \subset \mathcal{I}_{r_0}(\pa E)$, then it is possible to compute $d_{H^{-1}}(F,E)$ explicitly. This computation is contained in the following lemma, the proof of which can be found in~\cite[Lemma 2.4]{CFJKsd} and~\cite[Lemma 3.3]{Kub2025}.
\begin{lemma}\label{lemmaH^-1}
    Let $E \Subset \R^3$ be a connected $C^2$-regular set satisfying the \textsc{UBC} with radius $r_0$. Let $F \subset \R^3$ be a measurable set such that $F \Delta E \subset \mathcal{I}_{r_0}(\pa E)$ and we set
    \begin{equation}\label{funzxi_FE}
        \xi_{F,E}(x):= \int_{-r_0}^{r_0} \big(\chi_{F}(x+t \nu_E(x))-\chi_{E}(x+ t \nu_E(x)) \big)(1+ H_E(x)t+ K_E(x)t^2) \, dt.
    \end{equation}
    Then $d_{H^{-1}}(F,E) < + \infty $ if and only if $\int_{\pa E} \xi_{F,E} \, d \mathcal{H}^2 =0$, which is also equivalent to $\vert F \vert= \vert E \vert$. Moreover, it holds 	\begin{equation}\label{normH^-1funzv}
		d_{H^{-1}}^2(F,E)= \int_{\pa E} \vert \nabla_{\pa E} v_{F,E} \vert^2 \, d \mathcal{H}^{2},
	\end{equation}
	where $v_{F,E} $ is the unique solution to the equation
	\begin{equation}\label{eqv}
		\left\{
		\begin{aligned}
			& -\Delta_{\pa E} v_{F,E}= \xi_{F,E} & \text{ on } \pa E , \\
			& \int_{\pa E} v_{F,E} \, d \mathcal{H}^2=0.
		\end{aligned}
		\right.
	\end{equation} 
\end{lemma}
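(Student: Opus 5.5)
We prove Lemma~\ref{lemmaH^-1} by reducing the supremum in \eqref{d_{H^{-1}}f} to a linear functional on $\pa E$ and then representing that functional through the solution of a Poisson problem.

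\emph{Step 1: coarea formula in normal coordinates.} Since $F\Delta E\subset \mathcal{I}_{r_0}(\pa E)$, the integrand $f\circ\pi_{\pa E}\,(\chi_F-\chi_E)$ is supported in the tubular neighbourhood. There we use the diffeomorphism $(x,t)\mapsto x+t\nu_E(x)$ from $\pa E\times(-r_0,r_0)$ onto $\mathcal{I}_{r_0}(\pa E)$. Its Jacobian is $\det(I+tB_E(x))$ restricted to $T_x\pa E$, which equals $1+tH_E(x)+t^2K_E(x)$ by \eqref{mammaroma6}. Moreover $\pi_{\pa E}(x+t\nu_E(x))=x$ for $|t|<r_0$. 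Fubini therefore gives
\[
\int_{\R^3} f\circ\pi_{\pa E}\,(\chi_F-\chi_E)\,dx=\int_{\pa E} f\,\xi_{F,E}\,d\mathcal H^2 ,
\]
so that $d_{H^{-1}}(F,E)=\sup\{\int_{\pa E} f\xi_{F,E}\,d\mathcal{H}^2:\ \|\nabla_{\pa E}f\|_{L^2}\le1\}$. The same computation with $f\equiv1$ yields $\int_{\pa E}\xi_{F,E}\,d\mathcal{H}^2=|F|-|E|$, which proves the equivalence of the last two conditions.

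\emph{Step 2: finiteness.} The constraint $\|\nabla_{\pa E}f\|_{L^2}\le1$ allows adding arbitrary constants $c$ to $f$, since $\pa E$ is connected and hence $\nabla_{\pa E}f=0$ only for constants. If $\int\xi_{F,E}\neq0$, letting $c\to\pm\infty$ makes the supremum infinite. Conversely, assume $\int\xi_{F,E}=0$. Note that $\xi_{F,E}\in L^\infty$, because $|1+tH_E+t^2K_E|$ is bounded under the UBC. Lax–Milgram on the space of mean-zero $H^1(\pa E)$ functions, together with the Poincaré inequality on the compact connected surface $\pa E$, gives a unique mean-zero solution $v_{F,E}$ of \eqref{eqv}.

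\emph{Step 3: identification of the supremum.} For any admissible $f$, integrating by parts and applying Cauchy–Schwarz gives
\[
\int_{\pa E} f\xi_{F,E}\,d\mathcal{H}^2=\int_{\pa E}\nabla_{\pa E}f\cdot\nabla_{\pa E}v_{F,E}\,d\mathcal{H}^2\le\|\nabla_{\pa E}v_{F,E}\|_{L^2}.
\]
Equality is attained at $f=v_{F,E}/\|\nabla_{\pa E}v_{F,E}\|_{L^2}$. This is legitimate when the norm is nonzero; when it vanishes, $\xi_{F,E}=0$ and both sides of \eqref{normH^-1funzv} are zero. This proves \eqref{normH^-1funzv}.

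The main point requiring care is Step 1: one must check that the normal-coordinate map is injective with positive Jacobian on all of $\mathcal{I}_{r_0}$. Both facts follow from the UBC with radius $r_0$, via uniqueness of the projection and the bound $|B_E|\le 1/r_0$ up to the constants recalled earlier. One must also check that sets of measure zero where the projection fails to be unique are irrelevant. The class of test functions $f$ (say $H^1$ or $C^1$) is handled by a density argument.
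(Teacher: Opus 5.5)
Your argument is correct and is the standard proof behind the references the paper cites for this lemma. The normal-coordinate change of variables, with Jacobian $1+tH_E+t^2K_E$, reduces the functional to $\int_{\pa E} f\,\xi_{F,E}\,d\mathcal{H}^2$; adding constants to $f$ forces $\int_{\pa E}\xi_{F,E}\,d\mathcal{H}^2=|F|-|E|=0$; and Lax--Milgram together with Cauchy--Schwarz identifies the supremum with $\|\nabla_{\pa E}v_{F,E}\|_{L^2(\pa E)}$.

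The one thing to make explicit is that you use connectedness of $\pa E$, not of $E$ as stated. You need it for the Poincaré inequality, for uniqueness of $v_{F,E}$, and for the implication $|F|=|E|\Rightarrow d_{H^{-1}}(F,E)<+\infty$. It does not follow from connectedness of $E$: for a spherical shell, taking $f$ equal to a large constant on one boundary component gives $d_{H^{-1}}(F,E)=+\infty$ unless $\xi_{F,E}$ has zero mean on each component separately. So connectedness of $\pa E$ is the hypothesis genuinely needed, and the lemma as literally stated is false without it.
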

A consequence of the above lemma is that the function $f$ that attains the supremum in~\eqref{d_{H^{-1}}f} is given by
\begin{equation}\label{lafunzcherelsupH^-1}
    f=\frac{v_{F,E}}{d_{H^{-1}}(F,E)}.
\end{equation}
If, in addition to the assumptions of Lemma~\ref{lemmaH^-1}, we assume that $\partial F$ is a normal graph of a $C^1$ function over $\partial E$, i.e., $\pa F= \{ x + \psi(x )\nu_E(x): x \in \pa E\}$ with $\psi \in C^1(\pa E)$, then the function in~\eqref{funzxi_FE} can be written explicitly as
\begin{equation}\label{xiFEfunz}
    \xi_{F,E}= \psi + \frac{H_E}{2} \psi^2 + \frac{K_E}{3}\psi^3.
\end{equation}
Moreover, we observe that if $E$ is $C^{2,\alpha}$-regular, then the function $f$ is of class $C^{2,\alpha}$; this follows from standard elliptic regularity estimates.

We define the standard $H^{-1}$-norm for a function $u$ on $\pa E$ as
\begin{equation}
    \| u \|_{H^{-1}(\pa E)}= \sup_{\| \nabla_{\pa E} g \|_{L^2(\pa E)} \leq 1} \int_{\pa E} g u \, d \mathcal{H}^2.
\end{equation}
We recall the following interpolation inequality:
\begin{equation}\label{gucciniintepol}
    \| u \|_{L^2(\pa E)} \leq \| \nabla_{\pa E} u \|_{L^2(\pa E)}^{\frac{1}{2}} \| u \|_{H^{-1}}^{\frac{1}{2}}.
\end{equation}
Under the assumptions of Lemma~\ref{lemmaH^-1}, if $\vert E \vert= \vert F\vert $, it holds
\begin{equation}
    \| \xi_{F,E} \|_{H^{-1}}= d_{H^{-1}}(F,E).
\end{equation}

The next lemma contains the first variation of the functional $F \mapsto d_{H^{-1}}(F,E)$. While its proof is provided for $\R^2$ in~\cite[Proposition 3.4]{Kub2025}, the same argument applies to $\R^3$.
\begin{proposition}\label{propELDH-1}
		Let $ E \Subset \R^3$ be a $C^2$-regular set that satisfies the UBC with radius $r_0$, and let $F \Subset \R^3$ be a set of class $C^1$ such that $ F \Delta E \subset \mathcal{I}_{r_0}(\pa E)$. Given $ X \in C_c^1(\R^3,\R^3)$  such that $ \div X=0$, we set $\Psi: (-\varepsilon,\varepsilon) \times \R^3 \rightarrow \R^3$ to be the solution of the Cauchy problem
		\begin{equation}
			\begin{cases}
				\frac{\pa }{\pa t} \Psi (t,x)= X\circ \Psi (t,x) \quad \forall x \in \R^3\\
				\Psi(0,x)=x \quad \forall x \in \R^3. 
			\end{cases}
		\end{equation}
		If $ f \in H^1(\partial E)$ with $ \int_{\pa F} f \,d \mathcal{H}^{2}=0$ denotes the function that realizes the supremum in the definition of $d_{H^{-1}}(F,E)$,
		then 
		\begin{equation}
			\frac{d }{d t} d_{H^{-1}}(\Psi(t,F),E) |_{t=0}= \int_{\partial F} f(\pi_{\pa E}(x)) X(x) \cdot \nu_{F}(x) d \mathcal{H}^{2}_x.
		\end{equation}
	\end{proposition}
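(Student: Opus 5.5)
The plan is to reduce the derivative of a supremum to the derivative of the linear functional evaluated at the maximizer, via an envelope argument that works because the underlying quadratic problem is explicit.

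\textbf{Reduction.} Since $\div X=0$, the flow $\Psi(t,\cdot)$ preserves volume, so $|\Psi(t,F)|=|F|$ for all $t$. For $t$ small we still have $\Psi(t,F)\Delta E\subset\mathcal{I}_{r_0}(\pa E)$, because $F\Delta E$ is compactly contained in the open tubular neighborhood (if not, we shrink slightly). Hence, by Lemma~\ref{lemmaH^-1}, $d_{H^{-1}}(\Psi(t,F),E)$ is finite whenever $|F|=|E|$; this is implicitly assumed, as otherwise the statement is void. Set
\[
L_t(g):=\int_{\R^3} g\circ\pi_{\pa E}\,(\chi_{\Psi(t,F)}-\chi_E)\,dx ,
\]
so that $d_{H^{-1}}(\Psi(t,F),E)=\sup\{L_t(g):\|\nabla_{\pa E}g\|_{L^2(\pa E)}\le 1\}$.

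\textbf{Derivative of the linear functional.} For fixed $g\in C^1(\pa E)$, the change of variables $y=\Psi(t,x)$ (Jacobian $1$, since $\div X=0$) gives $\int_{\Psi(t,F)} g\circ\pi_{\pa E}\,dy=\int_F g\circ\pi_{\pa E}\circ\Psi(t,x)\,dx$. Differentiating at $t=0$ gives $\int_F \nabla(g\circ\pi_{\pa E})\cdot X\,dx=\int_F\div\big((g\circ\pi_{\pa E})X\big)\,dx$, again using $\div X=0$. Since $F$ is $C^1$, the divergence theorem yields
\[
\frac{d}{dt}L_t(g)\Big|_{t=0}=\int_{\pa F} g(\pi_{\pa E}(x))\,X\cdot\nu_F\,d\mathcal{H}^2 .
\]
For $g\in H^1(\pa E)$ we pass to the limit by density. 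The map $g\mapsto g\circ\pi_{\pa E}$ is controlled in $L^2$ on the tube, and the trace on $\pa F$ of $g\circ\pi_{\pa E}$ is controlled by $\|g\|_{H^1(\pa E)}$, because $\pi_{\pa E}|_{\pa F}$ is Lipschitz. More precisely, one first passes to the limit in $\int_F\nabla(g\circ\pi_{\pa E})\cdot X$, then in the boundary integral.

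\textbf{Envelope step (main obstacle).} The supremum of a family of linear functionals need not be differentiable, so we exploit the explicit structure. By Lemma~\ref{lemmaH^-1} and~\eqref{lafunzcherelsupH^-1}, $d(t):=d_{H^{-1}}(\Psi(t,F),E)=\|\nabla_{\pa E}v_t\|_{L^2}$, where $v_t$ solves $-\Delta_{\pa E}v_t=\xi_t$ with $\xi_t:=\xi_{\Psi(t,F),E}$. One checks that $L_t(g)=\int_{\pa E} g\,\xi_t\,d\mathcal{H}^2$ via the coarea formula in normal coordinates; this is exactly how~\eqref{funzxi_FE} arises. The map $t\mapsto\xi_t$ is differentiable in $H^{-1}(\pa E)$: its derivative is the functional $g\mapsto\frac{d}{dt}L_t(g)$ computed above, and the continuity bounds just mentioned make this uniform in $\|g\|_{H^1}\le1$. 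By continuity of the inverse Laplacian, $d^2(t)=\langle\xi_t,(-\Delta_{\pa E})^{-1}\xi_t\rangle$ is then differentiable, with
\[
\frac{d}{dt}\big(d^2\big)\Big|_{t=0}=2\,\frac{d}{dt}L_t(v_0)\Big|_{t=0}.
\]
Dividing by $2d(0)$, and using $f=v_0/d(0)$ (the case $d(0)=0$ is handled separately by a one-sided argument, or excluded), gives the claim. Alternatively, one can use the elementary sandwich $L_t(f)\le d(t)$ and $d(0)=L_0(f)$, together with differentiability of $d$, to identify $d'(0)=\frac{d}{dt}L_t(f)|_{t=0}$. The delicate points are:
\begin{itemize}
\item the uniformity in $g$ of the differentiation of $L_t$;
\item the fact that the normalization $\int f=0$ only affects $L_t$ by a constant times $|\Psi(t,F)|-|E|=0$.
\end{itemize}
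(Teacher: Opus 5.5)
You prove this by a different route from the paper, and the route works, but its one essential step is not yet justified.

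\textbf{Comparison.} The paper gives no argument of its own here; it only asserts that the planar proof of \cite[Proposition~3.4]{Kub2025} carries over. Arguments of that kind are usually envelope (Danskin-type) sandwiches
\[
L_t(f_0)-L_0(f_0)\le d(t)-d(0)\le L_t(f_t)-L_0(f_t),
\]
where $f_t$ is the maximizer for $\Psi(t,F)$. They need uniform $C^1$ bounds on $f_t$ (from elliptic regularity, since $\xi_t\in L^\infty$) and the convergence $f_t\to f_0$.

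You instead write $d(t)=\|\xi_t\|_{H^{-1}(\pa E)}$ and differentiate a Hilbert norm along a curve, so you never have to track the maximizers $f_t$. The following parts are all correct:
\begin{itemize}
\item the identity $d'(0)=\dot\xi_0(v_0)/d(0)=\dot\xi_0(f)$;
\item the representation $L_t(g)=\int_{\pa E}g\,\xi_t$;
\item the remark that the normalization of $f$ is irrelevant.
\end{itemize}
The price is that everything rests on Fr\'echet differentiability of $t\mapsto\xi_t$ in $H^{-1}$. That means an $o(t)$ bound uniform over $\{\|\nabla_{\pa E}g\|_{L^2}\le 1,\ \int_{\pa E}g=0\}$.

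\textbf{The gap.} The bounds you cite do not give that uniformity. An $L^2$ bound on $g\circ\pi_{\pa E}$ and a trace bound on $\pa F$ only show $\dot\xi_0\in H^{-1}$; they do not make the remainder $o(t)$ uniformly in $g$. The trace estimate is in fact not needed at all. What works is the volume representation you allude to.

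First, assume $\pa F\Subset\mathcal I_r(\pa E)$ for some $r<r_0$. This is an extra hypothesis that ``shrinking'' cannot produce. It holds in every application in the paper, and it is also what makes $\nabla\pi_{\pa E}$ bounded. Take $\zeta\in C^1_c(\mathcal I_r(\pa E))$ with $\zeta=1$ near $\pa F$, and set $G:=\zeta\,(g\circ\pi_{\pa E})$ and $F_s:=\Psi(s,F)$.

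For $g\in C^1(\pa E)$ and $|t|$ small, $F_s\Delta F\subset\{\zeta=1\}$ for all $|s|\le|t|$. Volume preservation together with $\div X=0$ gives $\frac{d}{ds}\int_{F_s}G=\int_{F_s}\div(GX)$. Integrating in $s$ and applying Cauchy--Schwarz yields
\[
\Big|L_t(g)-L_0(g)-t\int_F\div(GX)\,dx\Big|\le |t|\,\|\div(GX)\|_{L^2}\sup_{|s|\le|t|}|F_s\Delta F|^{1/2}\le C|t|\,\|g\|_{H^1(\pa E)}\sup_{|s|\le|t|}|F_s\Delta F|^{1/2}.
\]
The last inequality is the coarea formula in normal coordinates.

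This estimate extends to $H^1(\pa E)$ by density. Since $|F_s\Delta F|\to0$, combining it with Poincar\'e gives exactly $\|\xi_t-\xi_0-t\dot\xi_0\|_{H^{-1}}=o(t)$. Here $\dot\xi_0(g)=\int_F\div(GX)=\int_{\pa F}g(\pi_{\pa E}(x))\,X\cdot\nu_F\,d\mathcal H^2$ for $g\in C^1$. Because $f=v_0/d(0)$ is $C^{1,\alpha}$, no trace issue arises in the final formula.

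\textbf{The case $d(0)=0$.} This case has to be excluded; a one-sided argument does not rescue it. If $d(0)=0$, then $d(t)=|t|\,\|\dot\xi_0\|_{H^{-1}}+o(t)$, which is in general not differentiable, and every admissible $g$ is a maximizer. The statement therefore tacitly assumes $d(0)>0$.
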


\subsection{Elastic term}
Let $\Omega$ be as in Definition~\ref{Omegaevrietadiriferimento}, $F \Subset \Omega$, and let $u : \Omega \setminus F \rightarrow \R^3$ be an  elastic displacement. We define the symme $E(u)$, the symmetric part of $\nabla u$, as 
\begin{equation}\label{02042026Eu}
    E(u):=\frac{\nabla u+ (\nabla u)^{T}}{2}.
\end{equation} 
    Throughout this work, we consider
	$\mathbb{C}: \R^{3\times 3}_{sym} \to \R^{3\times 3}_{sym}$ an elasticity tensor satisfying the coercivity condition $$\mathbb{C}A : A >0 \text{ for all } A \in \R_{sym}^{3 \times 3} \setminus \{ 0\} .$$ We define the elastic energy density by $$Q(A):= \frac{1}{2} \mathbb{C}A : A.$$  
     \subsubsection{Elastic energy}\label{subs:elastic energy}
    Let $w_0 \in C^{\infty}(\pa \Omega)$ be a prescribed boundary displacement. We define the elastic problem as
	\begin{equation}\label{minelast}
		u_F \in \arg \!\min \left\{ \int_{\Omega \setminus F} Q(E(u))\, dx : u \in H^1(\Omega \setminus F, \R^3) ,\, u |_{\pa \Omega}= w_0     \right\}
	\end{equation}
	and we define the corresponding energy as 
	\begin{equation*}
		\mathcal{E}(E(u_F)):= \int_{\Omega \setminus F} Q(E(u_F))\, dx.
	\end{equation*}
	In particular, $u_F$ is the unique solution in $H^1(\Omega \setminus F, \R^3)$ to
	\begin{equation}\label{eqelliel}
		\left\{
		\begin{aligned}
			& \div \mathbb{C}E(u_F)=0 & \text{ in } \Omega \setminus F  , \\
			& \mathbb{C}E(u_F)[\nu_F]=0 & \text{ on }  \pa F , \\
			& u_F= w_0 & \text{ on } \pa \Omega.
		\end{aligned}
		\right.
	\end{equation}  
\subsubsection{Constrained elastic energy}
	Let $w_0 \in C^{\infty}(\pa \Omega)$ be a prescribed boundary displacement as above. Fix $K_{el}>0$ such that $ \| w_0 \|_{C^{4}(\pa \Omega)} \leq K_{el} / 2$.  Let $F \Subset \Omega$ be a set of finite perimeter. 
    \begin{remark}
        {\rm 
   We recall that $\mu:= \mathcal{H}^2 \mrestr \pa^* F$ is a finite Radon measure on $\R^3$, and that $(\R^3, \vert \cdot \vert, \mu)$ is a metric measure space. Moreover, one can define the Sobolev space $H^{1,2}(\R^3, \vert \cdot \vert, \mu)$, which is a separable Hilbert space; see, for instance,~\cite{DiMarLucPas2020,GigliPasqualetto2017}.  We denote the norm in $H^{1,2}(\R^3, \vert \cdot \vert, \mu) $ by $ \| \cdot \|_{H^{1,2}( \mu)}$.  If $f \in C^1(\R^3)$, then we have $ \| f \|_{H^{1,2}(\mu )}= \| \nabla_{\pa F}f \|_{L^2(\R^3, \mu)}+  \| f \|_{L^2(\R^3, \mu)}$.  In particular, if $\pa F$ is smooth, then the space $ H^{1,2}(\R^3, \vert \cdot \vert, \mu)$ coincides with $ H^1(\pa F)$.\fr}
    \end{remark}
    We define the minimization problem \begin{equation}\label{minelvinc}
    \begin{split}
		u_F^{K_{el},h} \in \mathrm{argmin} \bigg\{ &\int_{\Omega \setminus F} Q(E(u))\, dx :  u \in \mathfrak{C}_{K_{el}}^{3,\frac{1}{4}}(\Omega ,\R^3) , \,\, u |_{\pa \Omega}= w_0, \\  
        &\| \pa^\alpha u \|_{H^{1,2}(\mathcal{H}^2 \mrestr \pa^* F)} \leq K_{el},\\
        & \int_{\pa^* F} \nabla_{\pa F} \pa^\alpha (u * \rho_\varepsilon )\cdot \nabla_{\pa F} \xi \, d \mathcal{H}^2  \leq K_{el} h^{-\frac{1}{4}}    \,\, \forall \xi \in \mathcal{D}, \, \varepsilon \in D, \, \vert \alpha \vert =3
          \bigg\},
        \end{split}
	\end{equation}
where $\mathcal{D} \subset C^\infty(\R^3)$ denotes a countable dense subset of the unit ball of $L^2(\R^3, \mathcal{H}^2 \mrestr \pa^* F)$, $D $ denotes a countable dense subset of $[0,1]$, and $\rho_\varepsilon$ is a standard Friedrichs mollifier. 

\begin{remark}\label{19042026remak}
    {\rm We observe that if $\pa F$ is sufficiently smooth, then the condition
    \begin{equation}
       \int_{\pa^* F} \nabla_{\pa F} \pa^\alpha (u * \rho_\varepsilon )\cdot \nabla_{\pa F} \xi \, d \mathcal{H}^2  \leq K_{el} h^{-\frac{1}{4}} \,\, \forall \xi \in \mathcal{D}, \, \varepsilon \in D , \, \vert \alpha \vert=3
    \end{equation}
    is equivalent to requiring that $\| \Delta_{\pa F}  \pa^\alpha u \|_{L^2(\pa F)} \leq K_{el} h^{-\frac{1}{4}} $ for $\vert \alpha \vert=3$. \fr}
\end{remark}
We define the constrained elastic energy as follows
	\begin{equation*}
		\mathcal{E}(E(u_F^{K_{el},h})):= \int_{\Omega \setminus F} Q(E(u_F^{K_{el},h}))\, dx.
	\end{equation*}

    \begin{remark}\label{02052026falchi}
   { \rm  Let $E_0$ be as in Definition~\ref{Omegaevrietadiriferimento}. By Schauder estimates, we can choose a constant $K_{el}$ such that $u_{E_0}$, the minimizer of~\eqref{minelast}, coincides with the minimizer $u_{E_0}^{K_{el}}$ of~\eqref{minelvinc} for every $h \leq 1$.  \fr }
\end{remark}

\begin{lemma}
    Let $F \Subset \Omega$ be  a set of finite perimeter. Then problem~\eqref{minelvinc} admits a solution.
\end{lemma}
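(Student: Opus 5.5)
The plan is to apply the direct method of the calculus of variations. The admissible class in~\eqref{minelvinc} must be non-empty, and the energy together with all the constraints must be compatible with a compactness topology, namely uniform $C^3$ convergence on $\mathrm{cl}(\Omega)$.

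\emph{Non-emptiness.} I would take a smooth extension $\bar w$ of $w_0$ to $\Omega$, for instance $\bar w := (\tilde w_0)\,\eta$. Here $\tilde w_0$ is a fixed extension of $w_0$ to a neighbourhood of $\pa\Omega$, and $\eta$ is a cutoff with $\eta\equiv 1$ near $\pa\Omega$ and $\eta\equiv 0$ on a neighbourhood of $\mathrm{cl}(F)$. This is possible because $F\Subset\Omega$.
\begin{itemize}
\item Since $\bar w$ vanishes identically near $\pa^*F$, we have $\pa^\alpha \bar w=0$ on $\pa^* F$. Hence its $H^{1,2}(\mathcal{H}^2\mrestr\pa^*F)$ norm is zero.
\item For $\varepsilon$ small the mollified derivatives also vanish there. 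For larger $\varepsilon\in D$ one should instead choose the cutoff to vanish on the $1$-neighbourhood of $F$ intersected with $\Omega$. If that is impossible, I would note that the mollification only involves values of $\bar w$ on a compact set, so the left-hand side is bounded by $C\|\bar w\|_{C^5}P(F)^{1/2}$.
\item Choosing the extension with $\|\bar w\|_{C^{3,1/4}}\le K_{el}$ (recall $\|w_0\|_{C^4}\le K_{el}/2$), and using $h\le1$ where needed, the class is non-empty.
\end{itemize}
I expect this step to be the main obstacle, since the constants in the mollifier constraint must be checked against $K_{el}h^{-1/4}$. If the bounds fail, I would fall back on the trivial choice of making $\bar w$ vanish on a large neighbourhood of $F$. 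This is available because the constraint is only tested on $\pa^*F$, and the convolution $u*\rho_\varepsilon$ requires $u$ extended by zero, or understood via the natural extension.

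\emph{Compactness.} I would take a minimizing sequence $u_n$. The bound $\|u_n\|_{C^{3,1/4}(\Omega)}\le K_{el}$ and Arzelà–Ascoli give, up to a subsequence, $u_n\to u$ in $C^3(\mathrm{cl}\,\Omega)$. Lower semicontinuity of the Hölder seminorm then gives $u\in\mathfrak{C}^{3,1/4}_{K_{el}}$, and the boundary condition $u|_{\pa\Omega}=w_0$ passes to the limit.

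\emph{Closedness of the constraints.} Since $\pa^\alpha u_n\to\pa^\alpha u$ uniformly, the $H^{1,2}(\mu)$ bound passes to the limit. This uses lower semicontinuity of the norm under $L^2(\mu)$ convergence in the metric-measure Sobolev space, where weak compactness and closedness of the differential hold because the space is Hilbert. For each fixed $\xi\in\mathcal D$ and $\varepsilon\in D$ with $\varepsilon>0$, the mollified functions $\pa^\alpha(u_n*\rho_\varepsilon)$ converge in $C^1$, so each integral converges and each inequality is preserved. The case $\varepsilon=0$ follows from the weak convergence of $\nabla_{\pa F}\pa^\alpha u_n$ in $L^2(\mu)$, which comes from the uniform $H^{1,2}(\mu)$ bound.

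\emph{Conclusion.} The energy $\int_{\Omega\setminus F}Q(E(u_n))\,dx$ converges, since $E(u_n)\to E(u)$ uniformly and $\Omega$ is bounded. Therefore $u$ is a minimizer.
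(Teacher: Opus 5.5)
\textbf{Same strategy as the paper.} Your overall strategy is the paper's. For non-emptiness you use a cutoff extension of $w_0$ that vanishes near $F$; the paper takes $\lambda\, w_0(\pi_{\partial\Omega})$ with $\lambda$ supported in $\mathcal{I}_r(\partial\Omega)$ and $r\le \mathrm{dist}(F,\partial\Omega)$. You then use Arzel\`a--Ascoli in $C^3$, lower semicontinuity of the norm in the Hilbert space $H^{1,2}(\mu)$, and passage to the limit in each mollified constraint for fixed $\xi\in\mathcal{D}$ and $\varepsilon\in D$. The compactness and closedness part is fine. Your separate treatment of $\varepsilon=0$, via weak convergence in $H^{1,2}(\mu)$, is a small addition the paper omits.

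\textbf{The non-emptiness step does not hold up as written.} The problem is in the fallbacks you propose.
\begin{itemize}
\item \emph{The bound $C\|\bar w\|_{C^5}P(F)^{1/2}$ is not valid.} The functions in $\mathcal{D}$ are bounded only in $L^2(\mu)$, so $\nabla_{\partial F}\xi$ is uncontrolled. With $g:=\partial^\alpha(\bar w*\rho_\varepsilon)$, the constraint is really a weak $L^2(\mu)$ bound on $\Delta_{\partial F} g$. Estimating it needs an integration by parts on $\partial^*F$.
\begin{itemize}
\item For smooth $F$ this produces the term $H_F\,\nabla g\cdot\nu_F$, so the correct bound involves $\|H_F\|_{L^2}$, not $P(F)^{1/2}$.
\item For a general finite-perimeter set, $\xi\mapsto\int_{\partial^*F}\nabla_{\partial F}g\cdot\nabla_{\partial F}\xi\,d\mathcal{H}^2$ need not be bounded on $L^2(\mu)$ at all; think of the edge terms on a cube. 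So no size estimate on $\bar w$ yields the constraint.
\end{itemize}
The robust way is to have $g$ vanish near $\partial^*F$ for \emph{every} $\varepsilon\in D$.
\item \emph{That cannot always be arranged.} Making $\bar w$ vanish on a large neighbourhood of $F$ fails once the mollification radius exceeds $\mathrm{dist}(F,\partial\Omega)$. The convolution then sees $\bar w=w_0$ on $\partial\Omega$.
\item \emph{The $C^{3,1/4}$ bound is unjustified.} $\|\bar w\|_{C^{3,1/4}(\Omega)}\le K_{el}$ does not follow from $\|w_0\|_{C^4(\partial\Omega)}\le K_{el}/2$. The term $\tilde w_0\,\nabla^3\eta$ can be of order $\|w_0\|_{L^\infty}\,\mathrm{dist}(F,\partial\Omega)^{-3}$.
\end{itemize}

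\textbf{What the argument actually needs.} You need $K_{el}$ large in terms of $\mathrm{dist}(F,\partial\Omega)$, $\Omega$ and $w_0$. You also need either the mollification radii in $D$ to be small compared with that distance, or enough regularity of $F$. The paper's one-line cutoff argument does not address these points either. You should state them as explicit assumptions rather than rely on the incorrect estimates above.
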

\begin{proof}
Let $\mathcal{P}$ denote the collection of properties in ~\eqref{minelvinc}; the set of functions $u: \Omega \rightarrow \R^3 $ such that $\mathcal{P}$ holds is nonempty.
Indeed,  for any $r \leq \min\{ 1/(2\| B_{\pa \Omega} \|_{L^\infty(\pa \Omega)}), d_\mathcal{H}(F, \pa \Omega) \}$, the function  $u = \lambda w_0 ( \pi_{\pa \Omega} )$ satisfies  $\mathcal{P}$. Here, $\lambda \in C_c^\infty(\mathcal{I}_r(\pa \Omega))$ is a cutoff function satisfying $\lambda=1$ in $\mathcal{I}_{r/2}(\pa \Omega)$, whose $C^5$ norm is sufficiently small.

Now, we show existence of a minimizer of~\eqref{minelvinc}. 
Let $\{ u_n\}_{n \in \N} $ be a minimizing sequence. Then, by the assumption $ u_n \in \mathfrak{C}_{K_{el}}^{3,\frac{1}{4}}(\Omega ,\R^3)$ and Arzelà--Ascoli theorem, we obtain that $ u_n \rightarrow u$ in $\mathfrak{C}_{K_{el}}^{3}(\Omega ,\R^3)$ and that $u \in \mathfrak{C}_{K_{el}}^{3,\frac{1}{4}}(\Omega ,\R^3)$. Using that $H^{1,2}(\R^3, \vert \cdot \vert, \mathcal{H}^2 \mrestr \pa^* F)$ is a separable Hilbert space, by the Banach-Alaoglu-Bourbaki theorem and the lower semicontinuity of the norm, we obtain, up to  a subsequence, that
 \begin{equation}
     \| \pa^\alpha u \|_{H^{1,2}(\mathcal{H}^2 \mrestr \pa^* F)} \leq \liminf_{n \rightarrow + \infty} \| \pa^\alpha u_n \|_{H^{1,2}(\mathcal{H}^2 \mrestr \pa^* F)} \leq K_{el}.
 \end{equation}
 Finally, let $\varepsilon \in D$ and $\xi \in \mathcal{D}$ be fixed, then
 \begin{equation}
     \begin{split}
         \int_{\pa^* F} \nabla_{\pa F} \pa^\alpha (u * \rho_\varepsilon )\cdot  \nabla_{\pa F} \xi  =& \lim_{n \rightarrow + \infty}\int_{\pa^* F} \nabla_{\pa F} \pa^\alpha (u * \rho_\varepsilon - u_{n} * \rho_\varepsilon )\cdot \nabla_{\pa F} \xi \\
          &+\lim_{n \rightarrow + \infty} \int_{\pa^* F} \nabla_{\pa F} \pa^\alpha (u_n * \rho_\varepsilon )\cdot \nabla_{\pa F} \xi \\
         \leq&  \lim_{n \rightarrow + \infty} \int_{\pa^* F} \nabla_{\pa F} \pa^\alpha (u * \rho_\varepsilon - u_{n} * \rho_\varepsilon )\cdot \nabla_{\pa F} \xi + K_{el}h^{-\frac{1}{4}}\\
         \leq&  K_{el}h^{-\frac{1}{4}}.
     \end{split}
 \end{equation}
\end{proof}
	For brevity, we omit the explicit dependence of $u_{F}^{K_{el},h}$ on $h$ and  write $u_{F}^{K_{el}}$.

	In the next proposition, we derive the first variation of the elastic energy. The proof, established for $\R^2$ in~\cite[Proposition 3.6]{Kub2025}, extends verbatim to $\R^3$.
	\begin{proposition}\label{PropelELASt}
		Let $F \Subset \Omega$ be a $C^1$-regular set. Given $X \in C^1_c(\Omega,\R^3)$, let $( \Phi(t,\cdot))_{t \in (-\varepsilon,\varepsilon)}$ denote the unique solution of the Cauchy problem
		\begin{equation}
			\left\{
			\begin{aligned}
				& \frac{\pa }{\pa t} \Phi(t,x)= X \circ \Phi(t,x) & \forall x \in \R^3 ,\\
				& \Phi(0,x)=x & \forall x \in \R^3.
			\end{aligned}
			\right.
		\end{equation}
		Setting $F_t:= \Phi(t,F)$, the first variation of the functional $\mathcal{E}$ is given by
		\begin{equation}\label{FVELAST}
			\frac{d}{d t} \bigg|_{t=0} \mathcal{E}(u_{F_t}^{K_{el}})= -\int_{\pa F} Q(E(u_F^{K_{el}})) X \cdot \nu_F \, d \mathcal{H}^2.
		\end{equation}
	\end{proposition}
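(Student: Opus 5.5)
The plan is to follow the classical Hadamard-type computation for the first variation of a Neumann elastic energy, adapted to the constrained minimizer $u_F^{K_{el}}$.

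\textbf{Step 1: upper bound from a transported competitor.} Write $u:=u_F^{K_{el}}$ and $u_t:=u\circ\Phi(t,\cdot)^{-1}$. Since $X$ has compact support in $\Omega$, $\Phi(t,\cdot)$ is the identity near $\pa\Omega$, so $u_t|_{\pa\Omega}=w_0$. By minimality of $u_{F_t}^{K_{el}}$, if $u_t$ is admissible for~\eqref{minelvinc} relative to $F_t$ we get
\[
\mathcal{E}(u_{F_t}^{K_{el}})\le \int_{\Omega\setminus F_t}Q(E(u_t))\,dx .
\]
Differentiating the right-hand side at $t=0$ by change of variables gives
\[
\int_{\Omega\setminus F}\Big(Q(E(u))\,\div X-\mathbb{C}E(u):\nabla u\,\nabla X\Big)\,dx .
\]
Integrating by parts, using $\div\,\mathbb{C}E(u)=0$ and $\mathbb{C}E(u)[\nu_F]=0$ on $\pa F$, this reduces to $-\int_{\pa F}Q(E(u))X\cdot\nu_F\,d\mathcal{H}^2$. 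The outer normal of $\Omega\setminus F$ on $\pa F$ is $-\nu_F$, which produces the sign. These equations are only available when the constraints are inactive; otherwise one keeps the domain integral, which is already the shape derivative of $t\mapsto\int_{\Omega\setminus F_t}Q(E(u_t))$.

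\textbf{Step 2: reverse inequality.} Transport $u_{F_t}^{K_{el}}$ back to $F$ via $\Phi(t,\cdot)$, which gives the symmetric inequality. Then use the $C^{3,1/4}$ bound and Arzelà--Ascoli to show $u_{F_t}^{K_{el}}\to u$ in $C^3$ as $t\to0$. This needs uniqueness of the minimizer, which holds by strict convexity of $Q\circ E$ on a convex admissible class (after fixing the boundary datum, Korn makes it strict). Together the two inequalities squeeze the difference quotients and yield~\eqref{FVELAST}. The same argument applies verbatim as in the planar case of \cite[Proposition 3.6]{Kub2025}.

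\textbf{Main obstacle: admissibility of the transported competitors.} The constraints in~\eqref{minelvinc} depend on $\pa^*F$: the $H^{1,2}$ bound on $\pa^\alpha u$ and the $h^{-1/4}$ bound on $\Delta_{\pa F}\pa^\alpha u$. Composing with $\Phi(t,\cdot)^{-1}$ and changing the surface changes these norms by $O(t)$, so $u_t$ may violate the constraints when they are saturated. I would handle this by scaling toward an interior admissible function: take $\tilde u_t:=(1-ct)u_t+ct\,\bar u$, where $\bar u=\lambda w_0(\pi_{\pa\Omega})$ is the strictly admissible function from the existence lemma. Convexity of all constraints restores admissibility. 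The energy error is then $O(t)$ times a quantity that must be shown to vanish in the limit, or is dominated using the Euler--Lagrange inequality of the constrained problem.

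This is the delicate point. If saturation cannot be avoided, the formula holds with $u$ satisfying only a variational inequality, so one must verify that the free-boundary equations in~\eqref{eqelliel} still hold for $u_F^{K_{el}}$ in the regime where the paper applies it. This is presumably guaranteed by the choice of $K_{el}$ as in Remark~\ref{02052026falchi}.
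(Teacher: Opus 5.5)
Your Step~1 is the classical Hadamard computation for the \emph{unconstrained} Neumann problem. It does not transfer to the constrained minimizer $u:=u_F^{K_{el}}$, for three reasons.

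\textbf{Admissibility of the transported competitor.} For $X\in C^1_c$ the flow $\Phi(t,\cdot)$ is only $C^1$. Hence $u_t=u\circ\Phi(t,\cdot)^{-1}$ is in general not even $C^2$, so it is not in $\mathfrak{C}^{3,\frac14}_{K_{el}}(\Omega,\R^3)$. Even for smooth $X$, a saturated $C^{3,\frac14}$ bound is violated at order $t$. So the transported competitor already fails the constraint that does \emph{not} depend on $F$. Your rescaling $(1-ct)u_t+ct\bar u$ does not restore $C^3$ regularity.

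\textbf{The integration by parts.} Turning the domain integral into $-\int_{\pa F}Q(E(u))X\cdot\nu_F$ uses \eqref{eqelliel}, which the constrained minimizer need not satisfy. Without it the two expressions differ by $\int_{\Omega\setminus F}\mathbb{C}E(u):E(\nabla u\,X)\,dx$.

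\textbf{The rescaling.} It changes the energy at first order by $ct\int_{\Omega\setminus F}\mathbb{C}E(u):E(\bar u-u)\,dx$. This is a Lagrange-multiplier term: it is nonnegative by the variational inequality and in general nonzero when constraints are active. It does not vanish after dividing by $t$, so the squeeze does not close. Your last paragraph leaves exactly this step as ``presumably guaranteed'', and that is where the proof is missing.

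\textbf{The missing idea: no transport is needed.} Admissible displacements in \eqref{minelvinc} are defined, and uniformly bounded in $C^{3,\frac14}$, on all of $\Omega$. So $u_F^{K_{el}}$ itself competes for $F_t$, and $u_{F_t}^{K_{el}}$ itself competes for $F$. This is how the paper exploits minimality of the constrained problem elsewhere (see \eqref{120226formz1} and \eqref{26032025form2zz}), and it is the natural reading of the planar argument it cites. With $g_s:=Q(E(u_{F_s}^{K_{el}}))$ one gets
\[
\int_{\R^3}(\chi_F-\chi_{F_t})\,g_t\,dx\;\le\;\mathcal{E}(u_{F_t}^{K_{el}})-\mathcal{E}(u_F^{K_{el}})\;\le\;\int_{\R^3}(\chi_F-\chi_{F_t})\,g_0\,dx .
\]
We have $|F\Delta F_t|\le C|t|$, and $t^{-1}\int(\chi_F-\chi_{F_t})\phi\,dx\to-\int_{\pa F}\phi\,X\cdot\nu_F\,d\mathcal{H}^2$ for continuous $\phi$. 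Hence both bounds, divided by $t$, converge to the right-hand side of \eqref{FVELAST} once $g_t\to g_0$ uniformly near $\pa F$. That convergence follows from three facts:
\begin{itemize}
\item Arzel\`a--Ascoli gives compactness;
\item the upper bound makes every limit point a minimizer for $F$;
\item strict convexity of $Q$ makes $E(u)$ unique on $\Omega\setminus F$.
\end{itemize}
The minimizer itself is not unique inside $F$, so your claimed $C^3$ convergence of $u_{F_t}^{K_{el}}$ on all of $\Omega$ is too strong. This argument uses no Euler--Lagrange system, no integration by parts, and no regularity of $X$ beyond $C^1$. Saturation of the $C^{3,\frac14}$ bound is harmless.

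\textbf{What remains open.} The only remaining issue is the one you correctly flag: cross-admissibility for the $\pa^*F$-dependent constraints in \eqref{minelvinc}. This is a genuine point; the paper's appeal to the planar case does not discuss it. With such a constraint active, one expects an extra multiplier term in the formula. The issue concerns the constraint class itself (for instance, one would need those constraints to be non-saturated at the minimizer). It cannot be repaired by the transport-plus-rescaling device.
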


\subsection{Minimizing movement scheme and flat solution}
\label{subsec:flatsol}
Fix  a time discretization step $h>0$, and let $ K_{el},\beta>0$ be fixed constants. Let $E \Subset \R^3 $ be a set of finite perimeter. 
For every measurable set $F \subset \R^3$ such that $F \Delta E \subset \mathcal{I}_{\beta}(\pa E)$, we define the functional
\begin{equation}\label{energia}
	\mathcal{F}_h(F,E):= \mathcal{G}(F)+ \frac{1}{2h}d_{H^{-1}}^2(F,E),
\end{equation}
where 
\begin{equation}\label{defG}
	\mathcal{G}(F)=P_{\varphi}(F)+\mathcal{E}(E(u_F^{K_{el}})).
\end{equation} 
\begin{definition}[Constrained discrete flat flow] \label{12092023def1}
	Let $K_{el},\,\beta, \, h>0$. Given   a set of finite perimeter $E_0 \Subset \R^3$, we define   $\{ E_{hk}^{h,\beta}\}_{k \in \mathbb{N}}$ iteratively by setting $E_0^{h,\beta}:=E_0$ and for $k\geq 1$ 
	$$   E_{ hk}^{h,\beta} \in \arg \min \left\{\mathcal{F}_{h}(F, E_{h(k-1)}^{h,\beta}) :  F \Delta E_{h(k-1)}^{h,\beta} \subset {\rm cl}( \mathcal{I}_{\beta}(\pa E_{h(k-1)}^{h,\beta})) \right\} .$$
	Moreover, we set $E_{t}^{h,\beta}:= E_{hk }^{h,\beta}$ for any  $t \in [k h,\, (k+1)h).$
	The family $\{E_t^{h,\beta}\}_{t \geq 0}$ is called a constrained discrete flat flow with initial datum $E_0$ and time step $h$. 
\end{definition}
We define a \emph{constrained flat flow solution of the anisotropic surface diffusion with elasticity} with initial datum $E_0$ any family of sets $\{E^\beta_t\}_{t \geq 0}$ such that $ P(E^\beta_t) < +\infty$ for every $t \geq 0$ and
\begin{equation}
    \lim_{n \rightarrow + \infty} \vert E^{h_n , \beta}_t \Delta E^\beta_t \vert =0 \text{ locally uniformly in $t$}.
\end{equation}

\section{Almost minimizers of the anisotropic perimeter}\label{sezioniquasiminimi}
In this section, we discuss properties of almost-minimizers of the anisotropic perimeter beginning with the definition.
\begin{definition}\label{15052026deflambdaalpha}
    Let $\Lambda \geq 0$ and $\alpha \in [0,\frac{1}{3})$. A set of finite perimeter $E \subset \R^{3}$ is called a $(\Lambda,\alpha)$-minimizer of the anisotropic perimeter if, for every set of finite perimeter $G \subset \R^3$, it holds that
    \begin{equation}
        P_{\varphi}(E) \leq P_{\varphi}(G)+ \Lambda \vert E \Delta G \vert^{1-\alpha}.
    \end{equation}
    When $\alpha=0$, we refer to $E$ as a $\Lambda$-minimizer of the anisotropic perimeter.
\end{definition}
It is known that if $E \subset \mathbb{R}^3$ is 
$(\Lambda,\alpha)$-minimizer of the anisotropic perimeter, then $\partial E$ is of class $C^{1,\eta}$ for every $ \eta \in [0 , \frac{\beta}{2})$; see ~\cite{AlmgrenSimonSchoen1977},~\cite{Bombieri1982} and~\cite{DePMa2015}. 
In the next lemma, we prove that if a bounded set is $C^2$-regular and satisfies the \textsc{UBC}, then it is a $\Lambda$-minimizer of the anisotropic perimeter.
\begin{lemma}\label{lamdaminC2set}
    Let $E \subset \R^3$ be a $C^2$-regular bounded set satisfying the \textsc{UBC} with radius $r_0$. Then $E$ is a $\Lambda$-minimizer of the anisotropic perimeter; that is, for every set $G \subset \R^3$ of finite perimeter, it holds
    \begin{equation}\label{anisoLambdmin}
        P_\varphi(E) \leq P_\varphi(G) + \Lambda \vert E \Delta G \vert,
    \end{equation}
    where $\Lambda= \Lambda(r_0,\varphi)$.
\end{lemma}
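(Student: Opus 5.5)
The plan is a calibration argument built from the signed distance function. Set $r:=r_0/2$ and work in the tubular neighborhood $\mathcal{I}_{r}(\pa E)$. There $d_E$ is of class $C^2$ by~\eqref{24112025primacrossfit} (this uses that $E$ is $C^2$-regular), and $|\nabla^2 d_E|\le C/r_0$, because $|B_E|\le 2/r_0$ and $|d_E B_E|\le 1$. Choose a cutoff $\eta\in C^\infty_c(\mathcal{I}_{r}(\pa E))$ with $0\le\eta\le 1$, $\eta=1$ on $\mathcal{I}_{r/2}(\pa E)$ and $|\nabla\eta|\le C/r_0$. Then define
\[
T:=\eta\,\nabla\varphi(\nabla d_E)\in C^1_c(\R^3,\R^3).
\]
Since $\varphi$ is a norm, $\varphi^0(\nabla\varphi(\nu))=1$ for every $\nu\neq 0$, so $\varphi^0(T)\le 1$ everywhere. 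On $\pa E$ we also have $T\cdot\nu_E=\nabla\varphi(\nu_E)\cdot\nu_E=\varphi(\nu_E)$, by the $1$-homogeneity of $\varphi$. Finally, $\|\div T\|_{L^\infty}\le \Lambda(r_0,\varphi)$: the term $\nabla^2\varphi(\nabla d_E)\nabla^2 d_E$ is bounded because $\nabla^2\varphi$ is bounded on the unit sphere and $|\nabla^2 d_E|\le C/r_0$, and the term $\nabla\eta\cdot\nabla\varphi$ is bounded by $C/r_0$.

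With $T$ in hand, let $G$ be a set of finite perimeter. It suffices to treat the case $P(G)<\infty$. Apply the divergence theorem to $E$, which is smooth, and the Gauss--Green formula to $G$ on its reduced boundary, and subtract:
\[
P_\varphi(E)=\int_{\pa E}T\cdot\nu_E\,d\mathcal{H}^2=\int_E\div T\,dx=\int_G\div T\,dx+\int_{E\setminus G}\div T\,dx-\int_{G\setminus E}\div T\,dx .
\]
For the first term on the right, Gauss--Green gives $\int_G\div T=\int_{\pa^*G}T\cdot\nu_G\,d\mathcal{H}^2$. The pointwise inequality $T\cdot\nu\le\varphi^0(T)\varphi(\nu)\le\varphi(\nu)$ then bounds this by $P_\varphi(G)$. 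The remaining two terms are bounded by $\|\div T\|_\infty|E\Delta G|\le\Lambda|E\Delta G|$. Together these give~\eqref{anisoLambdmin}.

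The main obstacle is that $\nabla d_E$ is only defined and $C^1$ inside the tubular neighbourhood, so $T$ must be localized, and the cutoff is where care is needed. The cutoff preserves the constraint $\varphi^0(T)\le1$ and is harmless on $\pa E$, where $\eta=1$. Its only effect is an extra contribution to $\div T$ of size $C/r_0$, so the constant $\Lambda$ depends only on $r_0$ and on $\sup_{\mathbb{S}^2}|\nabla^2\varphi|$, as required.

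One further point needs checking: the argument uses only $C^2$-regularity together with the UBC, so that $d_E\in C^2(\mathcal{I}_{r_0}(\pa E))$. It requires no global minimality of $E$ and no restriction on $G$ beyond finite perimeter, since the field $T$ calibrates $E$ up to the bulk error term.
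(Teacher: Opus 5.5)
Your proof is correct and is the same calibration argument as the paper's: the paper's field $X=\nabla\varphi(\nu_E\circ\pi_{\partial E})\,\xi$ coincides with your $T=\eta\,\nabla\varphi(\nabla d_E)$, because $\nabla d_E=\nu_E\circ\pi_{\partial E}$ in the tubular neighbourhood, and the estimate then follows exactly as you describe. One justification needs tightening, the Hessian bound: $|d_E B_E|\le 1$ alone does not control $(I+d_EB_E)^{-1}$, but the UBC gives principal curvatures $|\kappa_i|\le 1/r_0$, so on $\mathcal{I}_{r_0/2}(\partial E)$ one has $1+d_E\kappa_i\ge 1/2$ and the eigenvalues $\kappa_i/(1+d_E\kappa_i)$ of $\nabla^2 d_E$ are bounded by $2/r_0$.
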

\begin{proof}
The proof is based on a standard calibration argument, where the calibration is defined as
\begin{equation}\label{020226primacrossfit1}
      X : \R^3 \rightarrow \R^3,\,\,  X:= \nabla \varphi (\nu_E \circ \pi_{\pa E}) \xi,
    \end{equation}
    where $\xi \in C^\infty_c(\mathcal{I}_{r_0}(\pa E))$ and $\xi=1$ in $\mathcal{I}_{\frac{r_0}{2}}(\pa E)$. Details of the proof can be found in~\cite[Lemma~4.6]{Kub2025}.
\end{proof}
\begin{remark}\label{REMingrassato}
    {\rm Let $E \subset \R^3$ be as in Lemma~\ref{lamdaminC2set}, and let $ r \in (-\frac{r_0}{2}, \frac{r_0}{2})$. Then the set
    \begin{equation}\label{ingrassato}
        E_r := \left\{ x \in \R^3 : d_E(x) < r   \right\}
    \end{equation}
    is $C^2$-regular and satisfies the \textsc{UBC} with radius $r_0/2$, where we recall $d_E$ denotes the signed distance to $E$. Hence, applying Lemma~\ref{lamdaminC2set}, we obtain that for every set of finite perimeter $G \subset \R^3$
    \begin{equation}\label{060226form1}
        P_\varphi(E_r) \leq P_\varphi(G)+ \Lambda \vert E_r \Delta G \vert,
    \end{equation}
    where $\Lambda= \Lambda(r_0,\varphi)$. Moreover, by testing~\eqref{060226form1} with $G \cup E_r$ and $G \cap E_r$ we deduce
    \begin{equation}\label{060226formutil1}
        P_\varphi(G \cap E_r) \leq P_\varphi(G)+ \Lambda \vert G \setminus E_r \vert \text{ and } P_\varphi(G \cup E_r) \leq P_\varphi(G)+ \Lambda \vert E_r \setminus G \vert.
    \end{equation}
     \fr}
\end{remark}
The following lemma states some useful regularity properties for  $(\Lambda,\alpha)$-minimizers of the anisotropic perimeter. The proof is similar to those in~\cite[Lemma 2.8]{CFJKsd},~\cite[Lemma 4.3]{Kub2025} and~\cite[Lemma 2.12]{KLMP2025}, therefore we omit it here.
\begin{lemma}\label{epsilonregolarita}
    Let $E \Subset \Omega$ be a uniformly $C^{2,\beta}$-regular set with constants $r_0,C_0$ satisfying the \textsc{UBC} with radius $r_0$. Let $F$ be a $(\Lambda,\alpha)$-minimizer of the anisotropic perimeter. Given $\gamma < \min\{ \beta, \frac{1}{2}- \frac{3 \alpha}{2} \}$, there exists $\delta_0 \in (0, \frac{r_0}{2})$, depending on $r_0,C_0,\Lambda, \alpha,\gamma$, such that if 
    \begin{equation}
        F \Delta E \subset \mathcal{I}_{\delta_0}(\pa E),
    \end{equation}
    then there exists $\psi \in C^{1,\gamma}(\pa E)$ such that
    \begin{equation}
        \pa F = \left\{ x+ \psi(x)\nu_E(x) : x \in \pa E  \right\}.
    \end{equation}
    Moreover, for every $\varepsilon>0$ there exists $\delta_0= \delta_0(\varepsilon)$ such that $ \| \psi \|_{C^{1,\gamma}(\pa E)} \leq \varepsilon$.
\end{lemma}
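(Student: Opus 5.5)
The proof follows the classical $\varepsilon$-regularity route for almost minimizers, adapted to the anisotropic setting. We combine the uniform density estimates and excess decay for $(\Lambda,\alpha)$-minimizers of $P_\varphi$ with the fact that $\pa E$ is uniformly $C^{2,\beta}$, and hence flat at scale $r_0$.

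\textbf{Step 1: uniform density estimates.} Testing the $(\Lambda,\alpha)$-minimality of $F$ with $F\setminus B_r(x)$ and $F\cup B_r(x)$, and using $c|\nu|\le\varphi(\nu)\le C|\nu|$, gives
\[
|F\cap B_r(x)|\ge c r^3,\qquad |B_r(x)\setminus F|\ge cr^3
\]
for $x\in\pa F$ and $r\le r_1(\Lambda,\alpha)$. The condition $\alpha<1/3$ ensures that $\Lambda|B_r|^{1-\alpha}=o(r^2)$. Since $F\Delta E\subset\mathcal I_{\delta_0}(\pa E)$, these estimates force $\pa F\subset \mathrm{cl}(\mathcal I_{\delta_0}(\pa E))$ and $d_{\mathcal H}(\pa F,\pa E)\le C\delta_0$. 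Indeed, a point of $\pa F$ at distance greater than $\delta_0$ from $\pa E$ would carry a ball of volume fraction of $F\Delta E$ bounded below, which is a contradiction.

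\textbf{Step 2: small excess.} Fix $x\in\pa F$ and set $\bar x=\pi_{\pa E}(x)$. Remark~\ref{REMingrassato}, applied with $E_r$ for $r=\pm\delta_0$, yields the comparison inequalities \eqref{060226formutil1}. These show that inside $B_\rho(\bar x)$ the perimeter of $F$ is close to that of the nearly flat surface $\pa E$, and in particular $P(F;B_\rho)\le \mathcal H^2(\pa E\cap B_\rho)+C\rho^3+C\delta_0\rho$. Combined with the Hausdorff closeness from Step 1, this gives that the spherical excess
\[
\mathbf e(F,x,\rho)=\rho^{-2}\int_{\pa F\cap B_\rho(x)}|\nu_F-\nu_E(\bar x)|^2\,d\mathcal H^2
\]
is at most $C(\rho^{\beta}+\delta_0/\rho)$. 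The key inequality here is the excess–perimeter identity $\int|\nu_F-\nu|^2/2=P(F;B)-\int\nu_F\cdot\nu$, with the last integral controlled by the divergence theorem. Choosing first $\rho$ small and then $\delta_0\ll\rho$ makes the excess smaller than the threshold $\varepsilon_{reg}$.

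\textbf{Step 3: $\varepsilon$-regularity and graph representation.} Invoke the $\varepsilon$-regularity theorem for $(\Lambda,\alpha)$-minimizers of elliptic integrands (\cite{AlmgrenSimonSchoen1977,Bombieri1982,DePMa2015}). It states that small excess implies that $\pa F\cap B_{\rho/2}(x)$ is the graph over $T_{\bar x}\pa E$ of a $C^{1,\gamma}$ function, with norm controlled by the excess plus $\Lambda\rho^{\frac12-\frac32\alpha}$. The restriction $\gamma<\frac12-\frac32\alpha$ comes from this scaling, and $\gamma<\beta$ is needed so that $\pa E$ itself is $C^{1,\gamma}$ with small oscillation. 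Since $\nu_F$ is uniformly close to $\nu_E\circ\pi_{\pa E}$, the map $\pi_{\pa E}|_{\pa F}$ is a local $C^{1,\gamma}$ diffeomorphism, by the inverse function theorem and \eqref{canzone1}. It is moreover injective and surjective onto $\pa E$ by a degree/covering argument: $\pa F$ separates the two components of $\mathcal I_{r_0}(\pa E)\setminus\mathrm{cl}\,\mathcal I_{\delta_0}(\pa E)$, because $F\Delta E\subset\mathcal I_{\delta_0}$. This defines $\psi=d_E\circ(\pi_{\pa E}|_{\pa F})^{-1}\in C^{1,\gamma}(\pa E)$ with $\|\psi\|_{L^\infty}\le\delta_0$.

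The smallness $\|\psi\|_{C^{1,\gamma}}\le\varepsilon$ follows by interpolation \eqref{interHOLDER} between $\|\psi\|_{C^0}\le\delta_0$ and a uniform $C^{1,\gamma'}$ bound for some $\gamma<\gamma'<\min\{\beta,\frac12-\frac32\alpha\}$, obtained by running Step 3 with $\gamma'$. I expect the main obstacle to be Step 2, namely producing a quantitative excess bound uniform in $x$ from the one-sided comparison sets $E_{\pm\delta_0}$ in the anisotropic setting. If needed, I would instead argue by contradiction and compactness: take sequences $F_n$ with $\delta_0\to0$, which converge in $L^1$ to $E$; almost minimality gives convergence of perimeters; then the excess at every point tends to zero uniformly, by the standard argument in \cite[Lemma 2.8]{CFJKsd}.
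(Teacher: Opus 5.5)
Your outline is correct and is the standard $\varepsilon$-regularity argument. The paper does not prove this lemma. It defers to \cite[Lemma 2.8]{CFJKsd}, \cite[Lemma 4.3]{Kub2025} and \cite[Lemma 2.12]{KLMP2025}, whose argument is, in essence, the contradiction--compactness route you list as a fallback. With $\delta_0=1/n$ one gets $F_n\to E$ in $L^1$ and, from almost-minimality and lower semicontinuity, convergence of the anisotropic perimeters and hence vanishing excess. The $C^{1,\gamma}$-convergence theorem for almost minimizers then gives normal graphs over $\pa E$ whose height functions tend to $0$ in $C^{1,\gamma}$, so no interpolation step is needed.

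Your direct route instead makes every constant explicit. The dependence of $\delta_0$ only on $r_0,C_0,\Lambda,\alpha,\gamma$ is then automatic, and interpolation gives a rate for $\|\psi\|_{C^{1,\gamma}(\pa E)}$ in terms of $\delta_0$. In the compactness route you must also let $E=E_n$ vary among uniformly $C^{2,\beta}$ sets and extract a convergent subsequence, which your fallback as written omits. The price of the direct route is the quantitative excess bound of Step~2.

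As you justify it, Step~2 does not work.
\begin{itemize}
\item The inequalities \eqref{060226formutil1} express one-sided almost-minimality of $E_{\pm\delta_0}$, so they only bound perimeters from below. For $G=F$ they are empty, since $E_{-\delta_0}\subset F\subset E_{\delta_0}$ up to null sets.
\item The upper bound must come from the $(\Lambda,\alpha)$-minimality of $F$ itself. Test it with $G=(F\setminus B_{\rho'}(x))\cup(E\cap B_{\rho'}(x))$, choosing $\rho'\in(\rho,2\rho)$ by slicing so that $\mathcal H^2((F\Delta E)\cap\pa B_{\rho'}(x))\le C\delta_0\rho$. This gives $P_\varphi(F;B_{\rho'}(x))\le P_\varphi(E;B_{\rho'}(x))+C\delta_0\rho+C\Lambda\rho^{3(1-\alpha)}$.
\item Since only $P_\varphi$, and not $P$, is controlled, work with the anisotropic excess. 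Set $\nu=\nu_E(\bar x)$ and apply the divergence theorem to the constant field $\nabla\varphi(\nu)$. This gives $\big|\int_{\pa^*F\cap B_{\rho'}}\nabla\varphi(\nu)\cdot\nu_F-\int_{\pa E\cap B_{\rho'}}\nabla\varphi(\nu)\cdot\nu_E\big|\le C\delta_0\rho$.
\item Then use $\varphi(\nu_F)-\nabla\varphi(\nu)\cdot\nu_F\ge c|\nu_F-\nu|^2$, which follows from \eqref{unifell} and $\nabla\varphi(\nu)\cdot\nu=\varphi(\nu)$. Together with $|\nu_E-\nu|\le C\rho$ on $\pa E\cap B_{2\rho}(x)$, this bounds the excess by $C(\rho^2+\delta_0/\rho+\Lambda\rho^{1-3\alpha})$.
\end{itemize}
That bound is what you need: take $\rho$ small, then $\delta_0\ll\rho$. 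With this replacement the rest of your argument goes through.
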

In the next definition we introduce the notion of constrained $(\Lambda,\alpha)$-minimizer of the anisotropic perimeter.
\begin{definition}\label{DEF:lambdamincon}
    Let $\delta>0$, $\Lambda \geq 0$, $\alpha \in [0,\frac{1}{3})$, $E \subset \R^3$, and $F$ a set of finite perimeter. We say that $F \subset \R^3$ is a $(\delta,E)$-constrained $(\Lambda,\alpha)$-minimizer of the anisotropic perimeter if, for every set of finite perimeter $G \subset \R^3$ such that
    \begin{equation}
        G \Delta F \subset {\rm cl} \big( \mathcal{I}_{\delta}(\pa E) \big) \text{ and } \vert G \vert = \vert F \vert,
    \end{equation}
    it holds
    \begin{equation}
        P_\varphi(F) \leq P_\varphi(G)+ \Lambda \vert F \Delta G \vert^{1-\alpha}.
    \end{equation}
\end{definition}
    
	We recall that $\Omega\subset \R^3$ is a bounded open set with smooth boundary. For every $E \Subset \Omega$ we define 
	\begin{equation}\label{cost:simga_E}
		\sigma_E:= \frac{1}{2}{\rm dist}(\pa E, \pa \Omega).
	\end{equation} 

The next lemma shows that, under sufficient regularity assumptions on $E$, a $(\delta,E)$-constrained $(\Lambda,\alpha)$-minimizer of the anisotropic perimeter is a $(\Lambda,\alpha)$-minimizer of the anisotropic perimeter. For the proof we refer the reader to~\cite[Lemma~2.10]{CFJKsd}.
\begin{lemma}\label{lambdaminCONstrain}
    Let $E\Subset \Omega$ be a bounded $C^2$-regular set satisfying the \textsc{UBC} with radius $r_0$. Let $F \subset \R^3$ be a $(\delta,E)$-constrained $(\Lambda,\alpha)$-minimizer of the anisotropic perimeter for $\delta <  \min\{\frac{r_0}{2},\sigma_E\}$ such that
    \begin{equation}
        F \Delta E \subset {\rm cl} \big( \mathcal{I}_\delta(\pa E) \big) \text{ and } \vert F \vert = \vert E \vert.
    \end{equation}
    Then $F$ is a $(\Lambda_1,\alpha)$-minimizer of the anisotropic perimeter, where $\Lambda_1$ is a constant depending only on $\Lambda,\alpha,r_0,\sigma_E,\varphi, \vert \Omega \vert$.
\end{lemma}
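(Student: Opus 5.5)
Fix an arbitrary set of finite perimeter $G \subset \R^3$. The goal is $P_\varphi(F) \leq P_\varphi(G) + \Lambda_1 \vert F \Delta G\vert^{1-\alpha}$. I will reduce to competitors that are admissible in Definition~\ref{DEF:lambdamincon} in three steps: cut $G$ to the band around $\pa E$, correct the volume, and then apply the constrained minimality.

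\textbf{Trivial case.} If $\vert F\Delta G\vert \geq c_0$ for a small constant $c_0$ fixed below, I use the elementary bound $P_\varphi(F) \leq P_\varphi(E) + \Lambda\vert F\Delta E\vert^{1-\alpha}$. This follows by testing with $G=E$, which is admissible since $\vert E\vert = \vert F\vert$. Combined with $P_\varphi(E) \leq C(r_0,\vert\Omega\vert)$ (from the \textsc{UBC} and $E \Subset \Omega$), this gives $P_\varphi(F) \leq C \leq C c_0^{\alpha-1}\vert F\Delta G\vert^{1-\alpha}$. So from now on I assume $\vert F\Delta G\vert < c_0$.

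\textbf{Step 1: localization.} Take $r=\delta$ in Remark~\ref{REMingrassato}, with the inner set $E_{-\delta}$ and the outer set $E_{\delta}$. Both are $C^2$ with \textsc{UBC} of radius $r_0/2$, since $\delta<r_0/2$. Set $G_1 := (G \cup E_{-\delta}) \cap E_{\delta}$. Because $E_{-\delta}\subset F\subset E_\delta$ up to null sets, we have $G_1 \Delta F \subset {\rm cl}(\mathcal{I}_\delta(\pa E))$ and $\vert G_1\Delta F\vert \leq \vert G\Delta F\vert$. Applying \eqref{060226formutil1} twice gives
\begin{equation}
P_\varphi(G_1) \leq P_\varphi(G) + \Lambda(r_0,\varphi)\big(\vert E_{-\delta}\setminus G\vert + \vert G\setminus E_\delta\vert\big) \leq P_\varphi(G) + C\vert F\Delta G\vert .
\end{equation}
The last inequality holds because those regions lie in $F\setminus G$ and $G\setminus F$, respectively. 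The cutting is done at the level $\delta$ exactly, which is why the closure of the band appears in the definition.

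\textbf{Step 2: volume fixing.} This is the main obstacle. Let $m:=\vert F\vert - \vert G_1\vert$, so $\vert m\vert\leq \vert F\Delta G\vert$. I need $G_2$ with $\vert G_2\vert=\vert F\vert$, $G_2\Delta F\subset {\rm cl}(\mathcal{I}_\delta(\pa E))$, $P_\varphi(G_2)\leq P_\varphi(G_1)+C\vert m\vert$, and $\vert G_2\Delta G_1\vert \leq C\vert m\vert$. The plan is to modify $G_1$ only inside the band, using the sets $E_s$ for $s\in(-\delta,\delta)$.

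If $m>0$, I replace $G_1$ by $G_1\cup E_s$ and increase $s$ from $-\delta$ toward $\delta$. The volume $s\mapsto \vert G_1\cup E_s\vert$ is continuous and reaches $\vert E_\delta\vert \geq \vert F\vert$, so some $s$ gives exact volume $\vert F\vert$. By \eqref{060226formutil1} the perimeter grows by at most $\Lambda\vert E_s\setminus G_1\vert = \Lambda m$. The case $m<0$ is symmetric, using $G_1\cap E_s$ with $s$ decreasing.

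Uniformity of the constant in $\Lambda$ uses the fact that each $E_s$ has \textsc{UBC} radius at least $r_0/2$. The dependence on $\sigma_E$ and $\vert\Omega\vert$ enters only through the bound on $P_\varphi(E)$ and the fact that the band stays inside $\Omega$.

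\textbf{Step 3: conclusion.} Apply Definition~\ref{DEF:lambdamincon} to $G_2$:
\begin{equation}
P_\varphi(F)\leq P_\varphi(G_2)+\Lambda\vert F\Delta G_2\vert^{1-\alpha}\leq P_\varphi(G)+C\vert F\Delta G\vert + C\vert F\Delta G\vert^{1-\alpha}.
\end{equation}
Since $\vert F\Delta G\vert\leq c_0\leq 1$, we have $\vert F\Delta G\vert\leq \vert F\Delta G\vert^{1-\alpha}$, so the right-hand side is bounded by $P_\varphi(G)+\Lambda_1\vert F\Delta G\vert^{1-\alpha}$. This gives the statement.
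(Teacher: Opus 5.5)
Your proof is correct. It follows the argument the paper relies on (it cites [CFJKsd, Lemma 2.10] and prepares the tools in Remark~\ref{REMingrassato}): truncate the competitor between $E_{-\delta}$ and $E_{\delta}$ using \eqref{060226formutil1}, restore the volume by taking the union or intersection with a parallel set $E_s$, $s\in[-\delta,\delta]$, and then apply the constrained minimality, with the case of large $\vert F\Delta G\vert$ handled by the a priori bound $P_\varphi(F)\le P_\varphi(E)+\Lambda\vert F\Delta E\vert^{1-\alpha}$. That bound indeed depends only on $r_0,\varphi,\vert\Omega\vert$, for instance because the inner collar gives $\vert\Omega\vert\ge\vert E\vert\ge \frac{r_0}{8}P(E)$.
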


\section{A notion of almost minimality respect the pseudo-pseudo-${H^{-1}}$ distance}\label{serzquaisminH^-1}
The aim of this section is to introduce a notion of almost minimality with respect to $d_{H^{-1}}$. By assuming higher regularity of the boundary of $E$, we will derive an analogue of~\eqref{anisoLambdmin}, in which the measure of the symmetric difference is replaced by the $d_{H^{-1}}$ and the anisotropic perimeter $P_\varphi$ is replaced by $\mathcal{G}$.

We begin with a few remarks that will be useful in the sequel.
\begin{remark}
{\rm 
    We observe that if $E \Subset \Omega$ satisfies the \textsc{UBC}, then $\partial E$ is of class $C^{1,1}$, and we can define the Sobolev space $H^2(\Sigma)$ with $\Sigma = \partial E$. \fr}
\end{remark}
\begin{remark}{\rm 
    Recalling that $\varphi$ is a smooth strictly convex norm, the following inequality holds:
\begin{equation}\label{cost:Jvarphi}
    \exists J_\varphi>0\,: \, \nabla^2 \varphi(\nu)\xi \cdot \xi \geq J_\varphi \vert \xi \vert^2 \,\, \forall \nu \in {\rm cl} \big( \mathcal{I}_{\frac{1}{4}}(\pa B_1)  \big) \text{ and } \xi \in \R^3 \text{ s.t. }\xi \cdot\nu=0.
\end{equation} \fr}
\end{remark}
\begin{proposition}\label{ordinaryprop}
		Let $E \Subset \Omega$ be a $C^2$-regular set satisfying the \textsc{UBC} with radius $r_0$. Set $\Sigma:= \pa E$ and assume that $ \| H_E \|_{H^2(\Sigma)} \leq K_0$ for some $K_0 >0$.  Then there exists $\delta_1 \in \left(0, \min \{ \frac{r_0}{2},\sigma_E\} \right)$ (recall \eqref{cost:simga_E}) depending on $r_0,K_0,\sigma_E,K_{el}$, such that, for every set $F$ of finite perimeter satisfying  $F \Delta E \subset \mathcal{I}_{\delta_1}(\pa E)$ and $\vert F\vert = \vert E \vert$, it holds that
		\begin{equation}\label{formulaimpostante}
			\mathcal{G}(E) \leq \mathcal{G}(F)+\Lambda_2 d_{H^{-1}}(F,E),
		\end{equation}
		where $\mathcal{G}$ is defined in~\eqref{defG} and $\Lambda_2= \Lambda_2(r_0,K_0,\sigma_E,K_{el})$. Moreover, if $\pa F$ is a normal graph with $\pa F= \{ x+ \psi(x)\nu_E(x): x \in \pa E\}$, $\psi \in C^1(\pa E)$ and $ \| \psi \|_{C^1(\pa E)} \leq \delta_1$, then
		\begin{equation}\label{H-1lambminperfunz}
			\frac{J_\varphi}{4} \| \nabla_{\pa E} \psi \|^2_{L^2(\pa E)}+ \mathcal{G}(E) \leq \mathcal{G}(F)+\Lambda_2 d_{H^{-1}}(F,E),
		\end{equation}
		where $J_\varphi$ is the constant given by~\eqref{cost:Jvarphi}.
	\end{proposition}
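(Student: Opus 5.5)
The plan is a calibration argument for the perimeter part and a separate perturbative estimate for the elastic part. The calibration is modified so that the volume defect, which is what $d_{H^{-1}}$ controls, enters linearly against a mean-zero test function.

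\emph{Perimeter part.} Take the vector field $X=\nabla\varphi(\nu_E\circ\pi_{\pa E})\xi$ used in Lemma~\ref{lamdaminC2set}. On $\mathcal{I}_{r_0/2}(\pa E)$ one has $\varphi^0(X)\le 1$, $X=\nabla\varphi(\nu_E)$ on $\pa E$, and $\div X=H^\varphi_E\circ\pi_{\pa E}+O(d_E)$. More precisely, using \eqref{24112025primacrossfit} we get $\div X(x)=\div_{\pa E}(\nabla\varphi(\nu_E))(\pi_{\pa E}x)+d_E(x)\,R(x)$, with $R$ bounded in terms of $r_0$ and $\varphi$.

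The divergence theorem on $F\Delta E\subset\mathcal{I}_{\delta_1}$ then gives
\[
P_\varphi(F)-P_\varphi(E)\ \ge\ \int (\chi_F-\chi_E)\,\div X\,dx \;+\; \int_{\pa^*F}\big(\varphi(\nu_F)-X\cdot\nu_F\big)\,d\mathcal{H}^2 .
\]
The last integral is nonnegative. Write $\div X= H^\varphi_E\circ\pi_{\pa E}+d_E R$.

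\emph{The linear term.} Since $|F|=|E|$, we may subtract the mean: $H^\varphi_E-\overline{H^\varphi_E}$ is a legitimate test function in \eqref{d_{H^{-1}}f} after normalizing by $\|\nabla_{\pa E}H^\varphi_E\|_{L^2}$. This gives $\big|\int(\chi_F-\chi_E)H^\varphi_E\circ\pi_{\pa E}\big|\le \|\nabla_{\pa E}H^\varphi_E\|_{L^2}\,d_{H^{-1}}(F,E)$. This quantity is controlled by $K_0$: since $H^\varphi_E$ depends on $B_E$ and $\nu_E$, Lemma~\ref{pensobene} together with Lemma~\ref{lem:hilbert-norm} bounds $\|B_\Sigma\|_{H^2}$ through $\|H_E\|_{H^2}$, and hence bounds $\|\nabla_{\pa E}H^\varphi_E\|_{L^2}$.

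\emph{The quadratic error term.} This is the main obstacle. The term $\int(\chi_F-\chi_E)d_E R$ is only bounded by $\int_{F\Delta E}|d_E|$, and this must be absorbed. The plan is as follows.

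\textbf{Step 1.} Reduce to sets $F$ that are almost minimizers. If $\mathcal{G}(F)\ge\mathcal{G}(E)$ there is nothing to prove. Otherwise, replace $F$ by a minimizer of $\mathcal{G}(G)+\Lambda_2 d_{H^{-1}}(G,E)$ under the constraint $G\Delta E\subset\mathrm{cl}\,\mathcal{I}_{\delta_1}$ and $|G|=|E|$. Such a minimizer exists by lower semicontinuity.

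\textbf{Step 2.} Show that the minimizer $G$ is a constrained $(\Lambda,\alpha)$-minimizer. The elastic term is Lipschitz in $|G\Delta G'|$, because the $C^{3,1/4}$ constraint gives $\|Q(E(u))\|_\infty\le C(K_{el})$. The distance $d_{H^{-1}}(\cdot,E)$ is Hölder in $|G\Delta G'|$ via the $L^\infty$ bound on test functions coming from Sobolev embedding on $\pa E$.

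\textbf{Step 3.} Apply regularity. By Lemma~\ref{lambdaminCONstrain} and Lemma~\ref{epsilonregolarita}, the boundary $\pa G$ is a normal graph of some $\psi$ with small $C^{1,\gamma}$ norm. It therefore suffices to prove the graph inequality \eqref{H-1lambminperfunz}.

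\emph{The graph case.} For a normal graph $\psi$, expand $P_\varphi(F)-P_\varphi(E)$ to second order in $(\psi,\nabla_{\pa E}\psi)$. The second-order part is $\tfrac12\int A(\nu_E)\nabla\psi\cdot\nabla\psi\ge \tfrac{J_\varphi}{2}\|\nabla_{\pa E}\psi\|^2$ by \eqref{cost:Jvarphi}. The first-order part is $\int H^\varphi_E\,\xi_{F,E}$, using \eqref{xiFEfunz}, and is handled as in the linear term above. The remaining terms are $O(\int\psi^2+|\psi||\nabla\psi|)$.

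To absorb $\int\psi^2$, use the interpolation inequality \eqref{gucciniintepol}, $\|\xi\|_{L^2}^2\le\|\nabla\xi\|_{L^2}\,d_{H^{-1}}$, followed by Young's inequality. The leftover is $\tfrac{J_\varphi}{8}\|\nabla\psi\|^2+C\,d_{H^{-1}}^2$, and $d_{H^{-1}}^2\le C\delta_1 d_{H^{-1}}$ since $d_{H^{-1}}$ is small on $\mathcal{I}_{\delta_1}$.

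\emph{Elastic part.} By Proposition~\ref{PropelELASt} along the normal flow from $E$ to $F$, the difference $\mathcal{E}(F)-\mathcal{E}(E)$ is $-\int_{\pa E}Q(E(u_E))\xi_{F,E}$ plus a quadratic remainder. The linear part is bounded by $\|\nabla_{\pa E}Q(E(u_E))\|_{L^2}\,d_{H^{-1}}$, which is at most $C(K_{el})\,d_{H^{-1}}$ thanks to the $H^{1,2}$ constraint. The remainder is absorbed as above. The uniform-in-time $C^{3,1/4}$ control on $u$ is what makes this step work, and it is also where the dependence of $\Lambda_2$ on $K_{el}$ enters.
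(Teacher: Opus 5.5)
Your architecture matches the paper's. You obtain the graph inequality \eqref{H-1lambminperfunz} from a second-order expansion of $P_\varphi$ over $\pa E$: the first-order term $\int_{\pa E}H^\varphi_E\psi$ is paired against $d_{H^{-1}}$, and the $\int_{\pa E}\psi^2$ terms are absorbed through \eqref{gucciniintepol} and Young. You then get \eqref{formulaimpostante} by minimizing $\mathcal{G}+\Lambda_2 d_{H^{-1}}(\cdot,E)$, proving almost-minimality, and reducing to a small normal graph via Lemmas~\ref{lambdaminCONstrain} and~\ref{epsilonregolarita}. Your chain $\mathcal{G}(E)\le\mathcal{G}(G)+\Lambda_2 d_{H^{-1}}(G,E)\le\mathcal{G}(F)+\Lambda_2 d_{H^{-1}}(F,E)$ is slightly more economical than the paper's Claim 6, which goes on to prove $G=E$. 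Your calibration preamble ends up unused.

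Two smaller corrections. First, on the two-dimensional surface $\pa E$, $H^1$ embeds in every $L^p$ with $p<\infty$, but not in $L^\infty$. That is exactly what gives the H\"older exponent $1-1/p$ for $d_{H^{-1}}(\cdot,E)$; an $L^\infty$ bound would give Lipschitz continuity. Second, passing between $\psi$ and $\xi_{F,E}$ in the interpolation requires $\|\psi\|_{L^2}\le 2\|\xi_{F,E}\|_{L^2}$ and $\|\nabla_{\pa E}\xi_{F,E}\|_{L^2}\le C\|\nabla_{\pa E}\psi\|_{L^2}$. These use an $L^p$ bound on $\nabla_{\pa E}B_E$ coming from $\|H_E\|_{H^2}\le K_0$ (the paper's Claim 2).

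The step that does not hold as written is the elastic estimate. Integrating Proposition~\ref{PropelELASt} along the graphs $E_t$ of $t\psi$ gives, with $Q_t:=Q(E(u^{K_{el}}_{E_t}))$,
\[
\mathcal{E}(E(u_F^{K_{el}}))-\mathcal{E}(E(u_E^{K_{el}}))=-\int_0^1\!\int_{\pa E}Q_t\big(x+t\psi(x)\nu_E(x)\big)\,\psi(x)\big(1+t\psi H_E+t^2\psi^2K_E\big)(x)\,d\mathcal{H}^2_x\,dt .
\]
You replace $Q_t$ by the frozen stress $Q(E(u_E))$ and call the error quadratic and absorbable. That would require a quantitative stability estimate for the elastic minimizer with respect to the domain, in norms of $\psi$ no stronger than $H^1$. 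Nothing of this kind is available. Lemma~\ref{lem:stimeQ} needs $H^2$ control of the height function and much more regularity of the sets, whereas you only have $\|\psi\|_{C^1}\le\delta_1$ and $\|\nabla_{\pa E}\psi\|_{L^2}$. For the constrained problem \eqref{minelvinc}, whose admissible class depends on the set, it is not even clear that such an estimate holds.

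The fix is simply not to freeze the stress. By the $C^{3,\frac{1}{4}}$ constraint, $\|Q_t\|_{C^1(\Omega)}\le C(K_{el})$ uniformly in $t$. Hence replacing $Q_t(x+t\psi\nu_E)$ by $Q_t(x)$, and the weight by $\xi_{F,E}$, costs only $C\int_{\pa E}\psi^2$. Since $\xi_{F,E}$ has zero mean, for every $t$ you get $\big|\int_{\pa E}Q_t\,\xi_{F,E}\big|\le\|\nabla_{\pa E}Q_t\|_{L^2(\pa E)}\,d_{H^{-1}}(F,E)\le C(K_{el})\,d_{H^{-1}}(F,E)$. Note that it is this $C^{3,\frac{1}{4}}$ bound that controls $\nabla_{\pa E}Q_t$, not the $H^{1,2}$ constraint, which concerns third derivatives.

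The paper avoids the path altogether. Testing the minimality of $u_E^{K_{el}}$ with $u_F^{K_{el}}$ gives
\[
\mathcal{E}(E(u_E^{K_{el}}))\le\mathcal{E}(E(u_F^{K_{el}}))+\int_{\R^3}(\chi_F-\chi_E)\,Q(E(u_F^{K_{el}}))\,dx .
\]
Then $Q(E(u_F^{K_{el}}))\circ\pi_{\pa E}$ serves as the test function in $d_{H^{-1}}$. The bound $|Q(E(u_F^{K_{el}}))-Q(E(u_F^{K_{el}}))\circ\pi_{\pa E}|\le CK_{el}|d_E|$ on $F\Delta E$ produces the same $\int_{\pa E}\psi^2$ error. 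This route is one-sided, which is all that is needed, and requires neither the first-variation formula at the intermediate sets nor differentiability in $t$.
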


\begin{proof} We divide the proof in two steps.\\
\emph{Step 1}: We prove inequality~\eqref{H-1lambminperfunz}.\\ 
\emph{Claim 1}: The following estimates hold
     \begin{equation}\label{08022026form1}
          \| B_\Sigma \|_{H^2(\Sigma)}\leq C(K_0) \text{ and } \| H_E^\varphi \|_{H^2(\Sigma)} \leq C(K_0).
     \end{equation}
     
     The first inequality in~\eqref{08022026form1} follows immediately from Lemma~\ref{lem:hilbert-norm}. 
     Recall that the anisotropic curvature is not invariant under isometries of $\R^3$, hence, in order to compute $H^\varphi_E$ using the local chart $$\pa E \cap B_{\frac{r_0}{2}}(x)=\Big \{ (y',f(y'): y' \in B'_{\frac{r_0}{2}}(x')\Big\} \cap B_{\frac{r_0}{2}}(x)  $$
   we must accordingly modify the anisotropy. We consider the modification of $\varphi$ defined as $\tilde{\varphi}(x):= \varphi(Rx)$, where $R$ is an isometry of $\R^3$ chosen so as to represent the surface in this local chart. Therefore, using Lemma~\ref{10Expcurvvarphi} and in particular formula~\eqref{02expphicurv1} (with the choice $\pa F=\{ (y',f(y'): y' \in B'_{\frac{r_0}{2}}(x')\} $ and $ \pa E=B'_{\frac{r_0}{2}}(x')$),  we deduce that $f$ satisfies an equation of the form:
    \begin{equation}\label{100226ppranzo1}
        H_E^\varphi(\cdot,f(\cdot))=- {\rm tr} \big( A \nabla'^2 f  \big)+ H_{B'_{\frac{r_0}{2}}}^{\tilde \varphi} + \langle \mathcal{A}(\nabla' f), \nabla'^2 f \rangle +a_0(f, \nabla' f,\cdot) \text{ in } B'_{\frac{r_0}{2}}
    \end{equation}
   where $A \in \R^{2 \times 2}$ is positive definite and $\mathcal{A}$ is a smooth matrix-valued function taking values in $\R^{2 \times 2}$ and $a_0$ is a smooth real value function such that $\mathcal{A}(0)=0$ and $a_0(0,0,\cdot)=0$. Therefore, by~\eqref{100226ppranzo1} and using the $H^4$-bound of $f$, it follows that $ H_E^\varphi \in H^2(\Sigma)$ and that the second inequality in~\eqref{08022026form1} holds.

    By formula~\eqref{08022026form1} and the Sobolev embedding theorem, we obtain that $E$ is uniformly $C^{2,\alpha}$-regular for every $\alpha \in (0,1)$ and it holds
    \begin{equation}\label{linfB_Eealtro}
        \forall \, \,p \in (1,+\infty)\, \, \exists \, \,C_p(r_0,K_0,p)>0 \,\,: \,\, \| B_\Sigma \|_{L^\infty(\Sigma)}+ \| \overline{\nabla}   B_\Sigma \|_{L^p(\Sigma)} \leq C_p.
    \end{equation}
   Recall that $ \xi_{F,E}= \psi + \frac{H_E}{2}\psi^2+ \frac{K_E}{3}\psi^3$ (see Lemma~\ref{lemmaH^-1} and formula~\eqref{xiFEfunz}).\\
    \textit{Claim 2:} There exists $\delta_1 = \delta_1(r_0, K_0, \sigma_E)>0$ such that if $\| \psi \|_{C^1(\partial E)} \leq \delta_1$, then
    \begin{align}
        & \frac{1}{2}\vert \psi (x) \vert \leq \vert \xi_{F,E}(x) \vert \leq 2 \vert \psi (x)\vert \text{ for every } x \in \pa E ,\label{090226ppranzf1} \\ 
        & \| \nabla_{\pa E} \xi_{F,E} \|_{L^2(\pa E)} \leq  2 \| \nabla_{\pa E} \psi \|_{L^2(\pa E)}+ \sqrt{\delta_1} \| \psi \|_{L^2(\pa E)}, \label{090226ppranzf2}\\
        & \frac{1}{C}\| \nabla_{\pa E} \psi \|_{L^2(\pa E)} \leq \| \nabla_{\pa E} \xi_{F,E} \|_{L^2(\pa E)} \leq C \| \nabla_{\pa E} \psi \|_{L^2(\pa E)}, \label{10022026pprinazof2}
    \end{align}
    for some constant $C=C(r_0,K_0,\sigma_E)>0$.
    
   By~\eqref{linfB_Eealtro}, inequalities~\eqref{090226ppranzf1} follow immediately, provided that $\delta_1$ is sufficiently small. We compute the tangential gradient of $\xi_{F,E}$ and find
   \begin{equation}\label{ladefivatadixiFE}
       \nabla_{\pa E} \xi_{F,E}= \nabla_{\pa E} \psi \big( 1+ H_E \psi + K_E \psi^2  \big)+ \frac{\psi^2}{2} \nabla_{\pa E} H_E + \frac{\psi^3}{3} \nabla_{\pa E} K_E .
   \end{equation}
   Therefore, from formula~\eqref{linfB_Eealtro}, the Cauchy–Schwarz inequality, and the interpolation inequality of Proposition~\ref{prop:interpolation}, we deduce, for $\delta_1$ sufficiently small, that
   \begin{align*}
       \| \nabla_{\pa E} \xi_{F,E} \|_{L^2(\pa E)} &\leq 2 \| \nabla_{\pa E} \psi \|_{L^2(\pa E)}+C \left(  \int_{\pa E} \psi^4 \vert \overline{\nabla}B_{\Sigma}\vert^2 \, d \mathcal{H}^2\right)^{\frac{1}{2}}\notag\\
       & \leq 2 \| \nabla_{\pa E} \psi \|_{L^2(\pa E)}+ C \| \psi \|^2_{L^8(\pa E)} \| \overline{\nabla} B_E \|_{L^4(\pa E)}\notag\\
       & \leq 2 \| \nabla_{\pa E} \psi \|_{L^2(\pa E)}+ C \| \psi \|_{L^2(\pa E)} \| \psi \|_{W^{1,4}(\pa E)}\notag  \\
       & \leq 2 \| \nabla_{\pa E} \psi \|_{L^2(\pa E)}+ C\delta_1 \| \psi \|_{L^2(\pa E)} \notag  \\
       & \leq   2 \| \nabla_{\pa E} \psi \|_{L^2(\pa E)}+ \sqrt{\delta_1} \| \psi \|_{L^2(\pa E)},
   \end{align*}
   which proves~\eqref{090226ppranzf2}. 
   To derive the second inequality in~\eqref{10022026pprinazof2}, we combine~\eqref{090226ppranzf1}, the above estimates, and the Poincaré inequality, to obtain that
 \begin{equation}
     \begin{split}
         \| \nabla_{\pa E} \xi_{F,E}\|_{L^2(\pa E)}& \leq 2 \| \nabla_{\pa E} \psi \|_{L^2(\pa E)}+ \sqrt{\delta_1} \| \psi \|_{L^2(\pa E)}\\
         & \leq 2 \| \nabla_{\pa E} \psi \|_{L^2(\pa E)}+ 2 \sqrt{ \delta_1} \| \xi_{F,E} \|_{L^2(\pa E)}\\
         & \leq 2 \| \nabla_{\pa E} \psi \|_{L^2(\pa E)}+ 2\sqrt{\delta_1} C\| \nabla_{\pa E} \xi_{F,E}\|_{L^2(\pa E)}\\
         & \leq 2 \| \nabla_{\pa E} \psi \|_{L^2(\pa E)}+ \frac{1}{2}\| \nabla_{\pa E} \xi_{F,E} \|_{L^2(\pa E)}.
     \end{split}
 \end{equation}
Whereas, to obtain the first inequality in~\eqref{10022026pprinazof2}, we use formulas~\eqref{090226ppranzf1}, ~\eqref{ladefivatadixiFE}, and the Poincaré inequality to deduce that
\begin{equation}
\begin{split}
    \| \nabla_{\pa E} \psi \|_{L^2(\pa E)}& \leq \| \nabla_{\pa E} \xi_{F,E} \|_{L^2(\pa E)}+ C \delta_1 \| \nabla_{\pa E} \psi \|_{L^2(\pa E)}+ C \bigg( \int_{\pa E} \psi^4 \vert \overline{\nabla} B_\Sigma \vert^2 \, d \mathcal{H}^2 \bigg)^{\frac{1}{2}}\\
    & \leq \| \nabla_{\pa E} \xi_{F,E} \|_{L^2(\pa E)}+ C \delta_1 \| \nabla_{\pa E} \psi \|_{L^2(\pa E)}+ C \sqrt{\delta_1} \| \psi \|_{L^2(\pa E)}\\
    & \leq \| \nabla_{\pa E} \xi_{F,E} \|_{L^2(\pa E)}+ C \delta_1 \| \nabla_{\pa E} \psi \|_{L^2(\pa E)}+ C \sqrt{\delta_1} \| \xi_{F,E} \|_{L^2(\pa E)} \\
    & \leq C \| \nabla_{\pa E} \xi_{F,E} \|_{L^2(\pa E)}+ C \delta_1 \| \nabla_{\pa E} \psi \|_{L^2(\pa E)}.
\end{split}
\end{equation}
\textit{Claim 3:} There exists a constant $C= C(r_0,K_0)>0$ such that
   \begin{equation}\label{120226tesi1}
       \frac{3 J_\varphi}{8} \| \nabla_{\pa E} \psi \|^2_{L^2(\pa E)}+ P_\varphi(E) \leq P_\varphi(F)+ C d_{H^{-1}}(F,E).
   \end{equation}
   
Define $\Psi: \pa E \rightarrow \pa F$ by $ \Psi(x):= x+ \psi(x) \nu_E(x)$. We remark that, since $E$ is uniformly $C^{2,\alpha}$-regular, the function $\Psi$ is a $C^{1,\alpha}$ diffeomorphism. Fix $x \in \pa E$, let $\{ \tau_1,\, \tau_2 \}$ be the principal directions of $T_x \pa E$ and $\kappa_1(x),  \kappa_2(x)$ be the principal curvatures at $x$, i.e., $\kappa_i(x) \tau_i = B_E(x) \tau_i$ for $i =1,2$. Recall formula~\eqref{09022026primacorss2},  the tangential Jacobian is given by
\begin{equation}\label{evvrailtempo1}
    J_{\pa E} \Psi= \big( (1+ \psi \kappa_1)^2(1+ \psi \kappa_2)^2 +(1+ \psi \kappa_2)^2 \vert \overline{\nabla}_{\tau_1} \psi \vert^2+(1+ \psi \kappa_1)^2 \vert \overline{\nabla}_{\tau_2} \psi \vert^2\big)^{\frac{1}{2}}.
\end{equation}
Let $N: \pa E \rightarrow \R^3$ be the function defined in~\eqref{funzN}. By formula~\eqref{funznu_F}, we have  $\nu_F(\Psi(x))= N(x)/ \vert N(x)\vert$ for $x \in \pa E$. Let $x \in \pa E$, a straightforward computation yields
\begin{equation}
    \vert N(x) \vert = \frac{J_{\pa E} \Psi(x)}{(1+\psi(x)\kappa_1(x))(1+ \psi(x)\kappa_2(x))},
\end{equation}
hence, it holds
\begin{equation}
    \nu_F \circ \Psi(x) = N(x) \frac{ (1+ \psi(x)\kappa_1)(1+ \psi(x)\kappa_2)}{J_{\pa E} \Psi(x)}.
\end{equation}
Using this identity and the area formula, we obtain
\begin{align}
    P_\varphi(F)&= \int_{\pa F} \varphi(\nu_F)\, d\mathcal{H}^2= \int_{\pa E} \varphi(\nu_F \circ \Psi) J_{\pa E} \Psi \, d \mathcal{H}^2\notag\\
    & = \int_{\pa E} \varphi(N )(1+ \psi \kappa_1)(1+ \psi \kappa_2)\, d \mathcal{H}^2 \notag\\
    & = \int_{\pa E} \varphi(-(1+\psi \kappa_2) \overline{\nabla}_{\tau_1} \psi -(1+\psi \kappa_1) \overline{\nabla}_{\tau_2} \psi + (1+ \psi \kappa_1)(1+\psi \kappa_2)\nu_E )\, d\mathcal{H}^2.\label{giordano}
\end{align}
By the convexity of $\varphi$,~\eqref{cost:Jvarphi}, and possibly taking $\delta_1$  smaller, we obtain
\begin{align}
        &\varphi(-(1+\psi \kappa_2) \overline{\nabla}_{\tau_1} \psi  -(1+\psi \kappa_1) \overline{\nabla}_{\tau_2} \psi + (1+ \psi \kappa_1)(1+\psi \kappa_2)\nu_E)\notag\\
        &\geq \varphi((1+\psi \kappa_1)(1+\psi \kappa_2)\nu_E)-\nabla \varphi(\nu_E) \cdot \nabla_{\pa E} \psi \notag\\
        & \quad -\nabla \varphi(\nu_E) \cdot (  \psi \kappa_2\overline{\nabla}_{\tau_1}\psi +  \psi \kappa_1\overline{\nabla}_{\tau_2}\psi  )+\frac{J_\varphi}{2} \vert \nabla_{\pa E} \psi \vert^2 \notag\\
        & \geq \varphi(\nu_E)+ \big( \psi H_E+ \frac{\psi^2}{2}K_E \big) \nabla \varphi (\nu_E) \cdot \nu_E -\nabla \varphi(\nu_E) \cdot \nabla_{\pa E} \psi \notag\\
        & \quad -\nabla \varphi(\nu_E) \cdot (  \psi \kappa_2\overline{\nabla}_{\tau_1}\psi +  \psi \kappa_1\overline{\nabla}_{\tau_2}\psi  )+\frac{J_\varphi}{2} \vert \nabla_{\pa E} \psi \vert^2
        \notag\\
        & = \varphi(\nu_E)+ \big( \psi H_E+ \frac{\psi^2}{2}K_E \big) \varphi (\nu_E)  -\nabla \varphi(\nu_E) \cdot \nabla_{\pa E} \psi \label{slablig}\\
        & \quad -\nabla \varphi(\nu_E) \cdot (  \psi \kappa_2\overline{\nabla}_{\tau_1}\psi +  \psi \kappa_1\overline{\nabla}_{\tau_2}\psi  )+\frac{J_\varphi}{2} \vert \nabla_{\pa E} \psi \vert^2,\notag
\end{align}
where in the last step we used that $\nabla \varphi(x)\cdot x = \varphi(x)$. 

Let $\varepsilon > 0$ to be chosen later. Using formulas~\eqref{linfB_Eealtro},~\eqref{090226ppranzf1} and~\eqref{10022026pprinazof2}, we obtain
\begin{align}
        &\int_{\pa E} \Big (\psi H_E + \frac{\psi^2}{2}K_E \Big ) \varphi(\nu_E)\, d \mathcal{H}^2 \notag\\
        &\leq \int_{\pa E} (\psi + \frac{\psi^2}{2}H_E+ \frac{\psi^3}{3}K_E)H_E \varphi(\nu_E)\, d \mathcal{H}^2+ C \int_{\pa E} \psi^2 \, d \mathcal{H}^2 \notag\\
        &\leq\| H_E \varphi(\nu_E) \|_{H^1(\Sigma)}\| \psi +\frac{\psi^2}{2}H_E+ \frac{\psi^3}{3}K_E \|_{H^{-1}(\pa E)}  + C \int_{\pa E} \psi^2 \, d \mathcal{H}^2 \\
        & \leq C \| \xi_{F,E} \|_{H^{-1}(\pa E)}+ C \| \xi_{F,E} \|^2_{L^2(\pa E)} \notag\\
        & \leq C\| \xi_{F,E} \|_{H^{-1}(\pa E)}+ C \| \xi_{F,E}\|_{H^{-1}(\pa E)} \| \nabla_{\pa E} \xi_{F,E}\|_{L^2(\pa E)}\notag\\
        & \leq \frac{C}{\varepsilon} \| \xi_{F,E}\|_{H^{-1}(\pa E)}+ \varepsilon \| \nabla_{\pa E} \xi_{F,E} \|_{L^2(\pa E)}^2 \notag\\
        & \leq \frac{C}{\varepsilon} \| \xi_{F,E}\|_{H^{-1}(\pa E)}+ \varepsilon \| \nabla_{\pa E} \psi  \|_{L^2(\pa E)}^2.\label{schifani}
\end{align}
Arguing similarly as above and additionally using the divergence theorem and~\eqref{08022026form1}, we infer that
\begin{align}
        &\int_{\pa E}  \nabla \varphi(\nu_E) \cdot \nabla_{\pa E} \psi \, d \mathcal{H}^2 = \int_{\pa E} \div_{\pa E}(\nabla \varphi(\nu_E)) \psi \, d \mathcal{H}^2 \notag\\
        & \leq \int_{\pa E} H^\varphi_E ( \psi + \frac{\psi^2}{2}H_E+\frac{\psi^3}{3}K_E ) \, d \mathcal{H}^2 + C \int_{\pa E} \psi^2 \, d \mathcal{H}^2\notag \\
        & \leq \| H^\varphi_E \|_{H^1(\pa E)} \| \xi_{F,E}\|_{H^{-1}(\pa E)}+ C \| \xi_{F,E}\|^2_{L^2(\pa E)} \notag\\
        & \leq C \| \xi_{F,E}\|_{H^{-1}(\pa E)}+ C\| \xi_{F,E} \|_{H^{-1}(\pa E)}\| \nabla_{\pa E} \xi_{F,E}\|_{L^2(\pa E)}\notag\\
        &  \leq \frac{C}{\varepsilon} \| \xi_{F,E}\|_{H^{-1}(\pa E)}+ \varepsilon \| \nabla_{\pa E} \psi \|_{L^2(\pa E)}.\label{corchia}
\end{align}
Analogously,  we have
\begin{align}
        &\int_{\pa E}-\nabla \varphi(\nu_E) \cdot (  \psi \kappa_2\overline{\nabla}_{\tau_1}\psi +  \psi \kappa_1\overline{\nabla}_{\tau_2}\psi  ) \, d \mathcal{H}^2 \leq \frac{C}{\varepsilon} \| \psi \|_{L^2(\pa E)}^2+\varepsilon \| \nabla_{\pa E} \psi \|_{L^2(\pa E)}^2 \notag\\
        & \leq \frac{C}{\varepsilon} \| \xi_{F,E} \|_{L^2(\pa E)}^2+\varepsilon \| \nabla_{\pa E} \psi \|_{L^2(\pa E)}^2  \leq \frac{C}{\varepsilon^2} \| \xi_{F,E} \|_{H^{-1}(\pa E)}+ 2\varepsilon \| \nabla_{\pa E} \psi \|^2_{L^2(\pa E)}.\label{nador}
\end{align}
	Therefore, combining~\eqref{giordano},~\eqref{slablig},~\eqref{schifani},~\eqref{corchia} and~\eqref{nador}, and choosing $\varepsilon>0$ sufficiently small, we conclude the claim, recalling also that $d_{H^{-1}}(F,E)= \| \xi_{F,E} \|_{H^{-1}(\partial E)}$ (see formulas~\eqref{normH^-1funzv} and~\eqref{eqv}).\\
		\textit{Claim 4:} There exists $C=C(r_0,K_0,K_{el})>0$ such that
		\begin{equation}\label{120226tesi2}
			\mathcal{E}(E(u_E^{K_{el}})) \leq \mathcal{E}(E(u_F^{K_{el}}))+C d_{H^{-1}}(F,E)+ \frac{J_\varphi}{8} \| \nabla_{\pa E} \psi \|^2_{L^2(\pa E)}.
		\end{equation}
        
		By the minimality of $u^{K_{el}}_E$ and the  definition
		of $d_{H^{-1}}(F,E)$, we obtain that
		\begin{align}
				\mathcal{E}(E(u_F^{K_{el}}))&= \int_{\Omega \setminus F} Q(E(u_F^{K_{el}}))\, dx = \int_{\R^3}   Q(E(u_F^{K_{el}})) (\chi_\Omega- \chi_F)\, dx\notag\\
				& = \int_{\Omega \setminus E} Q(E(u_F^{K_{el}}))\, dx + \int_{\R^3} Q(E(u_F^{K_{el}}))(\chi_E- \chi_F)\, dx \notag\\
				& \geq \mathcal{E}(E(u_E^{K_{el}}))- \int_{\R^3}Q(E(u_F^{K_{el}}))  \circ \pi_{\pa E} (\chi_E-\chi_F)\, d x\notag\\
				& \quad +\int_{\R^3} \big(    Q(E(u_F^{K_{el}}))- Q(E(u_F^{K_{el}})) \circ \pi_{\pa E}\big) ( \chi_E-\chi_F) \, d x\notag\\
				& \geq \mathcal{E}(E(u_E^{K_{el}}))-CK_{el} d_{H^{-1}}(F,E)\label{120226formz1}\\
				& \quad +\int_{\R^3} \big(    Q(E(u_F^{K_{el}}))- Q(E(u_F^{K_{el}})) \circ \pi_{\pa E}\big) ( \chi_E-\chi_F) \, d x.\notag
		\end{align}
		We observe that
		\begin{equation}
			\vert Q(E(u_F^{K_{el}}))(x)- Q(E(u_F^{K_{el}})) \circ \pi_{\pa E}(x) \vert \leq C
			K_{el} \vert x-\pi_{\pa E}(x)\vert ,
		\end{equation}
		where $C>0$ is a universal constant.
		Using this inequality together with the coarea formula and arguing as in the previous claim, we obtain
		\begin{align}
				& \int_{\R^3}(Q ( E (u_F^{K_{el}}))-Q ( E (u_F^{K_{el}}))\circ \pi_{\pa E} )(\chi_E- \chi_F)\,dx \notag\\
				&\leq  CK_{el}\int_{\R^3} \vert x- \pi_{\pa E}(x)\vert \chi_{E \Delta F}(x)\, dx \notag\\
				& \leq C \int_{\pa E} \vert \psi(x)\vert  \int_{0}^{\vert \psi (x)\vert } (1+ t H_E(x)+t^2 K_E(x))\, dt \, d \mathcal{H}^2_x \\
				& \leq C  \| \xi_{F,E}\|^2_{L^2(\pa E)}  \leq \frac{C}{\eta} \| \xi_{F,E}\|_{H^{-1}(\pa E)}^2+ \eta C \| \nabla_{\pa E} \psi \|_{L^2(\pa E)}^2.\label{z18032025form2}
		\end{align}
		 Combining~\eqref{120226formz1} and~\eqref{z18032025form2}, and choosing $\eta$ suitably, we deduce~\eqref{120226tesi2}.
		
		Finally,  from~\eqref{120226tesi1} and~\eqref{120226tesi2} we conclude the proof of~\eqref{H-1lambminperfunz}.\\ \emph{Step 2}: We proceed to show the almost-minimality estimate~\eqref{formulaimpostante}. 
        
		We define the functional 
		\begin{equation}
			\mathfrak{G}(F):= \mathcal{G}(F)+ \Lambda_2 d_{H^{-1}}(F,E), \quad F \Delta E \subset {\rm cl} \big(\mathcal{I}_{\delta}(\pa E)\big),
		\end{equation}
		where $\Lambda_2$ is the constant appearing in~\eqref{H-1lambminperfunz} and $\delta \in (0,\min \{\frac{r_0}{4},\delta_1\})$. By the direct method of the calculus of variations, the functional $\mathfrak{G}$ admits a minimizer, which we denote by $F_{min}$. Moreover, by Lemma~\ref{lemmaH^-1}, we have $\vert F_{min} \vert=\vert E \vert$. We observe that proving~\eqref{formulaimpostante} is equivalent to proving that there exists $\delta$ such that $F_{min}=E$.\\
		\textit{Claim 5:} The set $F_{min}$ is an $(\Lambda,\alpha)$-minimizer of the anisotropic perimeter for any $\alpha \in (0,\frac{1}{3})$, with $\Lambda=\Lambda(r_0,K_0,\sigma_E,K_{el},\alpha)$.
		
		According to Lemma~\ref{lambdaminCONstrain}, the claim follows once it is shown that $F_{min}$ is a $(\delta,E)$-constrained $(\Lambda,\alpha)$-minimizer of the anisotropic perimeter. Let $G\subset \R^3$ be a set of finite perimeter such that \begin{equation}\label{12022026vincolo}
			G \Delta F_{min} \subset {\rm cl} \big( \mathcal{I}_{\delta}(\pa E) \big) \text{ and } \vert G \vert = \vert F_{min} \vert.
		\end{equation}
		By Lemma~\ref{lemmaH^-1}, we have that $d_{H^{-1}}(G,E) < +\infty$, since $\vert G \vert=\vert F_{min}\vert= \vert E \vert$. Let $f_G \in H^{1}(\pa E)$ be the function realizing the supremum in the definition of $d_{H^{-1}}(G,E)$. Then Lemma~\ref{lemmaH^-1} also implies that $f_G= v_{G,E} /d_{H^{-1}}(G,E) $ and $\int_{\pa E} f_G \, d \mathcal{H}^2=0$. Therefore, by the Sobolev embedding theorem, we have for $p >2$
		\begin{equation}
			\| f_G \|_{L^p(\pa E)} \leq C_p \| \nabla_{\pa E} f_G \|_{L^2(\pa E)} \leq C_p.
		\end{equation}
		Using this estimate and the coarea formula, we obtain
		\begin{align}
				\| f_G \circ \pi_{\pa E}\|_{L^p(\mathcal{I}_\delta(\pa E))}^p= \int_{\pa E} \vert f_G \vert^p \int_{-\delta}^{\delta} \big(1+ H_E t +K_E t^2 \big) \, d t d \mathcal{H}^2 \leq C_p.\label{12022026dopo1}
		\end{align}
		Recalling the definitions of $\mathfrak{G}$ and $\mathcal{G}$, by the minimality of $F_{min}$, we obtain 
		\begin{multline}\label{12022026pcrossfit-1}
			P_{\varphi}(F_{min})-P_\varphi(G) \leq \\ \Lambda_2 \big(  d_{H^{-1}}(G,E)-d_{H^{-1}}(F_{min},E) \big) + \mathcal{E}(E(u_G^{K_{el}}))-\mathcal{E}(E(u_{F_{min}}^{K_{el}})) .
		\end{multline}
		We first consider  the difference of the elastic terms
		\begin{align}
				\mathcal{E}(E(u_{F_{min}}^{K_{el}}))-\mathcal{E}(E(u_G^{K_{el}}))&=  \int_{\Omega \setminus F_{min}} Q(E(u_{F_{min}}^{K_{el}})) \,dx - \int_{\Omega \setminus G} Q(E(u_G^{K_{el}})) \,dx\notag\\
				& \leq \int_{\Omega \setminus F_{min} } Q(E(u_G^{K_{el}})) \,dx - \int_{\Omega \setminus G} Q(E(u_G^{K_{el}})) \,dx \notag\\
				& \leq  K_{el} \vert G \Delta F_{min} \vert, \label{26032025form2zz}
		\end{align}
		where the first inequality follows from the minimality of  $u_{F_{min}}^{K_{el}}$. We now estimate the difference 
		\begin{align}
				&d_{H^{-1}}(G,E)-d_{H^{-1}}(F_{min},E) \notag\\
				& \leq \int_{\R^3} f_G \circ \pi_{\pa E} (\chi_G-\chi_E)\, d x- \int_{\R^3} f_G \circ \pi_{\pa E} ( \chi_{F_{min}}-\chi_E ) \, d x \notag\\
				& = \int_{\R^3} f_G \circ \pi_{\pa E} (\chi_G-\chi_{F_{min}})\, d x \notag\\
				& \leq \| f_G \circ \pi_{\pa E} \|_{L^p(\mathcal{I}_\delta(\pa E))} \| \chi_G -\chi_{F_{min}} \|_{L^{\frac{p}{p-1}}(\R^3)}\notag \\
				& \leq C_p \vert G \Delta F_{min} \vert^{1-\frac{1}{p}},\label{12022026primacorss1}
		\end{align}
		where in the last inequality we used~\eqref{12022026dopo1}. Combining the above inequalities, we obtain
		\begin{equation}
			P_\varphi(F_{min})- P_\varphi(G) \leq C \vert G \Delta F_{min} \vert^{1-\frac{1}{p}}
		\end{equation}
		for every set of finite perimeter $G\subset \R^3$ satisfying~\eqref{12022026vincolo}, which concludes the proof of the claim.\\
		\textit{Claim 6:} There exists a sufficiently small constant $\delta>0$ such that $F_{min}=E$.
		
		Using Lemma~\ref{epsilonregolarita}, we obtain, provided $\delta>0$ is chosen sufficiently small, that $\partial F_{min}$ is the normal graph over $\partial E$ with height function $\psi \in C^{1}(\pa E)$ satisfying $\| \psi \|_{C^1(\pa E)} \leq \delta$. Therefore, by \emph{Step 1} and the minimality of $F_{min}$, we obtain
		\begin{equation}
			\frac{J_\varphi}{4} \| \nabla_{\pa E} \psi \|^2_{L^2(\pa E)}+ \mathcal{G}(E) \leq \mathcal{G}(F_{min})+\Lambda_2 d_{H^{-1}}(F_{min},E)= \mathfrak{G}(F_{min}) \leq \mathfrak{G}(E)=\mathcal{G}(E).
		\end{equation}
		Hence, $\psi$ is a constant function, but recalling that $\vert F_{min} \vert = \vert E \vert$, we conclude that $\psi \equiv 0$, i.e. $E=F_{min}$.
\end{proof}

\section{Preliminary estimates}\label{SEZIONEDELLESTIME}
The aim of this section is to establish regularity estimates for the set $F$ minimizing the incremental problem
\begin{equation}\label{ProblemaINc}
    \min \left\{  \mathcal{F}_{h}(A,E) : A \text{ measurable set, }  A \Delta E \subset \mathcal{I}_\eta(\pa E) \right\}
\end{equation}
under suitable assumptions on $ E \subset \Omega$ and on $ \eta>0$.  We recall that $\mathcal{I}_\eta$ was defined in~\eqref{intornotubolare}. 
\subsection{First regularity estimate} Throughout this subsection, we assume that the set $E \Subset \Omega$ is $C^5$-regular and 
\begin{equation}\label{13022026pcrossfit1}
  E \text{ satisfies the \textsc{UBC} with radius } r_0  \text{ and }\| H_E \|_{H^2(\pa \Sigma)} \leq K_0,
\end{equation}
for some $K_0>0$. Observe that these assumptions imply that $E$ is uniformly $C^{2,\gamma}$-regular for every $\gamma \in (0,1)$.

\begin{proposition}\label{CaneBernard06}
		Let $E \Subset \Omega$ be a $C^5$-regular set such that~\eqref{13022026pcrossfit1} holds. Let $F \Subset \Omega$ be a minimizer of~\eqref{ProblemaINc} for $\eta< \delta_1$, where $\delta_1$ is the constant given by Proposition~\ref{ordinaryprop}. Then there exist $C_0=C_0(r_0,\sigma_E,K_0,K_{el})>0$ and $h_0=h_0(r_0,\sigma_E,K_0,K_{el}, \eta)>0$ such that 
		\begin{equation}\label{dH-1hhh}
			d_{H^{-1}}(F,E) \leq C_0 h
		\end{equation}
		for every $h \leq h_0$. Moreover, $F$ is a $(\Lambda,\alpha)$-minimizer of the anisotropic perimeter for every $\alpha \in (0,\frac{1}{3})$ with $\Lambda= \Lambda(r_0,\sigma_E,K_0, \alpha,K_{el})$. In addition, if $\eta< \delta_2= \delta_2(r_0,\sigma_E)$, then $F$ satisfies $ \| \nabla_{\pa F} H_F^\varphi \|_{L^2(\pa F)} \leq C_0$, and $\pa F$ is the normal graph over $\pa E$ with height function $\psi \in C^1(\pa E)$ such that, for every $h \leq h_0$, 
		\begin{equation}\label{1803pons}
			\begin{split}
				&\|\psi\|_{L^2(\partial E)}\le C_0 h^{\frac{3}{4}}, \quad \| \psi \|_{H^1(\pa E)} \leq C_0 h^{\frac{1}{2}}, \quad \| \psi \|_{C^1(\pa E)} \leq C_0 h^{\frac{1}{9}}, \\
				&\text{for every }\varepsilon>0 \, \text{ exits } C_\varepsilon>0 : \| \psi \|_{L^\infty(\pa E)} \leq C_\varepsilon h^{\frac{1}{2}-\varepsilon}.
			\end{split}
		\end{equation}
	\end{proposition}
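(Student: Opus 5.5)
The plan is to compare $F$ with $E$ in the minimality, then use the $(\Lambda,\alpha)$-minimality to get a graph representation, and finally use the Euler--Lagrange equation together with interpolation.

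\emph{Step 1: the bound \eqref{dH-1hhh}.} Testing the minimality of $F$ against $A=E$ (admissible, with $d_{H^{-1}}(E,E)=0$) gives
\[
\mathcal{G}(F)+\tfrac{1}{2h}d_{H^{-1}}^2(F,E)\le \mathcal{G}(E).
\]
Since $F\Delta E\subset\mathcal{I}_\eta(\pa E)$ with $\eta<\delta_1$ and $d_{H^{-1}}(F,E)<\infty$, Lemma~\ref{lemmaH^-1} gives $|F|=|E|$. Proposition~\ref{ordinaryprop} then yields $\mathcal{G}(E)-\mathcal{G}(F)\le\Lambda_2 d_{H^{-1}}(F,E)$. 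Combining the two, $d_{H^{-1}}^2(F,E)\le 2h\Lambda_2 d_{H^{-1}}(F,E)$, so \eqref{dH-1hhh} holds with $C_0=2\Lambda_2$.

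\emph{Step 2: almost minimality.} I would repeat Claim~5 of Proposition~\ref{ordinaryprop} verbatim for a competitor $G$ with $G\Delta F\subset{\rm cl}(\mathcal{I}_\eta(\pa E))$ and $|G|=|F|$. The elastic difference is at most $K_{el}|G\Delta F|$. For the dissipation,
\[
d^2_{H^{-1}}(G,E)-d^2_{H^{-1}}(F,E)\le (d_G+d_F)\,C_p|G\Delta F|^{1-1/p}.
\]
By Step~1 and Lemma~\ref{lemmaH^-1}, $d_G\le d_F+C|G\Delta F|^{1-1/p}$, so this difference divided by $2h$ is bounded by $C|G\Delta F|^{1-1/p}+\frac{C}{h}|G\Delta F|^{2-2/p}$. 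Since $|G\Delta F|\le|\Omega|$, the second term is controlled only after using $h\le h_0$ in a crude way. To avoid any $h^{-1}$ dependence, I would split into two cases: small $|G\Delta F|$ (less than $h^{q}$ for suitable $q$) and large $|G\Delta F|$, where the trivial bound on perimeters suffices. If that fails, I would instead first use the minimizer of $\mathcal{G}+\frac{1}{2h}d^2$ restricted to $\mathfrak{G}$-type arguments. Lemma~\ref{lambdaminCONstrain} then upgrades constrained minimality to $(\Lambda,\alpha)$-minimality.

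\emph{Step 3: graph representation and estimates.} For $\eta<\delta_2$ small, Lemma~\ref{epsilonregolarita} gives $\pa F$ as a normal graph with $\|\psi\|_{C^{1,\gamma}}$ small. Inequality \eqref{H-1lambminperfunz} together with Step~1 gives
\[
\tfrac{J_\varphi}{4}\|\nabla_{\pa E}\psi\|_{L^2}^2\le \Lambda_2 C_0 h,
\]
hence $\|\psi\|_{H^1}\le Ch^{1/2}$ (using Poincaré with the volume constraint through \eqref{090226ppranzf1}). Interpolation \eqref{gucciniintepol} with $\|\xi_{F,E}\|_{H^{-1}}\le C_0h$ gives $\|\psi\|_{L^2}\le Ch^{3/4}$.

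Next, I would derive the Euler--Lagrange equation from Propositions~\ref{propELDH-1} and~\ref{PropelELASt} and \eqref{FeulPANI}:
\[
H^\varphi_F - Q(E(u_F^{K_{el}})) + \tfrac{1}{h}\,v_{F,E}\circ\pi_{\pa E} = \lambda \quad\text{on }\pa F.
\]
Differentiating tangentially, and using $\|\nabla_{\pa E}v\|_{L^2}=d_{H^{-1}}\le C_0h$ together with the $H^{1,2}$ constraint on $\pa^\alpha u$ in \eqref{minelvinc}, gives $\|\nabla_{\pa F}H^\varphi_F\|_{L^2}\le C_0$.

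The main obstacle is turning this into $C^1$ and $L^\infty$ rates. For $L^\infty$, I would use Sobolev embedding on surfaces in the form $\|\psi\|_{L^\infty}\le C_\varepsilon\|\psi\|_{H^1}^{1-\varepsilon}\|\psi\|_{W^{1,p}}^{\varepsilon}$, giving $C_\varepsilon h^{1/2-\varepsilon}$. For $C^1$, I would use Corollary~\ref{zonzo}: $\psi$ solves an elliptic equation with right-hand side in $H^1$ (from the $\nabla H^\varphi_F$ bound), so $\psi$ is bounded in $H^3$, hence in $C^{1,\gamma}$. Interpolating \eqref{interHOLDER} with $\|\psi\|_{L^\infty}$, or Proposition~\ref{prop:interpolation} between $H^3$ and $L^2$ ($\theta$ near $2/3$ plus a Sobolev loss), should give a positive power of $h$, and I expect to be able to reach at least $h^{1/9}$.
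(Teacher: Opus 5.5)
\textbf{Where the proposal falls short: Step~2.} Your Steps~1 and~3 follow the paper's argument essentially. Step~1 tests against $A=E$ and uses \eqref{formulaimpostante}. Step~3 uses Lemma~\ref{epsilonregolarita}, \eqref{H-1lambminperfunz}, Poincar\'e on $\xi_{F,E}$, \eqref{gucciniintepol}, the Euler--Lagrange equation, and interpolation of the uniform $C^{1,\gamma}$ bound against the $L^\infty$ rate, which gives about $h^{1/8-\varepsilon'}$ and hence $h^{1/9}$. The genuine gap is Step~2, the $h$-independent $(\Lambda,\alpha)$-minimality. You correctly isolate the bad term $\frac{C}{h}|G\Delta F|^{2-2/p}$, but your case split does not close. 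For $|G\Delta F|\ge h^q$, the only trivial information is $P_\varphi(F)\le\mathcal{G}(F)\le\mathcal{G}(E)$, so $P_\varphi(F)-P_\varphi(G)$ is only $O(1)$. Meanwhile $\Lambda|G\Delta F|^{1-\alpha}$ can be as small as $\Lambda h^{q(1-\alpha)}$. So throughout the range $h^q\lesssim|G\Delta F|\ll 1$ you would need $\Lambda\gtrsim h^{-q(1-\alpha)}$. The fallback about ``$\mathfrak{G}$-type arguments'' is not an argument. This is not a side issue: $\delta_0$ in Lemma~\ref{epsilonregolarita} depends on $\Lambda$. An $h$-dependent $\Lambda$ therefore destroys the $h$-independent threshold $\delta_2$ and the uniform graph representation on which your whole Step~3 rests.

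\textbf{The missing idea.} Use the almost-minimality of $E$ itself, \eqref{formulaimpostante}, tested with the competitor $G$. This is allowed because $G\Delta E\subset{\rm cl}(\mathcal{I}_\eta(\pa E))$ with $\eta<\delta_1$ and $|G|=|F|=|E|$. It gives $\mathcal{G}(F)\le\mathcal{G}(E)\le\mathcal{G}(G)+\Lambda_2 d_{H^{-1}}(G,E)$. Write $d_G=d_{H^{-1}}(G,E)$ and $d_F=d_{H^{-1}}(F,E)$. The paper splits according to $d_G$, not $|G\Delta F|$.
\begin{itemize}
\item If $d_G>4\Lambda_2h\ge 2d_F$, then $d_G\le 2(d_G-d_F)$, and the chain above gives $\mathcal{G}(F)-\mathcal{G}(G)\le 2\Lambda_2(d_G-d_F)$.
\item If $d_G\le 4\Lambda_2 h$, then $(d_G+d_F)/(2h)\le 3\Lambda_2$, and the minimality of $F$ against $G$ turns the quadratic dissipation difference into a linear one.
\end{itemize}
Altogether $\mathcal{G}(F)-\mathcal{G}(G)\le 3\Lambda_2\max\{d_G-d_F,0\}$. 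Then \eqref{12022026primacorss1} and the elastic comparison as in \eqref{26032025form2zz} give $P_\varphi(F)-P_\varphi(G)\le C|G\Delta F|^{1-1/p}$ with $C$ independent of $h$, after which Lemma~\ref{lambdaminCONstrain} applies. Your split in $|G\Delta F|$ can also be rescued, with threshold $h^{p/(p-1)}$. In the large regime you must then use this comparison with $E$, together with $d_G\le C_0h+C_p|G\Delta F|^{1-1/p}\le C|G\Delta F|^{1-1/p}$, rather than a bound on the perimeters.
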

	\begin{proof}
    By~\eqref{formulaimpostante} and the minimality of $F$, we have 
		\begin{equation}\label{dH-2hhh}
			\mathcal{G}(F)+ \frac{1}{2h}d_{H^{-1}}(F,E)^2 \leq \mathcal{G}(E) \leq \mathcal{G}(F)+ \Lambda_2 d_{H^{-1}}(F,E).
		\end{equation}
		Therefore, inequality~\eqref{dH-1hhh} follows with $C_0 = 2\Lambda_2$. We divide the proof into two steps.\\
		\textit{Step 1:} We prove that $F$ is a $(\Lambda,\alpha)$-minimizer of the anisotropic perimeter for all $\alpha \in (0, \frac{1}{3})$ and with $\Lambda= \Lambda(r_0,\sigma_E,K_0, \alpha,K_{el})$, i.e.
		\begin{equation}
			P_\varphi(F) \leq P_\varphi(G)+ \Lambda \vert F \Delta G \vert^{1-\alpha} \text{ for all } G \subset \R^3.
		\end{equation}
		
        We claim that, for every $ G \subset \R^3 $ such that $ G \Delta E \subset \mathcal{I}_\eta (\pa E)$ and $\vert G \vert =1$, the following inequality holds:
		\begin{equation}\label{22022026primpale}
			\mathcal{G}(F) \leq \mathcal{G}(G)+ 3 \Lambda_2 \big( d_{H^{-1}}(G,E)- d_{H^{-1}}(F,E)  \big).
		\end{equation}
		We first assume that $d_{H^{-1}}(G,E) > 4 \Lambda_2 h$. 
		Then, by~\eqref{dH-2hhh}, it holds that $2 d_{H^{-1}}(F,E) \leq 4\Lambda_2 h < d_{H^{-1}}(G,E)$,
		which implies
		\begin{equation*}
			\frac{1}{2}d_{H^{-1}}(G,E) \leq d_{H^{-1}}(G,E)- d_{H^{-1}}(F,E).
		\end{equation*}
		Therefore, using the minimality of $F$ together with~\eqref{formulaimpostante}, we deduce
		\begin{align*}
				\mathcal{G}(F) \leq  &  \mathcal{G}(E) 
				\leq \mathcal{G}(G)+  \Lambda_2 d_{H^{-1}}(G,E) \\
				\leq & \mathcal{G}(G)+ 2 \Lambda_2 \big( d_{H^{-1}}(G,E)-d_{H^{-1}}(F,E)  \big).
		\end{align*}
		Now, we assume $d_{H^{-1}}(G,E) \leq 4 \Lambda_2 h$. 
		Using the minimality of $F$ and~\eqref{dH-2hhh}, we obtain
		\begin{equation*}
			\begin{split}
				\mathcal{G}(F)-\mathcal{G}(G) \leq & \frac{1}{2h} \big(  d_{H^{-1}}(G,E)+d_{H^{-1}}(F,E)\big) \big( d_{H^{-1}}(G,E)-d_{H^{-1}}(F,E) \big)\\
				\leq & 3 \Lambda_3 \big(d_{H^{-1}}(G,E)-d_{H^{-1}}(F,E) \big).
			\end{split}
		\end{equation*}
		This establishes~\eqref{22022026primpale}.

		Thanks to Lemma~\ref{lambdaminCONstrain}, it is enough to prove that $F$ is a $(\eta,E)$-constrained $(\Lambda,\alpha)$-minimizer of the anisotropic perimeter. Let $G\subset \R^3$ be a set of finite perimeter such that
			$G \Delta F \subset {\rm cl} \big( \mathcal{I}_{\eta}(\pa E) \big) \text{ and } \vert G \vert = \vert F \vert.$
		Using the definition of $\mathcal{G}$ (see~\eqref{defG}) and~\eqref{22022026primpale}, we obtain
		\begin{equation}
			P_\varphi(F)-P_\varphi(G) \leq \mathcal{E}(E(u_F^{K_{el}}))- \mathcal{E}(E(u_G^{K_{el}}))+ 3\Lambda_2 \big(d_{H^{-1}}(G,E)-d_{H^{-1}}(F,E) \big).
		\end{equation}
		We conclude by applying the same arguments as in~\eqref{26032025form2zz} to estimate $\mathcal{E}(E(u_F^{K_{el}})) - \mathcal{E}(E(u_G^{K_{el}}))$, and those in~\eqref{12022026primacorss1} to control $d_{H^{-1}}(G,E) - d_{H^{-1}}(F,E)$. \\
		\textit{Step 2:} We prove the remaining assertions of the statement.

        Let $\delta_2 := \delta_0(\delta_1)$ be the constant provided by Lemma~\ref{epsilonregolarita} for $\varepsilon = \delta_1$, where $\delta_1$ is given in Proposition~\ref{ordinaryprop}.
        If $\eta<\delta_2$, we can apply Lemma~\ref{epsilonregolarita}, which yields that $\pa F$ is the normal graph over $\pa E$  with height function $\psi \in C^{1,\frac{1}{3}}(\pa E)$ and $ \| \psi \|_{C^{1,\frac{1}{3}}(\pa E)} \leq \delta_1$. 
		Then, combining formula~\eqref{H-1lambminperfunz}, the minimality of $F$, and~\eqref{dH-1hhh}, we obtain
		\begin{equation}
			\frac{J_\varphi}{4} \| \nabla_{\pa E} \psi \|^2_{L^2(\pa E)}+ \mathcal{G}(E) \leq \mathcal{G}(F)+\Lambda_2 d_{H^{-1}}(F,E) \leq \mathcal{G}(E)+ C h,
		\end{equation}
		which implies $\| \nabla_{\pa E} \psi \|_{L^2(\pa E)} \leq C \sqrt{h}$.
		Using this estimate together with~\eqref{090226ppranzf1} and~\eqref{10022026pprinazof2}, we deduce
		\begin{align*}
				\| \psi \|_{H^1(\pa E)} &= \| \psi \|_{L^2(\pa E)}+ \| \nabla_{\pa E} \psi \|_{L^2(\pa E)} \leq 2 \| \xi_{F,E} \|_{L^2(\pa E)}+ \| \nabla_{\pa E} \psi \|_{L^2(\pa E)} \\
				&\leq C \| \nabla_{\pa E} \xi_{F,E} \|_{L^2(\pa E)}+ \| \nabla_{\pa E} \psi \|_{L^2(\pa E)} \\
				&\leq  C\| \nabla_{\pa E} \psi \|_{L^2(\pa E)} \leq C\sqrt{h},
		\end{align*}
		where $\xi_{F,E}= \psi + \frac{H_E}{2}\psi^2+ \frac{K_E}{3}\psi^3$, and in the second inequality we applied Poincaré’s inequality, noting that $\int_{\pa E} \xi_{F,E} \, d \mathcal{H}^2=0$. Fix $\varepsilon>0$. By  interpolation and the $C^1$-bound on $\psi$, we obtain  
		\begin{equation}
			\| \psi \|_{L^\infty(\pa E)} \leq C_\varepsilon \| \psi \|_{C^1(\pa E)}^{2 \varepsilon} \| \psi \|_{H^1(\pa E)}^{1-2 \varepsilon} \leq C_\varepsilon h^{\frac{1}{2}-\varepsilon},
		\end{equation}
        and similarly $\| \psi \|_{C^1(\pa E)} \leq C h^{\frac{1}{9}}$ follows.
        Moreover, the bound $ \| \psi \|_{L^2(\pa E)} \leq C h^{\frac{3}{4}} $
        follows from~\eqref{gucciniintepol},~\eqref{090226ppranzf1} and~\eqref{dH-1hhh}.

	    For $h$ sufficiently small we deduce that $F \triangle E \Subset \mathcal{I}_\eta(\pa E)$. Hence, we can compute the Euler-Lagrange equation of the functional $ F \mapsto \mathcal{F}_h(F,E)$. Applying~\eqref{FeulPANI}, Proposition~\ref{propELDH-1}, and Proposition~\ref{PropelELASt}, we obtain
		\begin{equation}\label{guccinone1}
			H^\varphi_F(y)-Q(E(u_F^{K_{el}}))(y)+ \frac{d_{H^{-1}}(F,E)}{h}f \circ \pi_{\pa E}(y)=L \quad \text{ for all } y \in \pa F,
		\end{equation}
		where $f \in H^1(\pa E)$ is the function that realizes the supremum in formula~\eqref{d_{H^{-1}}f} and $L$ is the Lagrange multiplier. Recalling that $Q(E(u_F^{K_{el}})) \in C^{2,\frac{1}{4}}(\Omega)$ (see~\eqref{minelast}) and that $f \in C^{3,\gamma}(\pa E)$, since $f$ solves ~\eqref{eqv} (see also~\eqref{lafunzcherelsupH^-1}), we deduce that $F$ is $C^{4,\gamma}$-regular for any $\gamma \in (0,1)$. We emphasize, however, that the $C^4$-regularity depends on $h$. Now, we prove that $ \| \nabla_{\pa F}(f \circ \pi_{\pa E}) \|_{L^2(\pa F)} \leq C(K_0)$. Indeed, by~\eqref{regdecat12},~\eqref{canzone1}, and Poincar\'e's inequality we infer
		\begin{align*}
				\int_{\pa F} \vert \nabla_{\pa F} (f \circ \pi_{\pa E}) \vert^2 \, d \mathcal{H}^2 & \leq \int_{\pa F} \vert \nabla_{\pa F} \pi_{\pa E} \vert^2 \vert \nabla_{\pa E} f \vert^2 \circ \pi_{\pa E} \, d \mathcal{H}^2 \\
				& \leq \int_{\pa F} \vert \nabla \pi_{\pa E} P_{\pa F} \vert^2 \vert \nabla_{\pa E} f \vert^2 \circ \pi_{\pa E} \, \mathcal{H}^2\\
				& \leq C(K_0) \int_{\pa E} \vert \nabla_{\pa E} f \vert^2 J_{\pa E} \Psi \, d \mathcal{H}^2 \\
				& \leq C(K_0) \int_{\pa E} \vert f \vert^2 \, d \mathcal{H}^2 \leq C(K_0).
		\end{align*}
		Here we denoted by $\Psi : \partial E \to \partial F$ the map $ \Psi(x):=x + \psi(x) \nu_E(x)$, while $J_{\pa E} \Psi$ is defined in~\eqref{evvrailtempo1}, with $ \| J_{\pa E} \Psi \|_{L^\infty(\pa E)} \leq C(K_0)$. 
		Therefore, estimate $ \| \nabla_{\pa F} H_F^\varphi \|_{L^2(\pa F)} \leq C$ follows using also~\eqref{guccinone1}, $d_{H^{-1}}(F,E) \leq C h$, and the regularity $Q(E(u_F^{K_{el}})) \in C^{2,\frac{1}{4}}(\Omega) $. 
	\end{proof}

	An important consequence of Proposition~\ref{CaneBernard06} is that any minimizer of the incremental problem~\eqref{ProblemaINc} satisfies $F \triangle E \Subset \mathcal{I}_\eta(\pa E)$ for every $h \leq h_0$. This allows us to compute the Euler-Lagrange equation for the minimizer of $F$. Combining~\eqref{eqv},~\eqref{lafunzcherelsupH^-1}, and~\eqref{guccinone1}, we obtain
	\begin{equation}\label{Comeprima}
		\left\{
		\begin{aligned}
			& H_F^\varphi- Q(E(u_F^{K_{el}}))+ \frac{d_{H^{-1}}(F,E)}{h} f \circ \pi_{\pa E}= L \quad \text{ on } \pa F \\
			& - \Delta_{\pa E} f = \frac{\xi_{F,E}}{d_{H^{-1}}(F,E)}  \quad \text{ on } \pa E
		\end{aligned}
		\right.
	\end{equation}
	where $\xi_{F,E}$ is the function defined in~\eqref{xiFEfunz}. 
	\begin{remark}\label{szzz}
    {\rm 
		If $E$ is $C^{5,\gamma}$-regular for some $\gamma \in (0,1)$, then, recalling that $u_{F}^{K_{el}} \in C^{3, \frac{1}{4}}(\Omega)$ (see formula~\eqref{minelvinc}), by elliptic regularity, we can conclude that $F$ is $C^{4,\min\{\gamma, \frac{1}{4} \}}$-regular. Observe that these estimates are not uniform with respect to $h$.   \fr}
	\end{remark} 
	By Corollary~\ref{zonzo}, we may combine the two equations in~\eqref{Comeprima} to obtain
	\begin{equation}\label{PeerGynt}
		\begin{split}
			\frac{1}{h} \bigg( \psi+ \frac{H_E}{2}\psi^2+ \frac{K_E}{3} \psi^3 \bigg)(x)=&- \Delta_{\pa E} \big(\div_{\pa E} \big( \mathcal{A}(\nu_E) \nabla_{\pa E} \psi \big) \big)(x)+ \Delta_{\pa E} H_E^\varphi(x)\\
			&- \Delta_{\pa E} \big(Q(E(u_F^{K_{el}}))(x+ \psi(x)\nu_E) \big)+ \Delta_{\pa E} \widetilde{\mathcal{R}}_0(x),
		\end{split}
	\end{equation}
	where $\mathcal{A}$ is a positive definite $(1,1)$-tensor, see formula~\eqref{07032026posdef}, and $\widetilde{\mathcal{R}}_0$ is defined in~\eqref{nuovoresto}.

\subsection{Second regularity estimate}
Throughout this subsection, we assume that the set $E \Subset \Omega$ is $C^6$-regular, and we set $\Sigma= \pa E$. Moreover, we assume that 
\begin{equation}\label{AnitraD}
\begin{split}
      &E \text{ satisfies the \textsc{UBC} with radius } r_0  \text{ and }\| \nabla_{\pa E}\Delta_{\pa E}H_E \|_{L^2(\pa \Sigma)} \leq K_0, \\
    &  E \in \mathfrak{H}^{5}_{K_0, \sigma_1}(E_0),
      \end{split}
\end{equation}
for some $K_0>0$, where $\sigma_1 \leq \frac{\sigma_0}{2}$ and $\sigma_0$ is defined in~\eqref{Laconstantesigmazerofin}. 

In what follows, we adopt the notation $S_1 \star S_2$ from~\cite{Hamilton1982, Mantegazza2002} to denote a tensor obtained by contracting some indices of the tensors $S_1$ and $S_2$ using the coefficients of the metric tensor $g_{ij}$. This notation is convenient since it implies $$ \vert S_1 \star S_2 \vert \leq  C \vert S_1 \vert \vert S_2 \vert, $$
where $C$ is a constant depending on the tensor ranks of $S_i \in T^{(h_i,k_i)}(T \Sigma)$, with $h_i,k_i \in \N$.

\subsubsection{Estimate of the $H^3$-norm for the heightfunction}
The aim of this subsubsection is to obtain a uniform $H^3(\Sigma)$-estimate for the function $\psi$ that solves~\eqref{PeerGynt}. 

In the next lemma, we compute the commutator between the Laplace–Beltrami operator and the tangential divergence, as well as the commutator between the Laplace–Beltrami operator and the tangential gradient. This will allow us to rewrite equation~\eqref{PeerGynt}, which will be useful in the main proof of this subsubsection.
\begin{lemma}\label{Lemmacommutatoredifficile}
   Let $E \Subset \Omega$ be a $C^6$-regular set satisfying~\eqref{AnitraD}. Then the following identities hold:
   \begin{itemize}
       \item Let $f \in C^3(\Sigma)$, then 
       \begin{equation}\label{Eq:lemma:swapLapGrad}
           \Delta_{\pa E} \nabla_{\pa E}f = \nabla_{\pa E} \Delta_{\pa E} f+ a(\overline{\nabla}f \star B_\Sigma,\overline{\nabla}^2f),
       \end{equation}
       where $ a$ is a smooth vector field satisfying  $ a(\cdots)=0$ if $f=0$ and $f \mapsto a(\overline{\nabla}f \star B_\Sigma,\overline{\nabla}^2f) $ is linear.
       \item Let $X$ be a vector field on $\pa E$ of class $C^3$, then
       \begin{equation}\label{Eq:lemma:swapLapDiv}
            \Delta_{\pa E} \div_{\pa E} X= \div_{\pa E} \Delta_{\pa E} X+r_1(B_\Sigma \star \overline{\nabla}X, \overline{\nabla} B_\Sigma \star X,\overline{\nabla}^2X),
       \end{equation}
       where $r_1$ is a smooth function satisfying  $r_1(\cdots)=0$ if $X=0$ and $X \mapsto r_1(B_\Sigma \star \overline{\nabla}X, \overline{\nabla} B_\Sigma \star X,\overline{\nabla}^2X) $ is a linear function.
       \item  Let $X$ be a vector field of class $C^2$ and $A$ a $(1,1)$-tensor field of class $C^2$, then
    \begin{equation}\label{Eq:lemma:LapAX}
        \Delta_{\pa E}  \big(AX \big)= (\Delta_{\pa E}A)X+ A \Delta_{\pa E}X+ Z( A \star X \star B_\Sigma, \overline{\nabla }A \star \overline{\nabla} X),
    \end{equation}
    where $Z$ is a vector field satisfying  $Z(\cdots)=0$ if $X=0$ and $X \mapsto Z( A \star X \star B_\Sigma, \overline{\nabla }A \star \overline{\nabla} X)$ is a linear function.
   \end{itemize}
\end{lemma}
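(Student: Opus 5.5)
All three identities are local, so we work in normal coordinates around a point $p\in\Sigma$ and use the Ricci commutation rules for covariant derivatives on the surface $(\Sigma,g)$. We also rely on the Gauss equation: the Riemann tensor of $\Sigma$ is quadratic in the second fundamental form, $R=B_\Sigma\star B_\Sigma$, hence $\overline{\nabla}R=B_\Sigma\star\overline{\nabla}B_\Sigma$. Throughout, we identify $\nabla_{\pa E}$ with $\overline{\nabla}$, $\div_{\pa E}$ with the trace of $\overline{\nabla}$, and $\Delta_{\pa E}$ with $g^{ij}\overline{\nabla}_i\overline{\nabla}_j$ acting on tensors, as set up in the Riemannian notation subsection.

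\emph{First identity.} For $f\in C^3(\Sigma)$ we have $(\Delta\overline{\nabla} f)_k=g^{ij}\overline{\nabla}_i\overline{\nabla}_j\overline{\nabla}_k f$. Since the Hessian is symmetric, $\overline{\nabla}_j\overline{\nabla}_k f=\overline{\nabla}_k\overline{\nabla}_j f$. Commuting $\overline{\nabla}_i$ past $\overline{\nabla}_k$ then gives
\[
(\Delta\overline{\nabla} f)_k=\overline{\nabla}_k\Delta f+R_{k}{}^{l}\,\overline{\nabla}_l f .
\]
By Gauss, $R_k{}^l=(B_\Sigma\star B_\Sigma)_k{}^l=H_E B_{kl}-B_{km}B^m{}_l$. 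The extra term is therefore $B_\Sigma\star B_\Sigma\star\overline{\nabla} f$, which is linear in $f$ and vanishes when $f=0$. We may write it as $a(\overline{\nabla}f\star B_\Sigma,\overline{\nabla}^2f)$ with $a$ linear in its first argument, the coefficient being $B_\Sigma$, which is smooth. Since the statement allows $a$ to depend on $\overline{\nabla}^2 f$, we are free to include that argument; here it enters trivially.

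\emph{Second identity.} For a vector field $X$ we compute $\Delta\div X=g^{ij}\overline{\nabla}_i\overline{\nabla}_j\overline{\nabla}_k X^k$ and move $\overline{\nabla}_k$ outward, in two steps. First we swap $\overline{\nabla}_j$ and $\overline{\nabla}_k$ acting on $\overline{\nabla}X$; this produces $R\star\overline{\nabla}X$. Next we swap $\overline{\nabla}_i$ and $\overline{\nabla}_k$ acting on $\overline{\nabla}X$, producing terms $R\star\overline{\nabla}X$. Finally we differentiate the first commutator term $\overline{\nabla}_i(R\star X)$, which yields $\overline{\nabla}R\star X+R\star\overline{\nabla}X$. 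Collecting everything,
\[
\Delta\div X=\div\Delta X+B_\Sigma\star B_\Sigma\star\overline{\nabla}X+B_\Sigma\star\overline{\nabla}B_\Sigma\star X ,
\]
which has the claimed form $r_1(B_\Sigma\star\overline{\nabla}X,\overline{\nabla}B_\Sigma\star X,\overline{\nabla}^2X)$. The remaining factors of $B_\Sigma$ are absorbed into the smooth coefficients of $r_1$, and the expression is linear in $X$. This is the main bookkeeping step: the commutators must be ordered carefully so that no third derivatives of $X$ survive, and so that only first derivatives of $B_\Sigma$ appear. This is where the $C^6$ regularity is used, since $\overline{\nabla}B_\Sigma$ must be bounded.

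\emph{Third identity.} We view $A$ as a $(1,1)$-tensor and use the Leibniz rule for the contraction $AX$, which is compatible with $\overline{\nabla}$:
\[
\Delta(AX)=(\Delta A)X+2\langle\overline{\nabla}A,\overline{\nabla}X\rangle+A\Delta X .
\]
The middle term is of type $\overline{\nabla}A\star\overline{\nabla}X$. The $A\star X\star B_\Sigma$ slot is needed only if one starts from the extrinsic definition $\Delta_{\pa E}=\div_{\pa E}\nabla_{\pa E}$ on $\R^3$-valued maps. In that case, projecting onto the tangent space produces terms in which $B_\Sigma=\nabla_{\pa E}\nu_E$ multiplies $A$ and $X$. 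We set $Z$ to be the sum of these terms; it is linear in $X$ and vanishes when $X=0$.
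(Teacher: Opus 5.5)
Your proposal is correct and uses the same mechanism as the paper's appendix: commuting covariant derivatives produces curvature terms, the Gauss equation rewrites these through $B_\Sigma$, and the third identity is the Leibniz rule. The paper works in a general local frame, with Lie-bracket terms and a frame-independence argument, whereas your index-form Ricci identities give the remainders explicitly, e.g.\ $\mathrm{Ric}(\nabla_{\pa E} f)$ and $2\langle\overline{\nabla}A,\overline{\nabla}X\rangle$.

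One small slip in the second identity: the first swap of $\overline{\nabla}_j$ and $\overline{\nabla}_k$ acts on $X$ itself and yields a Ricci term $R\star X$, not $R\star\overline{\nabla}X$. This is exactly what your later step $\overline{\nabla}_i(R\star X)$ presupposes, so the final formula is unaffected.
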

\begin{proof}
    The proof is provided in Appendix~\ref{appendix}, where each formula is established in a separate lemma.
\end{proof}
\begin{lemma}\label{nonstaropiu21}
    Let $E \Subset \Omega$ be a $C^6$-regular set such that~\eqref{AnitraD} holds. Let $\mathcal{A}$ denote the $(1,1)$-tensor field defined in~\eqref{07032026posdef}, and let $f \in C^5(\pa E)$. Then the following identity holds: 
    \begin{equation}\label{LapdivAnunabf}
    \begin{split}
        \Delta_{\pa E} \div_{\pa E} \big( \mathcal{A}(\nu_E) \nabla_{\pa E} f  \big)&= \div_{\pa E} \big( \mathcal{A}(\nu_E) \nabla_{\pa E} \Delta_{\pa E} f \big)+ \div_{\pa E} \big( \Delta_{\pa E} \mathcal{A}(\nu_E)  \nabla_{\pa E} f \big)\\
        & \quad+ \widehat{\mathcal{R}}(\overline{\nabla} B_\Sigma \star \overline{\nabla}^2 f, B_\Sigma \star\overline{\nabla}^3 f  ),
        \end{split}
    \end{equation}
    where $\widehat{\mathcal{R}}$ is a smooth function satisfying $ \widehat{\mathcal{R}}(0,0)=0$.
\end{lemma}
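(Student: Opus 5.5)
We apply the three commutator identities of Lemma~\ref{Lemmacommutatoredifficile} in sequence, with $X:=\mathcal{A}(\nu_E)\nabla_{\pa E} f$, and collect every lower-order term into $\widehat{\mathcal{R}}$.

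\emph{Step 1.} By \eqref{Eq:lemma:swapLapDiv} with this $X$,
\[
\Delta_{\pa E}\div_{\pa E} X=\div_{\pa E}\Delta_{\pa E}X+r_1\big(B_\Sigma\star\overline{\nabla}X,\overline{\nabla}B_\Sigma\star X,\overline{\nabla}^2X\big).
\]

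\emph{Step 2.} Apply \eqref{Eq:lemma:LapAX} with $A=\mathcal{A}(\nu_E)$ and vector field $\nabla_{\pa E} f$:
\[
\Delta_{\pa E}X=(\Delta_{\pa E}\mathcal{A}(\nu_E))\nabla_{\pa E} f+\mathcal{A}(\nu_E)\Delta_{\pa E}\nabla_{\pa E} f+Z\big(\mathcal{A}\star\overline{\nabla}f\star B_\Sigma,\overline{\nabla}\mathcal{A}\star\overline{\nabla}^2f\big).
\]

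\emph{Step 3.} Use \eqref{Eq:lemma:swapLapGrad} to write $\Delta_{\pa E}\nabla_{\pa E}f=\nabla_{\pa E}\Delta_{\pa E}f+a(\overline{\nabla}f\star B_\Sigma,\overline{\nabla}^2 f)$. Taking $\div_{\pa E}$ of the resulting expression produces the two main terms in \eqref{LapdivAnunabf}, namely
\[
\div_{\pa E}\big(\mathcal{A}(\nu_E)\nabla_{\pa E}\Delta_{\pa E} f\big)\quad\text{and}\quad \div_{\pa E}\big(\Delta_{\pa E}\mathcal{A}(\nu_E)\nabla_{\pa E} f\big).
\]
What remains is the remainder
\[
\div_{\pa E}\big(\mathcal{A}(\nu_E)\,a\big)+\div_{\pa E} Z+r_1 .
\]

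\emph{Step 4.} It remains to show that this remainder has the stated structure. Expand it with the Leibniz rule for covariant derivatives, and express all derivatives of $\mathcal{A}(\nu_E)$ through Lemma~\ref{pensobene}. That lemma gives $|\overline{\nabla}^m\mathcal{A}(\nu_E)|\lesssim$ polynomials in $B_\Sigma,\dots,\overline{\nabla}^{m-1}B_\Sigma$ with smooth coefficients in $\nu_E$. In particular, $\overline{\nabla}\mathcal{A}$ is of the form $B_\Sigma\star(\cdot)$ and $\overline{\nabla}^2\mathcal{A}$ is of the form $\overline{\nabla}B_\Sigma\star(\cdot)+B_\Sigma\star B_\Sigma\star(\cdot)$. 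Each resulting term is linear in $f$ and contains at most three derivatives of $f$ and at most one derivative of $B_\Sigma$. The derivative counts are as follows:
\begin{itemize}
\item From $\div(\mathcal{A}a)$: $\overline{\nabla}^3 f\star B_\Sigma$, $\overline{\nabla}^2f\star\overline{\nabla}B_\Sigma$, $\overline{\nabla}f\star\overline{\nabla}B_\Sigma$.
\item From $r_1$: $\overline{\nabla}^2X$ involves $\overline{\nabla}^3 f$, $\overline{\nabla}\mathcal{A}\star\overline{\nabla}^2 f$ and $\overline{\nabla}^2\mathcal{A}\star\overline{\nabla}f$, so at most $\overline{\nabla}B_\Sigma$ appears.
\item From $\div Z$: terms of the form $\overline{\nabla}^2\mathcal{A}\star\overline{\nabla}^2 f$ and $\overline{\nabla}\mathcal{A}\star\overline{\nabla}^3f$.
\end{itemize}
Lower-order products such as $B_\Sigma\star\overline{\nabla} f$ are absorbed into the same smooth function: its coefficients may depend on $\nu_E$ and $B_\Sigma$, which are bounded in $L^\infty$ by \eqref{AnitraD} and Sobolev embedding. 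This gives the form $\widehat{\mathcal{R}}(\overline{\nabla}B_\Sigma\star\overline{\nabla}^2 f, B_\Sigma\star\overline{\nabla}^3 f)$ with $\widehat{\mathcal{R}}(0,0)=0$.

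\emph{Main obstacle.} The main difficulty is the bookkeeping in Step 4. One point needs care: a term $\overline{\nabla}^3 f$ appears without an explicit curvature factor, coming from $\overline{\nabla}^2X$ in $r_1$. We need to check, via the explicit commutator in the appendix (Ricci identity, where the Riemann tensor equals $B_\Sigma\star B_\Sigma$), that $\overline{\nabla}^2X$ enters $r_1$ only multiplied by curvature, so that it is really of the form $B_\Sigma\star\overline{\nabla}^3 f$. If the appendix formula does not give this directly, we would recompute the commutator $[\Delta_{\pa E},\div_{\pa E}]$ in local coordinates using the Ricci identity to confirm it.
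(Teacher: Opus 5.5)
Your proposal is correct and follows the paper's own proof. It makes the same three substitutions from Lemma~\ref{Lemmacommutatoredifficile} with $X=\mathcal{A}(\nu_E)\nabla_{\pa E}f$ and then collects the remainders. Your flagged caveat is well taken, and the Ricci identity settles it. The paper passes from $r_1(\dots,\mathcal{A}(\nu_E)\star\overline{\nabla}^3 f)$ to a term $B_\Sigma\star\overline{\nabla}^3 f$ without comment, and the $\overline{\nabla}^2 f$ argument of $a$ raises the same issue. In fact $\Delta_{\pa E}\div_{\pa E}X-\div_{\pa E}\Delta_{\pa E}X=R\star\overline{\nabla}X+\overline{\nabla}R\star X$ and $\Delta_{\pa E}\nabla_{\pa E}f-\nabla_{\pa E}\Delta_{\pa E}f=\mathrm{Ric}(\nabla_{\pa E}f)$, with $R=B_\Sigma\star B_\Sigma$. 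So no uncurved second derivatives occur, and the only third-order term in the remainder is $\overline{\nabla}\mathcal{A}(\nu_E)\star\overline{\nabla}^3 f=B_\Sigma\star\overline{\nabla}^3 f$, coming from $\div_{\pa E}Z$.
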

\begin{proof}
    Using formula~\eqref{Eq:lemma:swapLapDiv} together with the Leibniz rule, we obtain
    \begin{align*}
            \Delta_{\pa E} \div_{\pa E} \big(   \mathcal{A}(\nu_E) \nabla_{\pa E} f\big)&= \div_{\pa E} \big( \Delta_{\pa E} \big(\mathcal{A}(\nu_E) \nabla_{\pa E} f  \big)  \big)\\
            & \quad+{r}_1( B_\Sigma \star \overline{\nabla}^2 f,\overline{\nabla}B_\Sigma \star \overline{\nabla} f, \mathcal{A}(\nu_E) \star \overline{\nabla}^3 f).
    \end{align*}
    We note that lower-order terms should also be included in $r_1$; however, we omit them here to avoid overloading the notation. Now, ~\eqref{Eq:lemma:swapLapGrad},~\eqref{Eq:lemma:LapAX},  the chain rule and Leibniz rule imply
    \begin{equation*}
    \begin{split}
        \div_{\pa E} \big( \Delta_{\pa E} \big(\mathcal{A}(\nu_E) \nabla_{\pa E} f  \big)  \big)&=  \div_{\pa E} \big( \Delta_{\pa E} \mathcal{A}(\nu_E)\nabla_{\pa E} f + \mathcal{A}(\nu_E)  \Delta_{\pa E} \nabla_{\pa E} f   \big)  + \div_{\pa E}   Z  \\
        &=  \div_{\pa E} \big( \mathcal{A}(\nu_E) \nabla_{\pa E} \Delta_{\pa E} f \big)+ \div_{\pa E} \big( \mathcal{A}(\nu_E) a(\overline{\nabla}f \star B_\Sigma,\overline{\nabla}^2f) \big) \\
        & \quad + \div_{\pa E} \big(   \Delta_{\pa E} \mathcal{A}(\nu_E)  \nabla_{\pa E} f  \big)+ \div_{\pa E}  Z \\
        &= \div_{\pa E} \big( \mathcal{A}(\nu_E) \nabla_{\pa E} \Delta_{\pa E} f \big)+ \div_{\pa E} \big(   \Delta_{\pa E} \mathcal{A}(\nu_E)  \nabla_{\pa E} f  \big)\\
        & \quad + r_2(\overline{\nabla} B_\Sigma \star \overline{\nabla}^2 f, B_\Sigma \star\overline{\nabla}^3 f  ),
        \end{split}
    \end{equation*}
    where $r_2$ is a smooth function satisfying $r_2(0,0)=0$. We note that lower-order terms should also be included in $r_2$; however, they are omitted here to simplify the notation. Finally, combining the above equalities, we obtain~\eqref{LapdivAnunabf}.
\end{proof}
\begin{lemma}\label{H3mcurvimpH3acurv}
Let $E \Subset \Omega$ be a $C^6$-regular set such that~\eqref{AnitraD} holds, then 
\begin{equation}
    \|  \nabla_{\pa E} \Delta_{\pa E} H^\varphi_E \|_{L^2(\pa E)} \leq C(K_0,r_0), \quad \| H_E^\varphi \|_{H^3(\pa E)} \leq C(K_0,r_0).
\end{equation}
\end{lemma}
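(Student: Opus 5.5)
The plan is to treat $H^\varphi_E=\div_{\pa E}(\nabla\varphi(\nu_E))$ as a smooth function of $\nu_E$ contracted with $B_\Sigma$, and to transfer the curvature bound in~\eqref{AnitraD} to it. The first step is to show that $\|B_\Sigma\|_{H^3(\Sigma)}\le C(K_0,r_0)$. By the second part of Lemma~\ref{lem:hilbert-norm} with $k=2$,
\[
\|B_\Sigma\|_{H^{3}(\Sigma)}\le C\big(1+\|\nabla_{\pa E}\Delta_{\pa E}H_E\|_{L^2(\pa E)}\big)\le C(K_0,r_0),
\]
where $\mathcal H^2(\pa E)$ and $\|B_E\|_{L^\infty}\le 2/r_0$ are controlled by the \textsc{UBC} and $E\in\mathfrak H^5_{K_0,\sigma_1}(E_0)$. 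By Sobolev embedding on $\Sigma$ (with the uniform constants of Proposition~\ref{prop:interpolation}) this gives $\|B_\Sigma\|_{L^\infty}+\|\overline\nabla B_\Sigma\|_{L^\infty}+\|\overline\nabla^2B_\Sigma\|_{L^p}\le C$ for every $p<\infty$.

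The second step is the pointwise representation
\[
H^\varphi_E={\rm tr}\big(\nabla^2\varphi(\nu_E)\nabla_{\pa E}\nu_E\big)=\langle A(\nu_E),B_\Sigma\rangle ,
\]
with $A=\nabla^2\varphi$ a smooth tensor evaluated at the unit normal. Differentiating three times with the Leibniz rule gives
\[
\overline\nabla^3H^\varphi_E=\sum_{j=0}^3\overline\nabla^{j}A(\nu_E)\star\overline\nabla^{3-j}B_\Sigma .
\]
Lemma~\ref{pensobene} bounds $|\overline\nabla^jA(\nu_E)|$ by polynomials in $|\overline\nabla^{i}B_\Sigma|$ with $i\le j-1$, with coefficients depending on the $C^j$ norms of $\nabla^2\varphi$ on the unit sphere. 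Every product that arises then has the form $|\overline\nabla^{\alpha_1}B_\Sigma|\cdots|\overline\nabla^{\alpha_l}B_\Sigma|$ with $\sum\alpha_i\le3$. Lemma~\ref{lem:Leibniz} bounds its $L^2$ norm by $C\|B_\Sigma\|_{L^\infty}^{l-1}\|B_\Sigma\|_{H^3}\le C$. The lower derivatives are handled the same way, which gives $\|H^\varphi_E\|_{H^3}\le C(K_0,r_0)$.

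Finally, $|\nabla_{\pa E}\Delta_{\pa E}H^\varphi_E|\le C|\overline\nabla^3H^\varphi_E|$, since $\Delta_{\pa E}$ is a trace of $\overline\nabla^2$ and $\nabla_{\pa E}$ commutes with contraction by the parallel metric. So the first inequality follows from the second.

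The only delicate point is that $\overline\nabla^jA(\nu_E)$ is not genuinely a tensor on $\Sigma$ in the $\R^3$-valued slots. I would handle this as in Remark~\ref{battiato3}, by splitting off normal components, or simply work in the local graph charts as in Claim 1 of Proposition~\ref{ordinaryprop}, using equation~\eqref{100226ppranzo1} with the $H^5$ bound on $f$. The chart version is more elementary: it gives $H^\varphi_E\in H^3$ because $f\in H^5$ with uniform bounds, which in turn come from $\|B_\Sigma\|_{H^3}\le C$ and the \textsc{UBC}.
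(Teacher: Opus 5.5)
Your argument is correct, but its main line differs from the paper's.

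\textbf{The paper's route.} The paper proves this lemma by referring to Claim~1 of Proposition~\ref{ordinaryprop}. It first gets $\|B_\Sigma\|_{H^3}\le C$ from Lemma~\ref{lem:hilbert-norm}, exactly as you do. It then works in the local graph charts provided by the \textsc{UBC}, with the anisotropy rotated to $\tilde\varphi$. There it applies the expansion of Lemma~\ref{10Expcurvvarphi} with a flat reference surface, which writes $H^\varphi_E$ as $-{\rm tr}(A\nabla'^2 f)$ plus lower-order nonlinear terms in $f,\nabla' f,\nabla'^2 f$. The $H^3$ bound is then read off from $H^5$ bounds on the graph functions $f$. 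This is your fallback.

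\textbf{Your route.} Your primary argument stays intrinsic. It uses the exact identity $H^\varphi_E={\rm tr}(\nabla^2\varphi(\nu_E)B_E)$, then Lemma~\ref{pensobene} to control the derivatives of $\nabla^2\varphi(\nu_E)$, and Lemma~\ref{lem:Leibniz} to bound the resulting products, all of total order at most $3$.

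\textbf{Comparison.} Your route avoids translating Sobolev norms between charts and the manifold, i.e.\ passing from $\|B_\Sigma\|_{H^3(\Sigma)}$ to $\|f\|_{H^5}$ and back, with the Christoffel symbols involved. The paper leaves this translation implicit. The paper's route, in turn, reuses an expansion it needs anyway and keeps the proof parallel to Proposition~\ref{ordinaryprop}.

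\textbf{The point you flag as delicate.} It is in fact harmless. Since $B_E\nu_E=0$ and $B_E$ takes values in $T\Sigma$, you have
\[
{\rm tr}\big(\nabla^2\varphi(\nu_E)B_E\big)={\rm tr}\big(P_{\pa E}\nabla^2\varphi(\nu_E)P_{\pa E}B_E\big).
\]
So only the restriction of the bilinear form $\nabla^2\varphi(\nu_E)$ to $T\Sigma\times T\Sigma$ enters. That restriction is exactly the $2$-covariant tensor field on $\Sigma$ covered by Lemma~\ref{pensobene}, so no splitting as in Remark~\ref{battiato3} is needed.
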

\begin{proof}
   The proof is analogous to that of \emph{Claim 1} in Proposition~\ref{ordinaryprop}.
\end{proof}
\begin{lemma}\label{Paperplane}
    Let $E \Subset \Omega$ be a $C^6$-regular set such that~\eqref{AnitraD} holds. Let $F \subset \Omega$ be a minimizer of~\eqref{ProblemaINc} for $\eta < \delta_2$, where $\delta_2$ is the constant provided by Proposition~\eqref{CaneBernard06}.  Then there exists $h_1 \leq h_0$, where  $h_0$ is the constant provided by~\eqref{CaneBernard06}, such that the solution of~\eqref{PeerGynt} satisfies
    \begin{equation}\label{stimeinH^3}
     \| \psi \|_{H^2(\pa E)} \leq C_0 h^{\frac{1}{4}}, \qquad  \| \psi \|_{H^3(\pa E)} \leq C_0
    \end{equation}
    for every $h \leq h_1$ and where $C_0=C_0(r_0,\sigma_E,K_0,K_{el},\delta_2,P(E))$.
\end{lemma}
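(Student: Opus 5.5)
We plan to derive both bounds in \eqref{stimeinH^3} from an energy estimate obtained by testing equation~\eqref{PeerGynt} with $\Delta_{\pa E}\psi$, and then to interpolate. By Remark~\ref{szzz}, $\psi$ is $C^{4,\gamma}$ for each fixed $h$, so all integrations by parts below are legitimate. The constants, however, must not depend on that qualitative regularity. As a priori information we use Proposition~\ref{CaneBernard06}: $\|\psi\|_{L^2}\le C h^{3/4}$, $\|\psi\|_{H^1}\le Ch^{1/2}$, $\|\psi\|_{C^1}\le Ch^{1/9}$, and $\|\psi\|_{L^\infty}\le C_\varepsilon h^{1/2-\varepsilon}$. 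The geometric information on $E$ comes from \eqref{AnitraD}, Lemma~\ref{lem:hilbert-norm} and Lemma~\ref{H3mcurvimpH3acurv}: $\|B_\Sigma\|_{H^3}\le C$ and $\|H^\varphi_E\|_{H^3}\le C$, and hence $B_\Sigma,\overline\nabla B_\Sigma\in L^\infty$.

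\emph{Testing the equation.} We multiply \eqref{PeerGynt} by $\Delta_{\pa E}\psi$ and integrate over $\pa E$.
\begin{itemize}
\item The left-hand side gives $-\frac1h\int|\nabla_{\pa E}\psi|^2$ plus cubic terms in $\psi$. These terms are controlled by $\frac{C}{h}\|\psi\|_{L^\infty}\|\psi\|_{H^1}^2$ together with terms involving $\nabla H_E,\nabla K_E$. All of them are bounded by $C$ (or absorbed), since $\|\psi\|_{H^1}^2\le Ch$.
\item The leading term is rewritten with Lemma~\ref{nonstaropiu21} and integrated by parts twice. This produces $-\int \mathcal A(\nu_E)\nabla_{\pa E}\Delta_{\pa E}\psi\cdot\nabla_{\pa E}\Delta_{\pa E}\psi\le -c\|\nabla_{\pa E}\Delta_{\pa E}\psi\|_{L^2}^2$, where $c>0$ by positive definiteness of $\mathcal A$ (Corollary~\ref{zonzo} and \eqref{cost:Jvarphi}).
\item The commutator remainders $\div(\Delta\mathcal A\,\nabla\psi)$ and $\widehat{\mathcal R}$ are handled by one integration by parts and Lemma~\ref{lem:Leibniz}. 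They are bounded by $C\|\psi\|_{H^3}\|\psi\|_{H^2}$ plus lower-order terms. Interpolating between $H^3$ and $H^1$ (Proposition~\ref{prop:interpolation}) and applying Young's inequality, they are absorbed into $\frac c4\|\nabla\Delta\psi\|^2+C$.
\item For $\Delta H^\varphi_E$ we integrate by parts once, which gives $|\int\nabla\Delta H^\varphi_E\cdot\nabla\psi|\le C\|\psi\|_{H^1}\le C$.
\end{itemize}
By Lemma~\ref{lem:hilbert-norm}, $\|\nabla\Delta\psi\|_{L^2}$ controls $\|\psi\|_{H^3}$ up to $C\|\psi\|_{L^\infty}$.

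\emph{Elastic and nonlinear terms.} For the elastic term, $\int \Delta_{\pa E}\big(Q(E(u_F^{K_{el}}))\circ\Psi\big)\Delta\psi=-\int\nabla(Q\circ\Psi)\cdot\nabla\Delta\psi$. Since $u^{K_{el}}_F\in C^{3,1/4}$ with norm $\le K_{el}$, we have $|\nabla(Q\circ\Psi)|\le C(1+|\nabla\psi|)$. This term is therefore bounded by $\frac c8\|\nabla\Delta\psi\|^2+C$, and no use of the $h^{-1/4}$ constraint is needed at this level.

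The main obstacle is $\int\Delta_{\pa E}\widetilde{\mathcal R}_0\,\Delta\psi=-\int\nabla\widetilde{\mathcal R}_0\cdot\nabla\Delta\psi$. By \eqref{nuovoresto} and \eqref{26052026Gaeta}, $\nabla\widetilde{\mathcal R}_0$ contains $\widetilde{\mathcal A}_1\star\overline\nabla^3\psi$ with $|\widetilde{\mathcal A}_1|\le C(|\psi|+|\overline\nabla\psi|)\le Ch^{1/9}$. This piece is absorbed into the coercive term for $h\le h_1$ small. The remaining pieces are quadratic, e.g. $(|\psi|+|\overline\nabla\psi|+|\overline\nabla^2\psi|)|\overline\nabla^2\psi|$, together with terms in $\overline\nabla(\psi B_\Sigma)$ and $\overline\nabla^2(\psi B_\Sigma)$, where $\overline\nabla^2 B_\Sigma\in L^4$. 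We bound $\|\,|\overline\nabla^2\psi|^2\|_{L^2}\le\|\overline\nabla^2\psi\|_{L^4}^2$ and apply Gagliardo--Nirenberg between $H^3$ and the $C^1$ bound, giving $\|\overline\nabla^2\psi\|_{L^4}^2\lesssim\|\psi\|_{H^3}\|\psi\|_{C^1}$. This yields $Ch^{1/9}\|\psi\|_{H^3}^2$, again absorbable. Collecting all terms gives
\[
\tfrac c2\|\nabla\Delta\psi\|^2+\tfrac1h\|\nabla\psi\|^2\le C,
\]
hence $\|\psi\|_{H^3}\le C_0$.

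\emph{The $H^2$ bound.} Interpolating $H^2$ between $H^1$ and $H^3$, $\|\psi\|_{H^2}\le C\|\psi\|_{H^3}^{1/2}\|\psi\|_{H^1}^{1/2}\le Ch^{1/4}$. The dependence on $P(E)$ enters through $\mathcal H^2(\Sigma)$ in the interpolation constants.
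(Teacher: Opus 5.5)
Your proposal is correct and follows essentially the paper's route: rewrite \eqref{PeerGynt} via Lemma~\ref{nonstaropiu21}, test with $\Delta_{\pa E}\psi$ (the paper uses $-\Delta_{\pa E}\psi$), use the coercivity of $\mathcal A(\nu_E)$ and the a priori bounds \eqref{1803pons} to absorb every remainder for small $h$, conclude with Lemma~\ref{lem:hilbert-norm}, and obtain the $H^2$ bound by interpolation; your $L^4$ Gagliardo--Nirenberg treatment of $\nabla\widetilde{\mathcal R}_0$ is only a technical variant of the paper's use of Lemma~\ref{lem:Leibniz}. One small inaccuracy: $\tilde a_0$ in \eqref{nuovoresto} has $O(1)$ linear parts, so $\overline\nabla\tilde a_0$ contributes $C\|\psi\|_{H^2}$ rather than a quadratic term, but the interpolation-plus-Young argument you already use for the commutator remainders absorbs it (as the paper does).
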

\begin{proof}
In order to simplify notation, we identify $B_E$ with $B_\Sigma$, and we omit the dependence of $C$  on $K_0$ and $r_0$. 

We observe that it suffices to show that $\|\psi\|_{H^3(\partial E)}\le C_0$,  then the estimate for $ \| \psi \|_{H^2(\pa E)}$ will follow from Proposition~\ref{prop:interpolation}.
By the assumptions on $E$ and Lemma~\ref{lem:hilbert-norm}, we have
\begin{equation}
    \| B_E \|_{H^3(\pa E)} \leq C (1 + \| \nabla_{\pa E} \Delta_{\pa E}  H_{\pa E} \|_{L^{2}(\pa E)} ).
\end{equation}
Hence, by the Sobolev embedding theorem, we obtain
\begin{equation}\label{NormC^1alphB_E}
    \| B_E \|_{C^{1,\alpha}(\pa E)} \leq C  \text{ for every } \alpha \in (0,1) .
\end{equation}
Combining~\eqref{PeerGynt} and~\eqref{LapdivAnunabf} with $f = \psi$, we obtain
\begin{align}
   \frac{1}{h} \bigg( \psi+ H_E \frac{\psi^2}{2} + K_E \frac{\psi^3}{3} \bigg)(x)=& -\div_{\pa E} \big( \mathcal{A}(\nu_E) \nabla_{\pa E} \Delta_{\pa E} \psi   \big)(x)+ \Delta_{\pa E} H^\varphi_E (x) \label{NewPeerGynt}\\
   & -\div_{\pa E} \big( \big(\Delta_{\pa E} \mathcal{A}(\nu_E) \big) \nabla_{\pa E} \psi \big)(x)\notag\\
   &- \widehat{\mathcal{R}}(\overline{\nabla} B_E \star \overline{\nabla} \psi, B_E \star \overline{\nabla}^3 \psi)(x)\notag\\
   &- \Delta_{\pa E} \big(Q(E(u_F^{K_{el}}))(x+ \psi(x)\nu_E)\big) + \Delta_{\pa E} \widetilde{\mathcal{R}}_0(x).\notag
\end{align}
Multiplying  by $-\Delta_{\partial E} \psi$, integrating by parts on $\partial E$, and rearranging terms, we deduce
\begin{equation}\label{eq:sistint}
    \begin{split}
        \frac{1}{h} \int_{\pa E} \vert \nabla_{\pa E} \psi \vert^2 + \int_{\pa E} \mathcal{A}(\nu_E) \nabla_{\pa E}& \Delta_{\pa E} \psi \cdot \nabla_{\pa E} \Delta_{\pa E} \psi = \int_{\pa E} \nabla_{\pa E} \Delta_{\pa E} H^{\varphi}_E \cdot \nabla_{\pa E} \psi \\
        & -\frac{1}{h} \int_{\pa E} \nabla_{\pa E} \Big(  H_E \frac{\psi^2}{2}+ K_E \frac{\psi^3}{3}\Big) \cdot \nabla_{\pa E} \psi \\
        &- \int_{\pa E} \big(  \Delta_{\pa E} \mathcal{A}(\nu_E)\big) \nabla_{\pa E} \psi \cdot \nabla_{\pa E} \Delta_{\pa E} \psi \\
        & + \int_{\pa E} \widehat{\mathcal R}(\overline{\nabla} B_E \star \overline{\nabla} \psi, B_E \star \overline{\nabla}^3 \psi) \Delta_{\pa E} \psi \\
        & - \int_{\pa E} \nabla_{\pa E} \big(Q(E(u_F^{K_{el}}))(x+ \psi(x)\nu_E(x))  \big)  \cdot \nabla_{\pa E} \Delta_{\pa E} \psi \\
        & + \int_{\pa E} \nabla_{\pa E} \widetilde{\mathcal{R}}_0 \cdot \nabla_{\pa E} \Delta_{\pa E } \psi .
    \end{split}
\end{equation}
We now proceed to estimate the right-hand side of~\eqref{eq:sistint}. Let $\varepsilon>0$ to
be chosen later. \\
\textit{Estimate of $ \int_{\pa E} \nabla_{\pa E} \Delta_{\pa E} H^{\varphi}_E \cdot \nabla_{\pa E} \psi $.}\\
By Lemma~\ref{H3mcurvimpH3acurv} and Young’s inequality, we obtain
\begin{align*}
   \int_{\pa E} \nabla_{\pa E} \Delta_{\pa E} H^{\varphi}_E \cdot \nabla_{\pa E} \psi &\leq C(\varepsilon) \| \nabla_{\pa E} \Delta_{\pa E} H^{\varphi}_E \|_{L^2(\pa E)}^2 + \varepsilon \| \nabla_{\pa E } \psi \|^2_{L^2(\pa E)} \notag\\
   &\leq C + \varepsilon \| \nabla_{\pa E } \psi \|^2_{L^2(\pa E)}.
\end{align*}
\textit{Estimate of $ -\frac{1}{h} \int_{\pa E} \nabla_{\pa E} \Big(  H_E \frac{\psi^2}{2}+ K_E \frac{\psi^3}{3}\Big) \cdot \nabla_{\pa E} \psi $.}\\
We have that 
\begin{equation*}
    \nabla_{\pa E} \Big( H_E \frac{\psi^2}{2}+ K_E \frac{\psi^3}{3}\Big)= \psi \nabla_{\pa E} \psi \big( H_E+ K_E \psi  \big)+ \psi^2 \Big( \frac{\nabla_{\pa E} H_E}{2}
    + \psi\frac{\nabla_{\pa E} K_E}{3}\Big).
\end{equation*}
Combining the above expression with~\eqref{1803pons} and~\eqref{NormC^1alphB_E}, we obtain
\begin{equation}\label{22piovani1}
    \left\|  \nabla_{\pa E}  \Big( H_E \frac{\psi^2}{2}+ K_E \frac{\psi^3}{3}\Big)\right\|_{L^2(\pa E)} \leq C h^{\frac{1}{2}}.
\end{equation}
Hence, using~\eqref{1803pons} and the Cauchy-Schwarz inequality, we conclude that
\begin{equation*}
    -\frac{1}{h} \int_{\pa E} \nabla_{\pa E} \Big(  H_E \frac{\psi^2}{2}+ K_E \frac{\psi^3}{3}\Big) \cdot \nabla_{\pa E} \psi \leq  C.
\end{equation*}
\textit{Estimate of $ - \int_{\pa E} \big(  \Delta_{\pa E} \mathcal{A}(\nu_E)\big) \nabla_{\pa E} \psi \cdot \nabla_{\pa E} \Delta_{\pa E} \psi$.}\\
Using Young's inequality, Lemma~\ref{pensobene},~\eqref{1803pons} and~\eqref{NormC^1alphB_E}, we obtain
\begin{align*}
   -\int_{\pa E} \big(  \Delta_{\pa E} \mathcal{A}(\nu_E)\big) \nabla_{\pa E} \psi \cdot \nabla_{\pa E} \Delta_{\pa E} \psi &\leq C(\varepsilon) \| \psi \|^2_{C^1(\pa E)} \| \mathcal{A}(\nu_E) \|_{H^2(\pa E)}^2+\varepsilon \|  \nabla_{\pa E} \Delta_{\pa E} \psi \|_{L^2(\pa E)}^2\\
   & \leq C(\varepsilon) \| B_E \|_{H^1(\pa E)}^2+ \varepsilon\|  \nabla_{\pa E} \Delta_{\pa E} \psi \|_{L^2(\pa E)}^2 \\
   & \leq C+ \varepsilon\|  \nabla_{\pa E} \Delta_{\pa E} \psi \|_{L^2(\pa E)}^2.
\end{align*}
\textit{Estimate of $\int_{\pa E} \widehat{\mathcal R}(\overline{\nabla} B_E \star \overline{\nabla} \psi, B_E \star \overline{\nabla}^3 \psi) \Delta_{\pa E} \psi $.}\\
Since $\widehat{R}$ is smooth, by~\eqref{NormC^1alphB_E}, we obtain
\begin{align*}
   |\widehat{\mathcal R}(\overline{\nabla} B_E \star \overline{\nabla} \psi, B_E \star \overline{\nabla}^3 \psi) \vert &\leq C\big(  \| B_E \|_{C^1(\pa E)}  \big) \big(   1+ \vert \overline{\nabla} \psi  \vert + \vert \overline{\nabla}^2 \psi  \vert+ \vert \overline{\nabla}^3 \psi  \vert \big) \\
   &  \leq C\big(   1+ \vert \overline{\nabla} \psi  \vert + \vert \overline{\nabla}^2 \psi  \vert+ \vert \overline{\nabla}^3 \psi  \vert \big).
\end{align*}
Therefore, by Young's inequality, we deduce
\begin{align*}
    \int_{\pa E} \widehat{\mathcal R}(\overline{\nabla} B_E \star \overline{\nabla} \psi, B_E \star \overline{\nabla}^3 \psi) \Delta_{\pa E} \psi \leq&\frac{\varepsilon}{2} \| \psi \|_{H^3(\pa E)}^2+ C(\varepsilon)\| \Delta_{\pa E} \psi \|^2_{L^2(\pa E)} \\
    \leq &   \varepsilon  \| \psi \|_{H^3(\pa E)}^2+  C(\varepsilon) \| \psi \|_{H^1(\pa E)}^2                  \\
    \leq & \varepsilon \| \nabla_{\pa E} \Delta_{\pa E} \psi \|_{L^2(\pa E)}^2+ C, 
\end{align*}
where in the second inequality we used Proposition~\ref{prop:interpolation}, and in last inequality we used Lemma~\ref{lem:hilbert-norm} combined with~\eqref{1803pons} and~\eqref{AnitraD}.\\
\textit{Estimate of $-\int_{\pa E} \nabla_{\pa E} \big(Q(E(u_F^{K_{el}}))(x+ \psi(x)\nu_E(x))  \big)  \cdot \nabla_{\pa E} \Delta_{\pa E} \psi$.}\\
We have that
\begin{align*}
        &\nabla_{\pa E} \big(Q(E(u_F^{K_{el}}))(x+ \psi(x)\nu_E(x))  \big) = \nabla Q(E(u_F^{K_{el}})) (x+ \psi(x)\nu_E(x)) \nabla_{\pa E} \big( x+ \psi(x)\nu_E(x) \big)\\
        &=   \nabla Q(E(u_F^{K_{el}})) (x+ \psi(x)\nu_E)  \big( P_{\pa E}(x)+ \nu_E(x)\otimes\nabla_{\pa E} \psi(x)+ \psi(x)B_E(x)  \big).
\end{align*}
 We recall that $u_{F}^{K_{el}} \in \mathfrak{C}_{K_{el}}^{3,\frac{1}{4}}(\Omega, \R^3)$, see~\eqref{minelvinc}. Hence, using also~\eqref{1803pons}, we obtain
 \begin{equation*}
    \|  \nabla_{\pa E} \big(Q(E(u_F^{K_{el}}))(\cdot+ \psi(\cdot)\nu_E)  \big) \|^2_{L^2(\pa E)}  \leq C(K_{el}). 
 \end{equation*}
 Next, by Young's inequality and the previous estimate, we obtain
 \begin{align*}
  -\int_{\pa E} &\nabla_{\pa E} \big(Q(E(u_F^{K_{el}}))(x+ \psi(x)\nu_E)  \big)  \cdot \nabla_{\pa E} \Delta_{\pa E} \psi   \\
  & \leq  C(\varepsilon)\|  \nabla_{\pa E} \big(Q(E(u_F^{K_{el}}))(\cdot+ \psi(\cdot)\nu_E)  \big) \|^2_{L^2(\pa E)} + \varepsilon \| \nabla_{\pa E} \Delta_{\pa E} \psi \|_{L^2(\pa E)}^2 \\
  & \leq C+ \varepsilon \| \nabla_{\pa E} \Delta_{\pa E} \psi \|_{L^2(\pa E)}^2.
 \end{align*}
 \textit{Estimate of $\int_{\pa E} \nabla_{\pa E} \widetilde{\mathcal{R}}_0 \cdot \nabla_{\pa E} \Delta_{\pa E } \psi$ and existence of $h_1$.}\\
 We recall that $\widetilde{R}_0$ is composed of three addends, see~\eqref{nuovoresto}. Using Young’s inequality, we obtain
 \begin{equation*}
     \int_{\pa E} \nabla_{\pa E} \widetilde{\mathcal{R}}_0 \cdot \nabla_{\pa E} \Delta_{\pa E } \psi \leq C(\varepsilon)\| \nabla_{\pa E} \widetilde{\mathcal{R}}_0 \|^2_{L^2(\pa E)}+ \varepsilon \| \nabla_{\pa E} \Delta_{\pa E } \psi \|_{L^2(\pa E)}^2.
 \end{equation*}
  By the Leibniz rule, we obtain
 \begin{align*}
        \| \nabla_{\pa E}  \widetilde{\mathcal{R}}_0 \|_{L^2(\pa E)} &\leq  C \sum_{j+k=1} \|    \vert \overline{\nabla}^j \big( \widetilde{\mathcal{A}}_1(\psi B_E, \overline{\nabla} \psi, \nu_E)  \big) \vert \vert \overline{\nabla}^{2+k} \psi \vert \|_{L^2(\pa E)}\\
        & \quad+C \sum_{j+k=1} \|    \vert \overline{\nabla}^j \big( \widetilde{\mathcal{A}}_2(\psi B_E, \overline{\nabla} \psi, \nu_E)  \big) \vert \vert \overline{\nabla}^{1+k} \big(\psi B_E \big) \vert \|_{L^2(\pa E)}\\
        &\quad+ \| \tilde{a}_0 (\psi, \overline{\nabla}\psi, B_E, \nu_E) \|_{H^1(\pa E)}.
 \end{align*}
 For $j,k$ such that $j+k=1$, we apply Lemma~\ref{lem:Leibniz} with $T_1= \widetilde{\mathcal{A}}_1(\psi B_E, \overline{\nabla} \psi, \nu_E)  $ and $T_2= \overline{\nabla} \psi$ to estimate
 \begin{equation*}
     \begin{split}
         \| \vert  \overline{\nabla}^j \big(  \widetilde{\mathcal{A}}_1(\psi B_E, \overline{\nabla} \psi, \nu_E) \big) \vert \vert \overline{\nabla}^{2+k} \psi \vert \|_{L^2(\pa E)} &\leq C \| \widetilde{\mathcal{A}}_1(\psi B_E, \overline{\nabla} \psi, \nu_E) \|_{L^\infty(\pa E)} \|  \psi \|_{H^3(\pa E)}\\
         & \quad +C \| \psi \|_{C^1(\pa E)} \|  \widetilde{\mathcal{A}}_1(\psi B_E, \overline{\nabla} \psi, \nu_E) \|_{H^2(\pa E)} .
     \end{split}
 \end{equation*}
 Similarly, taking $T_1= \widetilde{\mathcal{A}}_2(\psi B_E, \overline{\nabla} \psi, \nu_E)$ and $T_2= \psi B_E$, we obtain
 \begin{equation*}
     \begin{split}
         &\|    \vert \overline{\nabla}^j \big( \widetilde{\mathcal{A}}_2(\psi B_E, \overline{\nabla} \psi, \nu_E)  \big) \vert \vert \overline{\nabla}^{1+k} \big(\psi B_E \big) \vert \|_{L^2(\pa E)} \\
         &\leq  C \| \widetilde{\mathcal{A}}_2(\psi B_E, \overline{\nabla} \psi, \nu_E) \|_{L^\infty(\pa E)} \|  \psi B_E \|_{H^2(\pa E)}\\
         & \quad+ C \| \psi B_E \|_{L^\infty(\pa E)} \| \widetilde{\mathcal{A}}_2(\psi B_E, \overline{\nabla} \psi, \nu_E) \|_{H^2(\pa E)} .
     \end{split}
 \end{equation*}
 Since $\widetilde{\mathcal{A}}_i$  is smooth and satisfies $ \widetilde{\mathcal{A}}_i(0,0,\cdot)=0$, we obtain
 \begin{equation}\label{filmporcorosso}
     \| \widetilde{\mathcal{A}}_i (\psi B_E, \overline{\nabla} \psi, \nu_E) \|_{L^\infty(\pa E)} \leq C \| \psi \|_{C^1(\pa E)} \le C h^{\frac{1}{9}},
 \end{equation}
 where the last inequality holds for $h \leq h_1 \leq h_0$, by~\eqref{1803pons}.
 Moreover,  we have
 \begin{equation*}
     \begin{split}
     &\vert \overline{\nabla}^2 \widetilde{\mathcal{A}}_i(\psi B_E, \overline{\nabla} \psi,\nu_E) \vert \\ 
     &\leq C \sum_{\vert \alpha \vert
     \leq 2} \big( 1+ \vert \overline{\nabla}^{\alpha_1}( \psi B_E) \vert  \big) \big( 1+ \vert \overline{\nabla}^{\alpha_2} (\psi B_E) \vert \big) \big( 1+ \vert \overline{\nabla}^{1+\alpha_3} \psi \vert \big) \big( 1+ \vert \overline{\nabla}^{1+\alpha_4} \psi \vert \big), 
     \end{split}
 \end{equation*}
 where the constant $C=C\big( \| B_E \|_{C^1(\pa E)} \big)$. Therefore, applying Lemma~\ref{lem:Leibniz} with $T_i=  \psi B_E$ for $i=1,2$ and $T_i= \overline{\nabla}\psi$ for $i=3,4$, we get
 \begin{align*}
     &\| \widetilde{\mathcal{A}}_i(\psi B_E, \overline{\nabla} \psi,\nu_E) \|_{H^2(\pa E)} \\
     &\leq C \big(  (1+ \| \psi B_E \|_{L^\infty(\pa E)})(1+ \| \psi \|_{H^3(\pa E)}) +  (1+ \| \psi \|_{C^1(\pa E)})(1+ \| \psi B_E \|_{H^2(\pa E)})\big)\\
     & \leq C \big( 1+ \| \psi B_E \|_{H^2(\pa E)}+ \| \psi \|_{H^3(\pa E)} \big) \\
     & \leq C \big(    1+ \| B_E \|_{H^2(\pa E)}+ \| \psi \|_{H^3(\pa E)}+ C \big) \\
     & \leq  C \big(1+ \| \nabla_{\pa E} \Delta_{\pa E} \psi \|_{L^2(\pa E)}  \big),
 \end{align*}
 where  in the last inequality we used Lemma~\ref{lem:hilbert-norm} and~\eqref{AnitraD}.
 
 Finally, we estimate $ \tilde{a}_0$. Using  the chain rule, the regularity of $\tilde{a}_0$, equation~\eqref{NormC^1alphB_E}, Lemma~\ref{lem:Leibniz}, and the interpolation inequalities of Proposition~\ref{prop:interpolation}, we obtain
 \begin{equation*}
     \begin{split}
         \| \tilde{a}_0 (\psi, \overline{\nabla} \psi, B_E, \nu_E) \|_{H^1(\pa E)} \leq& C \| (1+| \overline{\nabla}\psi \vert )(1+ \vert \overline{\nabla}^2 \psi \vert )(1+ \vert B_E \vert)(1+\vert \overline{\nabla}B_E \vert)    \|_{L^2(\pa E)}\\
        \leq & C \| \psi \|_{H^2(\pa E)}+C \leq \varepsilon \| \psi \|_{H^3(\pa E)}+ C(\varepsilon) \| \psi \|_{H^1(\pa E)}+ C\\
        \leq & \varepsilon \| \nabla_{\pa E} \Delta_{\pa 
        E} \psi \|_{L^2(\pa E)}+ C .
     \end{split}
 \end{equation*}
 Up to taking $h_1$ smaller, by~\eqref{filmporcorosso},, and the above inequalities, we conclude that
 \begin{equation*}
     \int_{\pa E} \nabla_{\pa E} \widetilde{\mathcal{R}}_0 \cdot \nabla_{\pa E} \Delta_{\pa E } \psi \leq C+ C\varepsilon \| \nabla_{\pa E} \Delta_{\pa E } \psi \|_{L^2(\pa E)}^2.
 \end{equation*}

Recall that, by~\eqref{unifell} and~\eqref{07032026posdef}, we have
 \begin{equation*}
     C\| \nabla_{\pa E} \Delta_{\pa E} \psi \|_{L^2(\pa E)}^2 \leq \int_{\pa E} \mathcal{A}(\nu_E) \nabla_{\pa E}\Delta_{\pa E} \psi \cdot \nabla_{\pa E} \Delta_{\pa E} \psi.
 \end{equation*}
Finally, by combing all the previously derived estimates with equation~\eqref{eq:sistint}, we infer
 \begin{equation}
     \frac{1}{h} \| \nabla_{\pa E} \psi \|_{L^2(\pa E)}^2+ C\| \nabla_{\pa E} \Delta_{\pa E} \psi \|_{L^2(\pa E)}^2 \leq C,
 \end{equation}
and Lemma~\ref{lem:hilbert-norm} yields $\| \psi \|_{H^3(\pa E)} \leq C.$
 
\end{proof}

\subsubsection{Estimates in $H^5$}
The aim of this subsubsection is to obtain an $H^5(\pa E)$-bound, with a constant independent of $h$, for the function $\psi$ solving equation~\eqref{PeerGynt}.
\begin{lemma}\label{Lemma:stimefinali}
    Let $E \Subset \Omega$ be a $C^6$-regular set such that~\eqref{AnitraD} holds and that satisfies $\| \Delta_{\pa E}^2 H_E \|_{L^2(\pa E)} \leq  K_0 h^{-\frac{1}{4}}$. Let $F \subset \Omega$ be a minimizer of~\eqref{ProblemaINc} for $\eta < \delta_2$, where $\delta_2$ is the constant given in Proposition~\ref{CaneBernard06}.  Then there exists $h_2 \leq h_1$, where $h_1$ is the constant provided by Lemma~\ref{Paperplane}, such that the solution of~\eqref{PeerGynt} satisfies, for every $h \leq h_2$,
    \begin{equation}\label{LESTIMEFINALI}
    \begin{split}
      &\| \psi \|_{H^1(\pa E)} \leq C_1 h, \,\| \psi \|_{H^2(\pa E)} \leq C_1 h^{\frac{3}{4}},\, \| \psi \|_{H^3(\pa E)} \leq C_1h^{\frac{1}{2}}, \\
      &\| \psi \|_{H^4(\pa E)} \leq C_1h^{\frac{1}{4}}, \,  \| \psi \|_{H^5(\pa E)} \leq C_1, \, \| \psi \|_{C^{3,\alpha}(\pa E)} \leq \tilde{C}_{1,\alpha} \\
      & \| \psi \|_{C^{0,\alpha}(\pa E)} \leq \tilde{C}_{1,\alpha} h^{\frac{3}{4}} \,, \| \psi \|_{C^{1,\alpha}(\pa E)} \leq \tilde{C}_{1,\alpha} h^{\frac{1}{2}},\, \| \psi \|_{C^{2,\alpha}(\pa E)} \leq \tilde{C}_{1,\alpha} h^{\frac{1}{4}}  \quad \text{ for } \alpha \in (0,1), 
      \end{split}
    \end{equation}
    where $C_1=C_1(r_0,\sigma_E,K_0,K_{el},\delta_2,h_1,P(E))$ and $\tilde{C}_{1,\alpha}=\tilde{C}_1(\alpha, C_1)$.
\end{lemma}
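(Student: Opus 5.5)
The plan is a higher-order energy estimate for \eqref{NewPeerGynt}, parallel to Lemma~\ref{Paperplane}, followed by interpolation. The key target is
\begin{equation*}
\frac1h\|\nabla_{\pa E}\Delta_{\pa E}\psi\|_{L^2(\pa E)}^2+c\,\|\nabla_{\pa E}\Delta_{\pa E}^2\psi\|_{L^2(\pa E)}^2\le C .
\end{equation*}
The second term gives $\|\psi\|_{H^5}\le C_1$ through Lemma~\ref{lem:hilbert-norm}. This needs $\|\nabla_{\pa E}\Delta^2_{\pa E}H_E\|$ only multiplied by $\|\psi\|_{L^\infty}$, and I expect to handle that piece using the assumption $\|\Delta^2_{\pa E}H_E\|_{L^2}\le K_0h^{-1/4}$ together with $\|\psi\|_{L^\infty}\lesssim h^{1/2-\varepsilon}$. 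The first term gives $\|\psi\|_{H^3}\le C h^{1/2}$ via Lemma~\ref{lem:hilbert-norm}, using $\|\psi\|_{L^\infty}\le C h^{1/2-\varepsilon}$ and $\|B_\Sigma\|_{H^3}\le C$. Interpolating between $H^3$ and $H^5$ with Proposition~\ref{prop:interpolation} gives $H^4\lesssim h^{1/4}$.

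For the lower norms I would combine the $H^3$ bound with the $H^{-1}$ information $d_{H^{-1}}(F,E)\le C_0h$ from \eqref{dH-1hhh}. Since $\|\xi_{F,E}\|_{H^{-1}}\le Ch$ and $\xi_{F,E}\approx\psi$, interpolating $H^{-1}$ against $H^3$ yields $H^1\lesssim h^{1/2}\cdot h^{1/2}=h$ (exponent $3/4$ on $h$, $1/4$ on $h^{1/2}$) and $H^2\lesssim h^{3/4}$. The transfer from $\xi$ to $\psi$ uses \eqref{090226ppranzf1}–\eqref{10022026pprinazof2}. The H\"older bounds then follow from Sobolev embedding on the surface $H^{k+1}\hookrightarrow C^{k-1,\alpha}$, i.e. $C^{k,\alpha}\lesssim H^{k+2-\epsilon'}$, combined with interpolation between consecutive $H^k$ norms.

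To derive the energy inequality, I would apply $\Delta_{\pa E}$ to \eqref{NewPeerGynt} once more, multiply by $\Delta_{\pa E}\psi$ (equivalently, test \eqref{NewPeerGynt} against $\Delta^2_{\pa E}\psi$), and integrate by parts. The leading term $-\Delta\div(\mathcal A\nabla\Delta\psi)$ is commuted using Lemma~\ref{nonstaropiu21} applied to $f=\Delta_{\pa E}\psi$. This produces the coercive term $\int\mathcal A\nabla\Delta^2\psi\cdot\nabla\Delta^2\psi$, plus errors involving $\Delta\mathcal A(\nu_E)$ and $\overline\nabla B_\Sigma\star\overline\nabla^4\psi$. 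With $\|B_\Sigma\|_{H^3}$ bounded and $B_\Sigma\in C^{1,\alpha}$, these are absorbed by Young's inequality and the interpolation $\|\psi\|_{H^4}\le\varepsilon\|\psi\|_{H^5}+C_\varepsilon\|\psi\|_{H^3}$, where $\|\psi\|_{H^3}\le C$ is already known from Lemma~\ref{Paperplane}.

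The remaining right-hand side terms are handled as follows.
\begin{itemize}
\item[(a)] The term $\Delta^2 H^\varphi_E$ is paired with $\Delta^2\psi$, or after one integration by parts $\nabla\Delta H^\varphi_E$ is paired with $\nabla\Delta^2\psi$. The first option requires $\|\Delta^2 H_E^\varphi\|_{L^2}\lesssim h^{-1/4}$; I would obtain this as in Lemma~\ref{H3mcurvimpH3acurv}, and the product is then handled using the gain in the $1/h$ term.
\item[(b)] The $1/h$ nonlinear terms $\nabla\Delta(H_E\psi^2+K_E\psi^3)$ are bounded by $\|\psi\|_{C^1}$ times the quantities already controlled.
\item[(c)] The elastic term $\Delta^2[Q(E(u_F^{K_{el}}))(x+\psi\nu_E)]$ is treated by integrating by parts so that at most $\nabla\Delta$ falls on $Q\circ\Psi$. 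This needs third derivatives of $u$ on $\pa F$, which is exactly why the constraint $\|\Delta_{\pa F}\pa^\alpha u\|_{L^2}\le K_{el}h^{-1/4}$ with $|\alpha|=3$ (Remark~\ref{19042026remak}) is built into \eqref{minelvinc}.
\item[(d)] The remainder $\widetilde{\mathcal R}_0$ is handled with Lemma~\ref{lem:Leibniz} as in Lemma~\ref{Paperplane}, using the smallness $\widetilde{\mathcal A}_i=O(\|\psi\|_{C^1})=O(h^{1/9})$ so that the top-order $\overline\nabla^5\psi$ contributions are absorbed.
\end{itemize}

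I expect the main obstacle to be the terms that scale like $h^{-1/4}$, namely $\Delta^2 H_E$ and the $u$-derivatives in (a) and (c). Pairing them naively against $\|\psi\|_{H^5}$ gives an $h^{-1/4}$ loss that cannot be absorbed into an $h$-independent bound. What I would try first is to put one derivative onto $\psi$ and pair against $\nabla\Delta\psi$ via the $\frac1h\|\nabla\Delta\psi\|^2$ term. This gives $h^{-1/4}\|\nabla\Delta\psi\|\le \frac{1}{4h}\|\nabla\Delta\psi\|^2+Ch^{1/2}$, which closes the estimate.
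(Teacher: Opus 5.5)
The $H^5$ part of your plan is essentially the paper's Step~1: apply $\Delta_{\pa E}$ to \eqref{NewPeerGynt}, commute the leading term via Lemma~\ref{nonstaropiu21} with $f=\Delta_{\pa E}\psi$, absorb, and conclude with Lemma~\ref{lem:hilbert-norm}. There are two slips. First, the coercive term $\int_{\pa E}\mathcal A(\nu_E)\nabla_{\pa E}\Delta^2_{\pa E}\psi\cdot\nabla_{\pa E}\Delta^2_{\pa E}\psi$ comes from testing the differentiated equation with $-\Delta^2_{\pa E}\psi$ (i.e.\ \eqref{NewPeerGynt} against $\Delta^3_{\pa E}\psi$). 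Testing with $\Delta_{\pa E}\psi$ only gives an $H^4$-level estimate. Second, for $k=3$ the curvature factor in Lemma~\ref{lem:hilbert-norm} is $\|\Delta^2_{\pa E}H_E\|_{L^2}$, not $\|\nabla_{\pa E}\Delta^2_{\pa E}H_E\|_{L^2}$.

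The genuine gap is in the lower-order bounds. Interpolating $\|\xi_{F,E}\|_{H^{-1}}\le C_0h$ against $\|\psi\|_{H^3}\lesssim h^{1/2}$ places $H^1$ at the midpoint, with weights $1/2,1/2$. This gives $h^{1/2}\cdot h^{1/4}=h^{3/4}$, not $h$; the weights $3/4,1/4$ you quote correspond to $L^2$. Likewise $H^2$ comes out as $h^{1/4}\cdot h^{3/8}=h^{5/8}$, not $h^{3/4}$. No other interpolation pair helps. The claimed bounds follow the scaling $\|\psi\|_{H^s}\sim h^{(5-s)/4}$, which at level $H^{-1}$ would require $h^{3/2}$, so $d_{H^{-1}}(F,E)\le C_0h$ is simply too weak.

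Hence $\|\psi\|_{H^1}\le C_1h$, $\|\psi\|_{H^2}\le C_1h^{3/4}$ and the $C^{0,\alpha}$ bound remain unproved. They are used later, e.g.\ $\|\xi_{F,E}\|_{H^1}\le Ch$ in Lemma~\ref{lemmahnegativa}. Your $H^3$ bound also loses an $\varepsilon$: Lemma~\ref{lem:hilbert-norm} with $\|\psi\|_{L^\infty}\lesssim h^{1/2-\varepsilon}$ only gives $h^{1/2-\varepsilon}$.

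The missing idea is to use the equation once more, after $\|\psi\|_{H^5}\le C$ is known. Test \eqref{UltimaPeerGynt} with $-\xi_{F,E}$ (the paper uses $-\psi$). The left-hand side gives exactly $\frac1h\|\nabla_{\pa E}\xi_{F,E}\|^2_{L^2}$. Every term on the right, after at most one integration by parts, is an $L^2$-bounded quantity paired with $\nabla_{\pa E}\xi_{F,E}$ or $\xi_{F,E}$. This uses the $H^5$ bound, Lemma~\ref{H3mcurvimpH3acurv}, the $O(1)$ bound on $\nabla_{\pa E}\Delta_{\pa E}$ of the elastic term, and the remainder estimates of Step~1. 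Poincar\'e ($\int_{\pa E}\xi_{F,E}=0$) then gives $\|\nabla_{\pa E}\xi_{F,E}\|_{L^2}\le Ch$, hence $\|\psi\|_{H^1}\le Ch$ by \eqref{090226ppranzf1} and \eqref{10022026pprinazof2}. Interpolating between $H^1$ and $H^5$ yields $h^{3/4},h^{1/2},h^{1/4}$ for $H^2,H^3,H^4$ with no loss.

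Your diagnosis of the ``main obstacle'' is also off, and acting on it would break the elastic estimate. In the second option of (a), which is the paper's route, $\Delta^2_{\pa E}H^\varphi_E$ is integrated by parts once against $\nabla_{\pa E}\Delta^2_{\pa E}\psi$. Only $\|\nabla_{\pa E}\Delta_{\pa E}H^\varphi_E\|_{L^2}\le C$ is then needed. In (c), $\|\nabla_{\pa E}\Delta_{\pa E}\big(Q(E(u_F^{K_{el}}))(\cdot+\psi\nu_E)\big)\|_{L^2}$ involves only tangential derivatives of $\nabla^3u_F^{K_{el}}$ along $\pa F$. These are controlled by the $O(1)$ constraint $\|\pa^\alpha u\|_{H^{1,2}(\mathcal{H}^2 \mrestr \pa^* F)}\le K_{el}$ in \eqref{minelvinc}; the $h^{-1/4}$ constraint serves Lemma~\ref{lemmahnegativa}, not this lemma.

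If you bounded this quantity by $h^{-1/4}$ instead, Young against $\nabla_{\pa E}\Delta^2_{\pa E}\psi$ would leave $h^{-1/2}$. The device you describe cannot repair this. Reaching $\nabla_{\pa E}\Delta_{\pa E}\psi$ from a pairing with $\Delta^2_{\pa E}\psi$ moves $\nabla_{\pa E}\Delta^2_{\pa E}$ onto the elastic term or onto $H^\varphi_E$, which nothing controls. If one really had only $h^{-1/4}$ bounds on $\Delta^2_{\pa E}$ of these quantities, the correct way to exploit the $\frac1h$-term would be $\|\Delta^2_{\pa E}\psi\|^2_{L^2}\le\|\nabla_{\pa E}\Delta_{\pa E}\psi\|_{L^2}\|\nabla_{\pa E}\Delta^2_{\pa E}\psi\|_{L^2}$ followed by Young. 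In the actual proof the only $h^{-1/4}$ factor is $\|B_E\|_{H^4}\le C(1+\|\Delta^2_{\pa E}H_E\|_{L^2})$. It enters through Lemma~\ref{lem:hilbert-norm} and through $\|\psi B_E\|_{H^4}$ in $\widetilde{\mathcal R}_0$, and is always multiplied by $\|\psi\|_{L^\infty}$.
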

\begin{proof}
In order to simplify the notation, we identify $B_E$ with $B_\Sigma$. Notice that, by the assumptions on $E$, using Lemma~\ref{lem:hilbert-norm} and the Sobolev embedding theorem, we have
    \begin{equation}\label{2203stimecurvthm}
        \| B_E \|_{H^3(\pa E)} \leq C, \quad \| B_E \|_{C^{1,\alpha}(\pa E)} \leq C, \quad \| B_E \|_{H^4} \leq C h^{-\frac{1}{4}},
    \end{equation}
    where $C= C(K_0,r_0)$. For the reader’s convenience, and to avoid overburdening the notation, we will omit the explicit dependence on $K_0,\, r_0, P(E)$ and $\Omega, K_{el}$. 
    
    We compute  the Laplace-Beltrami operator applied to equation~\eqref{NewPeerGynt} (recall that~\eqref{PeerGynt} and~\eqref{NewPeerGynt} are equivalent).
     Using Lemma~\ref{nonstaropiu21} with $ f= \Delta_{\pa E} \psi$, we obtain
    \begin{equation*}
        \begin{split}
        \Delta_{\pa E} \div_{\pa E} \big( \mathcal{A}(\nu_E) \nabla_{\pa E} \Delta_{\pa E} \psi   \big)=& \div_{\pa E} \big( \mathcal{A}(\nu_E) \nabla_{\pa E} \Delta_{\pa E}^2  \psi \big)\\
        & + \div_{\pa E} \big( \big(\Delta_{\pa E} \mathcal{A}(\nu_E) \big) \nabla_{\pa E} \Delta_{\pa E} \psi \big)\\
        & + \widehat{\mathcal{R}}(\overline{\nabla} B_E \star \overline{\nabla} \Delta_{\pa E}\psi, B_E \star \overline{\nabla}^3 \Delta_{\pa E}\psi).
        \end{split}
    \end{equation*}
Therefore, by~\eqref{NewPeerGynt} we have
\begin{equation}\label{UltimaPeerGynt}
    \begin{split}
   \frac{1}{h} \left( \Delta_{\pa E}\psi+  \Delta_{\pa E}\Big(H_E \frac{\psi^2}{2} \Big) + \Delta_{\pa E} \Big(K_E \frac{\psi^3}{3}  \Big) \right)(x)=
    &-\div_{\pa E} \big( \mathcal{A}(\nu_E) \nabla_{\pa E} \Delta_{\pa E}^2 \psi   \big)(x)\\
   &- \ \div_{\pa E} \big( \big(\Delta_{\pa E} \mathcal{A}(\nu_E) \big) \nabla_{\pa E} \Delta_{\pa E} \psi \big)(x) \\
        & - \widehat{\mathcal{R}}(\overline{\nabla} B_E \star \overline{\nabla} \Delta_{\pa E}\psi, B_E \star \overline{\nabla}^3 \Delta_{\pa E}\psi)(x)\\
   &+ \Delta_{\pa E}^2 H^\varphi_E (x)\\
   & -\Delta_{\pa E}\div_{\pa E} \big( \big(\Delta_{\pa E} \mathcal{A}(\nu_E) \big) \nabla_{\pa E} \psi \big)(x)\\
   &- \Delta_{\pa E}\widehat{\mathcal{R}}(\overline{\nabla} B_E \star \overline{\nabla} \psi, B_E \star \overline{\nabla}^3 \psi)(x)\\
   &- \Delta_{\pa E}^2 \big(Q(E(u_F^{K_{el}}))(x+ \psi(x)\nu_E)  \big)\\
   &+ \Delta_{\pa E}^2 \widetilde{\mathcal{R}}_0(x).
   \end{split}
\end{equation}
\textit{Step 1:} We prove the estimate for  $ \| \psi \|_{H^5(\pa E)}$ and the existence of $h_2$.\\
Multiplying~\eqref{UltimaPeerGynt} by $-\Delta_{\partial E}^2 \psi$ and integrating by parts, we obtain
\begin{align*}
        \frac{1}{h} \int_{\pa E} \vert \nabla_{\pa E} \Delta_{\pa E} \psi \vert^2 + \int_{\pa E} \mathcal{A}&(\nu_E) \nabla_{\pa E} \Delta_{\pa E}^2 \psi \cdot \nabla_{\pa E} \Delta_{\pa E}^2 \psi = \int_{\pa E} \nabla_{\pa E} \Delta_{\pa E} H^{\varphi}_E \cdot \nabla_{\pa E} \Delta_{\pa E}^2 \psi \\
        & -\frac{1}{h} \int_{\pa E} \nabla_{\pa E} \Big(  H_E \frac{\psi^2}{2}+ K_E \frac{\psi^3}{3}\Big) \cdot \nabla_{\pa E} \Delta_{\pa E}^2 \psi \\
        &- \int_{\pa E} \nabla_{\pa E}\div_{\pa E} \Big(\big(  \Delta_{\pa E} \mathcal{A}(\nu_E)\big) \nabla_{\pa E} \psi \Big) \cdot \nabla_{\pa E} \Delta_{\pa E}^2 \psi \\
        & - \int_{\pa E} \nabla_{\pa E} \widehat{\mathcal{R}}(\overline{\nabla} B_E \star \overline{\nabla} \psi, B_E \star \overline{\nabla}^3 \psi)\cdot\nabla_{\pa E}\Delta_{\pa E}^2 \psi \\
        & - \int_{\pa E} \nabla_{\pa E} \Delta_{\pa E} \big(Q(E(u_F^{K_{el}}))(x+ \psi(x)\nu_E(x))  \big)  \cdot \nabla_{\pa E} \Delta_{\pa E}^2 \psi \\
        & - \int_{\pa E} \big(\Delta_{\pa E} \mathcal{A}(\nu_E) \big)\nabla_{\pa E} \Delta_{\pa E} \psi \cdot \nabla_{\pa E} \Delta^2_{\pa E} \psi                     \\
        & + \int_{\pa E} \widehat{\mathcal{R}}(\overline{\nabla} B_E \star \overline{\nabla} \Delta_{\pa E}\psi, B_E \star \overline{\nabla}^3 \Delta_{\pa E} \psi) \Delta^2_{\pa E} \psi              \\
        & + \int_{\pa E} \nabla_{\pa E} \Delta_{\pa E}\widetilde{\mathcal{R}}_0 \cdot \nabla_{\pa E} \Delta_{\pa E }^2 \psi .
\end{align*}
We now proceed to estimate the right-hand side of the above identity. Let $\varepsilon > 0$ be fixed and to be chosen later. \\
\textit{Estimate of $\int_{\pa E} \nabla_{\pa E} \Delta_{\pa E} H^{\varphi}_E \cdot \nabla_{\pa E} \Delta_{\pa E}^2 \psi$. }\\
By Lemma~\ref{H3mcurvimpH3acurv} and Young's inequality, we obtain
\begin{align}
   \int_{\pa E} \nabla_{\pa E} \Delta_{\pa E} H^{\varphi}_E \cdot \nabla_{\pa E} \Delta^2_{\pa E} \psi \leq& C(\varepsilon) \| \nabla_{\pa E} \Delta_{\pa E} H^{\varphi}_E \|_{L^2(\pa E)}^2 + \varepsilon \| \nabla_{\pa E } \Delta^2_{\pa E} \psi \|^2_{L^2(\pa E)} \notag\\
   \leq & C+ \varepsilon \| \nabla_{\pa E } \Delta^2_{\pa E} \psi \|^2_{L^2(\pa E)}.\label{22Rivoluzione1}
\end{align}
\textit{Estimate of $-\frac{1}{h} \int_{\pa E} \nabla_{\pa E} \left(  H_E \frac{\psi^2}{2}+ K_E \frac{\psi^3}{3}\right) \cdot \nabla_{\pa E} \Delta_{\pa E}^2 \psi$.}\\
Using Young's inequality together with formula~\eqref{22piovani1}, we deduce
\begin{equation}\label{22Rivoluzione2}
  -\frac{1}{h} \int_{\pa E} \nabla_{\pa E} \Big(  H_E \frac{\psi^2}{2}+ K_E \frac{\psi^3}{3}\Big) \cdot \nabla_{\pa E} \Delta_{\pa E}^2 \psi \leq C+ \varepsilon \| \nabla_{\pa E } \Delta^2_{\pa E} \psi \|^2_{L^2(\pa E)}.  
\end{equation}
\textit{Estimate of $- \int_{\pa E} \nabla_{\pa E}\div_{\pa E} \left(\big(  \Delta_{\pa E} \mathcal{A}(\nu_E)\big) \nabla_{\pa E} \psi \right) \cdot \nabla_{\pa E} \Delta_{\pa E}^2 \psi$.}\\
Using Young's inequality, we obtain
\begin{align*}
    - \int_{\pa E}& \nabla_{\pa E}\div_{\pa E} \Big(\big(  \Delta_{\pa E} \mathcal{A}(\nu_E)\big) \nabla_{\pa E} \psi \Big) \cdot \nabla_{\pa E} \Delta_{\pa E}^2 \psi \\ 
    &\leq C(\varepsilon) \| \nabla_{\pa E}\div_{\pa E} \Big(\big(  \Delta_{\pa E} \mathcal{A}(\nu_E)\big) \nabla_{\pa E} \psi \Big) \|^2_{L^2(\pa E)}+ \varepsilon \| \nabla_{\pa E} \Delta_{\pa E}^2 \psi \|^2_{L^2(\pa E)}.
\end{align*}
A straightforward computation yields
\begin{equation}
    \vert \nabla_{\pa E}\div_{\pa E} \Big(\big(  \Delta_{\pa E} \mathcal{A}(\nu_E)\big) \nabla_{\pa E} \psi \Big) \vert \leq C \big(  \vert \overline{\nabla}^3 B_E \vert \vert \overline{\nabla} \psi \vert+ \vert \overline{\nabla}^2 B_E \vert \vert \overline{\nabla}^2 \psi \vert + \vert \overline{\nabla} B_E \vert \vert \overline{\nabla}^3 \psi \vert \big).
\end{equation}
Therefore, by Young's inequality and Proposition~\ref{prop:interpolation}, we obtain
\begin{align*}
        \int_{\pa E }  \left \vert \nabla_{\pa E}\div_{\pa E} \Big(\big(  \Delta_{\pa E} \mathcal{A}(\nu_E)\big) \nabla_{\pa E} \psi \Big) \right\vert^2 \leq& \frac{1}{2} \| B_E \|^4_{W^{2,4}(\pa E)}+ \frac{1}{2} \| \psi \|^4_{W^{2,4}(\pa E)}\\
        &+ \| B_E \|^2_{H^3(\pa E)} \| \overline{\nabla} \psi \|^2_{L^\infty(\pa E)}\\
        &+ \| B_E \|_{C^1(\pa E)}^2 \| \psi \|^2_{H^3(\pa E)} \\
        \leq & C \| B_E \|_{H^3(\pa E)}^{\frac{5}{6}}+ C \| \psi \|^{\frac{5}{6}}_{H^3(\pa E)} \\
        &+ \| B_E \|^2_{H^3(\pa E)} \| \overline{\nabla} \psi \|^2_{L^\infty(\pa E)}\\
        &+ \| B_E \|_{C^1(\pa E)}^2 \| \psi \|^2_{H^3(\pa E)} \leq C
\end{align*}
where in the last inequality we used~\eqref{stimeinH^3} and~\eqref{2203stimecurvthm}. Combining the above estimates, we conclude that
\begin{equation}\label{22Rivoluzione3}
     - \int_{\pa E} \nabla_{\pa E}\div_{\pa E} \bigg(\big(  \Delta_{\pa E} \mathcal{A}(\nu_E)\big) \nabla_{\pa E} \psi \bigg) \cdot \nabla_{\pa E} \Delta_{\pa E}^2 \psi  \leq C + \varepsilon \| \nabla_{\pa E} \Delta_{\pa E}^2 \psi \|^2_{L^2(\pa E)}.
\end{equation}
\textit{Estimate of $- \int_{\pa E} \nabla_{\pa E} \widehat{\mathcal{R}}(\overline{\nabla} B_E \star \overline{\nabla} \psi, B_E \star \overline{\nabla}^3 \psi) \cdot \nabla_{\pa E}\Delta_{\pa E}^2 \psi$.}\\
By Young's inequality, we have
\begin{multline}
- \int_{\pa E} \nabla_{\pa E} \widehat{\mathcal{R}}(\overline{\nabla} B_E \star \overline{\nabla} \psi, B_E \star \overline{\nabla}^3 \psi) \cdot \nabla_{\pa E}\Delta_{\pa E}^2 \psi \\
\leq C(\varepsilon) \|  \widehat{\mathcal{R}}(\overline{\nabla} B_E \star \overline{\nabla} \psi, B_E \star \overline{\nabla}^3 \psi)  \|_{H^1(\pa E)}^2+ \varepsilon \| \nabla_{\pa E} \Delta_{\pa E}^2\psi \|^2_{L^2(\pa E)}.
\end{multline}
Using the regularity of $\widehat{\mathcal R}$, we get
\begin{equation*}
\begin{split}
   \|  \widehat{\mathcal{R}}(\overline{\nabla} B_E \star \overline{\nabla} \psi, B_E \star \overline{\nabla}^3 \psi) \|_{H^1(\pa E)}^2 &\leq C \| \psi \|_{H^4(\pa E)}^2 + \| B_E \|_{H^2(\pa E)}^2\\
   & \leq C + \varepsilon \| \nabla_{\pa E} \Delta^2_{\pa E} \psi \|_{L^2(\pa E)} .
   \end{split}
\end{equation*}
Here we used~\eqref{2203stimecurvthm}, Proposition~\ref{prop:interpolation}, Lemma~\ref{lem:hilbert-norm}, and~\eqref{stimeinH^3}, which, via the Sobolev embedding theorem, implies that $ \| \psi \|_{L^\infty(\pa E)} \leq C h^{\frac{1}{4}}$. Therefore, we conclude that
\begin{equation}\label{22Rivolutione4}
    - \int_{\pa E} \nabla_{\pa E} \widehat{\mathcal{R}}(\overline{\nabla} B_E \star \overline{\nabla} \psi, B_E \star \overline{\nabla}^3 \psi) \cdot \nabla_{\pa E}\Delta_{\pa E}^2 \psi \leq C+ 2 \varepsilon \| \nabla_{\pa E} \Delta^2_{\pa E} \psi \|^2_{L^2(\pa E)}.
\end{equation}
\textit{Estimate of $ - \int_{\pa E} \nabla_{\pa E} \Delta_{\pa E} \big(Q(E(u_F^{K_{el}}))(x+ \psi(x)\nu_E(x))  \big)  \cdot \nabla_{\pa E} \Delta_{\pa E}^2 \psi$.}\\
Applying  Young's inequality, we obtain 
\begin{equation}\label{24032026sera1}
\begin{split}
  - \int_{\pa E}& \nabla_{\pa E} \Delta_{\pa E} \big(Q(E(u_F^{K_{el}}))(x+ \psi(x)\nu_E(x))  \big)  \cdot \nabla_{\pa E} \Delta_{\pa E}^2 \psi \\
  &\leq C(\varepsilon) \| \nabla_{\pa E} \Delta_{\pa E} \big(Q(E(u_F^{K_{el}}))(x+ \psi(x)\nu_E(x))  \big) \|^2_{L^2(\pa E)}  + \varepsilon \| \nabla_{\pa E} \Delta_{\pa E}^2 \psi \|^2_{L^2( \pa E)}.
\end{split}
\end{equation}
We aim to bound the first term on the right-hand side of the above inequality.

Set $F:= Q \big( E(u_F^{K_{el}})   \big)$, we have that
\begin{equation}
\begin{split}
    \nabla_{\pa E} F(x+ \psi(x)&\nu_E(x))= \nabla F (x+ \psi(x)\nu_E(x)) \nabla_{\pa E} \big(  x+ \psi(x)\nu_E (x)\big) \\
    &= \nabla F (x+ \psi(x)\nu_E(x)) \big( P_{\pa E}(x)+ \nu_E(x)\otimes\nabla_{\pa E} \psi(x)+ \psi(x)B_E(x) \big).
    \end{split}
\end{equation}
We now compute $ \nabla_{\pa E}^2  F(x+ \psi(x)\nu_E(x)) $, obtaining 
\begin{align}
      \nabla_{\pa E}^2  F(x+ \psi(x)\nu_E(x)) =& \nabla^2 F (x+\psi(x)\nu_E(x)) \big( P_{\pa E}+ \nu_E \otimes \nabla_{\pa E} \psi+ \psi B_E  \big)^2(x) \label{derivativeofFqe}\\
      & + Z_1(\nu_E, B_E, \overline{\nabla} B_E, \overline{\nabla} \psi, \overline{\nabla}^2 \psi), \notag
\end{align}
where $Z_1$ is a matrix-valued function whose $C^1$-norm is controlled by a constant depending on $K_{el}$, since its coefficients depend on derivatives of $F$, namely $ \| Z_{1}\|_{C^1} \leq K_{el} $.
Moreover, $Z_1$ satisfies
\begin{equation}\label{24032026form1}
\begin{split}
   &\vert  \nabla_{\pa E} Z_1 (\nu_E, B_E, \overline{\nabla} B_E, \overline{\nabla} \psi, \overline{\nabla}^2 \psi) \vert \\
    &\leq C(K_{el}) \big( \vert \overline{\nabla}^3 \psi \vert+ \vert  B_E \vert \vert \overline{\nabla}^2 \psi \vert+ \vert \overline{\nabla} B_E \vert (1+ \vert \overline{\nabla}\psi \vert)+ \psi \vert \overline{\nabla}^2 B_E \vert    \big).
\end{split}
\end{equation}
We have 
\begin{equation}
\begin{split}
   &{\rm tr} \bigg( \nabla^2 F (x+\psi(x)\nu_E(x)) \big( P_{\pa E}+ \nu_E \otimes \nabla_{\pa E} \psi+ \psi B_E  \big)^2(x)  \bigg) \\
   &= \sum_{i,j=1}^3 a_{ij}(x+ \psi(x)\nu_E(x)) b_{ij}(\nu_E^{I}, \nabla_{\pa E} \psi^I, \psi, B_{E}^I)(x),
   \end{split}
\end{equation}
where $a_{ij}$ are the coefficients of $\nabla^2 F$, and $b_{ij}$ are smooth function depending on $\nu_E^{I},\, \nabla_{\pa E} \psi^I$ and $ \psi, B_{E}^I$. Here, the notation $ A^{I}$, where $A $ is a vector or a matrix, denotes a selected component of the vector or matrix $A$. Therefore, by ~\eqref{derivativeofFqe}, we conclude that
\begin{align*}
     \Delta_{\pa E} F(x+ \psi(x)\nu_E(x))&=  {\rm tr} \big( \nabla_{\pa E}^2  F(x+ \psi(x)\nu_E(x)) \big)\\
     &=  \sum_{i,j=1}^3 a_{ij}(x+ \psi(x)\nu_E(x)) b_{ij}(\nu_E^{I}, \nabla_{\pa E} \psi^I, \psi, B_{E}^I)(x)\\
     & \quad +{\rm tr}  \big( Z_1(\nu_E, B_E, \overline{\nabla} B_E, \overline{\nabla} \psi, \overline{\nabla}^2 \psi)(x)  \big).
\end{align*}
Hence, we deduce that
\begin{align*}
    \nabla_{\pa E} \Delta_{\pa E} F(x+ \psi(x)\nu_E(x)) = & \sum_{i,j=1}^3 \nabla_{\pa E} a_{ij}(x+ \psi(x)\nu_E(x))  b_{ij}(\nu_E^{I}, \nabla_{\pa E} \psi^I, \psi, B_{E}^I)(x)\\
    & + \sum_{i,j=1}^3  a_{ij}(x+ \psi(x)\nu_E(x)) \nabla_{\pa E} b_{ij}(\nu_E^{I}, \nabla_{\pa E} \psi^I, \psi, B_{E}^I)(x)\\
    & + \nabla_{\pa E}{\rm tr}  \big( Z_1(\nu_E, B_E, \overline{\nabla} B_E, \overline{\nabla} \psi, \overline{\nabla}^2 \psi)(x)  \big).
\end{align*}
By~\eqref{24032026form1}, we can estimate the last term as follows
\begin{equation*}
\begin{split}
     &\nabla_{\pa E}{\rm tr}  \big( Z_1(\nu_E, B_E, \overline{\nabla} B_E, \overline{\nabla} \psi, \overline{\nabla}^2 \psi)(x)  \big) \\ 
     &\leq C(K_{el}) \big( \vert \overline{\nabla}^3 \psi \vert+ \vert  B_E \vert \vert \overline{\nabla}^2 \psi \vert+ \vert \overline{\nabla} B_E \vert (1+ \vert \overline{\nabla}\psi \vert)+ \psi \vert \overline{\nabla}^2 B_E \vert    \big).
\end{split}
\end{equation*}
By the regularity of the functions $b_{ij}$ and formulas~\eqref{stimeinH^3} and~\eqref{2203stimecurvthm}, we obtain
\begin{equation*}
\begin{split}
    \vert \nabla_{\pa E} b_{ij}(\nu_E^{I}, \nabla_{\pa E} \psi^I, \psi, B_{E}^I) \vert &\leq C(K_{el}) \| \nabla b_{ij} \|_{L^\infty} \big( 1+ \vert \overline{\nabla}^2 \psi \vert + \vert \overline{\nabla} B_E \vert ) \\
    & \leq C(K_{el})(1+ \vert \overline{\nabla}^2 \psi \vert  ),
    \end{split}
\end{equation*}
moreover, we have
\begin{equation*}
     \vert b_{ij}(\nu_E^{I}, \nabla_{\pa E} \psi^I, \psi, B_{E}^I) \vert \leq \| b_{ij} \|_{L^\infty} \leq C.
\end{equation*}
Recalling ~\eqref{minelvinc} (in particular that $ u^{K_{el}}_F \in \mathfrak{C}^{3, \frac{1}{4}}_{K_{el}}(\Omega; \R^3)$),
we get
\begin{equation*}
    \vert a_{ij}(x+ \psi(x)\nu_E(x)) \vert \leq C(K_{el}).
\end{equation*}
Now, combing the previous inequalities with ~\eqref{stimeinH^3} and~\eqref{2203stimecurvthm}, we deduce
\begin{align*}
\int_{\pa E} &\vert \nabla_{\pa E} \Delta_{\pa E} \big(Q(E(u_F^{K_{el}}))(x+ \psi(x)\nu_E(x))  \big) \vert^2 \\
&\leq C\sum_{i,j=1}^3 \int_{\pa E} \vert \nabla_{\pa E} a_{ij}(x+ \psi(x)\nu_E(x)) \vert^2 
  +C\big(1+ \| \psi \|_{H^3(\pa E)}^2+ \| \psi \|_{L^\infty}^2 \| \overline{ \nabla}^2 B_E \|_{L^2(\pa E)}  \big) \\
& \leq C \sum_{i,j=1}^3 \int_{\pa E} \vert \nabla_{\pa F}a_{ij} \vert^2 (x+ \psi(x)\nu_E(x))+ C \\
& \leq C \sum_{i,j=1}^3 \int_{\pa F} \vert \nabla_{\pa F} a_{ij } \vert^2 +C \leq C.
\end{align*}
 Therefore, using the above estimate in~\eqref{24032026sera1}, we obtain
 \begin{equation}\label{22Rivolutione5}
      - \int_{\pa E} \nabla_{\pa E} \Delta_{\pa E} \big(Q(E(u_F^{K_{el}}))(x+ \psi(x)\nu_E(x))  \big)  \cdot \nabla_{\pa E} \Delta_{\pa E}^2 \psi 
  \leq C  + \varepsilon \| \nabla_{\pa E} \Delta_{\pa E}^2 \psi \|^2_{L^2( \pa E)}.
 \end{equation}
 \textit{Estimate of $-\int_{\pa E} \big(\Delta_{\pa E} \mathcal{A}(\nu_E) \big)\nabla_{\pa E} \Delta_{\pa E} \psi \cdot \nabla_{\pa E} \Delta^2_{\pa E} \psi$.}\\
 Using Young's inequality,~\eqref{2203stimecurvthm} and~\eqref{stimeinH^3}, we obtain 
 \begin{align}
       -\int_{\partial E}& \big(\Delta_{\pa E} \mathcal{A}(\nu_E) \big)\nabla_{\pa E} \Delta_{\pa E} \psi \cdot  \nabla_{\pa E} \Delta^2_{\pa E} \psi  \notag\\
       &\leq C(\varepsilon) \| \mathcal{A}(\nu_E) \nabla_{\pa E} \Delta_{\pa E} \psi \|^2_{L^2(\pa E)}+ \varepsilon \| \nabla_{\pa E} \Delta_{\pa E} \psi \|^2_{L^2(\pa E)}  \notag\\
       & \leq C(\varepsilon)\| \psi\|_{H^3(\pa E)}^2+ \varepsilon \|\nabla_{\pa E} \Delta_{\pa E}^2 \psi \|_{L^2(\pa E)}\notag\\
       & \leq C(\varepsilon)+ \varepsilon \|\nabla_{\pa E} \Delta_{\pa E}^2 \psi \|_{L^2(\pa E)}.\label{22Rivolutione6}
 \end{align}
 \textit{Estimate of $ \int_{\pa E} \widehat{\mathcal{R}}(\overline{\nabla} B_\Sigma \star \overline{\nabla} \Delta_{\pa E}\psi, B_\Sigma \star \overline{\nabla}^3 \Delta_{\pa E} \psi)\Delta^2_{\pa E} \psi$.}\\
 Since $\widehat{\mathcal{R}}$ is smooth (see Lemma~\ref{nonstaropiu21}), we obtain
\begin{align*}
   &|\widehat{\mathcal{R}}(\overline{\nabla} B_E \star \overline{\nabla} \Delta_{\pa E}\psi, B_E \star \overline{\nabla}^3 \Delta_{\pa E} \psi) \vert \\
   &\leq C\big(  \| B_E \|_{C^1(\pa E)}  \big) \big(   1+ \vert \overline{\nabla} \Delta_{\pa E} \psi  \vert + \vert \overline{\nabla}^2  \Delta_{\pa E} \psi  \vert+ \vert \overline{\nabla}^3 \Delta_{\pa E} \psi  \vert \big) \\
   &  \leq C\big(   1+ \vert \overline{\nabla} \Delta_{\pa E} \psi  \vert + \vert \overline{\nabla}^2  \Delta_{\pa E}\psi  \vert+ \vert \overline{\nabla}^3 \Delta_{\pa E} \psi  \vert \big). 
\end{align*}
Hence, by Young's inequality and the interpolation inequalities, we obtain
\begin{align}
      \int_{\pa E}  \widehat{\mathcal{R}}(\overline{\nabla} B_E \star \overline{\nabla} \Delta_{\pa E}&\psi, B_E \star \overline{\nabla}^3 \Delta_{\pa E} \psi) \Delta^2_{\pa E} \psi \notag\\
      \leq &C(\varepsilon)\| \psi \|_{H^4(\pa E)}^2  + \varepsilon \| \psi \|_{H^5(\pa E)}^2\notag\\
       \leq& C(\varepsilon)\| \psi \|^2_{H^3(\pa E)}+2\varepsilon \| \psi \|^2_{H^5(\pa E)} \notag\\
       \leq & \varepsilon C \| \nabla_{\pa E} \Delta_{\pa E}^2 \psi \|_{L^2(\pa E)}^2 +C(\varepsilon) \big( 1+ \| \Delta^2_{\pa E} H_E \|^2_{L^2(\pa E)} \| \psi \|^2_{L^\infty(\pa E)}  \big)\notag\\
       \leq &\varepsilon C \| \nabla_{\pa E} \Delta_{\pa E}^2 \psi \|_{L^2(\pa E)}^2 +C(\varepsilon),\label{22Rivolutione7}
  \end{align}
  where in the third estimate we applied Lemma~\ref{lem:hilbert-norm} together with~\eqref{stimeinH^3}; and in the last estimate we used the inequality $\|\psi\|_{L^\infty(\pa E)}\le Ch^{1/4}$ together with~\eqref{2203stimecurvthm} to deduce 
  \begin{equation}\label{Ilfantasmaformaggino}
      \| \Delta^2_{\pa E} H_E \|_{L^2(\pa E)} \| \psi \|_{L^\infty(\pa E)} \leq C.
  \end{equation} 
  \textit{Estimate of $\int_{\pa E} \nabla_{\pa E} \Delta_{\pa E}\widetilde{\mathcal{R}}_0 \cdot \nabla_{\pa E} \Delta_{\pa E }^2 \psi$ and existence of $h_2$.}\\
  We recall that $\widetilde{R}_0$ consists of three terms, see~\eqref{nuovoresto}. Using Young's inequality, we get
 \begin{equation}\label{25032026form1}
     \int_{\pa E} \nabla_{\pa E} \Delta_{\pa E}\widetilde{\mathcal{R}}_0 \cdot \nabla_{\pa E} \Delta_{\pa E }^2 \psi \leq C(\varepsilon)\| \nabla_{\pa E} \Delta_{\pa E} \widetilde{\mathcal{R}}_0 \|^2_{L^2(\pa E)}+ \varepsilon \| \nabla_{\pa E} \Delta_{\pa E }^2 \psi \|_{L^2(\pa E)}^2.
 \end{equation}
 Using the Leibniz rule, we obtain
 \begin{align*}
        \| \nabla_{\pa E} \Delta_{\pa E} \widetilde{\mathcal{R}}_0 \|^2_{L^2(\pa E)} \leq  C \sum_{j+k=3} &\|    \vert \overline{\nabla}^j \big( \widetilde{\mathcal{A}}_1(\psi B_E, \overline{\nabla} \psi, \nu_E)  \big) \vert \vert \overline{\nabla}^{2+k} \psi \vert \|_{L^2(\pa E)}\\
        & +C \sum_{j+k=3} \|    \vert \overline{\nabla}^j \big( \widetilde{\mathcal{A}}_2(\psi B_E, \overline{\nabla} \psi, \nu_E)  \big) \vert \vert \overline{\nabla}^{1+k} \big(\psi B_E \big) \vert \|_{L^2(\pa E)}\\
        &+ \| \tilde{a}_0 (\psi, \overline{\nabla}\psi, B_E, \nu_E) \|_{H^3(\pa E)}.
 \end{align*}
 For indices $j,k$ satisfying $j+k=3$, we apply Lemma~\ref{lem:Leibniz} with $T_1= \widetilde{\mathcal{A}}_1(\psi B_E, \overline{\nabla} \psi, \nu_E)  \big)$ and $T_2= \overline{\nabla} \psi$ to estimate  \begin{align*}
         &\| \vert  \overline{\nabla}^j \big(  \widetilde{\mathcal{A}}_1(\psi B_E, \overline{\nabla} \psi, \nu_E) \big) \vert \vert \overline{\nabla}^{2+k} \psi \vert \|_{L^2(\pa E)} \\
         &\leq C \| \widetilde{\mathcal{A}}_1(\psi B_E, \overline{\nabla} \psi, \nu_E) \|_{L^\infty(\pa E)} \|  \psi \|_{H^5(\pa E)}\\
         & \quad  +C \| \psi \|_{C^1(\pa E)} \|  \widetilde{\mathcal{A}}_1(\psi B_E, \overline{\nabla} \psi, \nu_E) \|_{H^4(\pa E)} \\
         &\leq  C \| \psi \|_{C^1(\pa E)}  \big( \| \psi \|_{H^5(\pa E)} +  \|  \widetilde{\mathcal{A}}_1(\psi B_E, \overline{\nabla} \psi, \nu_E) \|_{H^4(\pa E)}  \big) \\
         &\leq  C h^{\frac{1}{8}} \big( \| \psi \|_{H^5(\pa E)} +  \|  \widetilde{\mathcal{A}}_1(\psi B_E, \overline{\nabla} \psi, \nu_E) \|_{H^4(\pa E)}  \big),
 \end{align*}
 where in the last inequality we used that, by the Sobolev embedding, it holds $ \| \psi \|_{C^1(\pa E)} \leq  Ch^{\frac{1}{8}}$.
 Similarly, with $T_1= \widetilde{\mathcal{A}}_2(\psi B_E, \overline{\nabla} \psi, \nu_E)$ and $T_2= \psi B_E$, we get 
 \begin{align*}
         &\|    \vert \overline{\nabla}^j \big( \widetilde{\mathcal{A}}_2(\psi B_E, \overline{\nabla} \psi, \nu_E)  \big) \vert \vert \overline{\nabla}^{1+k} \big(\psi B_E \big) \vert \|_{L^2(\pa E)} \\
         &\leq  C \| \widetilde{\mathcal{A}}_2(\psi B_E, \overline{\nabla} \psi, \nu_E) \|_{L^\infty(\pa E)} \|  \psi B_E \|_{H^4(\pa E)}\\
         & \quad+ C \| \psi B_E \|_{L^\infty(\pa E)} \| \widetilde{\mathcal{A}}_2(\psi B_E, \overline{\nabla} \psi, \nu_E) \|_{H^4(\pa E)} \\
         & \leq C h^{\frac{1}{8}} \| \psi B_E \|_{H^4(\pa E)}+C h^{\frac{1}{4}}\| \widetilde{\mathcal{A}}_2(\psi B_E, \overline{\nabla} \psi, \nu_E) \|_{H^4(\pa E)}.
 \end{align*}
  Thanks to the smoothness of $ \widetilde{\mathcal{A}}_i$ and by the chain rule, we obtain
 \begin{equation}
     \begin{split}
         \vert \overline{\nabla}^4 \widetilde{\mathcal{A}}_i (\psi B_E, \overline{\nabla} \psi, \nu_E )\vert \leq & C \sum_{\vert \alpha \vert \leq 4} \big( 1+ \vert \overline{\nabla}^{\alpha_1} (\psi B_E) \vert  \big) \cdots \big( 1+ \vert \overline{\nabla}^{\alpha_4} (\psi B_E) \vert  \big) \cdots\\
         & \qquad \qquad \cdots \big( 1+ \vert \overline{\nabla}^{1+ \alpha_5} \psi  \vert  \big) \cdots \big( 1+ \vert \overline{\nabla}^{1+ \alpha_8} \psi  \vert \big) \cdots\\
         & \qquad  \qquad\cdots \big( 1+ \vert \overline{\nabla}^{\alpha_9} B_E \vert  \big)  \cdots\big( 1+ \vert \overline{\nabla}^{\alpha_{11}} B_E \vert  \big).
     \end{split}
 \end{equation}
 Therefore, by Lemma~\ref{lem:Leibniz} with $T_i= \psi B_E$ for $i=1,2,3,4$, $T_i= \overline{\nabla} \psi$ for $ i=5,6,7,8$, $T_i= B_E$ for $i=9,10,11$, we conclude that
 \begin{align*}
         &\| \overline{\nabla}^4 \widetilde{\mathcal{A}}_i(\psi B_E, \overline{\nabla} \psi, \nu_E) \|_{L^2(\pa E)}\\
         &\leq C \big( 1+ \| \psi B_E \|  \big) \big( 1+ \| \psi \|_{H^5(\pa E)}\big)  +C \big( 1+ \| \psi \|_{C^1(\pa E)} \big)  \big( 1+ \| \psi B_E \|_{H^4(\pa E)}\big)\\
         & \quad +C \big( 1+ \| \psi \|_{C^1(\pa E)} \big) \big( 1+ \| \psi B_E \|_{L^\infty(\pa E)}\big) \big( 1+ \| B_E \|_{H^3(\pa E)}\big) \\
         &\leq  C \big(1+ \| \psi B_E \|_{H^4(\pa E)} + \| \psi \|_{H^5(\pa E)} \big) 
         \\
         &\leq  C \big( 1+ \| \psi \|_{L^\infty(\pa E)} \| B_E \|_{H^4(\pa E)}+ \| \psi \|_{H^5(\pa E)} \big) \\
          &\leq C \big( 1+ \| \nabla_{\pa E} \Delta^2_{\pa E} \psi \|_{L^2(\pa E)} \big),
 \end{align*}
 where we also used~\eqref{stimeinH^3},~\eqref{2203stimecurvthm},  as well as Lemma~\ref{lem:hilbert-norm}. We  observe that  we also have
 \begin{equation*}
     \| \psi B_E \|_{H^4(\pa E)} \leq C \big( 1+ \| \nabla_{\pa E} \Delta^2_{\pa E} \psi \|_{L^2(\pa E)} \big) .
 \end{equation*}
 Similarly, we estimate $\tilde a_0$ as follows
  \begin{align*}
          &\| \tilde{a}_0 (\psi, \overline{\nabla} \psi,B_E, \nu_E ) \|_{H^3(\pa E)} \\
          & \leq C \sum_{ \vert \alpha \vert \leq 3} \| \big( 1+ \vert \overline{\nabla}^{1+ \alpha_1} \psi \vert  \big) \cdots \big(1+ \vert \overline{\nabla}^{1+ \alpha_3} \psi \vert  \big) \big( 1+ \vert \overline{\nabla}^{\alpha_4} B_E \vert  \big) \cdots \big( 1+ \vert \overline{\nabla}^{ \alpha_6} B_E \vert \big)     \|_{L^2(\pa E)} \\
          & \leq C \big( 1+ \| \psi \|_{H^4(\pa E)}+ \| B_E \|_{H^3(\pa E)} \big) \leq C \big( 1+ \varepsilon \| \nabla_{\pa E} \Delta^2_{\pa E} \psi \|_{L^2(\pa E)} + C(\varepsilon)  \| \psi \|_{H^3(\pa E)}\big) \\
           & \leq C(\varepsilon)+  \varepsilon \| \nabla_{\pa E} \Delta^2_{\pa E} \psi \|_{L^2(\pa E)}. 
  \end{align*}
  Therefore, combining  the previous estimates and choosing $h_2$, sufficiently small with respect to $\varepsilon$, we conclude
  \begin{equation}
      \| \nabla_{\pa E} \Delta_{\pa E} \widetilde{\mathcal{R}}_0 \|_{L^2(\pa E)} \leq \varepsilon \| \nabla_{\pa E} \Delta^2_{\pa E} \psi \|_{L^2(\pa E)}+ C(\varepsilon).
  \end{equation}
 Therefore, by~\eqref{25032026form1}, we deduce
  \begin{equation}\label{22Rivolutione8}
     \int_{\pa E} \nabla_{\pa E} \Delta_{\pa E}\widetilde{\mathcal{R}}_0 \cdot \nabla_{\pa E} \Delta_{\pa E }^2 \psi \leq C+ \varepsilon C\| \nabla_{\pa E} \Delta_{\pa E }^2 \psi \|_{L^2(\pa E)}^2 
  \end{equation}

  Recall that, by~\eqref{unifell} and~\eqref{07032026posdef}, we have
 \begin{equation}
     C\| \nabla_{\pa E} \Delta_{\pa E}^2 \psi \|_{L^2(\pa E)}^2 \leq \int_{\pa E} \mathcal{A}(\nu_E) \nabla_{\pa E}\Delta_{\pa E}^2 \psi \cdot \nabla_{\pa E} \Delta_{\pa E}^2 \psi.
 \end{equation}
Combining this observation with~\eqref{22Rivoluzione1},~\eqref{22Rivoluzione2},~\eqref{22Rivoluzione3},~\eqref{22Rivolutione4},~\eqref{22Rivolutione5},~\eqref{22Rivolutione6},~\eqref{22Rivolutione7},~\eqref{22Rivolutione8}
 we conclude that 
 \begin{equation}
     \frac{1}{h} \| \nabla_{\pa E} \Delta_{\pa E}\psi \|_{L^2(\pa E)}^2+ C\| \nabla_{\pa E} \Delta_{\pa E}^2 \psi \|_{L^2(\pa E)}^2 \leq C. 
 \end{equation}
 Finally, Lemma~\ref{lem:hilbert-norm} and~\eqref{Ilfantasmaformaggino} yield
 \begin{equation}
     \| \psi \|_{H^5(\pa E)} \leq C\big( \| \nabla_{\pa E} \Delta_{\pa E}^2 \psi \|_{L^2(\pa E)}+ \| \psi \|_{L^\infty(\pa E)} \| \Delta^2_{\pa E} H_E \|_{L^2(\pa E)} \big) \leq C.
 \end{equation}
 \textit{Step 2:} We prove that $ \| \psi \|_{H^1(\pa E)} \leq C h$ and we establish the other estimates.\\
 Multiplying~\eqref{UltimaPeerGynt} by $-\psi$, integrating by parts on $\partial E$, applying the Cauchy–Schwarz inequality, and reorganizing the resulting terms, we obtain
 \begin{align}
         \frac{1}{h} \| \nabla_{\pa E} \psi \|^2_{L^2(\pa E)} \leq & \| \nabla_{\pa E} \psi \|_{L^2(\pa E)} \bigg[ \| \nabla_{\pa E} \big( H_E \frac{\psi^2}{2} + K_E \frac{\psi^3}{3}\big)  \|_{L^2(\pa E)}\label{Kafka1}\\
         &+ \| \mathcal{A}(\nu_E) \nabla_{\pa E} \Delta^2_{\pa E} \psi  \|_{L^2(\pa E)} +\| \nabla_{\pa E} \Delta_{\pa E} H^\varphi_E \|_{L^2(\pa E)} \notag\\
         & + \| \nabla_{\pa E} \div_{\pa E} \big(\big( \Delta_{\pa E} \mathcal{A}(\nu_E) \big) \nabla_{\pa E} \psi   \big) \|_{L^2(\pa E)}\notag\\
         & + \| \nabla_{\pa E} \widehat{\mathcal{R}}(\overline{\nabla} B_E \star \overline{\nabla} \psi, B_E \star \overline{\nabla}^3 \psi) \|_{L^2(\pa E)} \notag\\
         & + \| \nabla_{\pa E} \Delta_{\pa E} \big(Q(E(u_F^{K_{el}}))(\cdot+ \psi(\cdot)\nu_E(\cdot))  \big) \|_{L^2(\pa E)} \notag\\
         & + \|  \big(\Delta_{\pa E} \mathcal{A}(\nu_E) \big)\nabla_{\pa E} \Delta_{\pa E} \psi  \|_{L^2(\pa E)}    + \| \nabla_{\pa E} \Delta_{\pa E}\widetilde{\mathcal{R}}_0  \|_{L^2(\pa E)}\bigg]\notag\\
         & +\| \psi \|_{L^2(\pa E)} \| \widehat{\mathcal{R}}(\overline{\nabla} B_E \star \overline{\nabla} \Delta_{\pa E}\psi, B_E \star \overline{\nabla}^3 \Delta_{\pa E}\psi) \|_{L^2(\pa E)}.\notag
 \end{align}
 Using~\eqref{090226ppranzf1},~\eqref{10022026pprinazof2}, and the Poincaré inequality, we obtain
 \begin{equation}\label{Kafka2}
     \| \psi \|_{L^2(\pa E)} \leq \| \xi_{F,E} \|_{L^2(\pa E)} \leq C \| \nabla_{\pa E} \xi_{F,E} \|_{L^2(\pa E)} \leq C \| \nabla_{\pa E} \psi \|_{L^2(\pa E)},
 \end{equation}
 where $\xi_{F,E}$ was defined in~\eqref{xiFEfunz}. We observe that, as a byproduct of Step 2, the term inside the square brackets in~\eqref{Kafka1} is bounded, and that $\| \widehat{\mathcal{R}}(\overline{\nabla} B_E \star \overline{\nabla} \Delta_{\pa E}\psi, B_E \star \overline{\nabla}^3 \Delta_{\pa E}\psi) \|_{L^2(\pa E)}$ is also bounded. Hence, combining these bounds with~\eqref{Kafka1} and~\eqref{Kafka2}, we deduce
     $\| \psi \|_{H^1(\pa E)} \leq C h.$
 Finally, applying Proposition~\ref{prop:interpolation}, we obtain the estimates  $$\| \psi \|_{H^2(\pa E)} \leq C h^{\frac{3}{4}}, \quad\| \psi \|_{H^3(\pa E)} \leq C h^{\frac{1}{2}}, \quad \| \psi \|_{H^4(\pa E)} \leq C h^{\frac{1}{4}}.$$
 The corresponding H\"older estimates then follow from the Sobolev embedding theorem.
\end{proof}
\begin{lemma}\label{Lispetttoregadget}
    Let $E \Subset \Omega$ be a $C^6$-regular set such that~\eqref{AnitraD} holds. Let $F \Subset \Omega$ be a minimizer of~\eqref{ProblemaINc} for $\eta < \delta_2$, where $\delta_2$ is the constant provided by Proposition~\ref{CaneBernard06}.    Then, for $u \in C^4(\mathcal{I}_{r_0}(\pa E))$, the following estimates hold on $\pa F$
    \begin{align}
       &\vert  \nabla_{\pa F} \Delta_{\pa F} u \vert \leq     C \big(  \vert \nabla^3 u \vert+ \vert B_F \vert \vert \nabla^2 u \vert + \vert \nabla u \vert \big(  \vert \overline{\nabla} B_F \vert+ \vert B_F \vert  \big)        \big)       \label{31032026form1}\\
       & \vert  \Delta_{\pa F}^2 u \vert  \leq C\big( \vert \nabla^4 u \vert+ \vert \nabla^3 u \vert \vert B_F \vert+ \vert \nabla^2 u \vert \vert \overline{\nabla} B_F \vert + \vert \nabla u \vert \vert \overline{\nabla}^2 B_F \vert   \big) \label{30032026form1}
 \end{align}
 where the constant $C= C(\| B_F \|_{L^\infty(\pa F)})$.
\end{lemma}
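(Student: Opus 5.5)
The plan is to derive explicit formulas for the tangential derivatives of the restriction to $\pa F$ of an ambient function $u$. We then differentiate these formulas, keeping track only of which ambient derivatives of $u$ and which covariant derivatives of $B_F$ appear. By the previous results, $F$ is $C^{4,\gamma}$-regular (Remark~\ref{szzz}), so all quantities below make sense pointwise on $\pa F$. The constant may depend on $\|B_F\|_{L^\infty}$ only, since every coefficient we produce is a polynomial in $\nu_F$ and $B_F$.

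First we compute the tangential gradient and the Laplace--Beltrami operator. On $\pa F$ we have $\nabla_{\pa F} u = P_{\pa F}\nabla u = \nabla u - (\nabla u\cdot \nu_F)\nu_F$. Taking the tangential divergence and using $\nabla_{\pa F}\nu_F = B_F$ and $\div_{\pa F}\nu_F = H_F$ gives the classical identity
\begin{equation*}
\Delta_{\pa F} u = {\rm tr}\big(P_{\pa F}\nabla^2 u\, P_{\pa F}\big) - H_F\, \nabla u\cdot \nu_F
= \Delta u - \nabla^2 u\,\nu_F\cdot\nu_F - H_F\,\nabla u\cdot\nu_F .
\end{equation*}
Schematically, $\Delta_{\pa F}u = a(\nu_F)\star \nabla^2 u + B_F\star \nu_F\star\nabla u$, with $a$ a smooth (polynomial) tensor.

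Next we obtain \eqref{31032026form1}. We apply $\nabla_{\pa F}$ to this identity using the chain rule. The derivative of $\nabla^2 u$ restricted to $\pa F$ is $\nabla^3 u\, P_{\pa F}$. Each derivative of $\nu_F$ produces a factor $B_F$. The derivative of $\nabla u$ produces $\nabla^2 u$, and the derivative of $B_F$ produces $\overline{\nabla} B_F$. Collecting terms gives
\begin{equation*}
\nabla_{\pa F}\Delta_{\pa F}u = \nabla^3 u\star a + \nabla^2u\star B_F\star a' + \nabla u \star(\overline{\nabla}B_F + B_F\star B_F)\star a''.
\end{equation*}
Absorbing one factor $|B_F|$ into the constant, this yields \eqref{31032026form1}.

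Finally we obtain \eqref{30032026form1}. We take $\Delta_{\pa F} = \div_{\pa F}\nabla_{\pa F}$ of $\Delta_{\pa F}u$. Equivalently, we apply the Laplace identity above with $u$ replaced by an extension of $\Delta_{\pa F}u$. It is simpler, though, to differentiate the schematic expression for $\nabla_{\pa F}\Delta_{\pa F}u$ once more and take the trace. This produces the terms $\nabla^4u$, $\nabla^3u\star B_F$, $\nabla^2u\star(\overline{\nabla}B_F + B_F\star B_F)$, and $\nabla u\star(\overline{\nabla}^2B_F + B_F\star\overline{\nabla}B_F + B_F^{\star 3})$. The main care needed is that the covariant derivative $\overline{\nabla}$ of tangential tensors differs from the ambient derivative of their $\R^3$-valued representatives only by terms involving $B_F$. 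This follows from the Gauss formula, so no extra derivatives appear. We then absorb the products of $B_F$ factors using $\|B_F\|_{L^\infty}$, and bound $|B_F||\overline{\nabla}B_F|$ by $C|\overline{\nabla}B_F|$. The only remaining issue is the lower-order terms that the statement does not list, namely $|\nabla^2u|\,|B_F|^2$ and $|\nabla u|\,|B_F|^3$ in \eqref{30032026form1}. We interpret them as absorbed in the stated terms using $|B_F|\le C$; for instance, $|\nabla^2 u||B_F|^2\le C|\nabla^2u||B_F|$, which is dominated by the $|\nabla^3 u||B_F|$-type structure only after enlarging the constant. If strict adherence to the stated form is needed, we would add these terms, or note that they are bounded by $C(|\nabla^2 u|+|\nabla u|)$.

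The main obstacle is this bookkeeping of which curvature derivatives appear. In particular, we must verify that no $\overline{\nabla}^2 B_F$ term multiplies anything other than $\nabla u$, which is true because each derivative of $B_F$ comes from differentiating $\nu_F$ one extra time.
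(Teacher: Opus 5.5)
Your route is essentially the paper's. Both start from $\Delta_{\pa F}u=\Delta u-\nabla^2u\,\nu_F\cdot\nu_F-H_F\,\nabla u\cdot\nu_F$, differentiate it once and twice, and record how many derivatives land on $\nu_F$ and $B_F$. The only difference is technical. The paper extends this expression off $\pa F$ through $\nabla d_F$ and $\nabla^2 d_F$, differentiates in $\R^3$, and bounds $\nabla^3 d_F$ and $\nabla^4 d_F$ on $\pa F$ via \eqref{24112025primacrossfit}--\eqref{canzone1}. You differentiate directly along $\pa F$ using $\nabla_{\pa F}\nu_F=B_F$ and the Gauss formula. The two bookkeepings are equivalent, and your derivation of \eqref{31032026form1} is fine.

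Your last step does need correcting. The leftover terms cannot be absorbed into the right-hand side of \eqref{30032026form1}. These are $|\nabla^2u|\,|B_F|^2$ and $|\nabla u|\,|B_F|^3$, and also $|\nabla u|\,|B_F|\,|\overline{\nabla}B_F|\le C|\nabla u|\,|\overline{\nabla}B_F|$, which you bound but then silently treat as one of the listed terms.
\begin{itemize}
\item $|\nabla^2u|$ is not pointwise controlled by $|\nabla^3u|$, so ``domination by the $|\nabla^3u||B_F|$-type structure'' is not available.
\item No term $|\nabla u|\,|\overline{\nabla}B_F|$ appears in \eqref{30032026form1}.
\end{itemize}

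In fact \eqref{30032026form1}, as literally stated, is false. Take a round sphere of radius $R$ and $u(x)=a\cdot x$. Then $\nabla^2u=\nabla^3u=\nabla^4u=0$, and $\overline{\nabla}B_F=\overline{\nabla}^2B_F=0$ because $B_F=R^{-1}g$ is parallel, so the right-hand side vanishes. On the other hand, $\Delta_{\pa F}u=-2R^{-2}\,a\cdot x$, hence $\Delta^2_{\pa F}u=4R^{-4}\,a\cdot x\neq0$.

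What can be proved, and what your computation actually gives, is \eqref{30032026form1} with $C\big(|\nabla^2u|\,|B_F|+|\nabla u|\,(|\overline{\nabla}B_F|+|B_F|)\big)$ added on the right. Your fallback ``add these terms'' is the right resolution; the absorption argument is not.

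The paper's proof drops the same lower-order terms. Its bound on $|\nabla^4 d_F|$ omits a $|B_F|$ contribution, and the $|\nabla^2u|\,|B_F|$ coming from $|\nabla^3 d_F|\le C(|\overline{\nabla}B_F|+|B_F|)$ disappears in the final form. These terms are harmless in the later applications in Lemma~\ref{lemmahnegativa}, where such products are controlled by bounds already available there.
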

\begin{proof}
We recall that the gradient operator in $\R^3$ coincides with the Levi–Civita connection on $\R^3$. Hence, we may invoke classical properties of the Levi–Civita connection; see~\cite{LeeBook, LeeBook2018, KobayashiNomizu1963}.  \\
\textit{Step 1:} Given $u \in C^2(\mathcal{I}_{r_0}(\pa E))$, we  show that
\begin{equation*}
\Delta_{\R^3} u-  \nabla^2 u \nabla d_F \cdot\nabla d_F   - {\rm tr} \big( \nabla^2 d_F \big) \nabla u \cdot \nabla d_F
\end{equation*} 
provides a natural extension of $ \Delta_{\pa F} u$ in $\mathcal{I}_{\frac{r_0}{2}}(\pa E)$. 

We recall that $\nabla_{\pa F} u= \nabla u - \nabla u \cdot \nu_F  \nu_F$ on $\pa F$. Therefore, if we define
\begin{equation*}
    Z_u:= \nabla u- \nabla u \cdot \nabla d_{F} \nabla d_F  \text{ in } \mathcal{I}_{\frac{r_0}{2}}(\pa E),
\end{equation*}
then $Z_u= \nabla_{\pa F} u$ on $\pa F$. We also introduce $\widetilde{P}_{\pa F}:= I- \nabla d_F \otimes \nabla d_F$. Recalling that $ \nabla_{\pa F} X:= \nabla X P_{\pa F}$ for a vector field $X$, we define $\widetilde{\nabla }_{\pa F}X:= \nabla X \widetilde{P}_{\pa F}$.
With this notation, we obtain
\begin{align*}
   \widetilde{\nabla}_{\pa F} Z_u=& \nabla \big( \nabla u- \nabla u \cdot \nabla d_F \nabla d_F  \big) \widetilde{P}_{\pa F} \\
   =& \nabla^2 u \widetilde{P}_{\pa F} - \nabla u \cdot \nabla d_F \nabla^2 d_F  \widetilde{P}_{\pa F}-\nabla^2 d_F \nabla u \otimes \nabla d_F \widetilde{P}_{\pa F}\\
   =&\nabla^2 u \widetilde{P}_{\pa F} - \nabla u \cdot \nabla d_F \nabla^2 d_F  \widetilde{P}_{\pa F},
\end{align*}
where we have used that $\widetilde{P}_{\pa F}^2 = \widetilde{P}_{\pa F}$ and $ \nabla u \otimes \nabla d_F \widetilde{P}_{\pa F}=0$.
Taking the trace, we obtain
\begin{equation*}
    {\rm tr} \big( \widetilde{\nabla}_{\pa F} Z_u\big) = \Delta_{\R^3} u-  \nabla^2 u \nabla d_F \cdot\nabla d_F   - {\rm tr} \big( \nabla^2 d_F \big) \nabla u \cdot \nabla d_F 
\end{equation*}
and we have ${\rm tr} \big( \widetilde{\nabla}_{\pa F} Z_u\big) = \Delta_{\pa F} u $.
An immediate consequence is that  on $\partial F$ it holds
    \begin{equation}\label{qullochenonguc}
    \Delta_{\pa F}  u = \Delta_{\R^3}  u -\nabla^2  u \nu_F \cdot \nu_F -H_F \nabla  u \cdot \nu_F.
    \end{equation}
\textit{Step 2:} We compute $\nabla_{\pa F} \Delta_{\pa F} u$ and prove formula~\eqref{31032026form1}.\\
By \emph{Step 1} applied to $f=u$, we obtain on $\partial F$:
\begin{align*}
      \nabla_{\pa F} \Delta_{\pa F} u=& P_{\pa F} \nabla \big( \Delta_{\R^3} u-  \nabla^2 u \nabla d_F \cdot\nabla d_F   - {\rm tr} \big( \nabla^2 d_F \big) \nabla u \cdot \nabla d_F  \big) \\
      =& P_{\pa F}\nabla \Delta_{\R^3} u- H_F P_{\pa F} \nabla^2 u \nu_F -H_F P_{\pa F} B_F \nabla u \\
      &  - \sum_{i=1}^3 \bigg(  {\rm tr} \big(  {\rm tr} \big(       {\rm tr}\big( \nabla^3 u \otimes e_i \big)   \otimes\nu_F\big)   \otimes \nu_F\big) + {\rm tr} \big( {\rm tr} \big( \nabla^2 u \otimes \nabla^2 d_F e_i \big)    \otimes \nu_F \big) \bigg)e_i\\
      & + {\rm tr} \big( {\rm tr}\big( {\rm tr}\big( \nabla^3 u \otimes \nu_F \big)\otimes \nu_F\big)  \otimes  \nu_F\big)\\
      & - \nabla u \cdot \nu_F \sum_{i=1}^3 {\rm tr} \big({\rm tr}\big( \nabla^3 d_F \otimes e_i \big)  \big) e_i+ {\rm tr} \big({\rm tr}\big( \nabla^3 d_F \otimes \nu_F \big)  \big) \nu_F,
\end{align*}
where, in the second equality, we used that $\nabla_X F= {\rm tr} \big( \nabla F \otimes X \big) $ and $ \nabla_{X} {\rm tr} \big( F \big)= {\rm tr} \big( \nabla_{X} F \big)$ and $ \nabla_{X,Y}^2 F= {\rm tr} \big( {\rm tr}\big( \nabla^2 F \otimes X \big) \otimes Y\big)$ (see e.g.~\cite[Propositions 4.15 and 4.21, Example 4.22]{LeeBook2018}), together with the fact that $B_F \nu_F=0$. Using~\eqref{24112025primacrossfit} and~\eqref{canzone1}, we obtain on $\partial F$:
\begin{equation}\label{Gladiatori31}
    \vert \nabla^3 d_F \vert \leq C \big( \vert \nabla_{\pa F} B_F \vert+ \vert B_F \vert \big) \leq C \big( \vert \overline{\nabla} B_F \vert + \vert B_F \vert  \big).
\end{equation}
Finally, combining the above equations, we conclude~\eqref{31032026form1}.\\
\textit{Step 3:} We prove formula~\eqref{30032026form1}.

Define $g:= \Delta_{\R^3} u-  \nabla^2 u \nabla d_F \cdot\nabla d_F   - {\rm tr} \big( \nabla^2 d_F \big ) \nabla u \cdot \nabla d_F$,
we need to estimate
\begin{equation}
    \Delta_{\pa F}^2 u= \Delta_{\pa F} g= \Delta_{\R^3}  g -\nabla^2  g \nu_F \cdot \nu_F -H_F \nabla  g \cdot \nu_F.
\end{equation}
Note that this computation can be carried out in a fully rigorous manner, as in \emph{Step 2}; however, here we present only the two main estimates. \\
\textit{Estimate of $ \vert \Delta_{\R^3} g\vert $ and $ \vert \nabla^2 g \vert$.}\\
We observe that it is sufficient to obtain an estimate for $ \vert \nabla^2 g \vert$. We have on $\pa F$
\begin{equation*}
\begin{split}
    \vert \nabla^2 g \vert &\leq C \big(  \vert \nabla^4 u \vert + \vert \nabla^3 u \vert \vert B_F \vert + \vert \nabla^2 u \vert \vert \nabla^3 d_F \vert + \vert \nabla u \vert \vert \nabla^4 d_F \vert    \big)\\
    & \leq C \big( \vert \nabla^4 u \vert + \vert \nabla^3 u \vert \vert B_F \vert + \vert \nabla^2 u \vert \vert \overline{\nabla} B_F \vert + \vert \nabla u \vert \vert \overline{\nabla}^2 B_F \vert    \big),
    \end{split}
\end{equation*}
where we  used~\eqref{Gladiatori31} and
$\vert \nabla^4 d_F \vert \leq C \big( \vert \overline{\nabla} B_F \vert + \vert \overline{\nabla}^2 B_F \vert \big) \text{ on }\pa F$,
which can be shown by~\eqref{24112025primacrossfit} and~\eqref{canzone1}.\\
\textit{Estimate of $|H_F \nabla g \cdot \nu_F|$.}\\
By a straightforward computation,~\eqref{24112025primacrossfit} and~\eqref{canzone1}, we obtain
\begin{equation*}
\begin{split}
 \vert H_F \nabla g \cdot \nu_F \vert &\leq C \vert B_F \vert \big(  \vert \nabla^3 u \vert + \vert B_F \vert \vert \nabla^2 u \vert + \vert \nabla^3 d_F\vert \vert \nabla u \vert \big)\\
 & \leq C \big(  \vert \nabla^3 u \vert + \vert B_F \vert \vert \nabla^2 u \vert + \vert \overline{\nabla} B_F \vert \vert \nabla u \vert  \big).
 \end{split}
\end{equation*}

Finally, combining the above estimates, we conclude~\eqref{30032026form1}.
\end{proof}

In the next lemma, we show that the regularity established in Lemma~\ref{Lemma:stimefinali} implies regularity of the set $F$ minimizer of~\eqref{ProblemaINc}.
\begin{lemma}\label{lemmahnegativa}
    Let $E \Subset \Omega$ be a $C^6$-regular set such that~\eqref{AnitraD} holds and that satisfies $\| \Delta_{\pa E}^2 H_E \|_{L^2(\pa E)} \leq  K_0 h^{-\frac{1}{4}}$. Let $F \subset \Omega$ be a minimizer of~\eqref{ProblemaINc} for $\eta < \delta_2$ and $h \leq h_2$, where $\delta_2$ is the constant provided by Proposition~\ref{CaneBernard06} and $h_2$ is the constant provided by Lemma~\ref{Lemma:stimefinali}. Then there exists a positive constant $C_2=C_2(C_1,r_0,K_0,K_{el})$, where $C_1$ is the constant provided in Lemma~\ref{Lemma:stimefinali}, such that
    \begin{equation}\label{deriv^3H_Federiv^4H_F}
        \| \nabla_{\pa F} \Delta_{\pa F} H_F \|_{L^2(\pa F)} \leq C_2, \quad \| \Delta^2_{\pa F} H_F \|_{L^2(\pa F)} \leq C_2 h^{-\frac{1}{4}}.
    \end{equation}
Moreover, the set $F$ satisfies the \textsc{UBC} with radius $r_0/2$.
\end{lemma}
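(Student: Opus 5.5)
The strategy is to transfer the $h$-uniform bounds on the height function $\psi$ from Lemma~\ref{Lemma:stimefinali} to curvature bounds on $\pa F$. We can do this either through the graph parametrization, or, more efficiently, through the Euler--Lagrange equation~\eqref{Comeprima}, which expresses $H^\varphi_F$ explicitly on $\pa F$. The UBC statement is the easiest part. By~\eqref{LESTIMEFINALI}, $\|\psi\|_{C^{2,\alpha}(\pa E)}\le \tilde C_{1,\alpha}h^{1/4}$, so for $h\le h_2$ (shrinking $h_2$ if needed) $\pa F$ is a $C^2$-small normal graph over $\pa E$. Hence $\|B_F\|_{L^\infty}\le \|B_E\|_{L^\infty}+Ch^{1/4}$, and the normal injectivity radius of $\pa F$ is at least that of $\pa E$ minus $O(h^{1/4})$. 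This yields the UBC with radius $r_0/2$, via the standard characterization of UBC through curvature bounds plus a lower bound on the distance between distinct sheets, using $\|\psi\|_{L^\infty}\le Ch^{3/4}$.

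For the curvature estimates I would work with the mean curvature $H_F$ directly via the graph formula. The expansion in Corollary~\ref{zonzo} with $\varphi=|\cdot|$ (so $\mathcal A=P_{\pa E}$) gives
\[
H_F\circ\Psi=-\Delta_{\pa E}\psi+H_E+\widetilde{\mathcal R}_0 .
\]
The first quantity, $\nabla_{\pa F}\Delta_{\pa F}H_F$, involves three derivatives of $H_F$, hence five derivatives of $\psi$ and three of $B_E$. We pull back to $\pa E$ via $\Psi$, whose differential is $I+O(h^{1/4})$ in $C^{1,\alpha}$ and bounded in $H^4$. Then
\[
\|\nabla_{\pa F}\Delta_{\pa F}H_F\|_{L^2(\pa F)}\le C\big(\|H_F\circ\Psi\|_{H^3(\pa E)}+\text{lower-order products}\big).
\]
Using Lemma~\ref{lem:Leibniz}, the lower-order products are controlled by $\|\psi\|_{H^5}\le C_1$, $\|B_E\|_{H^3}\le C$, and the $L^\infty/C^{1,\alpha}$ bounds. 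This gives the $h$-independent bound $C_2$, and the remainder $\widetilde{\mathcal R}_0$ is handled exactly as in Step~1 of Lemma~\ref{Lemma:stimefinali}. The change of metric in the Laplacians contributes terms with $\overline\nabla^k$ of the metric of $\pa F$ in $\pa E$-coordinates, i.e.\ up to $\overline\nabla^{k+1}\psi$ and $\overline\nabla^{k}B_E$, all of which stay within $H^5$ and $H^3$ respectively.

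The second quantity, $\Delta^2_{\pa F}H_F$, needs six derivatives of $\psi$, which Lemma~\ref{Lemma:stimefinali} does not provide. The graph formula alone is therefore insufficient; this is the main obstacle. Instead I would use the Euler--Lagrange equation~\eqref{Comeprima}:
\[
H^\varphi_F=L+Q(E(u_F^{K_{el}}))-\tfrac{d_{H^{-1}}(F,E)}{h}\,f\circ\pi_{\pa E}.
\]
The steps are as follows.
\begin{itemize}
\item First bound $\|\Delta^2_{\pa F}H^\varphi_F\|_{L^2(\pa F)}$. For the elastic term, the constraint in~\eqref{minelvinc} (via Remark~\ref{19042026remak}) gives $\|\Delta_{\pa F}\pa^\alpha u\|_{L^2(\pa F)}\le K_{el}h^{-1/4}$ for $|\alpha|=3$. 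Combined with Lemma~\ref{Lispetttoregadget}, this controls $\Delta^2_{\pa F}Q(E(u))$ by $Ch^{-1/4}$ plus terms involving $\overline\nabla^2B_F$ multiplied by bounded quantities.
\item For the $f$-term, $d_{H^{-1}}/h\le C$, and $f$ solves $-\Delta_{\pa E}f=\xi_{F,E}/d_{H^{-1}}$. Note that $(d_{H^{-1}}/h)\,\Delta_{\pa E}f=-\xi_{F,E}/h$, which is of size $\|\psi\|/h$. Using $\|\psi\|_{H^k}$ with the rates $h,h^{3/4},\dots$ from~\eqref{LESTIMEFINALI}, we get $\|\xi/h\|_{H^2}\le Ch^{-1/4}$. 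Transferring to $\pa F$ via Lemma~\ref{Lispetttoregadget} costs only bounded factors.
\item Second, convert from $H^\varphi_F$ to $H_F$. By Lemma~\ref{10Expcurvvarphi} applied locally, $H^\varphi_F$ is a uniformly elliptic quasilinear operator in the graph function. Elliptic regularity (as in Claim~1 of Proposition~\ref{ordinaryprop}, Lemma~\ref{H3mcurvimpH3acurv}) then gives $\|B_F\|_{H^5}\lesssim 1+\|H^\varphi_F\|_{H^4}$. The terms with $\overline\nabla^2B_F$ appearing on the right are absorbed by interpolation against the already established $H^3$-type bound.
\end{itemize}
This absorption, keeping exactly the rate $h^{-1/4}$, is the delicate point.
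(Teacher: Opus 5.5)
Your proposal is correct. For the \textsc{UBC} and for the fourth-order bound it follows the paper. The paper also deduces the \textsc{UBC} from $\|\psi\|_{C^{2,\alpha}(\pa E)}\le \tilde C_{1,\alpha}h^{1/4}$. It obtains $\|\Delta^2_{\pa F}H_F\|_{L^2(\pa F)}\lesssim h^{-1/4}$ by rewriting \eqref{Comeprima} as $H^\varphi_F-Q(E(u_F^{K_{el}}))+\tilde f\circ\pi_{\pa E}=L$ with $-\Delta_{\pa E}\tilde f=\xi_{F,E}/h$. It then proves $\|\tilde f\|_{H^4(\pa E)}\lesssim h^{-1/4}$, transfers this to $\pa F$ with Lemma~\ref{Lispetttoregadget}, and handles the elastic term with the $\Delta_{\pa F}\pa^\alpha u$ constraint in \eqref{minelvinc}.

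Where you genuinely differ is the $h$-independent bound on $\nabla_{\pa F}\Delta_{\pa F}H_F$. The paper derives it from the Euler--Lagrange equation as well. It first proves $H^2$ and then $H^3$ bounds on $\pa F$ for $\tilde f\circ\pi_{\pa E}$, $H^\varphi_F$ and $H_F$, controlling the elastic term through the $H^{1,2}$ constraint on $\nabla^3 u$ along $\pa F$. You instead read it off the graph expansion $H_F\circ\Psi=-\Delta_{\pa E}\psi+H_E+\widetilde{\mathcal R}_0$, and this works:
\begin{itemize}
\item The bounds $\|\psi\|_{H^5}\le C_1$ and $\|B_E\|_{H^3}\le C$ control $H_F\circ\Psi$ in $H^3(\pa E)$.
\item The pulled-back metric needs only $\psi\in H^5\cap C^{2,\alpha}$ and $B_E\in H^3\cap C^{1,\alpha}$.
\item The only term of $\overline\nabla^3\widetilde{\mathcal R}_0$ carrying $\overline\nabla^4B_E$ comes as $\psi\,\overline\nabla^4B_E$. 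It is bounded by $\|\psi\|_{L^\infty}\|B_E\|_{H^4}\lesssim h^{3/4}h^{-1/4}$.
\end{itemize}
Your route exploits the full strength of Lemma~\ref{Lemma:stimefinali} and avoids the elastic constraint at third order. The paper's route treats both orders uniformly through the Euler--Lagrange structure, at the price of an extra bootstrap.

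Three small corrections.
\begin{itemize}
\item In your last bullet the estimate should read $\|B_F\|_{H^4}\lesssim 1+\|H^\varphi_F\|_{H^4}$, not $H^5$: four derivatives of $H_F$ correspond to $B_F\in H^4$.
\item Nothing delicate happens there. The terms involving $\overline\nabla^2B_F$, and the other lower-order curvature products, are bounded outright by $\|B_F\|_{H^3(\pa F)}\le C$. This follows from your third-order bound and Lemma~\ref{lem:hilbert-norm}, so no absorption is needed and the rate $h^{-1/4}$ is kept automatically.
\item Transferring $\tilde f$ to $\pa F$ also needs $\|\tilde f\|_{L^\infty}+\|\tilde f\|_{H^3(\pa E)}\le C$ to control the lower-order products. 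This follows from $\|\xi_{F,E}/h\|_{H^1}\le C$ together with $\int_{\pa E}\tilde f=0$.
\end{itemize}
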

\begin{proof}
    In what follows, we denote by $C$ a generic constant depending on $ C_1, r_0, K_0, K_{el}$, by $G \subset \R^3$  a smooth set, and  by $\overline{\nabla}_{\pa G}$ the Levi-Civita connection on $\pa G$.

   Recall~\eqref{LESTIMEFINALI}, in particular that $ \| \psi \|_{C^{2,\alpha}(\pa E)} \leq \tilde{C}_{1,\alpha} h^{1/4}$. Using~\cite[Theorem 2.6]{Da2018} and~\cite[Remark 3.4.6]{Antonia}, we obtain that $F$ satisfies the \textsc{UBC} with radius $r_0/2$, and in particular
    \begin{equation}\label{arremba01042026}
        \| B_F \|_{L^\infty(\pa F)} \leq \frac{2}{r_0}.
    \end{equation}

    Let $f \in H^1(\pa E)$ be the function attaining the supremum in~\eqref{d_{H^{-1}}f}, and define $\tilde{f}:= \frac{d_{H^{-1}}(F,E)}{h}f$. Hence, we can rewrite the Euler–Lagrange equations~\eqref{Comeprima} as
    \begin{equation}\label{Cosac'e'}
    \left\{
    \begin{aligned}
        & H_F^\varphi- Q(E(u_F^{K_{el}}))+ \tilde{f} \circ \pi_{\pa E}= L \quad \text{ on } \pa F, \\
        & - \Delta_{\pa E} \tilde{f} = \frac{\xi_{F,E}}{h}  \quad \text{ on } \pa E,
    \end{aligned}
    \right.
\end{equation}
where we recall $\xi_{F,E}$ is the function defined in~\eqref{xiFEfunz}. We observe that, using~\eqref{090226ppranzf1},~\eqref{10022026pprinazof2} and~\eqref{LESTIMEFINALI}, we obtain
\begin{equation}\label{xistainH^1}
    \| \xi_{F,E}\|_{H^1(\pa E)} \leq C h.
\end{equation}
We divide the proof into several steps.\\
 \textit{Step 1:} We show the following estimates
 \begin{equation}\label{03042026form1}
    \| \tilde{f} \|_{L^\infty(\pa E)} \leq C, \,\, \| \tilde{f} \|_{H^3(\pa E)} \leq C, \,\, \| \tilde{f} \|_{H^4(\pa E)} \leq C h^{-\frac{1}{4}}.
 \end{equation}

We start by proving the first estimate in~\eqref{03042026form1}. Using~\eqref{xistainH^1} and the Sobolev embedding theorem, we obtain $ \|\xi_{F,E}\|_{  L^p(\pa E)} \leq C_p h
$ for every $p\geq 1$. Moreover, by standard elliptic regularity theory, we deduce that $ \tilde{f} \in L^p(\pa E)$ for every $p \geq 1$. Fixing $p>2$ and arguing as in the proof of~\cite[Theorem 8.17]{GilbargTrudinger1977}, one can show that for all $x \in \pa E$
\begin{equation}
    \| \tilde{f} \|_{L^\infty(\pa E \cap B_{r_0/2 }(x))} \leq C(r_0) \big( \| \tilde{f} \|_{L^2(\pa E \cap B_{r_0}(x))}+ \frac{1}{h}\| \xi_{F,E} \|_{L^2(\pa E \cap B_{r_0}(x))} \big) 
\end{equation}
 and  the desired inequality follows.

We now prove the other two estimates in \eqref{03042026form1}.
Since~\eqref{arremba01042026} holds, we can apply Lemma~\ref{lem:hilbert-norm}, this combined with the second equation in~\eqref{Cosac'e'} imply
\begin{equation}\label{01042026primpranz1}
    \begin{split}
        \| \tilde{f} \|_{H^3(\pa E)} &\leq C \big( \| \nabla_{\pa E}\Delta_{\pa E} \tilde{f} \|_{L^2(\pa E)}+\| \tilde{f} \|_{L^\infty(\pa E)} (1+ \| \Delta_{\pa E} H_E \|_{L^2(\pa E)})\\
        & \leq C \frac{1}{h} \| \xi_{F,E} \|_{H^1(\pa E)} +C \leq C.
    \end{split}
\end{equation}
A straightforward computation yields
\begin{equation}
\begin{split}
    \Delta_{\pa E} \xi_{F,E}= &\Delta_{\pa E} \psi \big( 1+ \frac{\psi }{2}H_E+ \frac{\psi^2 }{3} K_E \big) +2 \nabla_{\pa E} \psi \nabla_{\pa E}\big(  \frac{\psi}{2}H_E \frac{\psi^2}{3} K_E \big)\\
    & + \psi \Delta_{\pa E} \big( \frac{\psi}{2}H_E + \frac{\psi^2}{3}K_E \big).
    \end{split}
\end{equation}
Hence, using~\eqref{LESTIMEFINALI} and Lemma~\ref{lem:hilbert-norm}, we deduce $\| \xi_{F,E} \|_{H^2(\pa E)} \leq C  h^{\frac{3}{4}}.$
Consequently, from  the second equation in~\eqref{Cosac'e'}, Lemma~\ref{lem:hilbert-norm} and ~\eqref{LESTIMEFINALI}, we obtain
\begin{align*}
    \| \tilde{f} \|_{H^4(\pa E)} &\leq C\big( \| \Delta^2_{\pa E} \tilde{f} \|_{L^2(\pa E)}+ \| \tilde{f} \|_{L^\infty(\pa E)}(1+ \| \nabla_{\pa E} \Delta_{\pa E} H_E \|_{L^2(\pa E)}) \big) \\
    & \leq C \frac{1}{h} \| \xi_{F,E} \|_{H^2(\pa E)} +C \leq C h^{-\frac{1}{4}}.
\end{align*}

To obtain estimate~\eqref{deriv^3H_Federiv^4H_F}, we need to relate higher-order derivatives of the function $ \tilde{f}\circ \pi_{\pa E}$ on $\pa F$ to higher-order derivatives of $\tilde{f} $ on $\pa E$.  To this end, we proceed as follows: first, we apply Lemma~\ref{Lispetttoregadget} to estimate derivatives on the boundary of $F$ in terms of standard derivatives in $\R^3$; subsequently, we estimate these derivatives in $\R^3$ in terms of derivatives on the boundary of $E$. For this purpose, we recall  that for all $x \in \mathcal{I}_{\frac{r_0}{2}}(\pa E)$, it holds that
\begin{equation}\label{01042026vorrei}
    | \nabla^k (\tilde{f} \circ \pi_{\pa E}) \vert  (x) \leq C_k \sum_{ \vert \alpha \vert \leq k} \prod_{j=1}^{k-1} \big( 1+ \vert \overline{\nabla}_{\pa E}^{\alpha_j} B_E \vert (\pi_{\pa E}(x)) \big)   \vert \overline{\nabla}_{\pa E}^{\alpha_k} \tilde{f}\vert (\pi_{\pa E}(x)) ,
\end{equation}
where the last index is never zero, i.e., $\alpha_k \neq 0$, see e.g.~\cite[Lemma 5.3]{JN}.\\
\textit{Step 2:} We show the following inequalities
\begin{equation}\label{Lafolliacorelli}
 \| \tilde{f}\circ\pi_{\pa E} \|_{H^3(\pa F)}, \,\, \|  H_F \|_{H^3(\pa F)} ,\,\,   \| H_F^\varphi \|_{H^3(\pa F)} \leq C.
\end{equation}

We begin by showing the bounds 
\begin{equation}\label{eq:boundssss}
    \| \tilde{f} \circ \pi_{\pa E} \|_{H^2(\pa F)}, \,\, \| H_F \|_{H^2(\pa F)}, \,\, \| H_F^\varphi \|_{H^2(\pa F)} \le C.
\end{equation}
From Lemma~\ref{lem:hilbert-norm} and the inequality $P(F) \leq C P_{\varphi}(F) \leq CP_{\varphi}(E)$, we obtain
\begin{equation}
\begin{split}
   \| \tilde{f} \circ \pi_{\pa E} \|_{H^2(\pa F)} \leq &C \big(   \|  \Delta_{\pa F} \tilde{f} \circ \pi_{\pa E} \|_{L^2(\pa F)}+ \|  \tilde{f} \circ \pi_{\pa E} \|_{L^2(\pa F)} \big)\\
   \leq&  C \big(   \|  \Delta_{\pa F} \tilde{f} \circ \pi_{\pa E} \|_{L^2(\pa F)}+ \| \tilde{f} \|_{L^\infty(\pa E)} P(F)^{\frac{1}{2}} \big)\\
   \leq & C \big(  \|  \Delta_{\pa F} \tilde{f} \circ \pi_{\pa E} \|_{L^2(\pa F)}+ C P_{\varphi}(E)^{\frac{1}{2}} \big).
   \end{split}
\end{equation} 
Next, using ~\eqref{qullochenonguc},~\eqref{arremba01042026},~\eqref{01042026primpranz1} and~\eqref{01042026vorrei}, we obtain
\begin{align}
    \| \Delta_{\pa F} \tilde{f} \circ \pi_{\pa E} \|_{L^2(\pa F)} \leq& C \big( 1+ \| \nabla^2 \tilde{f} \circ \pi_{\pa E} \|_{L^2(\pa F)} \big)\notag\\
    \leq& C \big(1+ \| \overline{\nabla}_{\pa E}^2 \tilde{f} \circ \pi_{\pa E}  \|_{L^2(\pa F)} + \| \overline{\nabla}_{\pa E}\tilde{f} \circ \pi_{\pa E} \|_{L^2(\pa F)}  \big)
    \notag\\
    \leq&   C\big( 1+ \| \tilde{f} \|_{H^2(\pa E)} \big) \leq C,\label{nunununununu01042026}
\end{align}
where in the third inequality we used the change of variables $ \Psi(x):= x+ \psi(x)\nu_E(x)$. Finally, computing the $\Delta_{\pa F}$ of the first equation in~\eqref{Cosac'e'} and using~\eqref{minelvinc} together with~\eqref{nunununununu01042026}, we obtain
\begin{equation}
    \| \Delta_{\pa F} H_F^\varphi \|_{L^2(\pa E)} \leq \|  Q(E(u_F^{K_{el}}))  \|_{H^2(\pa F)}+ \| \tilde{f} \circ \pi_{\pa E} \|_{H^2(\pa F)}  \leq C(K_{el})+  C.
\end{equation}
Combining these estimates and arguing as in Lemma~\ref{H3mcurvimpH3acurv}, we deduce~\eqref{eq:boundssss} .

We now proceed to prove the bound on $\| \tilde{f} \circ \pi_{\pa E} \|_{H^3(\pa F)}$.
From Lemma~\ref{lem:hilbert-norm} and the estimates~\eqref{eq:boundssss}, we obtain
\begin{align*}
    \| \tilde{f} \circ \pi_{\pa E} \|_{H^3(\pa F)} \leq& C\big(  \| \nabla_{\pa F} \Delta_{\pa F} ( \tilde{f} \circ \pi_{\pa E}) \|_{L^2(\pa F)}+ (1+\|  H_F\|_{H^2(\pa F)}) \| \tilde{f} \|_{L^\infty(\pa E)} \big) \\
    \leq & C \big( C+ \| \nabla_{\pa F} \Delta_{\pa F}  (\tilde{f} \circ \pi_{\pa E} )\|_{L^2(\pa F)} \big)\\
    \leq & C \big( C+  \| \nabla^3 (\tilde{f} \circ \pi_{\pa E} )\|_{L^2(\pa F)}+ \| B_F \|_{L^2(\pa F)} \| \nabla^2 (\tilde{f} \circ \pi_{\pa E} )\|_{L^2(\pa F)} \\
    &  \quad+ \| \nabla (\tilde{f} \circ \pi_{\pa E}) \|_{L^2(\pa F)} \big(  \| \overline{\nabla}_{\pa F} B_F \|_{L^2(\pa F)}+ \| B_F \|_{L^2(\pa F)} \big)\\
   \leq  & C \big(  1+\| \nabla^3 (\tilde{f} \circ \pi_{\pa E} )\|_{L^2(\pa F)}+\| \nabla^2 (\tilde{f} \circ \pi_{\pa E}) \|_{L^2(\pa F)}+ \| \nabla (\tilde{f} \circ \pi_{\pa E} )\|_{L^2(\pa F)} \big),
\end{align*}
where in the second inequality we also used the first inequality in~\eqref{03042026form1}, in the third inequality we used~\eqref{30032026form1}. We now estimate the terms $\| \nabla^3 ( \tilde{f} \circ \pi_{\pa E}) \|_{L^2(\pa F)}, \,\,\| \nabla^2( \tilde{f} \circ \pi_{\pa E} )\|_{L^2(\pa F)}, \,\, \| \nabla (\tilde{f} \circ \pi_{\pa E}) \|_{L^2(\pa F)}$. We present only the estimate for $\| \nabla^3 (\tilde{f} \circ \pi_{\pa E}) \|_{L^2(\pa F)}$, as the other cases are analogous. By the above estimate and~\eqref{01042026vorrei}, we obtain
\begin{align}
      \| \nabla^3 (\tilde{f} \circ \pi_{\pa E} )\|_{L^2(\pa F)} \leq&  C \big( \| (1+ \vert \overline{\nabla}_{\pa E} B_E \vert)^2 (1+ \vert \overline{\nabla}_{\pa E} \tilde{f} \vert ) \circ \pi_{\pa E} \|_{L^2(\pa F)}\notag\\
      & \quad +\| (1+ \vert \overline{\nabla}_{\pa E} B_E \vert)(1+ \vert \overline{\nabla}_{\pa E}^2 \tilde{f}\vert) \circ \pi_{\pa E}   \|_{L^2(\pa F)}\notag\\
      & \quad +(1+ \| B_E\|_{L^\infty(\pa E)}) \| \overline{\nabla}_{\pa E}^3 \tilde{f} \circ \pi_{\pa E}  \|_{L^2(\pa F)}\notag\\
      &\quad + \| (1+ \vert \overline{\nabla}_{\pa E}^2 B_E \vert )(1+ \vert \overline{\nabla}_{\pa E} \tilde{f}\vert ) \circ \pi_{\pa E} \|_{L^2(\pa F)}\big) \notag\\
     \leq  & C\big(1+ \|  \vert \overline{\nabla}^2 B_E \vert \circ \pi_{\pa E}  \vert \overline{\nabla}_{\pa E} \tilde{f}\vert \circ \pi_{\pa E}  \|_{L^2(\pa F)}  \big)\notag\\
     \leq & C \big(  1+ \| \overline{\nabla}_{\pa E}^2 B_E \|^2_{L^4(\pa E)} +\| \nabla_{\pa F} (\tilde{f} \circ \pi_{\pa E} ) \|_{L^4(\pa F)}^2 \big)\notag\\
     \leq & C \big(  1+ \| B_E \|_{H^3(\pa E)}^{\frac{10}{3}} \| B_E \|_{L^2(\pa E)}^{\frac{2}{3}}+ \| H_F^\varphi \|_{W^{1,4}(\pa F)}+ \| Q(E(u_{F}^{K_{el}})) \|_{L^4(\pa F)}     \big)\notag\\
     \leq & C(K_{el})     \label{AmandaLear02042026}
\end{align}
where in the second inequality we used a change of variables as in~\eqref{nunununununu01042026} and the estimates~\eqref{2203stimecurvthm}, \eqref{03042026form1} and \eqref{eq:boundssss}; in the third inequality we used that
\begin{equation}\label{02042026primacena1}
  \vert \nabla_{\pa E} \tilde{f} \vert \circ \pi_{\pa E}  \leq C \vert \nabla_{\pa F} (\tilde{f} \circ \pi_{\pa E} ) \vert  \text{ on } \pa F,
\end{equation}
see~\eqref{regdecat12}; in the fourth inequality we applied Young’s inequality and Proposition~\ref{prop:interpolation} together with the first equation in~\eqref{Cosac'e'} to obtain
 \begin{equation}
     \| \nabla_{\pa F} (\tilde{f} \circ \pi_{\pa E}) \|_{L^4(\pa E)} \leq \| H_F^\varphi \|_{W^{1,4}(\pa F)}+ \| Q(E(u_{F}^{K_{el}})) \|_{W^{1,4}(\pa F)} ;
 \end{equation}
in the last inequality we  used~\eqref{2203stimecurvthm} and~\eqref{eq:boundssss}.

 We prove the estimates of $ \| H_{F}^\varphi \|_{H^3(\pa F)} $ and of $ \| H_{F} \|_{H^3(\pa F)} $.
 Computing $\nabla_{\pa F} \Delta_{\pa F}$ applied to the first equation in~\eqref{Cosac'e'} and using~\eqref{Lafolliacorelli} we obtain
\begin{align}
     \| \nabla_{\pa F} \Delta_{\pa F} H^\varphi_{F} \|_{L^2(\pa F)} &\leq \| \tilde{f} \circ \pi_{\pa E} \|_{H^3(\pa F)}+ \|   \nabla_{\pa F} \Delta_{\pa F} Q(E(u_F^{K_{el}})) \|_{L^2(\pa F)} \notag\\
     &\leq C+ \|   \nabla_{\pa F} \Delta_{\pa F} Q(E(u_F^{K_{el}})) \|_{L^2(\pa F)}.\label{02042026ILMAESTRO1}
 \end{align}
 Therefore, we need to estimate  $\|   \nabla_{\pa F} \Delta_{\pa F} Q(E(u_F^{K_{el}})) \|_{L^2(\pa F)}.$ Using the arguments of Lemma~\ref{Lispetttoregadget}, we obtain
\begin{equation}
\begin{split}
  & \nabla_{\pa F} \Delta_{\pa F} Q(E(u_F^{K_{el}}))\\
   &= \nabla_{\pa F} \bigg( \Delta_{\R^3} Q(E(u_F^{K_{el}}))  -  \nabla^2 Q(E(u_F^{K_{el}}))  \nabla d_F \cdot\nabla d_F   - {\rm tr} \big( \nabla^2 d_F \big) \nabla Q(E(u_F^{K_{el}}))  \cdot \nabla d_F   \bigg).
   \end{split}
\end{equation}
By the above identity, together with the Leibniz rule and recalling~\eqref{02042026Eu}, we deduce
\begin{align}
  \| \nabla_{\pa F} \Delta_{\pa F} Q(E(u_F^{K_{el}})) \|_{L^2(\pa F)} \leq &   C \Big(  \|   \nabla_{\pa F} \nabla^3 u_F^{K_{el}} \|_{L^2(\pa F)}+ \| \vert \nabla^3  u_F^{K_{el}} \vert \vert B_F \vert \|_{L^2(\pa F)} \notag \\
  & \quad +\| \vert  \overline{\nabla}_{\pa F} B_F \vert \vert \nabla^2 u_{F}^{K_{el}} \vert \|_{L^2(\pa F)}+ \| \vert \nabla^2 u_F^{K_{el}} \vert \vert B_F \vert^2     \|_{L^2(\pa F)} \notag \\
  & \quad + \|  \vert H_F \vert \vert \nabla^3 u_F^{K_{el}} \vert   \|_{L^2(\pa F)}\Big)\notag\\
  \leq &  C \big(    C(K_{el})+ \| \overline{\nabla}_{\pa F} B_F \|_{L^2(\pa F)} \big) \leq C,\label{02042026ILMAESTRO2}
\end{align}
where in the second inequality we used~\eqref{minelvinc} and~\eqref{arremba01042026}, and in the last inequality we have applied \eqref{eq:boundssss} together with Lemma~\ref{lem:hilbert-norm}. Combining~\eqref{02042026ILMAESTRO1},~\eqref{02042026ILMAESTRO2}, and~\eqref{arremba01042026}, and invoking Lemma~\ref{lem:hilbert-norm}, we conclude the proof of the estimates.\\ 
 \textit{Step 3:} In this step we prove
 \begin{equation*}
      \| \Delta_{\pa F}^2 H_F \|_{L^2(\pa F)} \leq C h^{-\frac{1}{4}}. 
 \end{equation*}

 \emph{Claim:} It  holds $\| \Delta^2_{\pa F} (\tilde{f} \circ \pi_{\pa E}) \|_{L^2(\pa F)} \leq C h^{-\frac{1}{4}}.$
 Indeed, using formula~\eqref{30032026form1}, Lemma~\ref{lem:hilbert-norm} and the previous steps, we obtain
 \begin{align}
     \| \Delta^2_{\pa F}( \tilde{f} \circ \pi_{\pa E}) &\|_{L^2(\pa F)} \notag\\
     &\leq C \big( \|  \nabla^4 (\tilde{f} \circ \pi_{\pa E} )\|_{L^2(\pa F)}+ \| \vert \nabla^3 (\tilde{f} \circ \pi_{\pa E} )\vert \vert B_F \vert   \|_{L^2(\pa F)}\notag\\
     & \quad+ \| \vert \nabla^2 (\tilde{f} \circ \pi_{\pa E} )\vert \vert \overline{\nabla}_{\pa F} B_F \vert   \|_{L^2(\pa F)}+ \|\vert \nabla ( \tilde{f} \circ \pi_{\pa E} )\vert \vert \overline{\nabla}_{\pa F}^2 B_F \vert  \|_{L^2(\pa F)}    \big)\notag\\
     & \leq C \big( 1+ \|  \nabla^4 (\tilde{f} \circ \pi_{\pa E} )\|_{L^2(\pa F)}+  \|\vert \nabla ( \tilde{f} \circ \pi_{\pa E} )\vert \vert \overline{\nabla}_{\pa F}^2 B_F \vert  \|_{L^2(\pa F)}    \big)\notag\\
     & \leq C \big( 1+ \|  \nabla^4 (\tilde{f} \circ \pi_{\pa E} )\|_{L^2(\pa F)}+  \|\vert \overline{\nabla}_{\pa F} ( \tilde{f} \circ \pi_{\pa E} )\vert \vert \overline{\nabla}_{\pa F}^2 B_F \vert  \|_{L^2(\pa F)}    \big)\notag\\
     & \leq C \big(1+ \|  \nabla^4 (\tilde{f} \circ \pi_{\pa E} )\|_{L^2(\pa F)}\notag\\
     & \quad+ \| \tilde{f} \circ \pi_{\pa E} \|_{L^\infty(\pa F)} \|  B_F \|_{H^2(\pa F)}+ \| \overline{\nabla}_{\pa F} B_F \|_{L^\infty(\pa F)} \| \tilde{f} \circ \pi_{\pa F} \|_{H^1(\pa F)}    \big)\notag\\
      & \leq C \big( 1+ \|  \nabla^4 (\tilde{f} \circ \pi_{\pa E} )\|_{L^2(\pa F)}   \big),\label{02042026primacena2z}
 \end{align}
 where  in the third inequality we also used~\eqref{01042026vorrei} and~\eqref{02042026primacena1}, and 
 in the fourth inequality we applied Lemma~\ref{lem:Leibniz} with $T_1= \tilde{f}\circ \pi_{\pa E}$ and $T_2= \overline{\nabla} B_F$. Next, using~\eqref{01042026vorrei}, we obtain
 \begin{equation*}
 \begin{split}
     | \nabla^4 (\tilde{f} \circ \pi_{\pa E}) | &\leq C \big(  \vert \overline{\nabla}_{\pa E}^4 \tilde{f} \vert + \vert \overline{\nabla}_{\pa E} B_E \vert \vert \overline{\nabla}_{\pa E}^3 \tilde{f} \vert +\vert \overline{\nabla}^2_{\pa E} B_E \vert \vert \overline{\nabla}_{\pa E}^2 \tilde{f} \vert + \vert \overline{\nabla}^3_{\pa E} B_E \vert \vert \overline{\nabla}_{\pa E} \tilde{f} \vert  \\
     &\quad+ \vert \overline{\nabla}_{\pa E} B_E \vert + \vert \overline{\nabla}_{\pa E}^3 \tilde{f}\vert + \vert \overline{\nabla}_{\pa E}^2 B_E \vert + \vert \overline{\nabla}_{\pa E}^2 \tilde{f} \vert + \vert\overline{\nabla}^3_{\pa E} B_E \vert+ \vert \overline{\nabla}_{\pa E} \tilde{f} \vert  \big) \circ\pi_{\pa E}.
     \end{split}
 \end{equation*}
 From which we infer the desired estimate using also~\eqref{02042026primacena2z}.  We only present the estimates for the terms appearing in the first line of the above inequality as the remaining terms can be treated analogously. Hence, using a change of variables, the Cauchy-Schwarz inequality, formulas~\eqref{2203stimecurvthm} and~\eqref{03042026form1}, we obtain 
 \begin{equation*}
 \begin{split}
    \|\nabla^4 (\tilde{f} \circ \pi_{\pa E})\|_{L^2(\pa F)}&\leq \| \overline{\nabla}_{\pa E}^4 \tilde{f} \|_{L^2(\pa E)} + \| \overline{\nabla}_{\pa E} B_E \|_{L^\infty(\pa E)} \| \overline{\nabla}_{\pa E}^3 \tilde{f} \|_{L^2(\pa E)} \\
    & \quad +\| \overline{\nabla}^2_{\pa E} B_E \|_{L^4(\pa E)}^2 \| \overline{\nabla}_{\pa E}^2 \tilde{f} \|_{L^4(\pa E)}^2 + \| \overline{\nabla}^3_{\pa E} B_E \|_{L^2(\pa E)} \| \overline{\nabla}_{\pa E} \tilde{f} \|_{L^\infty(\pa E)}\\
    & \leq C \big(   h^{-\frac{1}{4}}+ 1+ \| \overline{\nabla}^2_{\pa E} B_E \|_{L^4(\pa E)}^2 \| \overline{\nabla}_{\pa E}^2 \tilde{f} \|_{L^4(\pa E)}^2 \big) \\
    & \leq C \big( h^{-\frac{1}{4}}+ \| B_E \|_{H^3(\pa E)}^{\frac{5}{4}}\| B_E \|_{L^2(\pa E)}^{\frac{1}{3}} \| \tilde{f}\|_{H^3(\pa E)}^{\frac{5}{3}}\| \tilde{ f} \|_{L^2(\pa E)}^{\frac{1}{3}} \big) \\
    & \leq C \big( 1+ h^{-\frac{1}{4}}  \big)
    \end{split}
 \end{equation*}
where  in the third inequality we also applied Proposition~\eqref{prop:interpolation}. Combining the above estimates, the claim follows.

 Now computing the $\Delta^2_{\pa F}$ of the first equation of~\eqref{Cosac'e'}, we obtain
 \begin{equation*}
 \begin{split}
     \| \Delta^2_{\pa F} H_F^\varphi \|_{L^2(\pa F)} \leq &\| \Delta^2_{\pa F} (\tilde{f} \circ \pi_{\pa E}) \|_{L^2(\pa F)}+ \| \Delta^2_{\pa F} Q(E(u_F^{K_{el}}))\|_{L^2(\pa F)} \leq C h^{-\frac{1}{4}},
     \end{split}
 \end{equation*}
 where we used the claim, Lemma~\ref{Lispetttoregadget} and~\eqref{minelvinc}. This concludes the proof.
\end{proof}

Recall that $E_0 \Subset \Omega$ is the initial datum of our problem; see~\ref{Omegaevrietadiriferimento}.
In the following lemma, we prove the existence of a constant $\sigma_1$, see~\eqref{AnitraD}, such that if $ \pa E $ is a normal graph over $\pa E_0$ and $ d_{\mathcal{H}}(\pa E, \pa E_0) \leq \sigma_1$, then $\pa F$ is a normal graph over $\pa E_0$, and moreover $ d_{\mathcal{H}}(\pa F, \pa E_0)\leq 2 \sigma_1$.
\begin{lemma}\label{0105nottelemma}
Let $E$ and $F$ be as in Lemma~\ref{lemmahnegativa}. Then there exist $\sigma_1 > 0$ and $h_3 \leq h_2$, where $h_2$ is the constant provided by Lemma~\ref{Lemma:stimefinali}, such that if $E \in \mathfrak{H}^5_{K_0,\sigma_1}(E_0)$, then for every $h \leq h_3$ it holds that
\begin{equation}
     F \in \mathfrak{H}^5_{ K_1, 2\sigma_1   }(E_0)
    \text{ and } F \in \mathfrak{H}^6_{h^{-1/4}K_1,2 \sigma_1}(E_0)
\end{equation}
where $K_1=K_1(K_0,K_{el},C_2)$ and $C_2$ is the constant provided by Lemma~\ref{lemmahnegativa}.
\end{lemma}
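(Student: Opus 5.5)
The plan is to express $\partial F$ as a normal graph over $\partial E_0$ by composing the two graph representations, and then to transfer the Sobolev bounds. Since $E\in\mathfrak{H}^5_{K_0,\sigma_1}(E_0)$, we write $\partial E=\{y+\varphi_E(y)\nu_{E_0}(y)\}$ with $\|\varphi_E\|_{L^\infty}\le\sigma_1$ and $\|\varphi_E\|_{H^5(\partial E_0)}\le K_0$. By Lemma~\ref{Lemma:stimefinali}, $\partial F=\{x+\psi(x)\nu_E(x)\}$ with $\|\psi\|_{L^\infty(\partial E)}\le \tilde C_{1,\alpha}h^{3/4}$ and $\|\psi\|_{C^{2,\alpha}}\le \tilde C h^{1/4}$.

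\emph{Step 1: graph property and height bound.} The map $y\mapsto \pi_{\partial E_0}\big(\Psi_E(y)+\psi(\Psi_E(y))\nu_E(\Psi_E(y))\big)$ is a $C^1$-perturbation of the identity on $\partial E_0$, where $\Psi_E(y):=y+\varphi_E(y)\nu_{E_0}(y)$. The perturbation is small in $C^1$ for $h\le h_3$ small, because $\|\psi\|_{C^1}\lesssim h^{1/2}$ and $\nu_E$ is $C^1$-controlled by $K_0$ through Sobolev embedding. Hence this map is a diffeomorphism, and $\partial F$ is a normal graph over $\partial E_0$ with height $\varphi_F$. This is well defined because $\sigma_1\le\sigma_0/2$ keeps everything inside $\mathcal{I}_{\sigma_0}(\partial E_0)$, where $\pi_{\partial E_0}$ is smooth. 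Moreover $|\varphi_F-\varphi_E\circ(\cdot)|\le C|\psi|\le Ch^{3/4}$, so $\|\varphi_F\|_{L^\infty}\le\sigma_1+Ch^{3/4}\le2\sigma_1$ once $h_3$ is chosen small relative to $\sigma_1$. The constant $\sigma_1$ itself only needs to be small enough, depending on $E_0$, that normal graphs over $\partial E_0$ with height at most $2\sigma_1$ and controlled $C^1$ norm are in the range where the tubular coordinates behave well.

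\emph{Step 2: Sobolev bounds.} The bounds on $\varphi_F$ should not come from composing $\varphi_E$ and $\psi$, since that would accumulate constants step after step. Instead I will use the curvature bounds of Lemma~\ref{lemmahnegativa}, namely $\|\nabla_{\partial F}\Delta_{\partial F}H_F\|_{L^2}\le C_2$ and $\|\Delta^2_{\partial F}H_F\|_{L^2}\le C_2h^{-1/4}$, together with the UBC with radius $r_0/2$. Lemma~\ref{lem:hilbert-norm} then gives $\|B_F\|_{H^3(\partial F)}\le C$ and $\|B_F\|_{H^4(\partial F)}\le Ch^{-1/4}$.

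Next, express the second fundamental form of a normal graph over $\partial E_0$ via the analogue of Lemma~\ref{10Expcurvvarphi} with $\varphi=|\cdot|$. This gives $H_F(y+\varphi_F\nu_{E_0}) = -\Delta_{\partial E_0}\varphi_F + H_{E_0}+\mathcal{R}_0$, with $\mathcal{R}_0$ having coefficients vanishing at $(\varphi_F,\nabla\varphi_F)=0$. Pulling back $H_F$ to $\partial E_0$ via the graph map, which is bounded in $C^{1,\alpha}$, and using the chain rule estimates as in~\eqref{01042026vorrei}, we get control of $\|\Delta_{\partial E_0}\varphi_F\|_{H^3}$ up to $\mathcal{R}_0$. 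The $\mathcal{R}_0$ terms are absorbed because $\|\varphi_F\|_{C^1}$ is small: $\sigma_1$ is small and the $C^1$ norm is interpolated between $L^\infty$ and $H^3$. Elliptic regularity on the fixed smooth surface $\partial E_0$, which is $C^{6,1}$, then yields $\|\varphi_F\|_{H^5(\partial E_0)}\le K_1$. Repeating the argument one order higher with the $h^{-1/4}$ bound yields $\|\varphi_F\|_{H^6(\partial E_0)}\le h^{-1/4}K_1$.

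The main obstacle is the absorption of the nonlinear remainder at top order. There, $\overline\nabla^{k}\mathcal{R}_0$ contains $\overline\nabla^{k+2}\varphi_F$ multiplied by $\mathcal{A}_1(\varphi_F,\nabla\varphi_F)$. To make this coefficient small I first need an a priori $C^1$ bound on $\varphi_F$ that is independent of $K_1$. I would obtain it by an a priori $H^3$ estimate from Step 1, through interpolation between the $L^\infty$ bound $2\sigma_1$ and an $H^3$ bound obtained from $\|B_F\|_{H^1}$. The lower-order products are handled with Lemma~\ref{lem:Leibniz}.
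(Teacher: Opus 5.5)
Your proposal is correct. Its second half is what the paper means by ``combining Lemma~\ref{epsilonregolarita} with Lemma~\ref{lemmahnegativa}'': the curvature bounds and UBC from Lemma~\ref{lemmahnegativa}, then Lemma~\ref{lem:hilbert-norm}, then elliptic regularity for the graph equation over the fixed surface $\partial E_0$. You simply spell out a step the paper leaves implicit.

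The genuine difference is how you get the normal-graph representation over $\partial E_0$. The paper does not compose the two graphs. It argues as follows:
\begin{itemize}
\item $F$ is a $(\Lambda,\alpha)$-minimizer by Proposition~\ref{CaneBernard06}.
\item Since $d_{\mathcal H}(\partial F,\partial E_0)\le Ch^{3/4}+\sigma_1\le\delta_0$ once $\sigma_1\le\delta_0/2$, the $\varepsilon$-regularity Lemma~\ref{epsilonregolarita} applies with $E=E_0$.
\item This directly gives a height function over $\partial E_0$ with arbitrarily small $C^{1,\gamma}$ norm. The constants depend only on $\Lambda$, not on $\|\varphi_E\|_{H^5}$.
\end{itemize}

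Your composition argument is more elementary, since it needs no regularity theory for almost-minimizers, and it is perfectly adequate for one step. But the $C^1$ and $C^{2,\alpha}$ control it yields passes through $\varphi_E$. The $C^1$ smallness you want in Step~2 therefore comes from interpolation, $\|\varphi_E\|_{C^1}\lesssim K_0^{\theta}\sigma_1^{1-\theta}$. So your $\sigma_1$ must depend on $K_0$, not only on $E_0$ as you say in Step~1. This is harmless, because the paper's $\delta_0$ also depends on $K_0$ through $\Lambda$.

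What the paper's route buys is reusability. The same $\varepsilon$-regularity argument is used in the proof of Theorem~\ref{1THMMAIN1} to represent every $E_j$ as a graph over $\partial E_0$ with uniform $C^{1,\gamma}$ bounds and no accumulation of errors. That is exactly the concern you raise at the start of Step~2.

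Finally, smallness of $\|\varphi_F\|_{C^1}$ is a convenience rather than a necessity. A $C^{1}$ bound already makes the full quasilinear graph mean-curvature operator uniformly elliptic, so a standard bootstrap closes the $H^5$ and $H^6$ estimates. Smallness is needed only if you treat $-\Delta_{\partial E_0}$ as the principal part and $\mathcal{A}_1$ as a perturbation.
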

\begin{proof}
  By Proposition~\ref{CaneBernard06}, we have that $F$ is a $(\Lambda,\alpha)$-minimizer of the anisotropic perimeter. Moreover, by Lemma~\ref{Lemma:stimefinali}, we obtain that $ d_{\mathcal{H}}(\pa E, \pa F) \leq C h^{\frac{3}{4}}$.  Let $\delta_0$ be given by Lemma~\ref{epsilonregolarita} with $E = E_0$ and with the constant $\Lambda$ given by Lemma~\ref{CaneBernard06}. Now, for $\sigma_1 \leq \delta_0/2$, we have
  \begin{equation}
      d_{\mathcal{H}}(\pa F, \pa E_0 ) \leq d_{\mathcal{H}}(\pa F, \pa E)+ d_{\mathcal{H}}(\pa E, \pa E_0) \leq  C h^{3/4} +\sigma_1 \leq \delta_0,
  \end{equation}
  for $h$ sufficiently small. Hence, combining Lemma~\ref{epsilonregolarita} with Lemma~\ref{lemmahnegativa}, we conclude.
\end{proof}
\subsubsection{Estimate for the elastic energy}
In this subsubsection, we show that the minimum of the constrained  problem~\eqref{minelvinc} coincides with the minimum of ~\eqref{minelast}, under suitable assumptions on the constant $K_{el}$ appearing in~\eqref{minelvinc}. Throughout this subsubsection, we assume that $E$ and $F$ are the same sets as in Lemma~\ref{lemmahnegativa} and Lemma~\ref{0105nottelemma}. 
We proceed with two technical lemmas that will be instrumental in achieving our goal.

\begin{lemma}\label{Lemmatecnico1elsticicita}
  Let $0 < \delta < \sigma_0$, where $\sigma_0$ is defined in~\eqref{Laconstantesigmazerofin}. Let $E_f \Subset \Omega$ be such that  $\pa E_f= \{ x+ f(x)\nu_{E_0}(x): x \in \pa E_0\}$ with $ \| f \|_{C^{3,\alpha}(\pa E_0)} \leq \delta$. If $f \in H^k(\partial E_0)$ with $k \in \{5,6\}$, then
 \begin{equation}\label{26042026onepiece1}
 \| \nabla^3 u_{E_f}(\cdot + f(\cdot) \nu_{E_0}(\cdot)) \|_{H^{k-\frac{7}{2}}(\pa E_0)} \leq C \big(  1+ \| f \|_{H^{k}(\pa E_0)} \big)
 \end{equation}
 where $u_{E_f}$ denotes the minimizer of problem~\eqref{minelast}, and $C=C(k, \delta)$.
\end{lemma}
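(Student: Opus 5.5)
The plan is to straighten the moving boundary, reducing the problem to a Lamé system on a fixed reference domain whose coefficients depend on $f$, and then to apply elliptic regularity up to the boundary in Sobolev scales together with the trace theorem.

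\emph{Step 1: transfer to a fixed domain.} Fix a cutoff $\lambda\in C^\infty_c(\mathcal{I}_{\sigma_0}(\pa E_0))$ with $\lambda=1$ near $\pa E_0$. Let $\Phi_f(x):=x+\lambda(x)\,(\mathcal{E}f)(x)\,\nu_{E_0}(\pi_{\pa E_0}(x))$, where $\mathcal{E}f$ is a harmonic-type (Poisson) extension of $f$ into a collar around $\pa E_0$. Since $\|f\|_{C^{3,\alpha}}\le\delta$ and $\delta<\sigma_0$, the map $\Phi_f$ is a $C^{3,\alpha}$ diffeomorphism of $\Omega\setminus E_0$ onto $\Omega\setminus E_f$ that is the identity near $\pa\Omega$. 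Moreover, by standard extension estimates, $\|\Phi_f-\mathrm{id}\|_{H^{k+1/2}(\Omega\setminus E_0)}\le C\|f\|_{H^k(\pa E_0)}$. The pulled-back field $v:=u_{E_f}\circ\Phi_f$ solves $\div(\mathbb{A}_f\nabla v)=0$ in $\Omega\setminus E_0$, with conormal condition $\mathbb{A}_f\nabla v[\nu_{E_0}]=0$ on $\pa E_0$ and $v=w_0$ on $\pa\Omega$. Here $\mathbb{A}_f=\mathbb{A}(\nabla\Phi_f)$ depends smoothly on $\nabla\Phi_f$ and is uniformly Legendre--Hadamard elliptic for $\delta$ small, because it is a perturbation of $\mathbb{C}$ and Korn's inequality holds uniformly in $f$.

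\emph{Step 2: elliptic regularity with rough coefficients.} Schauder theory first gives $\|v\|_{C^{3,\alpha}}\le C(\delta)$, since the coefficients are $C^{2,\alpha}$-bounded. To get the $H^s$ bound linear in $\|f\|_{H^k}$, I would differentiate the equation tangentially, using difference quotients along $\pa E_0$, and argue by induction on the order. At each order the highest derivative of $\mathbb{A}_f$ appears multiplied by $\nabla v\in L^\infty$, and the intermediate products are controlled by Moser-type tame estimates (the Euclidean analogue of Lemma~\ref{lem:Leibniz}). This should yield
\[
\|v\|_{H^{k+1/2}(\Omega\setminus E_0)}\le C\bigl(1+\|\mathbb{A}_f\|_{H^{k-1/2}}\bigr)\le C\bigl(1+\|f\|_{H^k(\pa E_0)}\bigr).
\]
Normal derivatives are then recovered from the equation in the usual way.

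\emph{Step 3: back to $u_{E_f}$ and trace.} By the chain rule, $\nabla^3u_{E_f}\circ\Phi_f$ is a polynomial expression in $\nabla^{j}v$ and $\nabla^{j}\Phi_f^{-1}$ with $j\le3$. Applying tame product estimates again gives $\|\nabla^3 u_{E_f}\circ\Phi_f\|_{H^{k-5/2}(\Omega\setminus E_0)}\le C(1+\|f\|_{H^k})$. On $\pa E_0$ we have $\Phi_f(x)=x+f(x)\nu_{E_0}(x)$, so the trace theorem, which loses one half derivative, gives \eqref{26042026onepiece1} with exponent $k-3$. Since $k-3\ge k-\tfrac72$, this is enough.

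\emph{Main obstacle.} I expect Step 2 to be the hardest part: the bound must be linear in the top norm $\|f\|_{H^k}$, with all lower-order quantities absorbed into $C(\delta)$ via the $C^{3,\alpha}$ smallness. I would first try the tangential difference-quotient energy method combined with Gagliardo--Nirenberg interpolation. If that becomes too technical, a fallback is to treat $\mathbb{A}_f-\mathbb{C}$ as a perturbation and run a fixed-point argument in $H^{k+1/2}$.
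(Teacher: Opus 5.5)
Your proposal is correct in outline and follows the same skeleton as the paper's proof: pull back to the fixed domain $\Omega\setminus E_0$, prove a Sobolev bound on the pulled-back displacement that is linear in the top norm of $f$, recover normal derivatives from the equation, and take the trace. The one real difference is your smoothing (harmonic) extension of $f$.

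The paper uses the non-smoothing extension $\Phi_f(x)=x+f(\pi_{\partial E_0}(x))\,\nu_{E_0}(\pi_{\partial E_0}(x))$. After local flattening, the coefficients $\mathbb{A}(\cdot,\bar f,\nabla\bar f)$ therefore lie only in $H^{k-1}$. The paper then proves the integer-order bound $\|v\|_{H^k}\le C(1+\|f\|_{H^k(\partial E_0)})$ as follows:
\begin{itemize}
\item it tests the tangentially differentiated weak equation ($|\beta|=k-1$) with $\eta^2\nabla^\beta v$;
\item it controls the commutator terms by Fa\`a di Bruno, Gagliardo--Nirenberg and Young;
\item it obtains normal derivatives by induction through the equation;
\item it closes with interior regularity and an $L^2$ bound on $u_{E_f}$ coming from minimality and Poincar\'e.
\end{itemize}
Since $\nabla^3 v\in H^{k-3}$, the trace loses half a derivative, which is exactly where the exponent $k-\frac72$ in the statement comes from.

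Your extension gains that half derivative and would give the stronger bound in $H^{k-3}(\partial E_0)$. It costs two things.
\begin{itemize}
\item The tame estimate $\|v\|_{H^{k+1/2}}\le C(1+\|\mathbb{A}_f\|_{H^{k-1/2}})$ is at half-integer order. The integer difference-quotient induction you describe does not produce it directly; you would need fractional difference quotients or Kato--Ponce/paraproduct commutator estimates.
\item For $k=6$, the factor $\nu_{E_0}\circ\pi_{\partial E_0}=\nabla d_{E_0}$ is only $C^{5,1}$, because $\partial E_0$ is only $C^{6,1}$. So the claimed $\|\Phi_f-\mathrm{id}\|_{H^{13/2}}\le C\|f\|_{H^6}$ can fail even for constant $f$. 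You would have to extend the vector field $f\nu_{E_0}\in H^k(\partial E_0)$ itself instead.
\end{itemize}

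The statement only asks for $H^{k-7/2}$. The simplest repair is to replace $\mathcal{E}f$ by $f\circ\pi_{\partial E_0}$ and run your Step 2 at integer order; that is precisely the paper's argument. Also, no smallness of $\delta$ is needed for ellipticity. The pulled-back Lam\'e system is strongly elliptic for any diffeomorphism, and Korn's inequality transfers from $\Omega\setminus E_f$ by change of variables.
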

\begin{proof}
    Without loss of generality, we may assume that $ f$ is a smooth function. It follows that $u_{E_f}$ is also smooth, since it solves system~\eqref{eqelliel}. 
   
    We start by considering a diffeomorphism to locally flatten $\pa E_0$, and study how the differential equation satisfied by $u_{E_f}$ changes under this coordinate transformation.
    We define  $\Phi_f: \Omega \setminus E_0 \rightarrow \Omega \setminus E_f$
    by $ \Phi_f(x):= x+ f\circ \pi_{ \pa E_0}(x)\nu_{E_0} \circ \pi_{\pa E_0}(x)$ for all $ x \in \mathcal{I}_\delta^{+}(\pa E_0)$, where $\mathcal{I}^+_\delta( \pa E_0) := \{ x \in \Omega \setminus E_0: d_{E_0}(x) < \delta \}$. Moreover, we may assume that $\| \Phi_f - {\rm id} \|_{H^k(\Omega \setminus E_0)} + \| \Phi_f^{-1} - {\rm id} \|_{H^k(\Omega \setminus E_0)} \leq C \| f \|_{H^k(\pa E_0)}.$
    Let $x_0 \in \pa E_0$ and let $\Phi: U \rightarrow B_{2R}$ be another diffeomorphism, where $U \Subset \Omega$ is a neighborhood  of $x_0$ and $\Phi(U \setminus E_0)= B^+_{2R}$, with $B^+_{2R}:= B_{2R} \cap \{ x_3 >0 \}$. 
    We set $v:= u_{E_f} \circ \Phi_f^{-1} \circ \Phi^{-1}  \text{ and } \bar{f}:= f \circ \pi_{\pa E_0} \circ \Phi^{-1}.$
    We observe that by~\cite[Lemma 7.1]{FJM2020} it holds
    \begin{equation}\label{27042026porcorosso2}
        \| \bar{f} \|_{H^k(B^+_{2R})} \leq C \big(  1+ \| f \|_{H^k(\pa E_0)} \big).
    \end{equation}
    Moreover, since $u_{E_f}$ solves~\eqref{eqelliel}, $v $ satisfies 
    \begin{equation}\label{26042026dopopranzo1}
        \int_{B^+_{2R}} \mathbb{A}(\cdot, \bar{f}, \nabla \bar{f}) \nabla v : \nabla \xi \, d x=0,
    \end{equation}
    for every $\xi \in C^\infty(B^+_{2R}, \R^3)$ such that $\xi=0$ on $\pa B_{2R}^+ \cap \{ x_3 >0 \}$, where $\mathbb{A}$ is a smooth tensor. We observe that the above equation in divergence form reads
    \begin{equation}\label{27042026chic'easd}
        \div \big( \mathbb{A}(\cdot, \bar{f}, \nabla \bar{f}) \nabla v   \big)=0 \text{ in } B^+_{2R}.
    \end{equation}
    Furthermore, for all $\xi \in C^\infty(B^+_{2R}, \R^3)$ such that $\xi=0$ on $ \pa B_{2R}^+ \cap \{ x_3 >0 \}$ it holds
    \begin{equation}\label{26042026primapale1}
        \int_{B^+_{2R}} \mathbb{A}(\cdot, \bar{f}, \nabla \bar{f}) \nabla \xi : \nabla \xi \, dx \geq c \int_{B^+_{2R}} \vert \nabla \xi \vert^2 \, dx. 
    \end{equation}
    
    We turn to prove~\eqref{26042026onepiece1}.
    Let $\beta  \in \N^3$ be a multi-index such that $ \vert \beta \vert =k-1$ for $k \in \{5,6\}$. We differentiate equation~\eqref{26042026dopopranzo1}, more precisely, we apply  $\nabla^\beta = \frac{\pa^{\beta_1}}{\pa x_1^{\beta_1}} \frac{\pa^{\beta_2}}{\pa x_2^{\beta_2}}$ to~\eqref{26042026dopopranzo1}. 
    
    We start with the case when $\beta=(\beta_1,\beta_2,0)$. By differentiating~\eqref{26042026dopopranzo1} in the direction $\beta$, we obtain
    \begin{equation}\label{26042026dopocaffe1}
      \int_{B^+_{2R}} \nabla^\beta(\mathbb{A}(\cdot, \bar{f}, \nabla \bar{f}) \nabla v ): \nabla \xi \, d x=0  
    \end{equation}
    for every $\xi \in C^\infty(B^+_{2R}, \R^3)$ such that $\xi=0$ on $ \pa B_{2R}^+ \cap \{ x_3 >0 \}$. Let $\eta \in C^\infty_c(B_{2R})$ such that $ \eta=1$ in $B_R$ and $ 0 \leq \eta \leq 1$. We set $ \xi:= \eta^2 \nabla^\beta v$ and use this as a test function in~\eqref{26042026dopocaffe1}. By the Leibniz rule, we obtain
    \begin{equation}
    \nabla^\beta (\mathbb{A}(\cdot, \bar{f}, \nabla \bar{f}) \nabla v ) =  \mathbb{A}(\cdot, \bar{f}, \nabla \bar{f}) \nabla \nabla^\beta v+ \sum_{\substack{0 \neq \tilde{\beta}  \leq \beta , \\ \tilde{\beta}=(\tilde{\beta}_1, \tilde{\beta}_2,0)  } }c_{\tilde{\beta}}\nabla^{\tilde{\beta }} (\mathbb{A}(\cdot, \bar{f}, \nabla \bar{f}))   \nabla^{\beta-\tilde{\beta}}\nabla v    ,
    \end{equation}
    where $ \tilde{\beta}\leq \beta$ if and only if $ \tilde{\beta}_i \leq \beta_i$ for $i=1,2$.
    From this, we deduce
    \begin{equation*}
        \begin{split}
            &\int_{B_{2R}^+} \eta^2 \mathbb{A}(\cdot, \bar{f}, \nabla \bar{f}) \nabla \nabla^\beta v : \nabla \nabla^\beta v \\
            &\leq C \int_{B^+_{2R}} \vert \nabla \nabla^\beta v \vert \eta \vert \nabla \eta \vert \vert \nabla^\beta v \vert + C \sum_{j=1}^{k-1} \int_{B^+_{2R}} \vert \nabla^j \mathbb{A}(\cdot, \bar{f}, \nabla \bar{f}) \vert \vert \nabla^{k-j} v \vert\big(\eta^2 \vert \nabla \nabla^\beta v \vert + \eta \vert \nabla \eta \vert \vert \nabla^\beta v \vert  \big) .
        \end{split}
    \end{equation*}
    Next, using~\eqref{26042026primapale1} with $\xi = \eta \nabla^\beta v$, we obtain
    \begin{equation*}
        c \int_{B^+_{2R}} \vert \nabla (\eta \nabla^\beta v)\vert^2 \leq \int_{B^+_{2R}} \mathbb{A}(\cdot, \bar{f}, \nabla \bar{f}) \nabla (\eta \nabla^\beta v) : \nabla (\eta \nabla^\beta v).
    \end{equation*}
    Combining the two above inequalities and using $ \nabla(\eta \nabla^\beta v)= \eta \nabla \nabla^\beta v+ \nabla^\beta v \nabla \eta $, we obtain
    \begin{align*}
            \frac{c}{2} \int_{B^+_{2R}} \vert \nabla \nabla^\beta v \vert^2 \eta^2 \leq& C\int_{B^+_{2R}} \vert \nabla (\eta \nabla^\beta v) \vert^2+ C \int_{B^+_{2R}} \vert \nabla^\beta v \vert^2 \vert \nabla \eta \vert^2 \\
            \leq & \int_{B^+_{2R}} \mathbb{A}(\cdot, \bar{f}, \nabla \bar{f}) \nabla (\eta \nabla^\beta v) : \nabla (\eta \nabla^\beta v)+   C \int_{B^+_{2R}} \vert \nabla^\beta v \vert^2 \vert \nabla \eta \vert^2 \\
            \leq & \int_{B^+_{2R}} \eta^2 \mathbb{A}(\cdot, \bar{f}, \nabla \bar{f})\nabla \nabla^\beta v : \nabla \nabla^\beta v\\
            & + C \int_{B_{2R}^+} \vert \nabla^\beta v \vert^2 \vert \nabla \eta \vert^2+ C \int_{B^+_{2R}} \vert \nabla \nabla^\beta v \vert \eta \vert \nabla \eta \vert \vert \nabla^\beta v \vert \\
            \leq & C \int_{B_{2R}^+} \vert \nabla^\beta v \vert^2 \vert \nabla \eta \vert^2+ C \int_{B^+_{2R}} \vert \nabla \nabla^\beta v \vert \eta \vert \nabla \eta \vert \vert \nabla^\beta v \vert \\
            &  +C \sum_{j=1}^{k-1} \int_{B^+_{2R}} \vert \nabla^j \mathbb{A}(\cdot, \bar{f}, \nabla \bar{f}) \vert \vert \nabla^{k-j} v \vert\big(\eta^2 \vert \nabla \nabla^\beta v \vert + \eta \vert \nabla \eta \vert \vert \nabla^\beta v \vert  \big) \\
            \leq & C(\varepsilon)\int_{B_{2R}^+} \vert \nabla^\beta v \vert^2 \vert \nabla \eta \vert^2+ \varepsilon \int_{B^+_{2R}} \vert \nabla \nabla^\beta v \vert^2 \eta^2  \\
            & + C \sum_{j=1}^{k-1} \int_{B^+_{2R}} \vert \nabla^j \mathbb{A}(\cdot, \bar{f}, \nabla \bar{f}) \vert^2 \vert \nabla^{k-j} v \vert^2
    \end{align*}
    where in the third inequality we used that $\bar{f}$ is bounded in $C^{3,\alpha}$, which implies that $\mathbb{A}(\cdot, \bar{f}, \nabla \bar{f})$ is bounded.  Hence, choosing $\varepsilon$ sufficiently small and using that $\eta = 1$ in $B_R^+$, we obtain
\begin{equation}\label{27042026porcorosso3}
    \int_{B_R^+} \vert \nabla \nabla^\beta v \vert^2 \leq C \int_{B_{2R}^+} \vert \nabla^{k-1} v \vert^2+ C \sum_{j=1}^{k-1} \int_{B^+_{2R}} \vert \nabla^j \mathbb{A}(\cdot, \bar{f}, \nabla \bar{f}) \vert^2 \vert \nabla^{k-j} v \vert^2.
\end{equation}

We estimate the term $\sum_{j=1}^{k-1} \int_{B^+_{2R}} \vert \nabla^j \mathbb{A}(\cdot, \bar{f}, \nabla \bar{f}) \vert^2 \vert \nabla^{k-j} v \vert^2 $.
We set $w:= \nabla \bar{f}$. Combining the Leibniz rule with the Fa\`a di Bruno's formula, we obtain
\begin{align*}
        \sum_{j=1}^{k-1} \vert \nabla^j \mathbb{A}(\cdot, \bar{f}, \nabla \bar{f}) \vert^2 \vert \nabla^{k-j} v \vert^2 &\leq C \sum_{j=1}^{k-1} \vert \nabla^{k-j} v \vert^2 \\
        & \quad+ C \sum_{j=1}^{k-1} \,\, \sum_{ \substack{1 \leq i_1 \leq \cdots \leq i_m \leq j \\
        i_1+\cdots+i_m \leq j\\
        m \in \N}} \vert \nabla^{i_1} w \vert^2 \cdots \vert \nabla^{i_m} w \vert^2 \vert \nabla^{k-j} v \vert^2.
\end{align*}
Integrating over $B^+_{2R}$ and applying  H\"older's inequality, we obtain
\begin{align}
    \sum_{j=1}^{k-1}&\int_{B^+_{2R}} \vert \nabla^j \mathbb{A}(\cdot, \bar{f}, \nabla \bar{f}) \vert^2 \vert \nabla^{k-j} v \vert^2 \notag\\
    & \quad \leq C \| v \|^2_{H^{k-1}(B^+_{2R})} \notag\\
    & \quad+ C \sum_{j=1}^{k-1} \,\, \sum_{ \substack{1 \leq i_1 \leq \cdots \leq i_m \leq j \\
        i_1+\cdots+i_m \leq j\\
        m \in \N}} \|  \nabla^{i_1} w \|^2_{L^{ \frac{2(k-1)}{i_1} }(B^+_{2R})} \cdots \|  \nabla^{i_m} w \|^2_{L^{  \frac{2(k-1)}{i_m}  }(B^+_{2R})} \|  \nabla^{k-j} v \|^2_{L^{ \frac{2(k-1)}{k-1-I_m} }(B^+_{2R})}\notag\\
      &  \quad \leq C \| v \|^2_{H^{k-1}(B^+_{2R})} \label{27042026lalberoazzuro}\\
    & \quad+ C \sum_{j=1}^{k-1} \,\, \sum_{ \substack{1 \leq i_1 \leq \cdots \leq i_m \leq j \\
        i_1+\cdots+i_m \leq j\\
        m \in \N}} \|  \nabla^{i_1} w \|^2_{L^{ \frac{2(k-1)}{i_1} }(B^+_{2R})} \cdots \|  \nabla^{i_m} w \|^2_{L^{  \frac{2(k-1)}{i_m}  }(B^+_{2R})} \|  \nabla^{k-j} v \|^2_{L^{ \frac{2(k-1)}{k-1-j} }(B^+_{2R})} \notag
\end{align}
here $I_m:= i_1+ \cdots+ i_m$, and if $I_m=k-1$, we set $ \|  \nabla^{k-j} w \|^2_{L^{ \frac{2(k-1)}{k-1-I_m} }(B^+_{2R})}:= \| \nabla^{k-1} w\|_{L^\infty (B^+_{2R})} $, and in the last inequality we have used $ L^{\frac{2(k-1)}{k-1-I_m}}(B^+_{2R}) \subset L^{\frac{2(k-1)}{k-1-j}}(B_{2R}^+)$.  We also remark that the assumption $ \| f \|_{C^{3,\alpha}(\pa E_0)} \leq \delta $ together with standard Schauder estimates, implies $\|v\|_{C^3(B^+_{2R})} \leq C(\delta)$. Using interpolation inequalities, we obtain
\begin{equation*}
    \| \nabla^{i_l} w \|_{L^{  \frac{2(k-1)}{i_l} } (B^+_{2R})    } \leq C \| w \|^{ \theta(i_l)}_{ H^{k-1}(B^+_{2R}) } \| w \|^{1-\theta(i_l)}_{L^\infty(B^+_{2R})} \leq C \|   w \|^{ \theta(i_l)}_{ H^{k-1}(B^+_{2R}) }
\end{equation*}
where $\theta(i_l):= i_l / (k-1)$. Similarly,
\begin{equation*}
    \|   \nabla^{k-j} v \|_{ L^{ \frac{2(k-1)}{k-1-j} }(B^+_{2R})  } \leq  C\| v \|^\theta_{ H^k(B^+_{2R}) } \| \nabla v \|^{1-\theta}_{L^\infty(B^+_{2R})   } \leq C \| v \|^\theta_{ H^k(B^+_{2R}) }
\end{equation*}
where $\theta:= (k-1-j)/ (k-1)$. Since $ \theta(i_1)+ \cdots+ \theta(i_m) \leq  \frac{j}{k-1}$, combining these inequalities and using Young’s inequality yield
\begin{align*}
     \|  \nabla^{i_1} &w \|^2_{L^{ \frac{2(k-1)}{i_1} }(B^+_{2R})} \cdots \|  \nabla^{i_m} w \|^2_{L^{  \frac{2(k-1)}{i_m}  }(B^+_{2R})} \|  \nabla^{k-j} v \|^2_{L^{ \frac{2(k-1)}{k-1-j} }(B^+_{2R})} 
     \\
     &\leq C \| w \|_{H^{k-1}(B^+_{2R})}^{2 (\theta(i_1)+ \cdots + \theta(i_m))} \| v \|^{2 \theta}_{H^k(B^+_{2R})} \leq C \| w \|_{ H^{k-1}(B^+_{2R}) }^{2 (\theta(i_1)+ \cdots + \theta(i_m)) \frac{1}{1-\theta}}+ \varepsilon \| v \|^2_{H^{k}(B_{2R}^+)}\\
     & \leq C (1+ \| w \|_{ H^{k-1}(B^+_{2R}) } )^{2\frac{j}{k-1}\frac{1}{1-\theta}}+ \varepsilon \| v \|^2_{H^{k}(B_{2R}^+)} \\
     &\leq \varepsilon \|  \nabla^k v \|^2_{L^2(B_{2R}^+)}+ C \|  v \|^2_{H^{k-1}(B_{2R}^+)}+C\big(1+ \| w \|_{H^{k-1}(B_{2R}^+)}^2  \big).
\end{align*}
This combined with~\eqref{27042026porcorosso2} and~\eqref{27042026lalberoazzuro} imply
\begin{multline}
\sum_{j=1}^{k-1} \int_{B^+_{2R}} \vert \nabla^j \mathbb{A}(\cdot, \bar{f}, \nabla \bar{f}) \vert^2 \vert \nabla^{k-j} v \vert^2 \\ \leq \varepsilon \|  \nabla^k v \|^2_{L^2(B_{2R}^+)}+ C \|  v \|^2_{H^{k-1}(B_{2R}^+)}+C\big(1+ \| f \|_{H^{k}(\pa E_0)}^2  \big).
\end{multline}

Finally,  using also~\eqref{27042026porcorosso3} we conclude
\begin{equation}\label{27042026fineclaim1}
    \int_{B_R^+} \vert \nabla \nabla^\beta v \vert^2 \leq \varepsilon \|  \nabla^k v \|^2_{L^2(B_{2R}^+)}+ C \|  v \|^2_{H^{k-1}(B_{2R}^+)}+C\big(1+ \| f \|_{H^{k}(\pa E_0)}^2  \big).
\end{equation}

We now consider the general case and estimate the derivatives in the remaining directions.
We proceed by differentiating equation~\eqref{27042026chic'easd}. We argue by induction. Let $ \beta= (\beta_1, \beta_2,0)$ with $ \vert \beta \vert= k-2$. Then arguing as in the previous case, we obtain an estimate for $ \nabla^\beta \pa_{x_3} v$. Next, if $\beta$ satisfies $ \vert \beta \vert= k-3$, we obtain an estimate for $ \nabla^\beta \pa_{x_3}^2 v$. We repeat this procedure until $|\beta| = 0$,  which gives an estimate for $\partial_{x_3}^{k-1} v$. Consequently, we obtain
\begin{equation*}
    \int_{B_R^+} \vert \nabla \nabla \pa_{x_3}^{k-1} v \vert^2 \leq \varepsilon \|  \nabla^k v \|^2_{L^2(B_{2R}^+)}+ C \|  v \|^2_{H^{k-1}(B_{2R}^+)}+C\big(1+ \| f \|_{H^{k}(\pa E_0)}^2  \big).
\end{equation*}
By combining this estimate with~\eqref{27042026fineclaim1}, we deduce
\begin{equation}\label{28042026crossfitxharju}
    \int_{B_R^+} \vert \nabla^k v \vert^2 \leq \varepsilon \| \nabla^k v \|^2_{L^2(B^+_{2R})}+ C \| v \|^2_{H^{k-1}(B^+_{2R})}+ C \big( 1+ \| f \|^2_{H^k(\pa E_0)} \big).
\end{equation}

 To conclude the validity of~\eqref{26042026onepiece1}, we consider a covering of $ \mathcal{I}_{\frac{\delta}{2}}^+(\pa E_0)$ with a finite collection of balls of radius $\delta$. Recalling that $v= u_{E_f} \circ \Phi_f^{-1} \circ \Phi$, we set $u:= u_{E_f} \circ \Phi_f^{-1}$. By a change of variables in~\eqref{28042026crossfitxharju}, we obtain
\begin{align*}
    \int_{\mathcal{I}^+_{\frac{\delta}{2}}(\pa E_0)} \vert \nabla^k u \vert^2 \, dx & \leq \varepsilon C \int_{\mathcal{I}^+_{\delta}(\pa E_0)} \vert \nabla^k u \vert^2 \, dx+ C \|  u \|^2_{H^{k-1} (\mathcal{I}^+_{\delta}(\pa E_0)) }+ C  \big(  1+ \| f \|^2_{H^k(\pa E_0)}  \big)\\
    & \leq 2 \varepsilon C \int_{\mathcal{I}^+_{\delta}(\pa E_0)} \vert \nabla^k u \vert^2 \, dx+ C \|  u \|^2_{L^2( \mathcal{I}^+_{\delta}(\pa E_0) )}+C  \big(  1+ \| f \|^2_{H^k(\pa E_0)}  \big),
\end{align*}
where, in the second inequality, we used the standard interpolation inequality. Therefore, choosing $\varepsilon$ sufficiently small, we obtain 
\begin{align}
    \int_{\mathcal{I}^+_{\frac{\delta}{2}}(\pa E_0)} \vert \nabla^k u \vert^2 \, dx & \leq \varepsilon C \|  \nabla^k u \|^2_{L^2( \mathcal{I}^+_{\delta}(\pa E_0) \setminus \mathcal{I}^+_{\frac{\delta}{2}}(\pa E_0)  )}+ C \|  u \|^2_{L^2( \mathcal{I}^+_{\delta}(\pa E_0) )}+C \big(  1+ \| f \|^2_{H^k(\pa E_0)}  \big)\notag\\
    & \leq  C \|  u_{E_f} \|^2_{L^2( \Omega \setminus E_f )}+C \big(  1+ \| f \|^2_{H^6(\pa E_0)}  \big) \notag\\
    & \leq C \big(  1+ \| f \|^2_{H^6(\pa E_0)}  \big),\label{28042026battiato1}
\end{align}
where in the second inequality we  used standard interior regularity for elliptic equations (see, for instance,~\cite[Theorem 9.11]{GilbargTrudinger1977}), in the fourth inequality we  used the minimality of $u_{E_f}$ together with the Poincaré inequality to obtain
\begin{align*}
    \|  u_{E_f} \|_{L^2( \Omega \setminus E_f )} & \leq \|  u_{E_f}- w_0 \|_{L^2( \Omega \setminus E_f )}+ \|  w_0\|_{L^2( \Omega \setminus E_f )} \\
    &\leq C \|\nabla(  u_{E_f}- w_0 ) \|_{L^2( \Omega \setminus E_f )}+ \|  w_0\|_{L^2( \Omega \setminus E_f )} \\
    & \leq C \| w_0 \|_{H^1(\Omega \setminus E_f)}+ C \bigg( \int_{\Omega \setminus E_f} Q(E(u_{E_f}))  \bigg)^{\frac{1}{2}} \\
    & \leq C \| w_0 \|_{H^1(\Omega \setminus E_f)},
\end{align*}
where we recall $w_0$ is the prescribed boundary displacement. 
Finally, combining~\eqref{28042026battiato1} with the Sobolev trace theorem yields~\eqref{26042026onepiece1}.
\end{proof}
We now turn to the second technical lemma concerning the elastic displacement.
\begin{lemma}\label{lemmanichel}
     Let $0 < \delta < \sigma_0$, where $\sigma_0$ is defined in~\eqref{Laconstantesigmazerofin}. Let $E_{f_i} \Subset \Omega$ be such that  $\pa E_{f_i}= \{ x+ f_i(x)\nu_{E_0}(x): x \in \pa E_0\}$ with $\| f_i \|_{C^{3,\alpha}(\partial E_0)} \leq \delta$ and $\| f_i \|_{H^5(\partial E_0)} \leq \tilde C$ for $i=1,2$. Then the following estimate holds:
 \begin{equation}\label{28042026meriggiare}
     \| \nabla^3 u_{E_{f_2}}(\cdot + f_2(\cdot) \nu_{E_0}(\cdot))- \nabla^3 u_{E_{f_1}}(\cdot + f_1(\cdot) \nu_{E_0}(\cdot)) \|_{L^2(\pa E_0)} \leq C \| f_2-f_1 \|_{H^4(\pa E_0)},
 \end{equation}
 where $u_{E_{f_i}}$ denotes the solution of~\eqref{minelast} associated with $E_{f_i}$, and $C = C(\delta, \tilde C)$.
\end{lemma}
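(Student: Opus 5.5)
We pull both problems back to the fixed reference domain $\Omega\setminus E_0$, exactly as in the proof of Lemma~\ref{Lemmatecnico1elsticicita}, and compare the pulled-back displacements there. With the diffeomorphisms $\Phi_{f_i}(x)=x+f_i\circ\pi_{\pa E_0}(x)\,\nu_{E_0}\circ\pi_{\pa E_0}(x)$ (cut off away from $\pa E_0$), set $u_i:=u_{E_{f_i}}\circ\Phi_{f_i}$ on $\Omega\setminus E_0$. Then $u_i$ solves
\[
\div\big(\mathbb{A}(\cdot,\bar f_i,\nabla\bar f_i)\nabla u_i\big)=0 \text{ in } \Omega\setminus E_0,
\]
with conormal condition $\mathbb{A}(\cdot,\bar f_i,\nabla\bar f_i)\nabla u_i[\nu_{E_0}]=0$ on $\pa E_0$ and $u_i=w_0$ on $\pa\Omega$, where $\mathbb{A}$ is smooth and uniformly coercive (as in~\eqref{26042026primapale1}). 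Note that $\nabla^3 u_{E_{f_i}}(x+f_i(x)\nu_{E_0}(x))$ is a smooth expression in $\nabla^j u_i$ and $\nabla^{j}(\Phi_{f_i}^{-1})$ ($j\le 3$) evaluated at $x\in\pa E_0$. Hence the left-hand side of~\eqref{28042026meriggiare} is bounded by
\[
C\big(\|\nabla^{\le 3}(u_2-u_1)\|_{L^2(\pa E_0)}+\|\nabla^{\le 3}u_1\|_{L^\infty}\,\|f_2-f_1\|_{C^3}\big)
\]
(up to lower-order harmless terms). The first factor is controlled by $\|u_2-u_1\|_{H^{4}(\mathcal I^+)}$ through the trace theorem ($H^{1}\to L^2(\pa)$ applied to third derivatives). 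The second term is controlled by $\|f_2-f_1\|_{H^4(\pa E_0)}$: since $\pa E_0$ is two-dimensional, $H^4\hookrightarrow C^{2,\alpha}$ only, so the $C^3$ norm is not available. Instead we keep the term in $L^2(\pa E_0)$ form, $|\nabla^3 u_1|\,|\nabla^3(f_2-f_1)|$ plus lower order, and bound it by $\|u_1\|_{C^3}\|f_2-f_1\|_{H^3}$, with $\|u_1\|_{C^3}\le C(\delta)$ by Schauder.

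The core step is the difference estimate. The function $w:=u_2-u_1$ satisfies
\[
\div\big(\mathbb{A}_2\nabla w\big)=-\div\big((\mathbb{A}_2-\mathbb{A}_1)\nabla u_1\big),
\]
with the matching inhomogeneous conormal condition on $\pa E_0$ and $w=0$ on $\pa\Omega$, where $\mathbb{A}_i=\mathbb{A}(\cdot,\bar f_i,\nabla\bar f_i)$. We then repeat the tangential-difference / induction-in-$x_3$ scheme of Lemma~\ref{Lemmatecnico1elsticicita} with $k=4$ (flatten locally, differentiate tangentially three times, test with $\eta^2\nabla^\beta w$, and recover normal derivatives from the equation). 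This gives
\[
\|w\|_{H^4}\le C\big(\|(\mathbb{A}_2-\mathbb{A}_1)\nabla u_1\|_{H^3}+\text{terms } \nabla^j\mathbb{A}_2\,\nabla^{4-j}w\big)+C\|w\|_{L^2}.
\]
We estimate the source with the algebra/Leibniz structure: $\mathbb{A}_2-\mathbb{A}_1=\int_0^1 D\mathbb{A}\,dt\cdot(\bar f_2-\bar f_1,\nabla(\bar f_2-\bar f_1))$, so its $H^3$ norm is at most $C\|\bar f_2-\bar f_1\|_{H^4}\le C\|f_2-f_1\|_{H^4(\pa E_0)}$ by~\eqref{27042026porcorosso2}. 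This uses the $C^3$ bound on $u_1$ from $\|f_1\|_{C^{3,\alpha}}\le\delta$ and Schauder, the $H^5$ bound on $f_i$ (which gives $u_1\in H^5$ near $\pa E_0$ by Lemma~\ref{Lemmatecnico1elsticicita}, needed where $\nabla^4 u_1$ multiplies $L^\infty$ differences), and $\tilde C$ to keep $\|\nabla^j\mathbb A_2\|_{L^{p}}$ bounded. The terms with $\nabla^j\mathbb{A}_2$ are absorbed by interpolation exactly as in~\eqref{27042026lalberoazzuro}. The low-order quantity $\|w\|_{L^2}$ (indeed $\|w\|_{H^1}$) is bounded by the energy estimate: test the difference equation with $w$ and use coercivity and Korn (Dirichlet on $\pa\Omega$), which gives $\|w\|_{H^1}\le C\|f_2-f_1\|_{C^1}\le C\|f_2-f_1\|_{H^4}$. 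Interior regularity handles the region away from $\pa E_0$, as in~\eqref{28042026battiato1}.

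The main obstacle is the source term in $H^3$. The product $(\mathbb A_2-\mathbb A_1)\nabla u_1$ involves $\nabla^4(\bar f_2-\bar f_1)$, which must be placed in $L^2$ against $\nabla u_1\in L^\infty$. It also involves $\nabla^4 u_1$ against $L^\infty$ norms of $\bar f_2-\bar f_1$ and $\nabla(\bar f_2-\bar f_1)$, which is where $\|f_1\|_{H^5}\le\tilde C$ and $H^4\hookrightarrow C^{1}$ enter. The intermediate mixed terms are handled by Hölder with the $L^p$ splitting and Gagliardo–Nirenberg, as already done in the proof of Lemma~\ref{Lemmatecnico1elsticicita}. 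One must track that every appearance of $f_2-f_1$ carries at most four derivatives in $L^2$ and at most two in $L^\infty$, so that the final constant depends only on $\delta$ and $\tilde C$.
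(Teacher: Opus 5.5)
Your proposal is correct and follows essentially the same route as the paper. You pull both problems back to the fixed reference domain, subtract the tangentially differentiated weak formulations, and test with $\eta^2\nabla^\beta(v_2-v_1)$. You then control the three error terms: $(\mathbb{A}_2-\mathbb{A}_1)\nabla^4 v_1$ via the $H^4$ bound of Lemma~\ref{Lemmatecnico1elsticicita}, the commutator terms $\nabla^j\mathbb{A}_2\,\nabla^{4-j}(v_2-v_1)$ by interpolation and absorption, and the remaining source terms via the Schauder bound on $v_1$; normal derivatives come from the equation. The only difference is that the paper delegates the final lower-order and trace step to \cite{FJM2020}, whereas you close it explicitly with the energy estimate $\|w\|_{H^1}\le C\|f_2-f_1\|_{C^1}$ and the trace theorem, which is fine.
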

\begin{proof}
The proof of this lemma is very similar to that of the previous one; therefore, we only provide a sketch. We adopt the same notation as in Lemma~\ref{Lemmatecnico1elsticicita}. Using the $C^{3,\alpha}$ bound for $f_i$, we may assume that
    \begin{equation}\label{30042026gosino}
        \| \Phi_{f_2} -\Phi_{f_1} \|_{ C^{k,\alpha}(\Omega \setminus E_0)}+ \| \Phi_{f_2}^{-1} -\Phi_{f_1}^{-1} \|_{C^{k,\alpha} (\Omega \setminus E_0)} \leq C \| f_2-f_1 \|_{C^{k,\alpha}(\pa E_0)} \text{ for all } k \leq 3.
    \end{equation}
    Let $x_0 \in \pa E_0$ be fixed. As in Lemma~\ref{Lemmatecnico1elsticicita}, we define 
\begin{equation}
     v_i:= u_{E_{f_i}} \circ \Phi_{f_i} \circ \Phi^{-1}, \,\,\, \bar{f}_i:= f_i \circ \pi_{\pa E_0} \circ \Phi \text{ for }i=1,2,
\end{equation}
and we have 
\begin{equation}\label{28042026pallido}
    \int_{B^+_{2R}} \mathbb{A}(\cdot, \bar{f}_i, \nabla \bar{f}_i) \nabla v_i : \nabla \xi \, dx=0 \text{ for }i=1,2
\end{equation}
for every $ \xi \in C^\infty(B^+_{2R}, \R^3)$ such that $ \xi=0$ on $ \pa B^+_{2R} \cap \{ x_3>0\}$. Let $\beta \in \N^3$ be a multi-index such that $ \vert \beta \vert=3$. To obtain~\eqref{28042026meriggiare}, we differentiate the two equations in~\eqref{28042026pallido} and subtract the resulting identities. As in  Lemma~\ref{Lemmatecnico1elsticicita}, we first consider the case $\beta = (\beta_1, \beta_2, 0)$ and then treat the case $\beta_3 \neq 0$.

We begin by computing  $\nabla^\beta (\mathbb{A}(\cdot, \bar{f}_i, \nabla \bar{f}_i) \nabla v_i ) $ for $\beta=(\beta_1,\beta_2,0)$
\begin{equation}\label{28042026assorto}
    \nabla^\beta (\mathbb{A}(\cdot, \bar{f}_i, \nabla \bar{f}_i) \nabla v_i ) =  \mathbb{A}(\cdot, \bar{f}_i, \nabla \bar{f}_i) \nabla \nabla^\beta v_i+ \sum_{\substack{0 \neq  \tilde{\beta}  \leq  \beta , \\ \tilde{\beta}=(\tilde{\beta}_1, \tilde{\beta}_2,0)  } }c_{\tilde{\beta}}\nabla^{\tilde{\beta }} (\mathbb{A}(\cdot, \bar{f}_i, \nabla \bar{f}_i))   \nabla^{\beta-\tilde{\beta}}\nabla v_i,    
\end{equation}
for $i=1,2$. Subtracting the two identities in~\eqref{28042026assorto} and using~\eqref{28042026pallido}, we obtain
\begin{align*}
     \int_{B^+_{2R}} \mathbb{A}(\cdot,\bar{f}_2,&\nabla \bar{f}_2) \big(   \nabla \nabla^{\beta} (v_2- v_1) \big) : \nabla \xi \, dx \\
     & = \int_{B^+_{2R}} \big( \mathbb{A}(\cdot,\bar{f}_1,\nabla \bar{f}_1)- \mathbb{A}(\cdot,\bar{f}_2,\nabla \bar{f}_2)   \big) \nabla^{\beta} \nabla v_1 : \nabla \xi\, dx \\
     &\quad -\sum_{\substack{0 \neq  \tilde{\beta}  \leq \beta , \\ \tilde{\beta}=(\tilde{\beta}_1, \tilde{\beta}_2,0)  } }c_{\tilde{\beta}} \int_{B^+_{2R}} \nabla^{\tilde{\beta }} (\mathbb{A}(\cdot,\bar{f}_2,\nabla \bar{f}_2) )   \nabla^{\beta-\tilde{\beta}}\nabla (v_2   - v_1): \nabla \xi\, dx\\
     & \quad  - \sum_{\substack{0 \neq \tilde{\beta}  \leq  \beta , \\ \tilde{\beta}=(\tilde{\beta}_1, \tilde{\beta}_2,0)  } }c_{\tilde{\beta}} \int_{B^+_{2R}}\nabla^{\tilde{\beta}} \big( \mathbb{A}(\cdot,\bar{f}_2,\nabla \bar{f}_2)- \mathbb{A}(\cdot,\bar{f}_1,\nabla \bar{f}_1) \big) \nabla^{\beta-\tilde{\beta}} \nabla v_1 : \nabla \xi\, dx.
\end{align*}
We choose $ \xi:= \eta^2 \nabla^{\beta}(v_2-v_1)$ as a test function in the above identity and argue as in Lemma~\ref{Lemmatecnico1elsticicita} (in particular, see the proof of~\eqref{27042026porcorosso3}), this yields
 \begin{equation}\label{29042026presso}
     \begin{split}
        \int_{B^+_R} | \nabla \nabla^{\beta}(v_2-v_1) \vert^2 \, dx \leq & C\int_{B^+_{2R}} (\vert \bar{f}_2- \bar{f}_1 \vert^2+ \vert \nabla \bar{f}_2-\bar{f}_1 \vert^2) \vert  \nabla^4 v_1 \vert^2 \, dx \\
        & +C \int_{B^+_{2R}} \big(  1+ \sum_{i=1}^4 \vert \nabla^i \bar{f}_2 \vert^2 \big) \vert \nabla^3 (v_2-v_1)\vert^2\, dx\\
        & + C\int_{B^+_{2R}} \big(   \sum_{i=1}^4 \vert \nabla^i (\bar{f}_2-\bar{f}_1 )\vert^2 \big) \big(  \sum_{i=1}^3 \vert \nabla^i v_1 \vert^2 \big)\, dx,
     \end{split}
 \end{equation}
 where we used the estimates
 \begin{equation}
 \begin{split}
     \vert \mathbb{A}(\cdot, \bar{f}_2, \nabla \bar{f}_2)- \mathbb{A}(\cdot, \bar{f}_1, \nabla \bar{f}_1)  \vert &\leq C \big(  \vert \bar{f}_2-\bar{f}_1  \vert+ \vert \nabla (\bar{f}_2-\bar{f}_1 ) \vert \big),\\ 
    \sum_{\substack{0 \neq \tilde{\beta}  \leq  \beta , \\ \tilde{\beta}=(\tilde{\beta}_1, \tilde{\beta}_2,0)  } }   \vert \nabla^{\tilde \beta} \mathbb{A}(\cdot, \bar{f}_2, \nabla \bar{f}_2) \vert &\leq C \big(   1+ \sum_{i=1}^4 \vert \nabla^i \bar{f}_2  \vert\big),\\
      \sum_{\substack{0 \neq \tilde{\beta}  \leq  \beta , \\ \tilde{\beta}=(\tilde{\beta}_1, \tilde{\beta}_2,0)  } }  \vert \nabla^{\tilde{\beta}} \big( \mathbb{A}(\cdot,\bar{f}_2, \nabla \bar{f}_2)- \mathbb{A}(\cdot, \bar{f}_1, \nabla \bar{f}_1)  \big)  \vert  &\leq C \sum_{i=1}^4\vert\nabla^i (\bar{f}_2-\bar{f}_1 ) \vert .
     \end{split}
 \end{equation}
 It remains to estimate the right-hand side of~\eqref{29042026presso}. To this end, we fix $\varepsilon>0$, whose value will be specified later. \\
 \textit{Estimate of $\int_{B^+_{2R}} (\vert \bar{f}_2-\bar{f}_1 \vert^2+ \vert \nabla (\bar{f}_2-\bar{f}_1 ) \vert^2) \vert  \nabla^4 v_1 \vert^2 \, dx$.}\\
 Recalling that $E_{f_i}$ satisfies the assumptions of Lemma~\ref{Lemmatecnico1elsticicita}, we obtain
 $ \| v_i \|_{H^4(B^+_{2R})} \leq C $. Moreover, combining this with the estimate $ \| \bar{f}_2-\bar{f}_1  \|_{C^1(B^+_{2R})} \leq C \| f_2-f_1\|_{C^1(\pa E_0)}$ and applying the Sobolev embedding theorem, we deduce 
 \begin{equation*}
    \int_{B^+_{2R}} (\vert \bar{f}_2-\bar{f}_1  \vert^2+ \vert \nabla (\bar{f}_2-\bar{f}_1 ) \vert^2) \vert  \nabla^4 v_1 \vert^2 \, dx \leq C \| f_2-f_1 \|_{H^3(\pa E_0)}.
 \end{equation*}
 \textit{Estimate of $ \int_{B^+_{2R}} \big(  1+ \sum_{i=1}^4 \vert \nabla^i \bar{f}_2 \vert^2 \big) \vert \nabla^3 (v_2-v_1)\vert^2\, dx$.}\\
 Using  H\"older's inequality, the Sobolev embedding theorem and the interpolation inequalities, we obtain
 \begin{align*}
          \int_{B^+_{2R}} \hspace{-0.1cm} \big(  1+ \sum_{i=1}^4 \vert \nabla^i \bar{f}_2 \vert^2 \big) \vert \nabla^3 (v_2-v_1)\vert^2 &\leq \| v_2-v_1 \|^2_{H^3(B^+_{2R})}+ \| \bar{f}_2 \|^2_{W^{4,4}(B^+_{2R})} \| v_2-v_1 \|^2_{W^{3,4}(B^+_{2R})}\\
          & \leq \| v_2-v_1 \|^2_{H^3(B^+_{2R})}\\
          & \quad+ C \| \bar{f}_2 \|_{H^5(B^+_{2R})}^2 \| v_2-v_1 \|^{\frac{1}{2}}_{H^4(B^+_{2R})}\| v_2-v_1 \|^{\frac{3}{2}}_{H^2(B^+_{2R})}  \\
          & \leq \varepsilon \|v_2-v_1 \|^2_{H^4(B^+_{2R})}+ C(\varepsilon) \|v_2-v_1 \|^2_{H^3(B_{2R}^+)}
 \end{align*}
 where in the last inequality we applied Young’s inequality together with  formula~\eqref{27042026porcorosso2}.\\
 \textit{Estimate of $\int_{B^+_{2R}} \big(   \sum_{i=1}^4 \vert \nabla^i( \bar{f}_2-\bar{f}_1)\vert^2 \big) \big(  \sum_{i=1}^3 \vert \nabla^i v_1 \vert^2 \big)\, dx$.}\\
 Using Schauder estimates, we obtain $ \| v_1 \|_{C^{3,\alpha}(B^+_{2R})} \leq C$, and consequently, we deduce that
 \begin{equation*}
    \int_{B^+_{2R}} \big(   \sum_{i=1}^4 \vert \nabla^i (\bar{f}_2-\bar{f}_1)\vert^2 \big) \big(  \sum_{i=1}^3 \vert \nabla^i v_1 \vert^2 \big) \leq  C\| (\bar{f}_2-\bar{f}_1) \|_{H^4(B^+_{2R})}^2.
 \end{equation*}
 
 Combining~\eqref{29042026presso} with the above estimates, we infer 
 \begin{equation*}
     \begin{split}
      \int_{B^+_R} | \nabla &\nabla^{\beta}(v_2-v_1) \vert^2 \, dx \\
      &\leq \varepsilon \|v_2-v_1 \|^2_{H^4(B^+_{2R})}+ C \| \bar{f}_2-\bar{f}_1 \|^2_{H^4(B^+_{2R})}+ C \| v_2-v_1 \|^2_{H^3(B^+_{2R})}  . 
     \end{split}
 \end{equation*}

Similarly as in  Lemma~\ref{Lemmatecnico1elsticicita}, we can also estimate the derivative in the direction $e_3$. Hence, for every $ \varepsilon \in (0,1)$, we deduce 
 \begin{equation}
     \int_{B^+_R} \vert \nabla^4(v_2-v_1) \vert^2 \leq \varepsilon \int_{B^+_{2R}} \vert \nabla^4(v_2-v_1) \vert^2 + C \| f_2-f_1 \|^2_{H^4(\pa E_0)}+ C \| v_2-v_1 \|^2_{H^3(B^+_{2R})}, 
 \end{equation}
from which it can be shown  
 \begin{equation}\label{30042026ppranzo2}
    \int_{B^+_R} \vert \nabla^4(v_2-v_1) \vert^2 \leq  C \| \bar{f}_2-\bar{f}_1 \|^2_{H^4(\pa E_0)}+ C \| v_2-v_1 \|^2_{H^3(B^+_{2R})} . 
 \end{equation}
 From this, we conclude  as in the proof of~\cite[Theorem 4.1 and Remark 4.2]{FJM2020}.
\end{proof}

\begin{remark}\label{01052026remrkino}
    {\rm 
    We remark that if $E$ 
    satisfies~\eqref{AnitraD}, with $\sigma_1$ given by Lemma~\ref{0105nottelemma}, and satisfies the assumptions of Lemma~\ref{lemmahnegativa}, then $ \pa E= \{ x + f(x)\nu_{E_0}(x): x \in \pa E_0\}$ where $f \in C^{3,\alpha}(\pa E_0)$ for every $\alpha \in (0,1)$, $ \| f \|_{H^5(\pa E_0)} \leq K_0$ and $ \| f \|_{H^6(\pa E_0)} \leq K_0 h^{-1/4}$. We can thus apply Lemma~\ref{Lemmatecnico1elsticicita} and obtain
    \begin{equation}\label{02052026auriga}
        \begin{split}
            & \| \nabla^3 u_{E}(\cdot+ f(\cdot) \nu_{E_0}(\cdot)) \|_{H^{3/2}(\pa E_0)} \leq C(K_0),\\
            & \| \nabla^3 u_{E}(\cdot+ f(\cdot) \nu_{E_0}(\cdot)) \|_{H^{5/2}(\pa E_0)} \leq h^{-1/4}C(K_0).
        \end{split}
    \end{equation}
    By the regularity of $f$ together with Schauder estimates, we deduce that $u_E \in C^{3,\frac{1}{2}}(\Omega \setminus E)$, and the following holds:
    \begin{equation}\label{02052026brick}
        \| u_E \|_{C^{3,\frac{1}{4}}(\Omega \setminus E)} \leq C \big(   \| w_0 \|_{C^{3,\frac{1}{2}}(\pa \Omega)}+ \| f \|_{C^{3,\frac{1}{4}}(\pa E_0)} \big),
    \end{equation}
    where $C$ is a universal constant depending only on $\pa E_0$.
    We define a natural extension of $u_E$ in $\Omega$ as follows: 
    \begin{equation}
  \tilde{u}_E(x):=  \left\{
    \begin{aligned}
        & u_E(x)\quad \text{ for }x\in \Omega \setminus E \\
        & u_E (\pi_{\pa E_0}(x)+f(\pi_{\pa E_0}(x))\nu_{E_0}(x)   ) \eta (d_{E_0}(x)/ \sigma_0) \quad \text{ for } x \in  E,
    \end{aligned}
    \right.
\end{equation}
here $ \eta \in C^\infty_c(-2,2)$ is such that $\eta=1$ in $(-1,1)$ and $\eta\geq 0$. We also set
\begin{equation}\label{K_eldelremarkino}
    K_{el}:= 2 \max\{ C(K_0)+ \| w_0\|_{C^{3, \frac{1}{4}}(\pa \Omega)}, \| \tilde{u}_E \|_{C^{3,\frac{1}{4}}(\Omega)}   \}.
\end{equation}
Now, it is easy to check that $\tilde{u}_E$ is a minimizer of problem\eqref{minelvinc}. Hence, the solution $u_E$ of~\eqref{minelast} coincides with the solution $u_E^{K{el}}$ of~\eqref{minelvinc} for every $h \leq 1$ in $\Omega \setminus E$.
    \fr}
\end{remark}
We are now in a position to prove the main lemma of this subsubsection.
\begin{lemma}\label{02052026silvia}
    Let $E \Subset \Omega$ be a $C^6$-regular set such that~\eqref{AnitraD} holds, with $\sigma_1$ provided by Lemma~\ref{0105nottelemma}, and assume that $\| \Delta_{\pa E}^2 H_E \|_{L^2(\pa E)} \leq  K_0 h^{-\frac{1}{4}}$. Let $K_{el} > 0$ be the constant in~\eqref{K_eldelremarkino}, and let $F \subset \Omega$ be a set satisfying the assumptions of Lemma~\ref{0105nottelemma}. 
    Then, there exists $0<h_4 = h_4(K_0, K_{el}) \leq h_3$, where $h_3$ is given by Lemma~\ref{0105nottelemma}, such that the solution $u_F$ of~\eqref{minelast} coincides with the solution $u_F^{K_{el}}$ of~\eqref{minelvinc} for all $h \leq h_4$ in $\Omega \setminus F$.
\end{lemma}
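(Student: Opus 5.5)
The plan is to verify that the unconstrained elastic equilibrium $u_F$ of~\eqref{minelast}, suitably extended inside $F$, is admissible for~\eqref{minelvinc}. Since $u_F$ minimizes over the larger class, this forces $u_F = u_F^{K_{el}}$ in $\Omega\setminus F$.

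\textbf{Step 1: normal-graph description of $\pa F$ and Sobolev bounds on $u_F$.} By Lemma~\ref{0105nottelemma}, for $h\le h_3$ we have $F\in\mathfrak{H}^5_{K_1,2\sigma_1}(E_0)\cap\mathfrak{H}^6_{h^{-1/4}K_1,2\sigma_1}(E_0)$. So $\pa F=\{x+g(x)\nu_{E_0}(x)\}$ with
\[
\|g\|_{H^5(\pa E_0)}\le K_1,\qquad \|g\|_{H^6(\pa E_0)}\le K_1h^{-1/4}.
\]
By Sobolev embedding, $\|g\|_{C^{3,\alpha}(\pa E_0)}\le C(K_1)$, and $\|g\|_{L^\infty}\le 2\sigma_1<\sigma_0$. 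Lemma~\ref{Lemmatecnico1elsticicita} applies with $k=5,6$ and gives
\[
\|\nabla^3u_F(\cdot+g\nu_{E_0})\|_{H^{3/2}(\pa E_0)}\le C(K_1),\qquad \|\nabla^3u_F(\cdot+g\nu_{E_0})\|_{H^{5/2}(\pa E_0)}\le C(K_1)h^{-1/4}.
\]
Schauder estimates give $\|u_F\|_{C^{3,1/4}(\Omega\setminus F)}\le C(\|w_0\|,\|g\|_{C^{3,1/2}})$.

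\textbf{Step 2: comparison with $E$, where the main difficulty lies.} These constants depend on $K_1$, not $K_0$. Since $K_{el}$ was fixed in~\eqref{K_eldelremarkino} in terms of $E$, we cannot simply reuse Remark~\ref{01052026remrkino}. I would instead compare with $u_E$ using Lemma~\ref{lemmanichel} with $f_1=f$ (the height of $E$) and $f_2=g$:
\[
\|\nabla^3u_F(\cdot+g\nu_{E_0})-\nabla^3u_E(\cdot+f\nu_{E_0})\|_{L^2(\pa E_0)}\le C\|g-f\|_{H^4(\pa E_0)}.
\]
By Lemma~\ref{Lemma:stimefinali}, after transporting from $\pa E$ to $\pa E_0$, we have $\|g-f\|_{H^4}\le Ch^{1/4}$. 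Interpolating this $L^2$-smallness against the uniform $H^{3/2}$ bound from Step 1 makes the $C^{0,\gamma}$ and $H^1$-trace differences of $\nabla^3 u$ small, of order $h^{\theta}$. An analogous Schauder comparison on the difference $u_F-u_E$ (transported by $\Phi_g,\Phi_f$) makes $\|\tilde u_F-\tilde u_E\|_{C^{3,1/4}}$ small. Hence for $h\le h_4$ small these norms are at most $K_{el}/2+o(1)\le K_{el}$, since $K_{el}$ was chosen as twice the corresponding quantities for $E$.

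\textbf{Step 3: verification of the constraints.} Define $\tilde u_F$ as in Remark~\ref{01052026remrkino}, with $f$ replaced by $g$. The $H^{1,2}(\mathcal H^2\mrestr\pa F)$ bound on $\pa^\alpha\tilde u_F$ with $|\alpha|=3$ follows from Step 2 and a change of variables $x\mapsto x+g\nu_{E_0}$ with Jacobian controlled by $\|g\|_{C^1}$.

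The Laplacian constraint of Remark~\ref{19042026remak} needs
\[
\|\Delta_{\pa F}\pa^\alpha \tilde u_F\|_{L^2(\pa F)}\le K_{el}h^{-1/4}.
\]
I would obtain this from the $H^{5/2}$ trace bound of Step 1: it controls $H^2$ of the trace on $\pa E_0$ by $C(K_1)h^{-1/4}$, and transporting to $\pa F$ costs only $\|g\|_{C^{2}}$ and $\|B_F\|_{H^2}$. This gives a bound $C(K_1)h^{-1/4}$. If $C(K_1)$ exceeds $K_{el}$, I would use the difference estimate to absorb the excess: interpolate the $h^{1/4}$-small $H^4$ difference from Step 2 against the $h^{-1/4}$ $H^6$ bounds. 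This controls the $H^2$-trace difference by $o(h^{-1/4})$, leaving $\|\Delta_{\pa E}\pa^\alpha u_E\|+o(h^{-1/4})\le K_{el}h^{-1/4}$.

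With all constraints satisfied, $\tilde u_F$ is admissible for~\eqref{minelvinc}. The unconstrained minimality of $u_F$ then gives
\[
\mathcal E(E(u_F))\le\mathcal E(E(u_F^{K_{el}}))\le\mathcal E(E(\tilde u_F))=\mathcal E(E(u_F)).
\]
Uniqueness in~\eqref{eqelliel} (strict convexity plus Korn's inequality) yields $u_F^{K_{el}}=u_F$ in $\Omega\setminus F$.
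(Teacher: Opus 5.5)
Your proposal follows the paper's proof: you extend $u_F$ and check that it is admissible in~\eqref{minelvinc} by comparing $F$ with $E$, whose quantities fix $K_{el}$ in~\eqref{K_eldelremarkino}. As in the paper, you use Schauder estimates for the $C^{3,\frac14}$ bound and, for the trace constraints, interpolate the $L^2$ difference from Lemma~\ref{lemmanichel} against the $H^{3/2}$ and $H^{5/2}$ bounds of Lemma~\ref{Lemmatecnico1elsticicita}, which yields the $o(h^{-1/4})$ excess. The variations are minor: the paper gets smallness of $g-f$ from $\|g-f\|_{L^1(\pa E_0)}\le Ch$ interpolated against the uniform $H^5$ bound rather than by transporting $\|\psi\|_{H^4(\pa E)}$ to $\pa E_0$, and it bounds $\|u_F\|_{C^{3,\frac14}}$ by applying Schauder directly with $\|g-f\|_{C^{3,\frac12}}\le Ch^{\theta_3}$ rather than through a perturbative estimate on $u_F-u_E$.
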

\begin{proof}
  To establish the claim, we need to prove the following:
  \begin{itemize}
      \item[$a)$] $u_F$ admit a natural extension in $F$, and $ \| u_F \|_{C^{3,\frac{1}{4}}(\Omega)} < K_{el}$,
      \item[$b)$] $\| \nabla^3 u_F \|_{H^{1}(\pa F)} < K_{el}$,
      \item[$c)$] $ \| \Delta_{\pa F} \nabla^3 u_F \|_{L^2(\pa F)} < K_{el} h^{-1/4}$ for every $h \leq h_4$.
  \end{itemize}
  
   We start by proving that $a) $ holds.
  By Lemma~\ref{0105nottelemma}, we have that $F \in \mathfrak{H}^5_{K_1,2 \sigma_1}(E_0)$. Denote by $g: \pa E_0 \rightarrow \R$ the function such that $\pa F= \{ x+ g(x)\nu_{E_0}(x): x \in \pa E_0\}$ and $ g \in H^5(\pa E_0)$. By Lemma~\ref{Lemma:stimefinali}, we deduce
  \begin{equation}\label{cantoLasorpresa02052026}
      \| g-f \|_{L^1(\pa E_0)} \leq C \| \psi \|_{L^1(\pa E)} \leq C h,
  \end{equation}
  where $\psi $ is the function such that $\partial F$ is the normal graph over $\partial E$ with height function $\psi$, and $f $ is the function introduced in Remark~\ref{01052026remrkino}. Using  Proposition~\ref{prop:interpolation} together with the Sobolev embedding theorem, we deduce
  \begin{equation}
      \| g- f \|_{C^0(\pa E_0)} \leq C \| g-f \|_{H^2(\pa E_0)} \leq C \| g- f \|_{H^5(\pa E_0)}^{\theta_1} \| g-f \|_{L^1(\pa E_0)}^{1-\theta} \leq C h^{1-\theta_1}
  \end{equation}
  for some $\theta_1 \in (0,1)$. From this, and applying again the interpolation inequality and the Sobolev embedding theorem, we conclude
  \begin{equation}\label{lalocomotiva02052026}
      \| g- f\|_{C^{3,\frac{1}{2}}(\pa E_0)} \leq C \| g-f \|^{\theta_2}_{H^5(\pa E_0)} \| g-f \|^{1-\theta_2}_{C^0(\pa E_0)} \leq Ch^{\theta_3}
  \end{equation}
  for some $\theta_2 \in (0,1)$, and where $\theta_3= \theta_3(\theta_1, \theta_2)$.   Therefore, by classical Schauder estimate  we obtain $u_F \in C^{3,\frac{1}{2}}(\Omega \setminus F)$ and
  \begin{equation}
  \begin{split}
      \| u_F \|_{C^{3,\frac{1}{4}}(\Omega \setminus F)} &\leq C \big( \| w_0 \|_{C^{3,\frac{1}{2}}(\pa \Omega)}+ \| g\|_{C^{3,\frac{1}{4}}(\pa E_0)}   \big) \\
      & \leq C \big( \| w_0 \|_{C^{3,\frac{1}{4}}(\pa \Omega)} +C h^{\theta_3}+ \| f \|_{C^{3,\frac{1}{4}}(\pa E_0)} \big)<  K_{el} 
      \end{split}
  \end{equation}
  for $ h $ small enough, where in the above estimate we have used~\eqref{02052026brick}. Finally, arguing as in Remark~\ref{01052026remrkino}, we can construct a natural extension of $u_F$ such  that $ \| u_F \|_{C^{3,\frac{1}{4}}(\Omega)}< K_{el}$.

  Now, we prove $c)$; the proof of $b)$ is analogous. We define $\Psi_f: \pa E_0 \rightarrow \pa E$ and $\Psi_g: \pa E_0 \rightarrow \pa F$ by $\Psi_f(x):= x+ f(x)\nu_{E_0}(x) \text{ and } \Psi_g(x):= x+ g(x) \nu_{E_0}(x).$
  Using the fractional interpolation inequality (see, for instance,~\cite{LeoniBookfrac, VanSch2023}) and a change of variables, we obtain
  \begin{align*}
          \| \nabla^3 u_F \|_{H^2(\pa F)} &\leq C \| \nabla^3 u_F \circ \Psi_g \|_{H^2(\pa E_0)}\\
          & \leq C\| \nabla^3 u_F \circ \Psi_g- \nabla^3 u_E \circ \Psi_f \|_{H^2(\pa E_0)}+ C\| \nabla^3 u_E  \circ \Psi_f\|_{H^2(\pa E_0)}\\
          & \leq C \| \nabla^3 u_F \circ \Psi_g - \nabla^3 u_E \circ \Psi_f \|_{L^2(\pa E_0)}^{\frac{1}{5}} \| \nabla^3 u_F \circ \Psi_g - \nabla^3 u_E \circ \Psi_f \|_{H^{\frac{5}{2}}(\pa E_0)}^{\frac{4}{5}}\\
          & \quad+ C\| \nabla^3 u_E  \circ \Psi_f\|_{H^2(\pa E_0)}\\
          & \leq C h^{-1/5} \| \nabla^3 u_F \circ \Psi_g - \nabla^3 u_E \circ \Psi_f \|_{L^2(\pa E_0)}^{\frac{1}{5}} + Ch^{-1/4} \\
          & \leq C h^{-1/5} \| g-f \|^{1/5}_{H^4(\pa E_0)}+ Ch^{-1/4}\\
          & \leq  C h^{-1/5} \| g-f \|_{H^5(\pa E_0)}^{5/30} \| g-f \|_{L^1(\pa E_0)}^{1/30}+ Ch^{-1/4}\\
          & \leq C h^{-1/4} \big(   1+ h^{1/12} \big),
  \end{align*}
  here in the fourth inequality we used~\eqref{02052026auriga}, in the fifth inequality we used formula~\eqref{28042026meriggiare}, and in the last inequality we used~\eqref{cantoLasorpresa02052026}. We observe that  the change of variable in the first inequality above depends at most on the $C^2$ norm of $ g$ which is controlled by the Sobolev embedding theorem and does not grow significantly compared to the $C^2$-norm of $f$, see formula~\eqref{lalocomotiva02052026}.
  This concludes the proof of $c)$.
\end{proof}
\subsection{Main theorem of the regularity estimate}
We collect all the results obtained in this section into the following theorem.
\begin{theorem}\label{Bologna}
    Let $K_{el}>0$ be as in Remark~\ref{02052026falchi}. Let $r_0,K_0>0$ be fixed. There exist positive constants $\sigma_1=\sigma_1(r_0,K_0,K_{el})$, $ \eta_0= \eta_0(r_0,K_0,K_{el})$, $h_0=h_0(r_0,K_0,K_{el})$, $K_1=K_1(r_0,K_0,K_{el})$, and $K_2=K_2(r_0,K_0,K_{el})$ such that: if $E$ is a $C^6$-regular set  that satisfies the \textsc{UBC} with radius $r_0$, $E \in \mathfrak{H}^5_{K_0,\sigma_1}(E_0)$, $ \| \nabla_{\pa E} \Delta_{\pa E} H_E \|_{L^2(\pa E)} \leq K_0$, and $ \| \Delta^2_{\pa E} H_E \|_{L^2(\pa E)} \leq h^{-1/4} K_0$, then for every $\eta \leq \eta_0$ and $h \leq h_0$, the minimizer $F$ of~\eqref{ProblemaINc} satisfies $ \pa F \Subset \mathcal{I}_{\eta}(\pa E)$ and $\pa F$ is the normal graph over $\pa E$ with height function $\psi: \pa E \rightarrow \R$ satisfying 
    \begin{equation}\label{Quattrostracci}
    \begin{split}
      &\| \psi \|_{H^1(\pa E)} \leq K_1 h, \,\| \psi \|_{H^2(\pa E)} \leq K_1 h^{\frac{3}{4}},\, \| \psi \|_{H^3(\pa E)} \leq K_1h^{\frac{1}{2}}, \\
      &\| \psi \|_{H^4(\pa E)} \leq K_1h^{\frac{1}{4}}, \,  \| \psi \|_{H^5(\pa E)} \leq K_1, \, \| \psi \|_{C^{3,\alpha}(\pa E)} \leq \tilde{K}_{1,\alpha} \\
      & \| \psi \|_{C^{0,\alpha}(\pa E)} \leq \tilde{K}_{1,\alpha} h^{\frac{3}{4}} \,, \| \psi \|_{C^{1,\alpha}(\pa E)} \leq \tilde{K}_{1,\alpha} h^{\frac{1}{2}},\, \| \psi \|_{C^{2,\alpha}(\pa E)} \leq \tilde{K}_{1,\alpha} h^{\frac{1}{4}}  \quad \text{ for } \alpha \in (0,1),
      \end{split}
    \end{equation}
    and it holds
       \begin{equation}\label{venezia}
        \| \nabla_{\pa F} \Delta_{\pa F} H_F \|_{L^2(\pa F)} \leq K_2, \,\, \| \Delta^2_{\pa F} H_F \|_{L^2(\pa F)} \leq K_2 h^{-\frac{1}{4}}.
    \end{equation}
    Furthermore, $F \in \mathfrak{H}^5_{K_1,2 \sigma_1}(E_0)$, $F$ satisfies the \textsc{UBC} with radius $r_0/2$, and
    the minimizer $u_F$ (respectively $u_E$) of problem~\eqref{minelast} coincides with $u_F^{K_{el}} \mrestr \Omega \setminus F$ (respectively $u_E^{K_{el}} \mrestr \Omega \setminus E$), the minimizer of problem~\eqref{minelvinc}. 
\end{theorem}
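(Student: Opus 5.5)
The plan is to assemble Theorem~\ref{Bologna} from the chain of results in this section, fixing the constants in the right order. First I fix $\sigma_1$ as given by Lemma~\ref{0105nottelemma}, with $\sigma_1\le \sigma_0/2$. Then I note that the hypotheses ($E$ being $C^6$, the \textsc{UBC} with radius $r_0$, $E\in\mathfrak{H}^5_{K_0,\sigma_1}(E_0)$, the two curvature bounds) are exactly \eqref{AnitraD} together with the extra assumption $\|\Delta^2_{\pa E}H_E\|_{L^2}\le K_0h^{-1/4}$. Since $\pa E$ is a normal graph over $\pa E_0$ with $H^5$-controlled height, $P(E)$, $\sigma_E$, and the $H^2$ bound on $H_E$ required in \eqref{13022026pcrossfit1} are all bounded in terms of $r_0,K_0$ and $E_0$. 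Hence every constant $\delta_1,\delta_2,C_0,C_1,C_2,h_0,\dots,h_4$ produced earlier depends only on $r_0,K_0,K_{el}$. I then set $\eta_0:=\min\{\delta_1,\delta_2\}/2$ and take the new $h_0$ to be the minimum of the earlier $h_0,h_1,\dots,h_4$.

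With these choices, the first part follows by citation. Proposition~\ref{CaneBernard06} gives that $F$ is a $(\Lambda,\alpha)$-minimizer, that $\pa F$ is a normal graph over $\pa E$, and the bounds \eqref{1803pons}. Since $\|\psi\|_{L^\infty}\le C_\varepsilon h^{1/2-\varepsilon}<\eta$ for $h$ small, we get $\pa F\Subset\mathcal I_\eta(\pa E)$, which justifies the Euler–Lagrange system \eqref{Comeprima} and \eqref{PeerGynt}. Lemma~\ref{Paperplane} gives the $H^3$ bound, and Lemma~\ref{Lemma:stimefinali} then gives all of \eqref{Quattrostracci} with $K_1:=C_1$ and $\tilde K_{1,\alpha}:=\tilde C_{1,\alpha}$. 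Lemma~\ref{lemmahnegativa} yields \eqref{venezia} with $K_2:=C_2$ and the \textsc{UBC} with radius $r_0/2$. Lemma~\ref{0105nottelemma} yields $F\in\mathfrak{H}^5_{K_1,2\sigma_1}(E_0)$; here I enlarge $K_1$ to the maximum of the two constants.

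The delicate point is the elasticity. All the estimates above were proved for the constrained displacement $u^{K_{el}}_F$, and the theorem asks that it coincide with the true minimizer $u_F$, and likewise $u^{K_{el}}_E$ with $u_E$. I would handle this as in Remark~\ref{01052026remrkino}. I choose $K_{el}$ via \eqref{K_eldelremarkino} depending only on $K_0$, $w_0$ and $E_0$, consistently with Remark~\ref{02052026falchi} for $E_0$. Then $u_E=u_E^{K_{el}}$ on $\Omega\setminus E$ by that remark, and Lemma~\ref{02052026silvia} gives $u_F=u_F^{K_{el}}$ on $\Omega\setminus F$ for $h\le h_4$.

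The main obstacle I foresee is circularity in the constants. $K_{el}$ must be fixed before the $h$-thresholds, because $h_0,\dots,h_4$ depend on $K_{el}$. But Lemma~\ref{02052026silvia} must produce a strict inequality with the same $K_{el}$. This is resolved by observing that the bound in Lemma~\ref{02052026silvia} is $C(K_0)(1+h^{\theta})$ with $C(K_0)$ independent of $K_{el}$, since it comes from Lemmas~\ref{Lemmatecnico1elsticicita} and~\ref{lemmanichel} together with Schauder estimates for the true elastic problem. The factor $2$ in \eqref{K_eldelremarkino} therefore absorbs the difference once $h$ is small, and this closes the argument.
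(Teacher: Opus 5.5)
This is exactly the paper's route. Theorem~\ref{Bologna} is stated without a separate proof, as the concatenation of Proposition~\ref{CaneBernard06}, Lemmas~\ref{Paperplane}, \ref{Lemma:stimefinali}, \ref{lemmahnegativa}, \ref{0105nottelemma}, Remark~\ref{01052026remrkino} and Lemma~\ref{02052026silvia}, and you are right to fix $K_{el}$ before the $h$-thresholds. Strictly, only the leading constant in Lemma~\ref{02052026silvia} is $K_{el}$-free; the coefficient of $h^{\theta}$ depends on $K_{el}$ through $K_1$. This is harmless, since $h_4$ may depend on $K_{el}$. Two of your bookkeeping claims, however, do not follow from the cited results, and the statement overclaims in the same way.

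First, $h_0$ cannot be taken independent of $\eta$. Proposition~\ref{CaneBernard06} has $h_0=h_0(r_0,\sigma_E,K_0,K_{el},\eta)$, and your own step $C_\varepsilon h^{1/2-\varepsilon}<\eta$ needs $h$ small in terms of $\eta$. So $h_0$ does not belong in your list of constants depending only on $r_0,K_0,K_{el}$. This cannot be repaired. Fix $h$ and let $\eta\to0$. An interior minimizer has $\|\psi\|_{L^\infty}<\eta$; together with the uniform $H^5$ bound in \eqref{Quattrostracci} this gives $\psi\to0$ in $H^{5-}$. Passing to the limit in \eqref{PeerGynt} would then force $\Delta_{\pa E}(H^\varphi_E-Q(E(u_E)))=0$. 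Hence for non-stationary $E$ the constraint becomes active, i.e.\ $\pa F\Subset\mathcal I_\eta(\pa E)$ fails. What you (and the paper) prove is the theorem with $h_0$ depending also on $\eta$. That is all Theorem~\ref{1THMMAIN1} needs, since there $\tilde h$ depends on $\beta$.

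Second, defining $K_{el}$ by \eqref{K_eldelremarkino} makes it a function of $K_0$. You therefore obtain $u_E=u_E^{K_{el}}$ and $u_F=u_F^{K_{el}}$ only for $K_{el}$ large in terms of $K_0$, not for the $K_{el}$ fixed by Remark~\ref{02052026falchi} before $r_0,K_0$. This matters because Theorem~\ref{1THMMAIN1} chooses $K_0=K_0(K_{el},\sigma_0)$ afterwards, which becomes circular. Lemma~\ref{02052026silvia} has the same feature, so you are faithful to the paper. The stated order can nevertheless be restored by comparing $u_E$ and $u_F$ with $u_{E_0}$ instead of bounding them crudely by $C(K_0)$:
\begin{itemize}
\item Keep the strict inequalities \eqref{perlachinaValle} for $E_0$, and use that $\sigma_1$ may depend on $K_0$ and $K_{el}$.
\item Interpolating $\|f\|_{L^\infty}\le\sigma_1$ against $\|f\|_{H^5}\le K_0$ makes the height $f$ of $\pa E$ over $\pa E_0$ small in $C^{3,1/2}$ and in $H^4$.
\item Lemma~\ref{lemmanichel} then controls the $L^2$ distance to $\nabla^3u_{E_0}$ on the boundary.
\item Lemma~\ref{Lemmatecnico1elsticicita} gives the $H^{3/2}$ bound and the $h^{-1/4}$-weighted $H^{5/2}$ bound, and fractional interpolation keeps $u_E$ within the constraints of \eqref{minelvinc}.
\item The same applies to $u_F$, whose height over $\pa E_0$ differs from $f$ by $O(h^{\theta})$ in $C^{3,1/2}$ and $H^4$.
\end{itemize}
This is the argument the paper itself uses in the last two claims of the proof of Theorem~\ref{1THMMAIN1}.
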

\section{Iteration} \label{iterazionesezione}
In this section, we prove a crucial iteration formula. Let $E \Subset \Omega$ and $K_{el}$ be as in the hypotheses of Theorem~\ref{Bologna}.  By Theorem~\ref{Bologna}, there exist constants $ \sigma_1, \eta_0,K_1, K_2, h_0$ depending only on $K, K_{el}, r_0$, such that if $ 0 < h\leq h_0$ and
\begin{equation}
 F \in   {\rm argmin} \left\{  \mathcal{F}_{h}(A,E) :  A \Delta E \subset \mathcal{I}_{\eta_0}(\pa E) \right\},
\end{equation}
then $F \in \mathfrak{H}^5_{K_1,2 \sigma_1}(E_0)$, $F$ is uniformly $C^{3,\alpha}$-regular and satisfies the \textsc{UBC} with radius $r_0/2$. Furthermore, we have $ \pa F \Subset \mathcal{I}_{\eta_0}(\pa E)$, $\pa F= \{   x+ \psi_{F,E}(x)\nu_E(x): x \in \pa E\}$, and
\begin{equation}\label{LESTIMEpsiFE}
    \begin{split}
      &\| \psi_{F,E} \|_{H^1(\pa E)} \leq K_1 h, \,\| \psi_{F,E} \|_{H^2(\pa E)} \leq K_1 h^{\frac{3}{4}},\, \| \psi_{F,E} \|_{H^3(\pa E)} \leq K_1h^{\frac{1}{2}}, \\
      & \| \psi_{F,E} \|_{H^4(\pa E)} \leq K_1h^{\frac{1}{4}},  \, \| \psi_{F,E} \|_{H^5(\pa E)} \leq K_1,\\
      & \| \nabla_{\pa F} \Delta_{\pa F} H_F \|_{L^2(\pa F)} \leq K_2, \, \| \Delta^2_{\pa F} H_F \|_{L^2(\pa F)} \leq K_2 h^{-\frac{1}{4}}. 
      \end{split}
    \end{equation}
Applying Theorem~\ref{Bologna} with $F$ in place of $E$, we obtain new constants $ \sigma_2,\eta_1,K_3,K_4,h_1$ depending only on $r_0, K_0, K_{el}$. If $ \eta \leq \eta_1$ and $ 0 < h \leq h_2:= \min \{ h_1, h_0\}$ the set 
\begin{equation}
   G \in   {\rm argmin} \left\{  \mathcal{F}_{h}(A,F) :  A \Delta E \subset \mathcal{I}_{\eta}(\pa F) \right\} 
\end{equation}
is uniformly $C^{3,\alpha}$-regular. Moreover, $\partial G \Subset \mathcal{I}_{\sigma_2}(\partial F)$, and $ \pa G = \{ x+ \psi_{G,E}(x)\nu_F(x): x \in \pa G \}$, with the following estimates:
\begin{equation}\label{LESTIMEpsiGF}
    \begin{split}
      &\| \psi_{G,F} \|_{H^1(\pa F)} \leq K_3 h, \,\| \psi_{G,F} \|_{H^2(\pa F)} \leq K_3 h^{\frac{3}{4}},\, \| \psi_{G,F} \|_{H^3(\pa F)} \leq K_3h^{\frac{1}{2}}, \\
      &  \| \psi_{G,F} \|_{H^4(\pa F)} \leq K_3h^{\frac{1}{4}}, \, \| \psi_{G,F} \|_{H^5(\pa F)} \leq K_3, \\
      &  \| \nabla_{\pa G} \Delta_{\pa G} H_G \|_{L^2(\pa G)} \leq K_4, \, \| \Delta^2_{\pa G} H_G \|_{L^2(\pa G)} \leq K_4 h^{-\frac{1}{4}}. 
      \end{split}
    \end{equation}
Moreover,  the minimizers $u_E, u_F, u_G$ of~\eqref{minelast} coincide with the minimizers $u_{E}^{K_{el}} \mrestr \Omega \setminus E, u_{F}^{K_{el}} \mrestr \Omega \setminus F, u_{G}^{K_{el}} \mrestr \Omega \setminus G$ of~\eqref{minelvinc}.
Throughout this
section, we will use the notation just introduced.

Throughout this section, to simplify the notation, we adopt the following convention:
$$ \nabla_{\pa A}^k= \overline{\nabla}_{\pa A}^k $$
for every $k \in \N$. Moreover, when $A$ is a set with smooth boundary, we identify the second fundamental form in the sense of differential geometry with the one introduced in Section~\ref{26052026regularsets}.

Before proving the main lemmas of this section, we state a preliminary result whose proof follows that of Lemmas~\ref{Lemmatecnico1elsticicita} and~\ref{lemmanichel}. Its validity relies on the uniform $C^{3,\alpha}$ estimates established above, which allow us to obtain an analogue of Lemma~\ref{lemmanichel} without reference to the manifold $\partial E_0$.

\begin{lemma}\label{lem:stimeQ}
    Let $E,F,G$ be as above. Then the following estimates hold:
     \begin{equation}\label{03052026label}
     \begin{split}
     &\| Q(E (u_{F}) ) (\cdot+ \psi_{F,E}(\cdot)\nu_E(\cdot))- Q(E (u_E)) \|_{H^{\frac{1}{2}}(\pa E)} \leq C \| \psi_{F,E} \|_{H^2(\pa E)}\\
     &\| Q(E (u_{G}) ) (\cdot+ \psi_{G,F}(\cdot)\nu_F(\cdot))- Q(E (u_F)) \|_{H^{\frac{1}{2}}(\pa F)} \leq C \| \psi_{G,F} \|_{H^2(\pa F)}
     \end{split}
 \end{equation}
 and
    \begin{equation}\label{28042026ossi}
    \begin{split}
     &\| Q(E (u_{F}) ) (\cdot+ \psi_{F,E}(\cdot)\nu_E(\cdot))- Q(E (u_E)) \|_{H^{\frac{3}{2}}(\pa E)} \leq C \| \psi_{F,E} \|_{H^3(\pa E)}\\
     &\| Q(E (u_{G}) ) (\cdot+ \psi_{G,F}(\cdot)\nu_F(\cdot))- Q(E (u_F)) \|_{H^{\frac{3}{2}}(\pa F)} \leq C \| \psi_{G,F} \|_{H^3(\pa F)}
     \end{split}
    \, .
 \end{equation}
\end{lemma}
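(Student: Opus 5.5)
We mirror the proofs of Lemmas~\ref{Lemmatecnico1elsticicita} and~\ref{lemmanichel}, replacing the fixed reference manifold $\pa E_0$ by $\pa E$ (respectively $\pa F$). We detail only the first lines of~\eqref{03052026label} and~\eqref{28042026ossi}. The second lines follow identically, because $F$ enjoys the same uniform bounds as $E$: the \textsc{UBC} with radius $r_0/2$, uniform $C^{3,\alpha}$-regularity, and the $H^5$ and curvature bounds in~\eqref{LESTIMEpsiFE}.

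\emph{Flattening.} We set $\Phi_\psi(x):=x+\psi_{F,E}(\pi_{\pa E}(x))\,\nu_E(\pi_{\pa E}(x))\,\eta(d_E(x))$, with $\eta$ a cutoff supported near $0$. This is a diffeomorphism from $\Omega\setminus E$ onto $\Omega\setminus F$, and for $k\le 4$ it satisfies $\|\Phi_\psi-{\rm id}\|_{H^{k}}\le C\|\psi_{F,E}\|_{H^{k-1/2}(\pa E)}$ (trace/extension), together with the corresponding $C^{3,\alpha}$ bound. We put $v_F:=u_F\circ\Phi_\psi$ and $v_E:=u_E$. Both solve, in $\Omega\setminus E$, elliptic systems
\[
\div(\mathbb{A}_\psi\nabla v_F)=0,\qquad \div(\mathbb{A}_0\nabla v_E)=0,
\]
with conormal boundary conditions on $\pa E$ and Dirichlet data $w_0$ on $\pa\Omega$. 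The coefficient $\mathbb{A}_\psi$ depends smoothly on $\nabla\Phi_\psi$, and $\mathbb{A}_0$ is the coefficient with $\Phi_\psi={\rm id}$. Hence $w:=v_F-v_E$ solves
\[
\div(\mathbb{A}_0\nabla w)=-\div\big((\mathbb{A}_\psi-\mathbb{A}_0)\nabla v_F\big),
\]
with the analogous inhomogeneous conormal condition and $w=0$ on $\pa\Omega$. The Schauder bound $\|v_F\|_{C^{3,\alpha}}\le C$ from Theorem~\ref{Bologna} gives $|\mathbb{A}_\psi-\mathbb{A}_0|+|\nabla(\mathbb{A}_\psi-\mathbb{A}_0)|\lesssim|\nabla\Phi_\psi-I|+|\nabla^2\Phi_\psi|$.

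\emph{Elliptic estimates.} Boundary $H^m$ regularity for the Lamé-type system, proved by tangential difference quotients and then recovery of normal derivatives exactly as in the proof of Lemma~\ref{Lemmatecnico1elsticicita}, gives
\[
\|w\|_{H^{m}(\mathcal I^+)}\le C\,\|(\mathbb{A}_\psi-\mathbb{A}_0)\nabla v_F\|_{H^{m-1}}\le C\,\|\Phi_\psi-{\rm id}\|_{H^{m}}\le C\,\|\psi_{F,E}\|_{H^{m-1/2}(\pa E)}.
\]
We take $m=3$ for~\eqref{03052026label} and $m=4$ for~\eqref{28042026ossi}. The product is controlled because $v_F$ is uniformly $C^{3,\alpha}$, and when $m=4$ the factor $\nabla^4 v_F$ is bounded in $L^2$ by Lemma~\ref{Lemmatecnico1elsticicita}, paired with the $L^\infty$ norm of $\nabla\Phi_\psi-I$. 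The trace theorem then yields $\|\nabla w\|_{H^{m-3/2}(\pa E)}\le C\|\psi_{F,E}\|_{H^{m-1/2}(\pa E)}\le C\|\psi_{F,E}\|_{H^{m-1}(\pa E)}$. With $m=3$ this is $\|\psi_{F,E}\|_{H^2}$ and with $m=4$ it is $\|\psi_{F,E}\|_{H^3}$, as claimed.

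\emph{Back to $Q$.} Since $Q$ is quadratic,
\[
Q(E(u_F))\circ\Psi-Q(E(u_E))=\tfrac12\,\mathbb{C}\big(E(u_F)\circ\Psi+E(u_E)\big):\big(E(u_F)\circ\Psi-E(u_E)\big),
\]
where $\Psi(x)=x+\psi_{F,E}(x)\nu_E(x)$. On $\pa E$, $\nabla u_F\circ\Psi=(\nabla v_F)(\nabla\Phi_\psi)^{-1}$, so the difference equals $\nabla w$ plus terms of size $|\nabla v_F|\,|\nabla\Phi_\psi-I|$. The sum factor is bounded in $C^{2,\alpha}$, and multiplication by a $C^{2,\alpha}$ function is bounded on $H^{1/2}$ and $H^{3/2}$. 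This gives~\eqref{03052026label} and~\eqref{28042026ossi}.

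\emph{Main obstacle.} The crux is the $m=4$ estimate with constants uniform in $h$. Only $H^5$ control of $\pa E$ is available, so the coefficients $\mathbb{A}_0$ (built from $\nu_E$ and $B_E$ through the flattening) have limited regularity. We handle this with the interpolation and H\"older bookkeeping of Lemma~\ref{Lemmatecnico1elsticicita}: we use $\|B_E\|_{H^3}\le C$ together with the $C^{3,\alpha}$ bounds, and absorb the highest-order terms by the $\varepsilon$-argument. Uniformity of the constants also requires that every local chart depend only on $r_0$ and the $C^{3,\alpha}$ bounds, which Theorem~\ref{Bologna} provides.
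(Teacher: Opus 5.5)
Your overall strategy is sound: pull $u_F$ back to the fixed domain $\Omega\setminus E$, and view $w=v_F-u_E$ as the solution of a Lam\'e problem with constant tensor $\mathbb{A}_0=\mathbb{C}$, right-hand side $\div G$ and conormal datum $-G\nu_E$, where $G=(\mathbb{A}_\psi-\mathbb{C})\nabla v_F$; then apply global $H^m$ regularity and the trace theorem. However, the derivative count in the elliptic step is wrong, and as written the chain does not give the lemma.

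\emph{First error.} Your $\Phi_\psi$ extends $\psi_{F,E}$ by $\psi_{F,E}\circ\pi_{\pa E}$, which is constant along normals. Such an extension gains no half derivative: $\|\Phi_\psi-\mathrm{id}\|_{H^m}$ is comparable to $\|\psi_{F,E}\|_{H^m(\pa E)}$, not to $\|\psi_{F,E}\|_{H^{m-1/2}(\pa E)}$. The latter bound requires a smoothing extension, e.g.\ a harmonic one.

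\emph{Second error.} The last inequality $\|\psi_{F,E}\|_{H^{m-1/2}(\pa E)}\le C\|\psi_{F,E}\|_{H^{m-1}(\pa E)}$ goes the wrong way. With $m=3$ you are effectively asserting $\|\nabla w\|_{H^{3/2}(\pa E)}\le C\|\psi_{F,E}\|_{H^2(\pa E)}$, which fails in general. On $\pa E$, $\nabla\Phi_\psi-I$ contains $\nu_E\otimes\nabla_{\pa E}\psi_{F,E}$. Hence the conormal datum $-G\nu_E$, which is a component of $\nabla w$ on $\pa E$, is to leading order linear in $\nabla_{\pa E}\psi_{F,E}$, with the boundary stress as coefficient. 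So $\nabla w|_{\pa E}\in H^{3/2}$ needs $\psi_{F,E}\in H^{5/2}$.

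\emph{Repair.} Lower $m$ by one. You only need $\|\nabla w\|_{H^{1/2}(\pa E)}$ and $\|\nabla w\|_{H^{3/2}(\pa E)}$, i.e.\ $m=2$ for \eqref{03052026label} and $m=3$ for \eqref{28042026ossi}. For your normal extension and $m\le 3$, using $\|B_E\|_{H^{m-1}(\pa E)}\le C$, one has $\|\Phi_\psi-\mathrm{id}\|_{H^m}\le C\|\psi_{F,E}\|_{H^m(\pa E)}$. Hence
$\|\nabla w\|_{H^{m-3/2}(\pa E)}\le C\|w\|_{H^m}\le C\|G\|_{H^{m-1}}\le C\|\psi_{F,E}\|_{H^m(\pa E)}$,
which is exactly the $H^2$ and $H^3$ norm in the statement. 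The term $\nabla v_F\big((\nabla\Phi_\psi)^{-1}-I\big)$ in your last step is bounded in $H^{m-3/2}(\pa E)$ the same way, by $C\|\nabla\Phi_\psi-I\|_{H^{m-1}}\le C\|\psi_{F,E}\|_{H^m(\pa E)}$.

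Two side remarks.
\begin{itemize}
\item $v_F$ is only uniformly $C^{2,\alpha}$, since $\nu_E\circ\pi_{\pa E}\in C^{2,\alpha}$ under the available $H^5$ bounds. For $m=3$, pair $\|\nabla\Phi_\psi-I\|_{L^\infty}\le C\|\psi_{F,E}\|_{C^1}\le C\|\psi_{F,E}\|_{H^3}$ with $\|\nabla^3 v_F\|_{L^2}\le C$.
\item Since $m\le 3$, no fourth derivatives of $v_F$ occur, so the ``main obstacle'' you describe disappears. That paragraph also misattributes the limited regularity to $\mathbb{A}_0$, which is constant in your setup.
\end{itemize}

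\emph{Comparison with the paper.} Once corrected, your argument is a genuinely different and cleaner route. The paper differentiates the elastic systems once, so the perturbation appears in the Neumann data $-\mathbb{C}E(u)Be$. Comparing $g_F\circ\Phi_{F,E}$ with $g_E$ then brings in $B_F\circ\Phi_{F,E}-B_E$, i.e.\ $\nabla^2\psi_{F,E}$, which is the source of the $H^2$ and $H^3$ norms; this is followed by localized difference-quotient estimates in flattened half-balls. You instead keep the whole perturbation in the coefficients of a single problem on a fixed domain. You never differentiate a boundary condition, and you rely only on standard global regularity for the Lam\'e system with Neumann data on $\pa E$ and Dirichlet data on $\pa\Omega$.
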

\begin{proof}
 We only  prove the second inequality in~\eqref{28042026ossi}, since the proofs of the other cases are analogous.

Recall that $u_E$ and $u_F$ satisfy the following systems
 \begin{equation}
     \left\{
		\begin{aligned}
			& \div \mathbb{C}E(u_E)=0 & \text{ in } \Omega \setminus E  , \\
			& \mathbb{C}E(u_E)[\nu_E]=0 & \text{ on }  \pa E , \\
			& u_E= w_0 & \text{ on } \pa \Omega,
		\end{aligned}
		\right. \quad \left\{
		\begin{aligned}
			& \div \mathbb{C}E(u_F)=0 & \text{ in } \Omega \setminus F  , \\
			& \mathbb{C}E(u_F)[\nu_F]=0 & \text{ on }  \pa F , \\
			& u_F= w_0 & \text{ on } \pa \Omega.
		\end{aligned}
		\right.
 \end{equation}
Fix a unit vector $e \in \R^3$, by differentiating the above equations in the direction $e$, we obtain
 \begin{equation}\label{04052026form1}
     \left\{
		\begin{aligned}
			& \div \mathbb{C}E(\pa_e u_E)=0 & \text{ in } \Omega \setminus E  , \\
			& \mathbb{C}E(\pa_e u_E)[\nu_E]=g_E & \text{ on }  \pa E , \\
			& \pa_e u_E= \pa_e w_0 & \text{ on } \pa \Omega,
		\end{aligned}
		\right.  \quad \left\{
		\begin{aligned}
			& \div \mathbb{C}E(\pa_e u_F)=0 & \text{ in } \Omega \setminus F  , \\
			& \mathbb{C}E(\pa_e u_F)[\nu_F]=g_F & \text{ on }  \pa F , \\
			& \pa_e u_F= \pa_e w_0 & \text{ on } \pa \Omega,
		\end{aligned}
		\right.
 \end{equation}
 where 
 \begin{equation}
     g_E= -\mathbb{C}E(u_E)B_E e, \quad g_F=- \mathbb{C}E(u_F)B_Fe.
 \end{equation}
 Let $\Phi_{F,E} : \Omega \setminus E \rightarrow \Omega \setminus F$ be defined by $\Phi_{F,E}(x):= x+ \psi_{F,E}\circ \pi_{\partial E}(x) \nu_{E}\circ\pi_{\partial E}(x)$. We have 
 \begin{align*}
    \vert g_F \circ \Phi_{F,E} -g_E \vert &\leq C \vert \nabla u_F \circ \Phi_{F,E}- \nabla u_E \vert \vert B_F  \vert \circ \Phi_{F,E}+ \vert \nabla u_E \vert \vert B_F  \circ \Phi_{F,E}- B_E \vert \\
    &\leq C \vert \nabla u_F \circ \Phi_{F,E}- \nabla u_E \vert + C ( \vert \psi_{F,E} \vert + \vert \nabla_{\pa E} \psi_{F,E} \vert + \vert \nabla_{\pa E}^2 \psi_{F,E} \vert ),
 \end{align*}
 where we used an expansion of $B_F$ similar to the expansion of the anisotropic curvature in Lemma~\ref{10Expcurvvarphi}.
Hence, we obtain
 \begin{equation}\label{04052026adessonnn}
 \begin{split}
     \| g_F \circ \Phi_{F,E} - g_E \|_{L^2(\pa E)} \leq & C \| \psi_{F,E} \|_{H^2(\pa E)}+ C \| \nabla u_F \circ \Phi_{F,E}- \nabla u_E \|_{L^2(\pa E)} \\
     \leq & C \| \psi_{F,E} \|_{H^2(\pa E)},
     \end{split}
 \end{equation}
 where the second inequality can be proved similarly as~\eqref{28042026meriggiare} (see also~\cite[Remark 4.2]{FJM2020}). 
 
 Let $x_0 \in \pa E$, and $\Phi: U \rightarrow B_{2R} $ be a diffeomorphism that flattens the boundary of $E$, where $U \Subset \Omega$ is a neighborhood of $x_0$ and $ \Phi(U \setminus E)= B^+_{2R}$, with $B_{2R}^+= B_{2R} \cap \{ x_3 >0\}$. We set 
 \begin{equation}\label{04052026form2}
     v_E := \pa_{e} u_E \circ \Phi^{-1}, \,\, v_F:= \pa_{e} u_F \circ \Phi_{F,E}^{-1} \circ \Phi^{-1},\,\, \bar{\psi}_{F,E}:= \psi_{F,E}\circ \pi_{\pa E} \circ \Phi^{-1}.
 \end{equation}
Therefore, by~\eqref{04052026form1},~\eqref{04052026form2}, we infer
\begin{equation}\label{04052026form3}
 \left\{
		\begin{aligned}
			& \div \big( \mathbb{A}(\cdot, \bar{\psi}_{F,E}, \nabla \bar{\psi}_{F,E})\nabla v_F\big)=0 & \text{ in } B^+_{2R}  , \\
			& E(v_F)[e_3]=G_F & \text{ on }  \{x_3=0\} \cap B_{2R} ,
		\end{aligned}
		\right.
\end{equation}
where the first equation is understood in the sense of distributions, i.e. by testing against functions $\xi \in C^\infty(B^+_{2R},\mathbb{R}^3)$ such that $\xi=0$ on $\partial B^+_{2R} \cap \{x_3>0\}$. Similarly, 
\begin{equation}\label{04052026form4}
 \left\{
		\begin{aligned}
			& \div \big( \mathbb{A}(\cdot,0,0)\nabla v_E \big)=0 & \text{ in } B^+_{2R}  , \\
			& E(v_E)[e_3]=G_E & \text{ on }  \{x_3=0\} \cap B_{2R} ,
		\end{aligned}
		\right.
\end{equation}
where again the first equation is interpreted in the sense of distributions with the same class of test functions. Moreover, the tensors $\mathbb{A}(\cdot,\bar{\psi}_{F,E},\nabla \bar{\psi}_{F,E})$ and $\mathbb{A}(\cdot,0,0)$ are uniformly elliptic. Finally, by ~\eqref{04052026adessonnn}, we conclude that $$ \| G_F -G_E \|_{L^2(\{ x_3=0\} \cap B_{2R} )} \leq C \|\psi_{F,E} \|_{H^2(\pa E)}.$$

Let $\xi \in C^\infty(B^+_{2R}, \R^3)$ be such that $\xi=0$ on $ \pa B^+_{2R} \cap \{x_3>0\}$. Differentiating equations~\eqref{04052026form3} and~\eqref{04052026form4}, we obtain
\begin{align*}
  \int_{B^+_{2R}} \mathbb{A}(\cdot,\bar{\psi}_{F,E}, \nabla \bar{\psi}_{F,E}) \nabla \pa v_F : \nabla \xi\, dx+ \int_{B^+_{2R}} \pa \mathbb{A}(\cdot, \bar{\psi}_{F,E}, \nabla \bar{\psi}_{F,E}) \nabla v_F : \nabla \xi\, dx&  \\
  + \int_{ \{ x_3=0  \} \cap B_{2R}^+   } \partial \xi  G_F \, d \mathcal{H}^2 =0&
\end{align*}
and 
\begin{align*}
  \int_{B^+_{2R}} \mathbb{A}(\cdot,0, 0) \nabla \pa v_E : \nabla \xi\, dx+ \int_{B^+_{2R}} \pa \mathbb{A}(\cdot, 0,0) \nabla v_E : \nabla \xi\, dx&  \\
  + \int_{ \{ x_3=0  \} \cap B_{2R}^+   } \pa \xi G_E \, d \mathcal{H}^2 =0.&
\end{align*}
Subtracting the two equations  above, we obtain
\begin{align*}
        \int_{B^+_{2R}} \mathbb{A}&(\cdot, \bar{\psi}_{F,E}, \nabla \bar{\psi}_{F,E} ) \big( \nabla \pa v_F- \nabla \pa v_E \big): \nabla \xi dx \\
        &=- \int_{B^+_{2R}} \big( \mathbb{A}(\cdot, \bar{ \psi}_{F,E}, \nabla \bar{\psi}_{F,E}) - \mathbb{A}(\cdot, 0,0)\big) \nabla \pa v_E : \nabla \xi\, dx\\
        &\quad- \int_{B^+_{2R}} \pa \mathbb{A}(\cdot, \bar{\psi}_{F,E}, \nabla \bar{\psi}_{F,E})(\nabla v_F- \nabla v_E): \nabla \xi \, dx\\
         &\quad- \int_{B^+_{2R}}  \big( \pa \mathbb{A}(\cdot, \bar{\psi}_{F,E}, \nabla \bar{\psi}_{F,E})- \pa \mathbb{A}(\cdot,0,0)  \big) \nabla v_E : \nabla \xi\, dx\\
         & \quad- \int_{\{ x_3=0 \} \cap B_{2R}  } \pa_\tau \xi \cdot (G_F-G_E) \, d\mathcal{H}^2.
\end{align*}
We first differentiate in the tangential directions $e_1,\,e_2$, and then in the normal direction $e_3$.
Testing the above equation with $ \xi:= \eta^2\pa_\tau  \big(  v_F- v_E \big)$, where $\eta \in C^\infty_{c}(B^+_{2R})$ and $\eta=1$ in $B_R$, and arguing as in Lemma~\ref{Lemmatecnico1elsticicita}, we obtain the desired estimate. The main difference from the proof of Lemma~\ref{Lemmatecnico1elsticicita} lies in the term
$$ \int_{\{ x_3=0 \} \cap B^+_{2R}   } \pa_\tau \xi \cdot (G_F-G_E),$$
so we briefly indicate how to bound it for the reader’s convenience.
Let $\varepsilon>0$ to be chosen later, we have 
\begin{align*}
        \int_{\{x_3=0\} \cap B_{2R}} \pa_{\tau} \xi \cdot  (G_F -G_E)  \leq& C(\varepsilon)\int_{ \{x_3=0\} \cap B_{2R} } \vert \xi \vert^2 + \varepsilon \int_{\{x_3=0\} \cap B_{2R} } \vert \pa _{\tau}(G_F -G_E) \vert^2\\
        \leq& \varepsilon  \| v_F -v_E \|^{2}_{H^{3/2}(\pa E)}+ C(\varepsilon) \| v_F-v_E \|^2_{L^2(\pa E)}\\
        &  + \varepsilon  \int_{\{x_3=0\} \cap B_{2R} } \vert \pa _{\tau}(G_F -G_E) \vert^2\\
        \leq& \varepsilon  \| v_F -v_E \|^{2}_{H^{3/2}(\pa E)}+ C(\varepsilon) \| \psi \|^2_{H^2(\pa E)} \\
        &+ \varepsilon C \int_{ \pa E } \vert \pa _{\tau}(g_F-g_E) \vert^2 \\
        \leq& \varepsilon \| v_F -v_E \|^{2}_{H^{3/2}(\pa E)}+ C(\varepsilon) \| \psi \|^2_{H^2(\pa E)} + C \| \psi_{F,E} \|^2_{H^3(\pa E)}  \\
        &  + \varepsilon C \| (\nabla u_F )(\cdot+ \psi_{F,E}(\cdot) \nu_E)- \nabla u_E (\cdot) \|_{H^1(\pa E)}^2,
\end{align*}
where  the second inequality follows by the definition of $\xi$ and by the interpolation, the third inequality can be proved analogously as~\cite[Remark 4.2]{FJM2020}, and the fourth inequality follows similarly as~\eqref{04052026adessonnn}.
\end{proof}

We now state a lemma that is essential for proving the main result of this section. Recall the notation introduced at the beginning of the section.

\begin{lemma}\label{propdiiterazione11}
    Let $\eta \leq \eta_1$, and let $\xi_{G,F}:= \psi_{G,F}+ \psi_{G,F}^2 \frac{H_F}{2}+ \psi^3_{G,F} \frac{K_F}{3}$. There exists a constant $h_3>0$, depending only on $K_0,r_0,K_{el}$, such that the following inequality holds:
    \begin{multline}\label{05052026dueminitui}
        \int_{\pa F}\Big(\frac{1}{h}-C\Big)\vert \nabla_{\pa F} \xi_{G,F} \vert^2 + \frac{3}{4} \mathcal{A}(\nu_F) \nabla_{\pa F} \Delta_{\pa F} \psi_{G,F} \cdot \nabla_{\pa F} \Delta_{\pa F} \psi_{G,F} \\
        \leq - \int_{\pa F} \xi_{G,F} \big(\Delta^2_{\pa F} H^\varphi_F - \Delta^2_{\pa F} Q(E(u_F)) \big)
    \end{multline}
    for all $ 0 < h \leq h_3$, where $C=C(K_0,K_{el},r_0)$.
\end{lemma}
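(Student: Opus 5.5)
We obtain \eqref{05052026dueminitui} by testing the Euler--Lagrange system for $G$ (relative to $F$) with a suitable fourth-order quantity and then absorbing the error terms. The equation is the analogue of \eqref{PeerGynt} with $(E,F)$ replaced by $(F,G)$:
\[
\frac{\xi_{G,F}}{h}=-\Delta_{\pa F}\div_{\pa F}\big(\mathcal{A}(\nu_F)\nabla_{\pa F}\psi_{G,F}\big)+\Delta_{\pa F}H^\varphi_F-\Delta_{\pa F}\big(Q(E(u_G))(\cdot+\psi_{G,F}\nu_F)\big)+\Delta_{\pa F}\widetilde{\mathcal R}_0 .
\]
Here we use Theorem~\ref{Bologna}, which gives $u_G=u_G^{K_{el}}$ on $\Omega\setminus G$. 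Write $Q(E(u_G))\circ\Psi=Q(E(u_F))+\big(Q(E(u_G))\circ\Psi-Q(E(u_F))\big)$. The first piece is moved to the right-hand side together with $\Delta_{\pa F}H^\varphi_F$.

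We apply $\Delta_{\pa F}$ to the equation, multiply by $-\xi_{G,F}$ and integrate over $\pa F$. After integrating by parts, the left-hand side becomes $\frac1h\int|\nabla_{\pa F}\xi_{G,F}|^2$. The principal term is
\[
\int \Delta^2_{\pa F}\div_{\pa F}(\mathcal{A}\nabla\psi)\,\xi_{G,F}.
\]
We replace $\xi_{G,F}$ by $\psi_{G,F}$ plus the quadratic remainder $\psi^2H_F/2+\psi^3K_F/3$. Using Lemma~\ref{nonstaropiu21} (twice), Lemma~\ref{Lemmacommutatoredifficile} and integration by parts, we rewrite the main part as
\[
\int\mathcal{A}(\nu_F)\nabla\Delta\psi\cdot\nabla\Delta\psi
\]
plus commutator terms. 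These terms are of the form $\int (\overline\nabla^j B_F)\star\overline\nabla^{a}\psi\star\overline\nabla^{b}\psi$ with $a+b+j=6$, $a,b\le 5$, and $j\le 4$. We bound them using \eqref{LESTIMEpsiGF}, the curvature bounds \eqref{LESTIMEpsiFE} (in particular $\|B_F\|_{H^3}\le C$ and $\|B_F\|_{H^4}\le Ch^{-1/4}$), Lemma~\ref{lem:hilbert-norm} and Proposition~\ref{prop:interpolation}. Each term is dominated by $\varepsilon\|\nabla\Delta\psi\|_{L^2}^2+C\|\nabla\psi\|_{L^2}^2$.

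The worst commutator term carries $\overline\nabla^4 B_F$. We handle it with $\|\psi\|_{L^\infty}\|B_F\|_{H^4}\le Ch^{3/4-\epsilon}h^{-1/4}$, which is small, as in \eqref{Ilfantasmaformaggino}. By Claim 2 of Proposition~\ref{ordinaryprop}, $\|\nabla\psi\|_{L^2}$ is comparable to $\|\nabla\xi_{G,F}\|_{L^2}$, so these contributions feed the $-C\int|\nabla\xi|^2$ term. The quadratic part of $\xi$ paired with the sixth-order operator is treated similarly: it carries factors $\|\psi\|_{C^{1,\alpha}}\lesssim h^{1/2}$, which makes it small relative to $\|\nabla\Delta\psi\|^2$ once $h\le h_3$. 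The contributions of $\widetilde{\mathcal R}_0$ are estimated exactly as in Lemma~\ref{Lemma:stimefinali}, using \eqref{26052026Gaeta} and the smallness $\|\psi\|_{C^1}\le Ch^{1/2}$.

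The elastic difference is paired with $\Delta^2\xi$ and integrated by parts to distribute derivatives. We put $H^{3/2}$ on the difference and $H^{5/2}$ on $\xi$. Using \eqref{28042026ossi},
\[
\|Q(E(u_G))\circ\Psi-Q(E(u_F))\|_{H^{3/2}(\pa F)}\le C\|\psi_{G,F}\|_{H^3}.
\]
Interpolation then gives the bound $C\|\psi\|_{H^3}\|\psi\|_{H^{5/2}}\le\varepsilon\|\nabla\Delta\psi\|^2+C\|\nabla\psi\|^2$, with the lower-order parts handled by Lemma~\ref{lem:hilbert-norm}.

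We expect this elastic step, together with the $\overline\nabla^4B_F$ commutator, to be the main obstacle. The $H^{1/2}$-loss coming from the trace is exactly critical, so one must check that the total derivative count stays at $3+3$ rather than $3+4$; this is the reason for splitting as $H^{3/2}\times H^{5/2}$ rather than $L^2\times H^4$. Finally, uniform ellipticity \eqref{unifell} gives $\int\mathcal{A}\nabla\Delta\psi\cdot\nabla\Delta\psi\ge c\|\nabla\Delta\psi\|^2$. Choosing $\varepsilon$ small, all $\varepsilon$-terms are absorbed into one quarter of the principal term, which leaves the factor $\frac34$ and yields \eqref{05052026dueminitui}.
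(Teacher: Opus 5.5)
Your skeleton is the paper's: apply $\Delta_{\pa F}$ to the Euler--Lagrange equation of $G$ over $F$, test with $-\xi_{G,F}$, split $Q(E(u_G))(\cdot+\psi_{G,F}\nu_F)$ into $Q(E(u_F))$ plus a difference, and absorb into the $\mathcal{A}$-term. Your $H^{3/2}\times H^{5/2}$ treatment of the elastic difference, using \eqref{28042026ossi} alone, is a valid alternative to the paper's route, which interpolates \eqref{03052026label} with \eqref{28042026ossi} to get an $H^1$ bound.

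\textbf{The gap is in the curvature terms.} Lemmas~\ref{Paperplane} and~\ref{Lemma:stimefinali} only need some uniform bound. By contrast, \eqref{05052026dueminitui} must be homogeneous: apart from the explicit linear term on the right, every error has to be bounded by $\varepsilon\|\nabla_{\pa F}\Delta_{\pa F}\psi_{G,F}\|^2_{L^2}+C\|\nabla_{\pa F}\xi_{G,F}\|^2_{L^2}$, with $C$ independent of $h$ and no additive constant. The reason is that Lemma~\ref{propdiiterazione} multiplies it by $h$ and iterates it $T/h$ times on quantities of size $h^2$.

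The device you borrow from \eqref{Ilfantasmaformaggino} for the $\overline{\nabla}^4B_F$ term violates this. Once the small factor $\|\psi\|_{L^\infty}\|B_F\|_{H^4}\le Ch^{1/2}$ has been spent, what remains (e.g.\ $Ch^{1/2}\|\psi_{G,F}\|_{H^2}$) is linear in $\psi_{G,F}$, i.e.\ an additive error. If instead you keep the term quadratic, the only control available is $\|B_F\|_{H^4}\le Ch^{-1/4}$. After interpolation and Young this puts a coefficient like $h^{-1/2}$ in front of $\|\nabla\psi_{G,F}\|_{L^2}^2$. The left side then becomes $(\frac{1}{h}-Ch^{-1/2})$, and the factor $(1+Mh)$ in \eqref{ARSENIOLEIMI} degenerates to $(1+Mh^{1/2})$, which explodes after $T/h$ steps.

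\textbf{The remedy is not to create the term.} Apply $\Delta_{\pa F}$ to \eqref{NewPeerGynt} rather than to \eqref{PeerGynt}, i.e.\ use Lemma~\ref{nonstaropiu21} only once, and integrate by parts before expanding. The principal part is then $\int\mathcal{A}(\nu_F)\nabla_{\pa F}\Delta_{\pa F}\psi_{G,F}\cdot\nabla_{\pa F}\Delta_{\pa F}\xi_{G,F}$. The only commutators are $\int(\Delta_{\pa F}\mathcal{A}(\nu_F))\nabla_{\pa F}\psi_{G,F}\cdot\nabla_{\pa F}\Delta_{\pa F}\xi_{G,F}$ and $\int\widehat{\mathcal{R}}\,\Delta_{\pa F}\xi_{G,F}$, and these need only $\|B_F\|_{C^1}\le C$. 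More generally, whenever four or more derivatives land on $B_F$, move them onto $\psi_{G,F}$ by parts and use $\|B_F\|_{H^3}\le C$.

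The same objection applies to importing the $\widetilde{\mathcal{R}}_0$ estimates ``exactly as in Lemma~\ref{Lemma:stimefinali}''. Those estimates control $\nabla\Delta\widetilde{\mathcal{R}}_0$, use $\|B\|_{H^4}$, and carry an additive $C(\varepsilon)$. Here only $\nabla_{\pa F}\widetilde{\mathcal{R}}_0$ enters, and you need the homogeneous bound $\|\nabla_{\pa F}\widetilde{\mathcal{R}}_0\|_{L^2}\le\varepsilon\|\nabla_{\pa F}\Delta_{\pa F}\psi_{G,F}\|_{L^2}+C\|\nabla_{\pa F}\xi_{G,F}\|_{L^2}$. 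This follows from $\widetilde{\mathcal{A}}_i(0,0,\cdot)=0$, $\tilde a_0(0,0,\cdot,\cdot)=0$, Lemma~\ref{lem:Leibniz} and \eqref{LESTIMEpsiGF}.
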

\begin{proof}
In what follows, we denote by $C$ a generic constant depending on $K_{0}, K_{\mathrm{el}}, r_0$. By the Euler–Lagrange equation~\eqref{NewPeerGynt}, it holds
\begin{align*}
        \frac{1}{h} \Delta_{\pa F} \xi_{G,F}=& -\Delta_{\pa F} \div_{\pa F} \big( \mathcal{A}(\nu_F) \nabla_{\pa F} \Delta_{\pa F } \psi_{G,F}   \big) + \Delta_{\pa F}^2 H_F^\varphi \\
        & - \Delta_{\pa F} \div_{\pa F} \big( \big( \Delta_{\pa F} \mathcal{A}(\nu_F)  \big) \nabla_{\pa F} \psi_{G,F} \big) \\
        & - \Delta_{\pa F} \widehat{\mathcal{R}}(\nabla_{\pa F} B_F \star \nabla_{\pa F} \psi_{G,F}, B_F \star \nabla^3_{\pa F} \psi_{G,F})\\
        & - \Delta^2_{\pa F} \big( Q(E(u_G))(\cdot+ \psi_{G,F}(\cdot)\nu_F(\cdot))   \big) + \Delta^2_{\pa F} \widetilde{\mathcal{R}}_0.
\end{align*}
Multiplying by $-\xi_{G,F}$ and integrating by parts, we infer
\begin{align}
    \frac{1}{h} \int_{\pa F} \vert \nabla_{\pa F} \xi_{G,F} \vert ^2+& \int_{\pa F} \mathcal{A}(\nu_F)  \nabla_{\pa F} \Delta_{\pa F} \psi_{G,F} \cdot \nabla_{\pa F} \Delta_{\pa F}  \psi_{G,F}=- \int_{\pa F} \xi_{G,F} \Delta_{\pa F}^2 H^\varphi_F\\
    &\quad-\int_{\pa F} \nabla_{\pa F} \big( Q(E(u_G))(\cdot+ \psi_{G,F}(\cdot)\nu_F(\cdot))   \big) \cdot \nabla_{\pa F}\Delta_{\pa F} \xi_{G,F} \label{08042026pom1}\\
    & \quad - \int_{\pa F} \big( \Delta_{\pa F}\mathcal{A}(\nu_F)  \big) \nabla_{\pa F} \psi_{G,F} \cdot \nabla_{\pa F} \Delta_{\pa F} \xi_{G,F} \label{08042026pom2}\\
    &\quad- \int_{\pa F} \mathcal{A}(\nu_F) \nabla_{\pa F} \Delta_{\pa F} \psi_{G,F} \cdot \nabla_{\pa F} \Delta_{\pa F} \big( \frac{\psi_{G,F}^2}{2} H_F+ \frac{\psi_{G,F}^3}{3}K_F  \big)  \label{08042026pom3}\\
    &\quad- \int_{\pa F}   \widehat{\mathcal{R}} \cdot    \Delta_{\pa F} \xi_{G,F}\label{08042026pom5}\\
    & \quad+\int_{\pa F} \nabla_{\pa F} \widetilde{R}_0 \cdot \nabla_{\pa F} \Delta_{\pa F} \xi_{G,F}. \label{08042026pom4}
    \end{align}
By Lemma~\ref{lem:hilbert-norm}, also recall~\eqref{10022026pprinazof2} and~\eqref{LESTIMEpsiGF}, we have
\begin{equation}
        \begin{split}
        \| \nabla_{\pa F} \Delta_{\pa F} \xi_{G,F} \|_{L^2(\pa F)} & \leq \| \nabla^3_{\pa F} \big( \psi_{G,F}(1+ \psi_{G,F}\frac{H_F}{2}+ \psi_{G,F}^2\frac{K_F}{3})   \big)\|_{L^2(\pa F)}\\
        & \leq C \bigg( \| \psi_{G,F}\|_{L^\infty(\pa F)} \| 1+ \psi_{G,F} \frac{H_F}{2}+ \psi^2_{G,F}\frac{K_F}{3}\|_{H^3(\pa F)}\\
        & \quad\qquad+\| \psi \|_{H^3(\pa F)} \| 1+ \psi_{G,F} \frac{H_F}{2}+ \psi^2_{G,F}\frac{K_F}{3}\|_{L^\infty(\pa F)} \bigg)\\
        & \leq C  \| \psi_{G,F} \|_{H^3(\pa F)} \\
        & \leq C \big( \| \nabla_{\pa F} \Delta_{\pa F} \psi_{G,F} \|_{L^2(\pa F)}+  \| \psi_{G,F} \|_{L^2(\pa F)} \big) \\
        & \leq C \big( \| \nabla_{\pa F} \Delta_{\pa F} \psi_{G,F} \|_{L^2(\pa F)}+ \| \nabla_{\pa F}\xi_{G,F} \|_{L^2(\pa F)} \big).\label{03052026caffe1}
        \end{split}
\end{equation}
We now proceed to estimate~\eqref{08042026pom1},~\eqref{08042026pom2},~\eqref{08042026pom3},~\eqref{08042026pom5},~\eqref{08042026pom4}. We fix $\varepsilon$, which will be chosen later.\\
\textit{Estimate of~\eqref{08042026pom1}}.\\
By Lemma \ref{lem:stimeQ},~\eqref{LESTIMEpsiGF} and~\eqref{03052026caffe1}, we have
\begin{align}
       - \int_{\pa F}& \nabla_{\pa F} \big( Q(E(u_G))(\cdot+ \psi_{G,F}(\cdot)\nu_F(\cdot))  \big) \cdot \nabla_{\pa F} \Delta_{\pa F} \xi_{G,F} \notag\\
       &= -\int_{\pa F}  \nabla_{\pa F} Q(E(u_F))  \cdot \nabla_{\pa F} \Delta_{\pa F} \xi_{G,F} \notag\\
        &\quad -\int_{\pa F} \nabla_{\pa F} \big( Q(E(u_G))(\cdot+ \psi_{G,F}(\cdot)\nu_F(\cdot))- Q(E(u_F))  \big) \cdot \nabla_{\pa F} \Delta_{\pa F} \xi_{G,F} \notag\\
       &\leq   - \int_{\pa F}  \nabla_{\pa F} Q(E(u_F))  \cdot \nabla_{\pa F} \Delta_{\pa F} \xi_{G,F} + \varepsilon \| \nabla_{\pa F} \Delta_{\pa F} \xi_{G,F} \|^2_{L^2(\pa F)} \notag\\
       &\quad + C(\varepsilon) \| Q(E(u_G))(\cdot+ \psi_{G,F}(\cdot)\nu_F(\cdot))- Q(E(u_F)) \|^2_{H^1(\pa F)} \notag\\
       &\leq  - \int_{\pa F}  \nabla_{\pa F} Q(E(u_F))  \cdot \nabla_{\pa F} \Delta_{\pa F} \xi_{G,F} + \varepsilon C \| \nabla_{\pa F} \Delta_{\pa F} \psi_{G,F} \|^2_{L^2(\pa F)}\notag\\
       &\quad + \varepsilon C \| \nabla_{\pa F} \xi_{G,F }\|^2_{L^2(\pa F)} + C(\varepsilon) \| \psi_{G,F} \|_{H^2(\pa F )} \| \psi_{G,F} \|_{H^3(\pa F)}\notag\\
      &\leq   - \int_{\pa F}  \nabla_{\pa F} Q(E(u_F))  \cdot \nabla_{\pa F} \Delta_{\pa F} \xi_{G,F} \label{05052026cara-2} \\
      &\quad  + \varepsilon C \| \nabla_{\pa F} \Delta_{\pa F} \psi_{G,F} \|^2_{L^2(\pa F)}+ C \| \nabla_{\pa F} \xi_{G,F }\|^2_{L^2(\pa F)}.\notag
\end{align}
\textit{Estimate of~\eqref{08042026pom2}.}\\
By integrating by parts, using Young’s inequality and~\eqref{10022026pprinazof2}, we obtain
\begin{align}
  - \int_{\pa F}& \big(  \Delta_{\pa F}\mathcal{A}(\nu_F)  \big) \nabla_{\pa F} \psi_{G,F} \cdot \nabla_{\pa F} \Delta_{\pa F} \xi_{G,F} \notag \\
  &\leq C(\varepsilon)\| B_F \|_{C^1(\pa F)}^2 \| \nabla_{\pa F} \psi_{G,F} \|^2_{L^2(\pa F)}+  \varepsilon \| \nabla_{\pa F}\Delta_{\pa F} \psi_{G,F} \|^2_{L^2(\pa F)}+\varepsilon C \| \nabla_{\pa F} \xi_{G,F} \|^2_{L^2(\pa F)} \notag \\
  &\leq  \varepsilon \| \nabla_{\pa F} \Delta_{\pa F} \psi_{G,F} \|_{L^2(\pa F)}^2+ C \| \nabla_{\pa F} \xi_{G,F} \|^2_{L^2(\pa F)}.\label{05052026cara-1}
\end{align}
\textit{Estimate of~\eqref{08042026pom3}.}\\
We start by establishing the following estimate, whose proof is similar to that of~\eqref{03052026caffe1}:
\begin{equation}
\begin{split}
    \| \nabla_{\pa F} \Delta_{\pa F} \big(  \psi_{G,F}^2 ( \frac{H_F}{2}&+ \psi_{G,F} \frac{K_F}{3}) \big) \|_{L^2(\pa F)} \\
    \leq& C \| \psi_{G,F}^2 \|_{H^3(\pa F)}+ \| \psi_{G,F} \|_{L^\infty(\pa F)} \| \frac{H_F}{2}+ \psi_{G,F} \frac{K_F}{3} \|_{H^3(\pa F)} \\
    \leq & C \| \psi_{G,F} \|_{L^\infty(\pa F)} \| \psi_{G,F} \|_{H^3(\pa F)}+ C \| \psi_{G,F} \|_{H^2(\pa F)}\\
    \leq & C \big( \varepsilon + h^{3/4} \big)\| \nabla_{\pa F} \Delta_{\pa F} \psi_{G,F} \|_{L^2(\pa F)}+ C \| \nabla_{\pa F} \xi_{G,F} \|_{L^2(\pa F)} 
\end{split}
\end{equation}
where in the third inequality we used~\eqref{LESTIMEpsiGF} together with the Sobolev embedding theorem. Hence, applying Young’s inequality and the estimate above for $h^{3/2} \leq \varepsilon$, we obtain
\begin{align}
      - \int_{\pa F}& \mathcal{A}(\nu_F) \nabla_{\pa F} \Delta_{\pa F} \psi_{G,F} \cdot \nabla_{\pa F} \Delta_{\pa F} \big( \frac{\psi_{G,F}^2}{2} H_F+ \frac{\psi_{G,F}^3}{3}K_F  \big) \notag\\
      &\leq \varepsilon \| \nabla_{\pa F} \Delta_{\pa F} \psi_{G,F} \|^2_{L^2(\pa F)} + \frac{C(\varepsilon+ h^{3/4})^2}{\varepsilon} \| \nabla_{\pa F} \Delta_{\pa F} \psi_{G,F} \|^2_{L^2(\pa F)}\notag\\
      & \quad+ C(\varepsilon) \| \nabla_{\pa F} \xi_{G,F}\|^2_{L^2(\pa F)}\notag\\
      & \leq C\varepsilon \| \nabla_{\pa F} \Delta_{\pa F} \psi_{G,F} \|^2_{L^2(\pa F)}+ C \| \nabla_{\pa F} \xi_{G,F} \|^2_{L^2(\pa F)}.\label{05052026cara0}
\end{align}
\textit{Estimate of~\eqref{08042026pom5}.}\\
Recall that $\widehat{\mathcal{R}}=\widehat{\mathcal{R}}(\overline{\nabla} B_F \star \overline{\nabla} \psi_{G,F}, B_F \star \overline{\nabla}^3 \psi_{G,F})$ is a smooth function. By Young's inequality,  interpolation  and~\eqref{03052026caffe1}, we infer
\begin{align}\label{05052026cara111}
    -\int_{\pa F}   \widehat{\mathcal{R}} \cdot    \Delta_{\pa F} \xi_{G,F} &\le \varepsilon\|\widehat{\mathcal{R}}\|^2_{L^2(\Omega)} + C(\varepsilon)\|  \Delta_{\pa F} \xi_{G,F} \|^2_{L^2(\pa F)} \\
    &\le \varepsilon C \| \nabla_{\pa F} \Delta_{\pa F} \psi_{G,F} \|^2_{L^2(\pa F)}+ C \| \nabla_{\pa F} \xi_{G,F }\|^2_{L^2(\pa F)}.
\end{align}
\textit{Estimate of~\eqref{08042026pom4}.}\\
Using Young’s inequality together with~\eqref{03052026caffe1}, we obtain
\begin{align}\label{05052026milano1}
 \int_{\pa F} \nabla_{\pa F} \widetilde{R}_0 \cdot \nabla_{\pa F} \Delta_{\pa F} \xi_{G,F} \leq& \varepsilon \| \nabla_{\pa F} \Delta_{\pa F} \psi_{G,F} \|^2_{L^2(\pa F)}+ C \| \nabla_{\pa F} \xi_{G,F } \|^2_{L^2(\pa F)}\\
 & + C \| \nabla_{\pa F} \widetilde{R}_0 \|_{L^2(\pa F)}^2.\notag
\end{align}
We now derive an estimate for $\| \nabla_{\pa F} \widetilde{R}_0 \|_{L^2(\pa F)}$. To simplify notation, we write $\psi$ instead of $\psi_{G,F}$. Recalling Corollary~\ref{zonzo}, we need to estimate
\begin{equation}\label{dadaisomo050502026}
  \langle \mathcal{A}_1(\psi B_F, \nabla_{\pa F} \psi, \nu_F), \nabla_{\pa F}^2 \psi \rangle, \,\, \langle \mathcal{A}_2(\psi B_F,  \nabla_{\pa F} \psi, \nu_F), \nabla_{\pa F}(\psi B_F) \rangle, \,\, a_0(\psi, \nabla_{\pa F} \psi, B_F, \nu_F). 
\end{equation}
We begin with the first term in~\eqref{dadaisomo050502026}. We have 
\begin{align*}
        &\|  \nabla_{\pa F} \langle \mathcal{A}_1(\psi B_F, \nabla_{\pa F} \psi, \nu_F), \nabla_{\pa F}^2 \psi \rangle \|_{L^2(\pa F)}   \\
        &\leq  C \big(  \| \mathcal{A}_1(\psi B_F, \nabla_{\pa F} \psi, \nu_F) \|_{L^\infty(\pa F)} \| \psi \|_{H^3(\pa F)}+ \|  \mathcal{A}_1(\psi B_F, \nabla_{\pa F} \psi, \nu_F) \|_{H^1(\pa F)} \|  \psi \|_{C^2(\pa F)} \big) \\
         &\leq  Ch^{1/2} \big(  \| \nabla_{\pa F} \Delta_{\pa F} \psi \|_{L^2(\pa F)}+  \| \psi \|_{H^2(\pa F)} \big)+ Ch^{1/4}\| \psi \|_{H^2(\pa F)} \\
     &\leq \varepsilon C \| \nabla_{\pa F} \Delta_{\pa F} \psi \|_{L^2(\pa F)}+ C \| \nabla_{\pa F} \xi_{G,F} \|_{L^2(\pa F)},
\end{align*}
where in the first inequality we applied Lemma~\ref{lem:Leibniz}; in the second inequality we used Lemma~\ref{lem:hilbert-norm} together with the Sobolev embedding Theorem, the regularity of $ \mathcal{A}_1(\psi B_F, \nabla_{\pa F} \psi, \nu_F)   $, and~\eqref{LESTIMEpsiGF}. Similarly, we deduce for the second term in~\eqref{dadaisomo050502026}
\begin{align*}
    \| \nabla_{\pa F} \langle \mathcal{A}_2(\psi B_F,  \nabla_{\pa F} & \psi, \nu_F), \nabla_{\pa F}(\psi B_F) \rangle \|_{L^2(\pa F)}\\
    \leq & \varepsilon C \| \nabla_{\pa F} \Delta_{\pa F} \psi \|_{L^2(\pa F)}+ C \| \nabla_{\pa F} \xi_{G,F} \|_{L^2(\pa F)}.
\end{align*}
 Analogously, for the last term in~\eqref{dadaisomo050502026}, we obtain
\begin{align*}
    \|  a_0(\psi, \nabla_{\pa F} \psi, B_F, \nu_F) \|_{H^1(\pa F)} \leq \varepsilon C \| \nabla_{\pa F} \Delta_{\pa F} \|_{L^2(\pa F)}+ C \| \nabla_{\pa F} \xi_{G,F} \|_{L^2(\pa F)}.
\end{align*}
 Combining~\eqref{05052026milano1} with the estimates above, we conclude that
\begin{equation}\label{05052026cara1}
    \int_{\pa F} \nabla_{\pa F} \widetilde{R}_0 \cdot \nabla_{\pa F} \Delta_{\pa F} \xi_{G,F} \leq \varepsilon \| \nabla_{\pa F} \Delta_{\pa F} \psi_{G,F} \|^2_{L^2(\pa F)}+ C \| \nabla_{\pa F} \xi_{G,F } \|^2_{L^2(\pa F)}.
\end{equation}

Finally, we obtain~\eqref{05052026dueminitui} by combining~\eqref{05052026cara-2},~\eqref{05052026cara-1},~\eqref{05052026cara0},~\eqref{05052026cara111} and~\eqref{05052026cara1}, taking $\varepsilon$ and $h$ sufficiently small.
\end{proof}
We are now in a position to establish the main result of this section.
\begin{lemma}[Iteration]\label{propdiiterazione}
    Let $E,F,G$  be as in the beginning of the section. We set
    \begin{equation}
        \xi_{G,F}= \psi_{G,F}+ \psi_{G,F}^2 \frac{H_F}{2}+ \psi^3_{G,F} \frac{K_F}{3}, \quad  \xi_{F,E}= \psi_{F,E}+ \psi_{F,E}^2 \frac{H_E}{2}+ \psi^3_{F,E} \frac{K_E}{3}.
    \end{equation}
   Then there exist constants $M$ and $h_4$, depending only on $K_0$, $K_{\mathrm{el}}$, and $r_0$, such that
    \begin{multline}\label{ARSENIOLEIMI}
       \int_{\pa F}\vert \nabla_{\pa F} \xi_{G,F} \vert^2 + \frac{h}{2} \mathcal{A}(\nu_F) \nabla_{\pa F} \Delta_{\pa F} \psi_{G,F} \cdot \nabla_{\pa F} \Delta_{\pa F} \psi_{G,F} \\
      \leq (1+Mh)   \int_{\pa E}\vert \nabla_{\pa E} \xi_{F,E} \vert^2 + \frac{h}{8} \mathcal{A}(\nu_E) \nabla_{\pa E} \Delta_{\pa E} \psi_{F,E} \cdot \nabla_{\pa E} \Delta_{\pa E} \psi_{F,E}.
    \end{multline}
\end{lemma}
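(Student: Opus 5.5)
The plan is to start from Lemma~\ref{propdiiterazione11}, multiply \eqref{05052026dueminitui} by $h$, and rewrite its right-hand side using the Euler--Lagrange system \eqref{Cosac'e'} of the \emph{previous} step $F$ relative to $E$. The first equation of \eqref{Cosac'e'} on $\pa F$ gives
\[
H^\varphi_F-Q(E(u_F))=L-\tilde f\circ\pi_{\pa E}, \qquad -\Delta_{\pa E}\tilde f=\frac{\xi_{F,E}}{h},
\]
where by Theorem~\ref{Bologna} we may use $u_F=u_F^{K_{el}}$. Hence
\[
\Delta^2_{\pa F}\big(H^\varphi_F-Q(E(u_F))\big)=-\Delta^2_{\pa F}(\tilde f\circ\pi_{\pa E}).
\]
The right-hand side of \eqref{05052026dueminitui} multiplied by $h$ becomes $h\int_{\pa F}\xi_{G,F}\,\Delta^2_{\pa F}(\tilde f\circ \pi_{\pa E})$. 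Integrating by parts once gives $h\int_{\pa F}\Delta_{\pa F}\xi_{G,F}\,\Delta_{\pa F}(\tilde f\circ\pi_{\pa E})$. Next I write
\[
\Delta_{\pa F}(\tilde f\circ \pi_{\pa E})=(\Delta_{\pa E}\tilde f)\circ\pi_{\pa E}+\mathcal{E},
\]
where, by \eqref{qullochenonguc}, \eqref{canzone1} and \eqref{regdecat12}, the commutator $\mathcal{E}$ is bilinear in the geometric perturbation (namely $\psi_{F,E}$, $\nabla_{\pa E}\psi_{F,E}$, $\nabla^2_{\pa E}\psi_{F,E}$, and $P_{\pa F}-P_{\pa E}$ as in \eqref{02122025form1}) and in $\nabla_{\pa E}\tilde f,\nabla^2_{\pa E}\tilde f$. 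Since $h\,\Delta_{\pa E}\tilde f=-\xi_{F,E}$, the main term equals
\[
-\int_{\pa F}\Delta_{\pa F}\xi_{G,F}\,(\xi_{F,E}\circ\pi_{\pa E})=\int_{\pa F}\nabla_{\pa F}\xi_{G,F}\cdot\nabla_{\pa F}(\xi_{F,E}\circ\pi_{\pa E}),
\]
and I bound it by $\tfrac12\int_{\pa F}|\nabla_{\pa F}\xi_{G,F}|^2+\tfrac12\int_{\pa F}|\nabla_{\pa F}(\xi_{F,E}\circ\pi_{\pa E})|^2$. The first half is absorbed into the left-hand side, where the factor $1-Ch$ was already present.

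For the transported term I change variables with $\Psi(x)=x+\psi_{F,E}(x)\nu_E(x)$ and use \eqref{regdecat12} and \eqref{evvrailtempo1}. The relevant quantities behave as follows:
\begin{itemize}
\item $\nabla\pi_{\pa E}=P_{\pa E}+O(|\psi_{F,E}|)$ on $\pa F$;
\item $P_{\pa F}-P_{\pa E}$ contributes to the squared norm only through $O(|\nabla_{\pa E}\psi_{F,E}|^2+|\psi_{F,E}|)$;
\item the Jacobian satisfies $J_{\pa E}\Psi=1+O(|\psi_{F,E}|+|\nabla_{\pa E}\psi_{F,E}|^2)$.
\end{itemize}
By \eqref{LESTIMEpsiFE} and the Hölder bounds of Theorem~\ref{Bologna}, $\|\psi_{F,E}\|_{L^\infty}\le Ch^{3/4}$ and $\|\nabla\psi_{F,E}\|^2_{L^\infty}\le Ch$. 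This gives
\[
\int_{\pa F}|\nabla_{\pa F}(\xi_{F,E}\circ\pi_{\pa E})|^2\le(1+Ch)\int_{\pa E}|\nabla_{\pa E}\xi_{F,E}|^2 .
\]
Doubling produces the factor $(1+Mh)$ on $\int_{\pa E}|\nabla_{\pa E}\xi_{F,E}|^2$, after dividing the left-hand side by $(1-Ch)$.

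The main obstacle is the commutator contribution $h\int_{\pa F}\Delta_{\pa F}\xi_{G,F}\,\mathcal{E}$, which must be split between the two curvature-type terms. I plan to bound $h|\mathcal{E}|$ pointwise by $C\big(|\psi_{F,E}|+|\nabla\psi_{F,E}|+|\nabla^2\psi_{F,E}|\big)\,h\big(|\nabla\tilde f|+|\nabla^2\tilde f|\big)\circ\pi_{\pa E}$. I then control $h\|\tilde f\|_{H^3(\pa E)}\le C\|\xi_{F,E}\|_{H^1}\le C\|\nabla_{\pa E}\xi_{F,E}\|_{L^2}$ via Lemma~\ref{lem:hilbert-norm} and \eqref{10022026pprinazof2}, together with $\|\psi_{F,E}\|_{C^{2,\alpha}}\le Ch^{1/4}$. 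For the factor $\Delta_{\pa F}\xi_{G,F}$ I use interpolation and \eqref{03052026caffe1} to bound it by a small multiple of $\|\nabla_{\pa F}\Delta_{\pa F}\psi_{G,F}\|$ plus $C\|\nabla_{\pa F}\xi_{G,F}\|$. Young's inequality then gives:
\begin{itemize}
\item a piece $\tfrac h4\int\mathcal{A}(\nu_F)\nabla\Delta\psi_{G,F}\cdot\nabla\Delta\psi_{G,F}$, which reduces $\tfrac34$ to $\tfrac12$ on the left by the ellipticity \eqref{unifell};
\item pieces $Ch\|\nabla\xi_{G,F}\|^2$ and $Ch\|\nabla\xi_{F,E}\|^2$;
\item whenever a third derivative of $\psi_{F,E}$ is produced, a term bounded by $\tfrac h8\int\mathcal{A}(\nu_E)\nabla_{\pa E}\Delta_{\pa E}\psi_{F,E}\cdot\nabla_{\pa E}\Delta_{\pa E}\psi_{F,E}$.
\end{itemize}
If the powers of $h$ turn out too weak at this step, I would integrate by parts once more. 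This moves a derivative from $\Delta_{\pa F}\xi_{G,F}$ onto $\mathcal{E}$, and the resulting third derivatives of $\psi_{F,E}$ are exactly what the $\tfrac h8$ term on the right of \eqref{ARSENIOLEIMI} is designed to absorb. Finally I take $h_4$ small so that all $Ch$ factors are at most $Mh$ and $1-Ch\ge\tfrac12$.
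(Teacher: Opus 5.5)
\textbf{Verdict: genuine gap.} Your skeleton is the paper's. You multiply Lemma~\ref{propdiiterazione11} by $h$, use the Euler--Lagrange equation of the step $E\to F$ to turn the fourth-order term into $\xi_{F,E}/h$, and finish with Young and absorption. The only structural difference is that you stay on $\pa F$ with $\xi_{F,E}\circ\pi_{\pa E}$, while the paper pulls back to $\pa E$ with $\xi_{G,F}\circ\Psi_{F,E}$. However, two of your estimates lose powers of $h$. Since the factor must be exactly $1+O(h)$ to survive $T/h$ iterations, both are genuine gaps.

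\textbf{First gap: the transport step.} The corrections $\nabla\pi_{\pa E}-P_{\pa E}=-\psi_{F,E}B_E(I+\psi_{F,E}B_E)^{-1}$ and $J_{\pa E}\Psi-1\approx H_E\psi_{F,E}$ are \emph{linear} in $\psi_{F,E}$ and have no sign. Hence $\|\psi_{F,E}\|_{L^\infty}\le Ch^{3/4}$ only gives $(1+Ch^{3/4})\int_{\pa E}|\nabla_{\pa E}\xi_{F,E}|^2$, not $(1+Ch)$. Optimizing the interpolation in \eqref{LESTIMEpsiFE} does not rescue this: since $H^1\not\subset L^\infty$ in two dimensions, one gets at best $\|\psi_{F,E}\|_{L^\infty}\le C_\delta h^{1-\delta}$, and $(1+Ch^{1-\delta})^{T/h}$ still diverges.

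The fix is to use homogeneity instead of $L^\infty$. For example,
$\int_{\pa E}|\psi_{F,E}||\nabla_{\pa E}\xi_{F,E}|^2\le \|\psi_{F,E}\|_{L^2}\|\nabla_{\pa E}\xi_{F,E}\|^2_{L^4}\le C\|\nabla_{\pa E}\xi_{F,E}\|_{L^2}^{5/2}\|\psi_{F,E}\|_{H^3}^{1/2}$.
Spend one factor via the a priori bound $\|\nabla_{\pa E}\xi_{F,E}\|_{L^2}\le Ch$ and apply Young. This lands in $Ch\|\nabla_{\pa E}\xi_{F,E}\|^2+\varepsilon h\|\nabla_{\pa E}\Delta_{\pa E}\psi_{F,E}\|^2$.

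The paper avoids the Jacobian in the principal term by writing $J_{\pa E}\Psi=\sqrt{J_{\pa E}\Psi}\,\sqrt{J_{\pa E}\Psi}$. Young then gives exactly $\frac{1}{2h}\int_{\pa E}|\nabla_{\pa E}\xi_{F,E}|^2+\frac{1}{2h}\int_{\pa F}|\nabla_{\pa F}\xi_{G,F}|^2$. The leftover linear correction is multiplied by $|\nabla_{\pa E}\xi_{F,E}|\le C(|\psi_{F,E}|+|\nabla_{\pa E}\psi_{F,E}|)$, so it is quadratic in $\psi_{F,E}$, and it is paid for with $\|\nabla_{\pa F}\xi_{G,F}\|_{L^2}\le Ch$.

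\textbf{Second gap: the commutator $h\int_{\pa F}\Delta_{\pa F}\xi_{G,F}\,\mathcal{E}$.} At the scale of \eqref{ARSENIOLEIMI}, admissible errors have the form $Ch(\|\nabla\xi_{F,E}\|^2+\|\nabla\xi_{G,F}\|^2)+\varepsilon h(\|\nabla\Delta\psi_{F,E}\|^2+\|\nabla\Delta\psi_{G,F}\|^2)$. Your plan combines $h\|\tilde f\|_{H^3(\pa E)}\le C\|\nabla_{\pa E}\xi_{F,E}\|_{L^2}$ with $\|\psi_{F,E}\|_{C^{2,\alpha}}\le Ch^{1/4}$. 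This produces $Ch^{1/4}\|\nabla_{\pa E}\xi_{F,E}\|_{L^2}\|\Delta_{\pa F}\xi_{G,F}\|_{L^2}$, a bilinear term with prefactor $h^{1/4}$ instead of $h$. No Young's inequality absorbs it: you get either $1+Ch^{1/4}$, or a coefficient of order $h^{-1/2}$ (instead of $\varepsilon h$) on $\|\nabla_{\pa F}\Delta_{\pa F}\psi_{G,F}\|^2$.

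Your fallback of integrating by parts again does not help. The piece $|\nabla\psi_{F,E}|\,h|\nabla^3\tilde f|$ then gives $Ch^{1/2}\|\nabla\xi_{F,E}\|\,\|\nabla\xi_{G,F}\|$.

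The underlying error is trading $h\tilde f$ for $\xi_{F,E}$. That turns an $O(h)$ bilinear term into a trilinear one whose only remaining smallness, from $\psi_{F,E}$, is too weak. Instead, keep the explicit $h$ and treat $\tilde f$ as an $h$-independent function: $\|\tilde f\|_{H^3(\pa E)}\le C$ by \eqref{03052026form1}, equivalently $H^\varphi_F-Q(E(u_F))$ is bounded in $H^3(\pa F)$. Since $\mathcal{E}$ is linear in $\tilde f$ with coefficients controlled by $\psi_{F,E}$ and its first two derivatives, you get $h\,|\int_{\pa F}\Delta_{\pa F}\xi_{G,F}\,\mathcal{E}|\le Ch\|\xi_{G,F}\|_{H^2(\pa F)}\|\psi_{F,E}\|_{W^{2,4}(\pa E)}$. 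Interpolation and Young then put this in admissible form.

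This is exactly how the paper treats $R_1$--$R_3$. It invokes the Euler--Lagrange identity only in the principal term and in $R_4$, where the extra $1/h$ is compensated by $\|\xi_{F,E}\|_{H^1(\pa E)}\le Ch$.
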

\begin{proof}
    In what follows, we denote by $C$ a generic constant depending on $K_0,\, K_{el}$ and $r_0$. To prove the thesis, we need to estimate the term on the right-hand side of inequality~\eqref{05052026dueminitui}, namely
\begin{equation}
  - \int_{\pa F} \xi_{G,F} \big(\Delta^2_{\pa F} H^\varphi_F - \Delta^2_{\pa F} Q(E(u_F)) \big).  
\end{equation}
Integrating by parts twice and applying a change of variables in the above expression, we obtain
\begin{equation}\label{06052026tuttiquanti}
\begin{split}
    &- \int_{\pa F} \Delta_{\pa F} \xi_{G,F} \Delta_{\pa F} \big(  H^\varphi_F- Q(E(u_F)) \big)\\
    &=-\int_{\pa E} \big( \Delta_{\pa F} ( H_F^\varphi- Q(E(u_F)))  \big) \circ \Psi_{F,E} \big(\Delta_{\pa F} \xi_{G,F} \big) \circ \Psi_{F,E} J_{\pa E } \Psi_{F,E}, 
    \end{split}
\end{equation}
where $ \Psi_{F,E}: \pa E \rightarrow \pa F$ is defined by $ \Psi_{F,E}(x):=x+ \psi_{F,E}(x)\nu_E(x)$, and $ J_{\pa E} \psi_{F,E}$ denotes the tangential Jacobian (see formula~\eqref{evvrailtempo1}). We now need to relate the Laplace--Beltrami operator on $\pa F$ to the corresponding operator on $\pa E$.\\
\textit{Claim: } If $f \in C^2(\pa F)$. Then, on $\pa E$
\begin{equation}\label{06052026laplace}
\begin{split}
   (\Delta_{\pa F} f ) \circ \Psi_{F,E}=& \Delta_{\pa E} (f \circ \Psi_{F,E})+ a(\nabla^2_{\pa F} f \circ \Psi_{F,E}, \psi_{F,E}, \nabla_{\pa E} \psi_{F,E}, \nu_E, B_E)\\
   &+ b(\nabla_{\pa F}f \circ \Psi_{F,E}, \psi_{F,E}, \nabla_{\pa E}\psi_{F,E}, \nabla_{\pa E}^2 \psi_{F,E}, \nu_E, B_E, \nabla_{\pa E} B_E).
   \end{split}
\end{equation}
Moreover, the following pointwise estimates hold:
\begin{multline}\label{06052026resto1}
    \vert a(\nabla^2_{\pa F} f \circ \Psi_{F,E}, \psi_{F,E}, \nabla_{\pa E} \psi_{F,E}, \nu_E, B_E) \vert \\ \leq C \vert \nabla_{\pa F}^2 f \vert \circ \Psi_{F,E} \big( \vert \psi_{F,E} \vert +\vert \nabla_{\pa E}\psi_{F,E} \vert + \vert \nabla_{\pa E}^2 \psi_{F,E} \vert \big)  
\end{multline}
and
\begin{multline}\label{06052026resto2}
  \vert b(\nabla_{\pa F}f \circ \Psi_{F,E}, \psi_{F,E}, \nabla_{\pa E}\psi_{F,E}, \nabla_{\pa E}^2 \psi_{F,E}, \nu_E, B_E, \nabla_{\pa E} B_E) \vert \\
  \leq C \vert \nabla_{\pa F} f \vert \circ \Psi_{F,E} \big( \vert \psi_{F,E} \vert + \vert \nabla_{\pa E} \psi_{F,E} \vert  \big),
\end{multline}
where the constant $C$ depends on $| B_E |_{C^1(\partial E)}$.

By the chain rule, we obtain
\begin{equation*}
    \nabla_{\pa E} (f \circ \Psi_{F,E})= \nabla_{\pa F} f \circ \Psi_{F,E} \nabla_{\pa E} \Psi_{F,E}.
\end{equation*}
Using formulas~~\eqref{02122025form1} and~\eqref{06052026form2}, we compute on $\partial E$
\begin{align*}
    \nabla_{\pa E} \Psi_{F,E}=& I-\nu_E \otimes \nu_E +A_0(\psi_{F,E}, \nabla_{\pa E} \psi_{F,E}, \nu_E, B_E)\notag\\
    =& I- \nu_F \circ \Psi_{F,E} \otimes \nu_F \circ \Psi_{F,E}\notag\\
    & +   \nu_F \circ \Psi_{F,E} \otimes \nu_F \circ \Psi_{F,E}- \nu_E \otimes \nu_E  + A_0(\psi_{F,E}, \nabla_{\pa E} \psi_{F,E}, \nu_E, B_E)  \notag\\
    =& I- \nu_F \circ \Psi_{F,E} \otimes \nu_F \circ \Psi_{F,E} + A_1(\psi_{F,E}, \nabla_{\pa E} \psi_{F,E}, \nu_E, B_E),
\end{align*}
where $A_0$ and $A_1$ are smooth matrix-valued functions satisfying $A_0(0,0,\cdot,\cdot)=A_1(0,0,\cdot,\cdot)=0$. Hence, we obtain
\begin{equation}\label{eq:ugnablacomp}
    \nabla_{\pa E} (f \circ \Psi_{F,E})= \nabla_{\pa F} f \circ \Psi_{F,E} \big( I+ A_1(\psi_{F,E}, \nabla_{\pa E} \psi_{F,E}, \nu_E,B_E) \big).
\end{equation}
Differentiating the above identity yields
\begin{equation}
    \begin{split}
      \nabla_{\pa E}^2 (f \circ \Psi_{F,E})  &= \nabla_{\pa F}^2 f \circ \Psi_{F,E} \big(  I+ A_1(\psi_{F,E}, \nabla_{\pa E} \psi_{F,E}, \nu_E,B_E) \big)^2\\
      &\quad+{\rm tr} \big( \nabla_{\pa F} f \circ \Psi_{F,E} \otimes \nabla_{\pa E} \big( I+ A_1(\psi_{F,E}, \nabla_{\pa E} \psi_{F,E}, \nu_E,B_E)    \big) \big).
    \end{split}
\end{equation}
Taking the trace of the above identity and computing explicitly $ \nabla_{\pa E} \big( I+ A_1(\psi, \nabla_{\pa E} \psi, \nu_E,B_E)    \big) $ we obtain~\eqref{06052026laplace},~\eqref{06052026resto1}, and~\eqref{06052026resto2}.

By the claim, formula~\eqref{06052026tuttiquanti}, and a further integration by parts, we obtain
\begin{equation}\label{LE-LOUVRE}
\begin{split}
     - &\int_{\pa F} \Delta_{\pa F} \xi_{G,F} \Delta_{\pa F} \big(  H^\varphi_F- Q(E(u_F)) \big)\\
     = & -  \int_{\pa E} \big( \Delta_{\pa F} ( H_F^\varphi- Q(E(u_F)))  \big) \circ \Psi_{F,E} \big(\Delta_{\pa F} \xi_{G,F} \big) \circ \Psi_{F,E} \big( J_{\pa E } \Psi_{F,E}  -\sqrt{J_{\pa E } \Psi_{F,E} } \big) \\
     & -  \int_{\pa E} \big( \Delta_{\pa F} ( H_F^\varphi- Q(E(u_F)))  \big) \circ \Psi_{F,E} \big(\Delta_{\pa F} \xi_{G,F} \big) \circ \Psi_{F,E} \sqrt{ J_{\pa E } \Psi_{F,E} }
     \\
   \leq  &  \int_{\pa E} \nabla_{\pa E} \Delta_{\pa E} \big(  (H_F^\varphi-Q(E(u_F))) \circ\Psi_{F,E} \big) \cdot \nabla_{\pa E} \big( \xi_{G,F} \circ \Psi_{F,E} \big)\sqrt{ J_{\pa E} \Psi_{F,E}} + \| R \|_{L^1(\pa F)}, 
\end{split}
\end{equation}
where $R$ denotes an error term satisfying the following pointwise estimate:
\begin{align*}
    R \leq&\\
    & C \big( \vert \psi_{F,E} \vert^2+ \vert \nabla_{\pa E} \psi_{F,E} \vert^2  \big) \vert \Delta_{\pa F} ( H_F^\varphi- Q(E(u_F)))  \vert \circ \Psi_{F,E} \vert \Delta_{\pa F} \xi_{G,F} \vert \circ \Psi_{F,E} \,\bigg( :=R_1 \bigg) \\
    & + C \vert \Delta_{\pa F} ( H_F^\varphi- Q(E(u_F)))  \vert \circ \Psi_{F,E} \big( \sum_{i=1}^2\vert \nabla_{\pa F}^i \xi_{G,F} \vert    \big)\circ \Psi_{F,E}  \big( \sum_{i=1}^2\vert \nabla_{\pa E}^i \psi_{F,E} \vert    \big) \,\bigg( :=R_2 \bigg)\\
    & + C \vert \Delta_{\pa F} \xi_{G,F} \vert \circ \Psi_{F,E} \big( \sum_{i=1}^2 \vert \nabla_{\pa F}^i ( H_F^\varphi- Q(E(u_F)) ) \vert   \big) \circ \Psi_{F,E} \big( \sum_{i=1}^2\vert \nabla_{\pa E}^i \psi_{F,E} \vert    \big) \,\bigg( :=R_3 \bigg)\\
    & + C \vert \Delta_{\pa E} \big(   (H_F^\varphi-Q(E(u_F))) \circ \Psi_{F,E} \big)\vert \vert \nabla_{\pa E} \big( \xi_{G,F} \circ \Psi_{F,E}  \big)\vert \big( \sum_{i=1}^2 \vert \nabla_{\pa E}^i\psi_{F,E}\vert  \big) \,\bigg( :=R_4 \bigg).
\end{align*}
It remains to estimate $R_i$ for $i=1, \dots,4$. To this end, we fix $\varepsilon>0$, to be chosen later. \\
\textit{Estimate of $\int_{\pa E}R_1+ R_2+R_3$.}

Using~\eqref{LESTIMEpsiFE}, together with the Sobolev embedding theorem and a change of variables, we obtain
\begin{align}
    \int_{\pa E} R_1+ R_2+R_3 \leq& C \| \xi_{G,F} \|^2_{H^2(\pa F)} + C \| \psi_{F,E} \|^2_{W^{2,4}(\pa E)} \|  H_F^\varphi-Q(E(u_F))    \|^2_{W^{2,4}(\pa F)}\notag\\
    \leq  & \varepsilon \| \nabla_{\pa F} \Delta_{\pa F} \psi_{G,F} \|^2_{L^2(\pa F)}+ C \| \nabla_{\pa F} \xi_{G,F} \|^2_{L^2(\pa F)}  + C \| \psi_{F,E} \|^2_{H^3(\pa E)}\notag\\
    \leq & \varepsilon \| \nabla_{\pa F} \Delta_{\pa F} \psi_{G,F} \|^2_{L^2(\pa F)}+ C \| \nabla_{\pa F} \xi_{G,F} \|^2_{L^2(\pa F)}  \label{10052026tuttiquanti}\\
    &  + \varepsilon \| \nabla_{\pa E} \Delta_{\pa E}\psi_{F,E} \|^2_{L^2(\pa E)}+ C \| \nabla_{\pa E} \xi_{F,E} \|^2_{L^2(\pa E)}, \notag
\end{align}
in the second and last inequalities we argue as in~\eqref{03052026caffe1}.\\
\textit{Estimate of $\int_{\pa E} R_4$.}

We recall the Euler-Lagrange equation~\eqref{Cosac'e'}
\begin{equation}\label{08052026dino}
    \frac{\xi_{F,E}}{h} = \Delta_{\pa E} \big(  (H_F^\varphi-Q(E(u_F)))   \circ \Psi_{F,E}\big) \text{ on }\pa E.
\end{equation}
Using this identity together with the Cauchy–Schwarz inequality, we obtain
\begin{align}
        \int_{\pa E} R_4 & \leq C\int_{\pa E} \vert \frac{\xi_{F,E}}{h} \vert \vert \nabla_{\pa E} (\xi_{G,F} \circ \Psi_{F,E})\vert \big( \sum_{i=1}^2 \vert \nabla_{\pa E}^i \psi_{F,E} \vert  \big)  \notag\\
        & \leq C  \| \nabla_{\pa F} \xi_{G,F} \|^2_{L^2(\pa F)}+\frac{\varepsilon}{h^2} \int_{\pa E} \vert \xi_{F,E} \vert^2 \big( \sum_{i=1}^2 \vert \nabla_{\pa E}^i \psi_{F,E} \vert^2  \big) \notag\\
        & \leq C \| \nabla_{\pa F} \xi_{G,F} \|^2_{L^2(\pa F)}+\frac{\varepsilon}{h^2} \| \xi_{F,E} \|^2_{L^4(\pa E)}\| \psi_{F,E} \|^2_{W^{2,4}(\pa E)} \notag\\
        & \leq C \| \nabla_{\pa F} \xi_{G,F} \|^2_{L^2(\pa F)}+\frac{\varepsilon C}{h^2} \|  \xi_{F,E} \|^2_{H^1(\pa E)} \| \psi_{F,E} \|_{H^3(\pa E)}^2 \notag\\
        & \leq C \| \nabla_{\pa F} \xi_{G,F} \|^2_{L^2(\pa F)}+\varepsilon \| \nabla_{\pa E} \Delta_{\pa E} \psi_{F,E} \|^2_{L^2(\pa E)}+C \| \nabla_{\pa E} \xi_{F,E} \|^2_{L^2(\pa E)},
\end{align}
in the second inequality we performed a change of variables, and in the last we applied~\eqref{LESTIMEpsiFE} and argued as in~\eqref{03052026caffe1}.

Finally, by~\eqref{LE-LOUVRE} and~\eqref{08052026dino} combined with the estimates of the error terms, we obtain
\begin{align}
         - &\int_{\pa F} \Delta_{\pa F} \xi_{G,F} \Delta_{\pa F} \big(  H^\varphi_F- Q(E(u_F)) \big)\notag\\
         & \quad \leq \frac{1}{h}\int_{\pa E} \nabla_{\pa E} \xi_{F,E} \cdot \nabla_{\pa E} \big( \xi_{G,F} \circ \Psi_{F,E}  \big)\sqrt{ J_{\pa E} \Psi_{F,E}} +\varepsilon C \| \nabla_{\pa F} \Delta_{\pa F} \psi_{G,F} \|^2_{L^2(\pa F)} \notag\\
       & \qquad+ C \| \nabla_{\pa F} \xi_{G,F} \|^2_{L^2(\pa F)}  + \varepsilon C\| \nabla_{\pa E} \Delta_{\pa E}\psi_{F,E} \|^2_{L^2(\pa E)}+ C \| \nabla_{\pa E} \xi_{F,E} \|^2_{L^2(\pa E)} \notag\\
       & \quad \leq \frac{1}{h} \int_{\pa E} \vert \nabla_{\pa E} \xi_{F,E} \vert \vert \nabla_{\pa F} \xi_{G,F} \vert \circ \Psi_{F,E} \big( \sqrt{J_{\pa E} \Psi_{F,E}}+ C \big(1+ \vert \psi_{F,E} \vert + \vert \nabla_{\pa E} \psi_{F,E} \vert  \big)  \big)\notag\notag\\
       & \qquad +\varepsilon C\| \nabla_{\pa F} \Delta_{\pa F} \psi_{G,F} \|^2_{L^2(\pa F)} + C \| \nabla_{\pa F} \xi_{G,F} \|^2_{L^2(\pa F)} \notag\\
       & \qquad + \varepsilon C\| \nabla_{\pa E} \Delta_{\pa E}\psi_{F,E} \|^2_{L^2(\pa E)}+ C \| \nabla_{\pa E} \xi_{F,E} \|^2_{L^2(\pa E)}  \notag\\
       & \quad \leq \frac{1}{h} \int_{\pa E} \vert \nabla_{\pa E} \xi_{F,E} \vert \vert \nabla_{\pa F} \xi_{G,F} \vert \circ \Psi_{F,E} \sqrt{ J_{\pa E} \Psi_{F,E}}\notag\\
       & \qquad + \frac{C}{h} \| \nabla_{\pa F} \xi_{G,F} \|_{L^2(\pa F)} \bigg( \int_{\pa E} \vert \nabla_{\pa E} \xi_{F,E} \vert^2 (1+\vert \psi_{F,E} \vert^2+ \vert \nabla_{\pa E} \psi_{F,E} \vert^2)  \bigg)^{\frac{1}{2}}
       \notag\\
       & \qquad +\varepsilon C\| \nabla_{\pa F} \Delta_{\pa F} \psi_{G,F} \|^2_{L^2(\pa F)} + C \| \nabla_{\pa F} \xi_{G,F} \|^2_{L^2(\pa F)} \notag\\
       & \qquad + \varepsilon C\| \nabla_{\pa E} \Delta_{\pa E}\psi_{F,E} \|^2_{L^2(\pa E)}+ C \| \nabla_{\pa E} \xi_{F,E} \|^2_{L^2(\pa E)} \notag\\ 
       & \quad \leq  \frac{1}{h} \bigg( C\| \nabla_{\pa E} \xi_{F,E} \|^2_{L^2(\pa E)}+ \frac{1}{2} \| \nabla_{\pa F} \xi_{G,F} \|^2_{L^2(\pa F)}  \bigg) + C \| \psi_{F,E} \|^2_{W^{1,4}(\pa E)} \notag\\
       & \qquad +\varepsilon \| \nabla_{\pa F} \Delta_{\pa F} \psi_{G,F} \|^2_{L^2(\pa F)} + C \| \nabla_{\pa F} \xi_{G,F} \|^2_{L^2(\pa F)} \notag\\
       & \qquad + \varepsilon \| \nabla_{\pa E} \Delta_{\pa E}\psi_{F,E} \|^2_{L^2(\pa E)}+ C \| \nabla_{\pa E} \xi_{F,E} \|^2_{L^2(\pa E)} \\ 
       &\quad \leq \frac{1}{h} \bigg( C\| \nabla_{\pa E} \xi_{F,E} \|^2_{L^2(\pa E)}+ \frac{1}{2} \| \nabla_{\pa F} \xi_{G,F} \|^2_{L^2(\pa F)}  \bigg)\label{10052026fineiter} \\
       & \qquad +\varepsilon \| \nabla_{\pa F} \Delta_{\pa F} \psi_{G,F} \|^2_{L^2(\pa F)} + C \| \nabla_{\pa F} \xi_{G,F} \|^2_{L^2(\pa F)} \notag\\
       & \qquad + \varepsilon  \| \nabla_{\pa E} \Delta_{\pa E}\psi_{F,E} \|^2_{L^2(\pa E)}+ C \| \nabla_{\pa E} \xi_{F,E} \|^2_{L^2(\pa E)},\notag
\end{align}
where in the second inequality we used~\eqref{eq:ugnablacomp}; in the fourth inequality we used Young’s inequality, the estimate $ \vert \nabla_{\pa E} \xi_{F,E}\vert \leq C (\vert \psi_{F,E} \vert + \vert \nabla_{\pa E} \psi_{F,E} \vert)$ as well as~\eqref{LESTIMEpsiFE} and~\eqref{LESTIMEpsiGF}; in the last inequality we applied the estimate $ \| \psi_{F,E} \|_{W^{1,4}(\pa E)}^2 \leq \varepsilon \| \nabla_{\pa E} \Delta_{\pa E}\psi_{F,E} \|^2_{L^2(\pa E)}+ C \| \nabla_{\pa E} \xi_{F,E} \|^2_{L^2(\pa E)} $ that can be shown as~\eqref{03052026caffe1}.

Finally, by~\eqref{05052026dueminitui},~\eqref{10052026fineiter}, taking $\varepsilon>0$ sufficiently small, and using the ellipticity of $\mathcal{A}$, we deduce~\eqref{ARSENIOLEIMI}.
\end{proof}

\section{Proof of the main theorem} \label{sezionefinale}
	In this section, we exploit the iteration estimates established in the previous section to prove that the constrained discrete flat flows, as introduced in Definition~\ref{12092023def1}, can be used to construct classical solutions of~\eqref{MAINEQ}, provided that $K_{el}$ is sufficiently large and the initial datum $E_0 \Subset \Omega$ is sufficiently smooth.

    We first establish the stability of the elastic minimizers with respect to convergence of the underlying domains.
    
    \begin{lemma}\label{lemmafacilespero}
		Let $F_h \Subset \Omega$, for $h>0$, and $F \Subset \Omega$ be uniformly $C^2$-regular sets satisfying the \textsc{UBC} with radius $r$ and $ \vert F_h \vert= \vert F \vert$. Assume that $\chi_{F_h} \to \chi_F$ in $L^1(\Omega)$ and that $ (\pa F_h, \vert \cdot \vert, \mathcal{H}^2 \mrestr \pa F_h)$ converge to  $ (\pa F, \vert \cdot \vert, \mathcal{H}^2 \mrestr \pa F)$ in the measured Gromov-Hausdorff sense. Let $u_{F_h}^{K_{el},h}$ be a minimizer of problem~\eqref{minelvinc} for $h>0$. Then, $u_{F_h}^{K_{el},h} \rightarrow u_F^{K_{el},0}$ as $h \rightarrow 0^+$ in $\mathfrak{C}_{K_{el}}^{3,\frac{1}{4}}(\Omega ,\R^2)$, where $u_F^{K_{el},0} $ is a minimizer of~\eqref{minelvinc} corresponding to $h=0$.
	\end{lemma}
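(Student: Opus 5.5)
The plan is a compactness argument combined with lower semicontinuity and a recovery construction for the constraints, which depend on $h$ through the factor $h^{-1/4}$.

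\emph{Step 1: compactness.} Every $u_h:=u_{F_h}^{K_{el},h}$ lies in $\mathfrak{C}^{3,\frac14}_{K_{el}}(\Omega,\R^3)$, so by Arzel\`a--Ascoli, along a subsequence, $u_h\to u$ in $C^3(\Omega)$, and the limit satisfies $u\in\mathfrak{C}^{3,\frac14}_{K_{el}}$ and $u|_{\pa\Omega}=w_0$.

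\emph{Step 2: the limit is admissible for the problem with $h=0$.} This is the only place the measured Gromov--Hausdorff convergence enters. Since $\pa^\alpha u_h\to\pa^\alpha u$ uniformly with uniformly H\"older first derivatives, and $\mathcal{H}^2\mrestr\pa F_h$ converges to $\mathcal{H}^2\mrestr\pa F$, the Sobolev norms $\|\pa^\alpha u_h\|_{H^{1,2}(\mathcal{H}^2\mrestr\pa F_h)}$ should be lower semicontinuous. I would justify this by the Mosco/$\Gamma$-liminf property of Cheeger energies under mGH convergence, in the form used by Gigli--Pasqualetto. For $h=0$ the weak constraint reads $\le K_{el}h^{-1/4}=+\infty$, so it is vacuous. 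Hence $u$ is admissible.

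\emph{Step 3: minimality.} Take any competitor $v$ admissible for $F$ at $h=0$. I must produce $v_h$ admissible for $F_h$ at level $h$ whose energies converge to that of $v$.
\begin{itemize}
\item First regularise: set $v_\delta:=\lambda w_0(\pi_{\pa\Omega})+(1-\delta)\big(v*\rho_\delta-\lambda w_0(\pi_{\pa\Omega})*\rho_\delta\big)$, suitably corrected near $\pa\Omega$ so that the boundary value $w_0$ is kept. This makes the bounds strict, and $v_\delta$ is smooth with bounded third derivatives.
\item Next, the $H^{1,2}(\pa F_h)$ norms of the smooth function $\pa^\alpha v_\delta$ converge to those on $\pa F$ under mGH convergence with UBC. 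This follows because the tangential gradients converge, given the uniform $C^{1,1}$ graph control. So $v_\delta$ is admissible for $F_h$ for $h$ small, once the weak $h$-constraint is checked.
\item For that constraint, $|\Delta_{\pa F_h}\pa^\alpha(v_\delta*\rho_\varepsilon)|\le C(\delta)$ uniformly by the UBC, which gives $\le C(\delta)\le K_{el}h^{-1/4}$ for $h$ small.
\end{itemize}
Then
\[
\int_{\Omega\setminus F_h}Q(E(u_h))\le\int_{\Omega\setminus F_h}Q(E(v_\delta)).
\]
Passing to the limit with $\chi_{F_h}\to\chi_F$ in $L^1$ and uniform convergence of the gradients gives $\int_{\Omega\setminus F}Q(E(u))\le\int_{\Omega\setminus F}Q(E(v_\delta))$. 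Letting $\delta\to0$ then shows that $u$ is a minimizer.

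\emph{Main obstacle.} I expect the recovery step to be the hardest part, namely keeping the strict inequalities and the boundary condition when regularising. I would handle the $\pa\Omega$ region with the cutoff $\lambda$ from the existence lemma, which has small $C^5$ norm. Because minimizers need not be unique, the statement is understood as convergence along subsequences to some minimizer $u_F^{K_{el},0}$.
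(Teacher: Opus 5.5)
\textbf{Steps 1 and 2.} These match the paper's proof. You use Arzel\`a--Ascoli in $\mathfrak{C}^{3,\frac14}_{K_{el}}$, and then lower semicontinuity of $\|\pa^\alpha\cdot\|_{H^{1,2}}$ along the mGH-converging boundaries (the paper cites Ambrosio--Honda and Gigli for this). Together these make the limit admissible at $h=0$.

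There is a minor slip in Step 2. The $C^{3,\frac14}$ bound gives uniform H\"older continuity of $\pa^\alpha u_h$, not of its first derivatives, which are fourth derivatives of $u_h$. This is harmless, since the liminf inequality only needs uniform convergence plus the uniform $H^{1,2}$ bound.

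For minimality, the paper compares with $C^5$ competitors admissible for $F_h$ and then invokes ``a standard density argument''. Your Step 3 tries to make that density argument explicit, and that is where the proposal fails.

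\textbf{The gap in Step 3.} You claim that mollifying and shrinking ``makes the bounds strict''. This is false for the surface constraint $\|\pa^\alpha v\|_{H^{1,2}(\mathcal{H}^2\mrestr\pa F)}\le K_{el}$.

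That constraint only controls the trace of $\pa^\alpha v$ on $\pa F$. Off $\pa F$, the function $\pa^\alpha v$ is merely $C^{0,\frac14}$. The value of $(\pa^\alpha v)*\rho_\delta$ at a point of $\pa F$ averages $\pa^\alpha v$ over a $\delta$-ball, where its tangential oscillation is uncontrolled.

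Here is a concrete example. Suppose $\pa F\cap U=\{y_3=0\}$ and $F\cap U=\{y_3<0\}$. Let $z(y)=\sum_k 2^{-13k/4}\eta(2^k y_3)\sin(2^k y_1)$, with $\eta\in C^\infty_c((1,2))$, $\eta\ge0$, $\eta\not\equiv0$.
\begin{itemize}
\item Then $z\in C^{3,\frac14}$ and $z\equiv0$ on $\{y_3\le0\}$, so every derivative of $z$ has zero trace on $\pa F$.
\item However, the dyadic term with $2^k\simeq\delta^{-1}$ survives the averaging, and $\|\nabla_{\pa F}((\pa_1^3 z)*\rho_\delta)\|_{L^2(\pa F\cap U)}\gtrsim\delta^{-3/4}$.
\item Add a small localized multiple of $z$ to a strictly admissible $v_0$ that is smooth near $\pa F$. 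This gives an admissible $v$ for which $\|\pa^\alpha v_\delta\|_{H^{1,2}(\pa F)}\to+\infty$ as $\delta\to0$.
\item The factor $1-\delta$ cannot compensate. So $v_\delta$ is not admissible for $F_h$, and you never reach $\int_{\Omega\setminus F}Q(E(u))\le\int_{\Omega\setminus F}Q(E(v))$.
\end{itemize}

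\textbf{What is needed instead.} Some regularisation is unavoidable. The $h$-dependent constraint, tested at all scales $\varepsilon\in D$, requires $\Delta_{\pa F_h}((\pa^\alpha v)*\rho_\varepsilon)$ to be bounded uniformly in $\varepsilon$, which $C^{0,\frac14}$ regularity does not give. What you need is an approximation $v_n\to v$ in $C^1$ with the following properties:
\begin{itemize}
\item $v_n$ is smooth near $\pa F$;
\item its $C^{3,\frac14}$ norm exceeds that of $v$ only by $o(1)$;
\item the $H^{1,2}(\pa F)$ norms of its third derivatives exceed those of $v$ only by $o(1)$.
\end{itemize}
This calls for a regularisation adapted to $\pa F$ rather than an isotropic ambient mollifier, followed by shrinking towards a strictly admissible function.

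The paper only asserts this density step, so this is exactly where a real construction must be supplied. In the setting where the lemma is applied, you could alternatively compare only with the extension of the unconstrained minimiser $u_F$, provided that extension is admissible for $F_h$. You would then conclude by uniqueness of $u_F$ (strict convexity of $Q$ and Korn's inequality).
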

    \begin{proof}
        Throughout the proof, we will implicitly pass to subsequences without further mention.
		By Arzelà-Ascoli Theorem, we have $ u_{F_h}^{K_{el},h} \rightarrow u$ in $C^{3,\beta}(\Omega)$ for $\beta < 1/4$, where $u \in C^{3,\frac{1}{4}}(\Omega)$. Let $ v \in C^5(\Omega)$ be a competitor to the minimality of $u_{F_h}^{K_{el},h}$ for some $h>0$. Then, we have 
		\begin{equation}\label{11052026yoga}
			\begin{split}
				\int_{\Omega \setminus F} Q(E(u)) \, d x& = \lim_{h \rightarrow 0^+} \int_{\Omega \setminus F_h} Q(E(u_{F_h}^{K_{el},h})) \, d x \\
				&\leq \lim_{h \rightarrow 0^+}\int_{\Omega \setminus F_h} Q(E(v))\, d x  = \int_{\Omega \setminus F} Q(E(v))\, d x.
			\end{split}
		\end{equation}
        By the measured Gromov-Hausdorff convergence of $(\pa F_h, \vert \cdot \vert, \mathcal{H}^2 \mrestr \pa F_h)$ to $(\pa F, \vert \cdot \vert, \mathcal{H}^2 \mrestr \pa F)$ together with the fact that both $F_h$ and $ F$ satisfy the \textsc{UBC} with radius $r$ it follows from~\cite{AMBHON2017, Gigli2017} that $$ \| \pa^{\alpha} u_{F}^{K_{el}} \|_{H^{1,2}(\mathcal{H}^2 \mrestr \pa F)} \leq \liminf_{h \rightarrow 0^+}\| \pa^{\alpha} u_{F_h}^{K_{el}} \|_{H^{1,2}(\mathcal{H}^2 \mrestr \pa F_h)}, \text{ for every } |\alpha|=3 . $$   Hence, $ \| \pa^{\alpha} u_{F}^{K_{el}} \|_{H^{1,2}(\mathcal{H}^2 \mrestr \pa F)} \leq K_{el}$. Therefore, combining this estimate with~\eqref{11052026yoga} and applying a standard density argument, this concludes the proof. 
    \end{proof}
    Before proving the main theorem of this section, we state a lemma whose proof can be found in~\cite[Lemma 5.2]{CFJKsd}.
    \begin{lemma}\label{lemma15052026}
        Let $\Omega$ and $E_0$ be as in Definition~\ref{Omegaevrietadiriferimento}, $K>0$, and let $F \Subset \Omega$ be a $(\Lambda,\alpha)$-minimizer of the anisotropic perimeter (see Definition~\ref{15052026deflambdaalpha}) satisfying
        \begin{equation}
            \| B_F \|_{L^\infty(\pa F)} \leq K \quad \text{ and } \quad  \| \Delta_{\pa F} H_F \|_{L^2(\pa F)} \leq K.
        \end{equation}
        There exists $\delta>0$, depending only on $\Omega$, $E_0$, $K$, $\Lambda$, and $\alpha$, such that if $ F \Delta E_0 \subset \mathcal{I}_\delta(\pa E_0)$, then $F$ satisfies the \textsc{UBC} with radius $ \sigma_0 /2$, where $\sigma_0$ is given in~\eqref{Laconstantesigmazerofin}.
    \end{lemma}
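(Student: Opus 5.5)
The plan is to show that the curvature bounds upgrade $F$ to a uniformly regular set that is a small normal graph over $\pa E_0$, and then read off the ball radius from the geometry of $E_0$.

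First, $F$ is a $(\Lambda,\alpha)$-minimizer and $F\Delta E_0\subset\mathcal{I}_\delta(\pa E_0)$. Lemma~\ref{epsilonregolarita}, applied with $E=E_0$ (uniformly $C^{6,1}$, hence $C^{2,\beta}$), gives for $\delta$ small a height function $\psi\in C^{1,\gamma}(\pa E_0)$ with $\pa F=\{x+\psi(x)\nu_{E_0}(x)\}$. Given any $\varepsilon>0$, shrinking $\delta$ (depending on $\Lambda,\alpha,E_0,\Omega$) ensures $\|\psi\|_{C^{1,\gamma}(\pa E_0)}\le\varepsilon$. In particular $\|\psi\|_{L^\infty}\le\delta$.

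Next we upgrade regularity. The bound $\|B_F\|_{L^\infty}\le K$ together with the $C^1$-closeness gives control of $\mathcal{H}^2(\pa F)$. Lemma~\ref{lem:hilbert-norm} with $k=1$ then yields $\|B_F\|_{H^2(\pa F)}\le C(K)(1+\|\Delta_{\pa F}H_F\|_{L^2})\le C(K)$. By Sobolev embedding on the surface, uniform thanks to Proposition~\ref{prop:interpolation} and the curvature bound, $B_F$ is bounded in $C^{0,\beta}$ for every $\beta<1$. Writing the mean curvature of the normal graph through the isotropic analogue of Lemma~\ref{10Expcurvvarphi}, we get a quasilinear elliptic equation for $\psi$ on $\pa E_0$: the principal part is $-\Delta_{\pa E_0}\psi$, and the coefficients are small in $C^{0,\gamma}$ since $\|\psi\|_{C^{1,\gamma}}\le\varepsilon$. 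Schauder theory then gives $\|\psi\|_{C^{2,\beta}(\pa E_0)}\le C(K,E_0)$. Interpolating this against $\|\psi\|_{L^\infty}\le\delta$ via \eqref{interHOLDER} gives $\|\psi\|_{C^{2}(\pa E_0)}\le C\delta^{\theta}$ for some $\theta>0$.

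Finally we verify the \textsc{UBC}. $E_0$ satisfies the \textsc{UBC} with some radius larger than $2\sigma_0$, since by \eqref{Laconstantesigmazerofin} $\sigma_0<\frac12\|B_{E_0}\|_{L^\infty}^{-1}$; for $E_0$ connected with reach of this size this holds, and we use the convention of~\cite{Da2018}. A normal graph with $C^2$-small height function inherits the \textsc{UBC} with a radius close to the original; this is the perturbation result used in Lemma~\ref{lemmahnegativa} via~\cite[Theorem 2.6]{Da2018} and~\cite[Remark 3.4.6]{Antonia}. Choosing $\delta$ so that $C\delta^\theta$ is below the threshold of that result gives the \textsc{UBC} with radius $\sigma_0/2$.

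The main obstacle is the second step: obtaining a $C^2$ bound on $\psi$ that is uniform in $F$ and small. The key point is that the $L^2$ bound on $\Delta_{\pa F}H_F$ is intrinsic to $\pa F$, so it must be transferred to $\pa E_0$ coordinates without circularity. I would first obtain $W^{2,p}$ bounds on $\psi$ from the equation $H_F\circ\Psi=$ (an elliptic operator applied to $\psi$) with $H_F\in L^\infty$. These bounds make the change of variables controlled, and Schauder estimates then follow.
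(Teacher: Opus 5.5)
The paper gives no proof of this lemma and defers to \cite[Lemma 5.2]{CFJKsd}. Your route is the standard one for this kind of statement:
\begin{itemize}
\item Lemma~\ref{epsilonregolarita} makes $\pa F$ a $C^{1,\gamma}$-small normal graph over $\pa E_0$.
\item Lemma~\ref{lem:hilbert-norm} gives $\|B_F\|_{H^2(\pa F)}\le C(K)$; note this is the case $k=2$, not $k=1$.
\item Morrey's embedding then gives $B_F\in C^{0,\beta}$, and the graph expansion plus Schauder give a uniform $C^{2,\beta}$ bound on $\psi$.
\item Interpolation against $\|\psi\|_{L^\infty}\le\delta$ makes $\psi$ small in $C^2$.
\end{itemize}
The circularity you worry about at the end does not arise. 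The norm $\|\psi\|_{C^{1,1}}$ is already controlled by $\|B_F\|_{L^\infty}\le K$ together with the $C^1$-smallness. The $H^2$ bound on $B_F$ and its H\"older embedding are intrinsic to $\pa F$, with constants depending only on $K$ and $\mathcal{H}^2(\pa F)$. So no preliminary $W^{2,p}$ step is needed.

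The step that fails as written is the start of your last paragraph. You claim that $\sigma_0<\frac12\|B_{E_0}\|_{L^\infty}^{-1}$ forces $E_0$ to satisfy the \textsc{UBC} with radius larger than $2\sigma_0$. A pointwise curvature bound does not control the \textsc{UBC} radius (the reach), which is also limited by global near-contacts of $\pa E_0$, and connectedness does not help. For example, take a horseshoe-shaped $E_0$ with mild curvature whose two ends are separated by a gap of width $g\ll\|B_{E_0}\|_{L^\infty}^{-1}$; its \textsc{UBC} radius is at most $g/2$. Now take $F=E_0$. It is a $(\Lambda,\alpha)$-minimizer by Lemma~\ref{lamdaminC2set} and satisfies $F\Delta E_0=\emptyset$. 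So radius $\sigma_0/2$ cannot follow from \eqref{Laconstantesigmazerofin} alone when $\sigma_0$ is close to the admissible bound.

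You must use, as the paper tacitly does (compare Remark~\ref{rem:thm2}), that $E_0$ satisfies the \textsc{UBC} with radius $2\sigma_0$; equivalently, $\sigma_0$ is at most half the \textsc{UBC} radius of $E_0$. With that hypothesis your perturbation step is correct. A $C^2$-small normal graph over $\pa E_0$ satisfies the \textsc{UBC} with radius close to $2\sigma_0$, hence at least $\sigma_0/2$, once $C\delta^\theta$ is small enough, and the proof closes.
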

    \begin{theorem}\label{1THMMAIN1}
        Let $\Omega$ and $E_0$ be as in Definition~\ref{Omegaevrietadiriferimento},
		and let $K_{el}>0$ be the constant provided by Remark~\ref{02052026falchi}. There exist  $T_s,C_0, \beta_0,\sigma_1>0$ with the following property: for every $\beta < \beta_0$, there exists $\tilde{h}$ such that any discrete constrained flat flow $ \{ E^{h,\beta}_t \}_{t \geq0}$ starting from $E_0$ with $ 0 < h \leq \tilde{h}$ satisfies
		\begin{equation}\label{LaVidaEsUnaLenteja}
			\pa E^{h,\beta}_t= \{x+ f^{h,\beta}(t,x)\nu_{E_0}(x): x \in \pa E_0    \}, \, \| f^{h,\beta} \|_{H^5(\pa E_0)} \leq C_0,\, \| f^{h,\beta} \|_{L^\infty(\pa E_0)} \leq \sigma_1,
		\end{equation}
		for all $t \in [0,T_s]$. Moreover, the functions $f^{h,\beta}$ converge in 
		$L^\infty([0,T_s], H^5(\pa E_0))$ to a function $f^{\beta}$ such that the family $ \{ E^\beta_t \}_{t \in [0,T_s]}$ defined as
		\begin{equation}
			\pa E^\beta_t= \{  x + f^\beta(t,x) \nu_{E_0}(x) : x \in \pa E_0 \}
		\end{equation}
		is a distributional solution of~\eqref{MAINEQ}.	Moreover, there exists $\theta \in (0,1)$ such that $$ f^\beta\in \mathrm{Lip}([0,T_s],L^1(\pa E_0)) \cap C^{0,\theta}([0,T_s], C^{3,\alpha}(\pa E_0)).$$
	\end{theorem}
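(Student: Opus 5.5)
The argument is an induction on the discrete steps, driven by Theorem~\ref{Bologna} and the iteration Lemma~\ref{propdiiterazione}; it is followed by a compactness argument in time and a passage to the limit in the Euler--Lagrange equation~\eqref{Comeprima}.

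\textbf{Setup and inductive hypothesis.} Set $K_0:=$ a constant larger than $\|\nabla_{\pa E_0}\Delta_{\pa E_0}H_{E_0}\|_{L^2}$ and $\|\Delta^2_{\pa E_0}H_{E_0}\|_{L^2}$; both are finite because $\pa E_0$ is $C^{6,1}$. Let $r_0$ be the \textsc{UBC} radius of $E_0$. With these constants, Theorem~\ref{Bologna} supplies $\sigma_1,\eta_0,h_0,K_1,K_2$, and we fix $\beta_0\le\eta_0$. For $E_k:=E^{h,\beta}_{kh}$ we introduce the energy
\[
D_k:=\int_{\pa E_{k-1}}|\nabla\xi_{E_k,E_{k-1}}|^2+\tfrac{h}{8}\int_{\pa E_{k-1}}\mathcal{A}\,\nabla\Delta\psi_{E_k,E_{k-1}}\cdot\nabla\Delta\psi_{E_k,E_{k-1}} .
\]
The inductive hypothesis, valid for all $k$ with $kh\le T_s$, has three parts:
\begin{itemize}
\item $E_k\in\mathfrak{H}^5_{C_0,\sigma_1}(E_0)$;
\item $E_k$ satisfies the \textsc{UBC} with radius $\sigma_0/2$;
\item $\|\nabla\Delta H_{E_k}\|_{L^2}\le K_0'$ and $\|\Delta^2H_{E_k}\|_{L^2}\le K_0'h^{-1/4}$, where the constant $K_0'$ is fixed once and for all.
\end{itemize}

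\textbf{Inductive step.} Given the hypothesis up to step $k$, Theorem~\ref{Bologna} applies to $E_k$ and yields the normal-graph description of $E_{k+1}$, the estimates~\eqref{Quattrostracci}, and the coincidence $u^{K_{el}}=u$. Lemma~\ref{propdiiterazione} then gives $D_{k+1}\le(1+Mh)D_k$. Iterating, $D_k\le e^{MT_s}D_1$, and $D_1\lesssim h^2$ by Lemma~\ref{Lemma:stimefinali}, since $\|\psi\|_{H^1}\le Ch$.

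The bound on $D_k$ controls both $\|\psi_k\|_{H^1}\lesssim h$ and $h\|\psi_k\|_{H^3}^2\lesssim h^2$. Summing over $k$, the total displacement $\sum_k\|\psi_k\|_{L^2}\lesssim T_s$ is small in low norms. Interpolating against the uniform $H^5$ bound of Theorem~\ref{Bologna} gives $\|f_k-f_0\|_{C^{3,\alpha}}\lesssim T_s^\theta$. Choosing $T_s$ small therefore keeps $\|f_k\|_{L^\infty}\le\sigma_1$ and keeps the sets in the regime of Lemma~\ref{lemma15052026}, which restores the \textsc{UBC}.

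\textbf{Main obstacle.} The delicate point is closing the loop on the constants, since Theorem~\ref{Bologna} only gives $K_2=K_2(K_0)$, which in general exceeds $K_0$. The naive approach loses control over many steps. My plan is to recover the curvature bounds at step $k+1$ not from Theorem~\ref{Bologna} itself but from the Euler--Lagrange equation~\eqref{Cosac'e'}, in the form $H^\varphi_F=L+Q-\tilde f\circ\pi$ with $-\Delta\tilde f=\xi/h$. This gives $\|\nabla\Delta H_F\|_{L^2}\lesssim 1+\|\xi\|_{H^1}/h\lesssim1+\sqrt{D_{k+1}}/h$, which is uniformly bounded thanks to the iteration. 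Similarly, $\|\Delta^2H_F\|_{L^2}\lesssim h^{-1/4}$ follows via Lemma~\ref{lemmahnegativa}, whose bound involves only quantities already controlled by $D_{k+1}$. The uniform $H^5$ bound on $f_k$ follows in the same way, by relating $H^5$ of the graph to $H^3$ of the curvature through Lemma~\ref{lem:hilbert-norm}. The elastic constant $K_{el}$ remains admissible by Lemma~\ref{02052026silvia}, with uniform constants along the iteration.

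\textbf{Convergence and limit equation.} The discrete time-Lipschitz bound $\|f_{k+1}-f_k\|_{L^1}\lesssim\|\psi\|_{L^1}\lesssim h$ gives $\mathrm{Lip}([0,T_s],L^1)$. Interpolating with the uniform $H^5$ bound gives $C^{0,\theta}([0,T_s],C^{3,\alpha})$ equicontinuity. An Arzelà--Ascoli / Aubin--Lions argument then extracts a limit $f^\beta$; the strong $L^\infty_tH^5$ convergence comes from uniqueness of the limit plus interpolation. To pass to the limit in~\eqref{Comeprima}, test against $\zeta\in C^\infty_c$:
\begin{itemize}
\item the discrete velocity $\xi/h$ converges to $V$;
\item $\tilde f\circ\pi$ converges to $-(H^\varphi-Q)$ up to a constant;
\item $-\Delta\tilde f=\xi/h$ then yields $V=\Delta(H^\varphi-Q(E(u)))$ distributionally.
\end{itemize}
In the last step, Lemma~\ref{lemmafacilespero} together with Lemma~\ref{lemmanichel} gives convergence of the elastic term, and the constrained elastic minimizer equals the true $u_{E_t}$.
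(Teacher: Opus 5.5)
\textbf{Comparison with the paper.} Your outline follows the paper's architecture. It runs an induction driven by Theorem~\ref{Bologna} and Lemma~\ref{propdiiterazione}, and recovers the curvature bounds from the Euler--Lagrange system~\eqref{Cosac'e'}; this is what the paper's ``possibly after increasing the constant $K_0$'' amounts to (cf.\ Remark~\ref{rem:thm2}, where $K_0=K_0(\sigma_0,L_0,K_{el})$). It then uses the $\mathrm{Lip}([0,T_s],L^1)$ bound, interpolation, compactness, and the limit in the tested equation.

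The genuine difference is how you keep the flow near $E_0$. The paper lets $k_0$ be the exit index from $\mathcal{I}_\beta(\partial E_0)$ and bounds $T_0=k_0h$ from below by the density estimate $c\beta^3\le|E_{k_0}\Delta E_0|\le CL_0T_0$, so the existence time it produces depends on $\beta$. You instead interpolate the $L^1$ displacement against the uniform $H^5$ bound, obtaining $\|f_k-f_0\|_{C^{3,\alpha}}\lesssim T_s^\theta$. This buys three things:
\begin{itemize}
\item a $\beta$-independent $T_s$, as the statement asserts;
\item $\|f_k\|_{L^\infty}$ brought back from $2\sigma_1$ to $\sigma_1$;
\item $E_k$ placed in the range of Lemma~\ref{lemma15052026}, which restores the \textsc{UBC} radius that Theorem~\ref{Bologna} halves at each step. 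The paper's induction leaves this point implicit.
\end{itemize}
You should use the same closeness to $E_0$ to keep the constraints in~\eqref{minelvinc} inactive uniformly in $k$, comparing with $E_0$ directly as the paper does for the limit flow. Lemma~\ref{02052026silvia} is not enough here, because its admissible $K_{el}$ is tied to the current set and its curvature bound.

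Two bookkeeping points. First, Theorem~\ref{Bologna}, and hence $\sigma_1,\eta_0,h_0,M$, must be applied with the propagated constants $(\sigma_0/2,K_0')$, not with the data of $E_0$. Second, Lemmas~\ref{Lemma:stimefinali} and~\ref{lemmahnegativa} give constants depending on $K_0'$. Closing the loop therefore requires checking that this dependence enters only through terms that are sublinear in $K_0'$ or carry a positive power of $h$.

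\textbf{The step that fails.} You write: ``the strong $L^\infty_tH^5$ convergence comes from uniqueness of the limit plus interpolation.'' Interpolating a uniform $H^5$ bound against $L^1$ convergence gives convergence only in $H^s$ for $s<5$ (hence in $C^{3,\alpha}$), never in the top norm. Uniqueness of the limit only turns subsequential convergence into convergence of the whole family; it does not improve the topology.

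What your argument actually yields is weak-$*$ convergence in $L^\infty([0,T_s],H^5(\partial E_0))$ plus strong convergence in $L^\infty([0,T_s],C^{3,\alpha}(\partial E_0))$. This is also all the paper's proof yields: pointwise-in-time $L^1$ convergence with $f^\beta\in\mathrm{Lip}([0,T_s],L^1)\cap L^\infty([0,T_s],H^5)$. It is enough for the limit equation, so drop the claim of norm convergence in $H^5$.

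Relatedly, the discrete velocities are only bounded in $L^2$, so ``$\xi/h$ converges to $V$'' needs an argument. The quantity $\xi_{k+1}/h$ lives on $\partial E_k$, and its weak limit must be identified with $\partial_t f^\beta/|N|$ on $\partial E_0$. The paper does this through~\eqref{terranovacanel2}; equivalently you can argue through $\frac{d}{dt}\int_{E_t}\zeta\,dx$ for smooth $\zeta$.
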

\begin{proof}
In the proof, we will implicitly pass to subsequences without further mention, except when required for clarity. Fix $K_0 = K_0(K_{el},\sigma_0)$ a sufficiently large constant to be chosen later (recall~\eqref{Laconstantesigmazerofin} for the definition of $\sigma_0$), and fix $\beta \leq \beta_0 \leq \delta$, where $\delta$ is provided by Lemma~\ref{lemma15052026}. 

Let $\{E_{hk}^{h,\beta}\}_{k\in\mathbb{N}}$ be a constrained discrete flat flow starting from $E_0$; see Definition~\ref{12092023def1}. For simplicity of notation, we set $E_k:= E_{hk}^{h,\beta}$ for $k \in \N$. By the choice of $K_{el}$, we can apply Theorem~\ref{Bologna}, which yields \begin{equation}\label{12052025form-1}
			\begin{split}
				&\pa E_1= \{  x +\psi_1(x)\nu_{E_0}(x): x \in \pa E_0 \},\\
				&\| \psi_1 \|_{H^1(\pa E_0)} \leq L_0 h, \quad \| \psi_1 \|_{H^5(\pa E_0)} \leq L_0, \\
				&  \|  H_{E_1} \|_{H^3(\pa E_1)} \leq K_0, \quad  \| \Delta_{\pa E_1}^2 H_{E_1} \|_{L^2(\pa E_1)} \leq K_0 h^{-1/4},
			\end{split} 
		\end{equation}
        and  $u_{E_1}= u^{K_{el}}_{E_1} \mrestr \Omega \setminus E_1 $, where $u_{E_1}$ is given in~\eqref{minelast} and $u^{K_{el}}_{E_1}$ is given in~\eqref{minelvinc}. Here, $L_0=L_0(K_{el})$. Moreover, for any $\alpha \in (0,1)$,  the following estimates also hold
		\begin{equation}\label{12052025form-1/2}
			\begin{split}
      &\| \psi_1 \|_{H^2(\pa E_0)} \leq L_0 h^{\frac{3}{4}},\, \| \psi_1 \|_{H^3(\pa E_0)} \leq L_0 h^{\frac{1}{2}}, \, \| \psi_1 \|_{H^4(\pa E_0)} \leq L_0 h^{\frac{1}{4}}, \, \| \psi \|_{C^{3,\alpha}(\pa E)} \leq \tilde{L}_{0,\alpha} \\
      & \| \psi_1 \|_{C^{0,\alpha}(\pa E_0)} \leq \tilde{L}_{0,\alpha} h^{\frac{3}{4}} \,, \| \psi_1 \|_{C^{1,\alpha}(\pa E_0)} \leq \tilde{L}_{0,\alpha} h^{\frac{1}{2}},\, \| \psi_1 \|_{C^{2,\alpha}(\pa E_0)} \leq \tilde{L}_{0,\alpha} h^{\frac{1}{4}} 
      \end{split}
		\end{equation}
        
        Let $k_0 $ denote the largest natural number such that $\pa E_k \subset \mathcal{I}_{\beta }(\pa E_0)$  for every  $k \leq k_0$, and set $T_0:= k_0 h$.
		We claim that, for every $k \leq k_0$, the following holds:
		\begin{equation}\label{12052025form1}
			\|  H_{E_k} \|_{H^3(\pa E_k)} \leq K_0, \,\,  \| \Delta_{\pa E_k}^2 H_{E_k} \|_{L^2(\pa E_k)} \leq K_0 h^{-1/4}, \text{ and } u_{E_k}= u_{E_k}^{K_{el}}.
		\end{equation}
        We proceed by induction. The base case holds since \eqref{12052025form1} is valid for $k=1$. Assume that the claim holds up to $k-1$. Then, applying Theorem~\eqref{Bologna}, we obtain 
            	\begin{equation}\label{12052025form2}
			\begin{split}
				\pa E_{k}= \{  x +\psi_{k}(x)\nu_{E_{k-1}} (x): x \in \pa E_{k-1} \}, \,\,
				\| \psi_{k} \|_{H^1(\pa E_{k-1})} \leq L_1 h, \,\, \| \psi_{k} \|_{H^5(\pa E_{k-1})} \leq L_1, 
			\end{split} 
		\end{equation}
		where $L_1=L_1(K_0,K_{el})$.  For every $j\geq 1$, we set $\xi_j= \xi_{E_j,E_{j-1}}$; recall~\eqref{xiFEfunz}. Moreover, Theorem~\ref{Bologna} implies that $u_{E_k} = u_{E_k}^{K_{el}} \mrestr \Omega \setminus E_k$. Hence, we can apply Lemma~\ref{propdiiterazione} to obtain
\begin{align*}
   & \int_{\pa E_{j-1} }\vert \nabla_{\pa E_{j-1} } \xi_{ j } \vert^2 + \frac{h}{2} \mathcal{A}(\nu_ {E_{j-1}}) \nabla_{\pa  E_{j-1} } \Delta_{\pa  E_{j-1} } \psi_{ j } \cdot \nabla_{\pa  E_{j-1} } \Delta_{\pa  E_{j-1} } \psi_{ j } \\
       &  \leq (1+Mh)   \int_{\pa E_{j-2} } \Big(  \vert \nabla_{\pa E_{j-2}} \xi_{j-1} \vert^2 + \frac{h}{8} \mathcal{A}(\nu_ {E_{j-2}}) \nabla_{\pa  E_{j-2} } \Delta_{\pa  E_{j-2} } \psi_{ j-1 } \cdot \nabla_{\pa  E_{j-2} } \Delta_{\pa  E_{j-2} } \psi_{ j-1 }  \Big)
\end{align*}
        	for every $ 1 \leq j \leq k$. By iterating the estimate above and using~\eqref{12052025form-1} and~\eqref{12052025form-1/2}, we obtain
            \begin{equation}\label{10052026suseguitemi}
                \begin{split}
                 & \int_{\pa E_{k-1} }\vert \nabla_{\pa E_{k-1} } \xi_{ k } \vert^2 + \frac{h}{4} \sum_{j=1}^k \int_{\pa E_{j-1}}\mathcal{A}(\nu_ {E_{j-1}}) \nabla_{\pa  E_{k-1} } \Delta_{\pa  E_{j-1} } \psi_{ j } \cdot \nabla_{\pa  E_{j-1} } \Delta_{\pa  E_{j-1} } \psi_{ j } \\ 
                 & \leq (1+Mh)^{k-1} \int_{\pa E_{0} }  \Big ( \vert \nabla_{\pa E_{0}} \xi_{1} \vert^2 + \frac{h}{8} \mathcal{A}(\nu_ {E_{0}}) \nabla_{\pa  E_{0} } \Delta_{\pa  E_{0} } \psi_{ 1 } \cdot \nabla_{\pa  E_{0} } \Delta_{\pa  E_{0} } \psi_{ 1 }\Big )\\
                 &  \leq e^{2M h k} L_0^2 h^2  \leq e^{2M h k_0} L_0^2 h^2 \leq 2 L_0^2 h^2,
                \end{split}
            \end{equation}
            if $T_0=h k_0$ is sufficiently small. We recall (see~\eqref{10022026pprinazof2}) that the following holds
		\begin{equation}\label{zizzagna}
			\frac{1}{C}\| \nabla_{\pa E_{j-1}} \psi_{j} \|_{L^2(\pa E_{j-1})}^2 \leq \| \nabla_{\pa E_{j-1}  }\xi_{j}\|^2_{L^2(\pa E_{j-1})} \leq C \| \nabla_{\pa E_{j-1}} \psi_{j} \|_{L^2(\pa E_{j-1})}^2  \quad \forall j \ge 1. 
		\end{equation}
        Up to increasing the constant $L_0$ and using~\eqref{10052026suseguitemi} together with~\eqref{zizzagna}, we obtain
\begin{equation}\label{zizzagna2}
			\| \nabla_{\pa E_k} \psi_k \|^2_{L^2(\pa E_{k-1})}+ h \sum_{j=1}^k \| \nabla_{\pa E_{j-1}}\Delta_{\pa E_{j-1}} \psi_j \|^2_{L^2(\pa E_{j-1})} \leq L_0^2 h^2.
        \end{equation}
        Therefore, we infer \eqref{12052025form1} from Lemma~\ref{lemmahnegativa}, possibly after increasing the constant $K_0$.

        We now show that $T_0=hk_0>0$.
        We can assume that  there exists a point $x_0 \in \pa E_{k_0}$ such that $\mathrm{dist}(x_0,E_0)\geq  \frac{\beta}{2}$. Since $E_{k_0}$ is a $(\Lambda,\alpha)$-minimizer of the anisotropic perimeter for a constant $\Lambda$ independent of $h$, see Proposition~\ref{CaneBernard06}, the density estimates together with the inequality $\mathrm{dist}(x_0,E_0)\geq  \frac{\beta}{2}$ yield 
			$\vert E_{k_0} \Delta  E_0 \vert \geq c \beta^3,$
		for some constant $c$ depending on $\Lambda$ and $\alpha$. 	Therefore, using also~\eqref{zizzagna} and~\eqref{zizzagna2}, we infer
		\begin{align*}
				c \beta^3 &\leq \vert E_{k_0} \Delta  E_0 \vert \leq \sum_{j=1}^{k_0} \vert E_j \Delta E_{j-1} \vert \leq C \sum_{j=1}^{k_0} \| \psi_j \|_{L^1(\pa E_{j-1})}\leq CP(E_{j-1})^{\frac{1}{2}} \sum_{j=1}^{k_0} \| \xi_j \|_{L^2(\pa E_{j-1})} \\
                &\leq  C_\varphi (P_\varphi(E_0)+K_{el}^2\vert \Omega \vert)^{\frac{1}{2}} \sum_{j=1}^{k_0} \| \nabla_{\pa E_{j-1}} \psi_j \|_{L^2(\pa E_{j-1})} \leq  C L_0 T_0,
		\end{align*}
		here we have used that $P_\varphi(E_j) \leq K_{el}^2\vert \Omega \vert +P_\varphi (E_0)$, which follows from the minimizing movements scheme. This proves that $T_0>0$.
        
        There exist constants $C_0,\sigma_1>0$ such that 
		\begin{equation}\label{12052025claimlego}
			E_j \in \mathfrak{H}^5_{C_0,\sigma_1}(E_0) \text{ for all } 0 \leq j \leq k_0 . 
		\end{equation}
       Indeed, by Proposition~\ref{CaneBernard06}, the sets $E_j$ are $(\Lambda,\alpha)$-minimizers of the anisotropic perimeter for some $\Lambda$ independent of $h$. Consequently, Lemma~\ref{epsilonregolarita} implies that each $E_j$ can be represented as a normal graph over $\partial E_0$, with height function $f_j:\partial E_0 \to \mathbb{R}$ satisfying $ \| f_j \|_{C^{1,\gamma}(\pa E_0)} \leq C$, and $\mathrm{dist}(\pa E_j, \pa E_0) \leq \beta \leq \sigma_1 \text{ for every }j.$
		Finally, the first inequality in~\eqref{12052025form1} implies that $\| f_j \|_{ H^5(\pa E_0)} \leq C_0$.

        Let $0 \leq i,k \leq k_0$. Assume $i < k$, then it holds
		\begin{equation}
			\begin{split}
				\| f_i- f_k \|_{L^1(\pa E_0)} & \leq C\sum_{j=i+1}^{k} \vert E_j \Delta E_{j-1} \vert \leq C\sum_{j=i+1}^{k} \| \psi_j \|_{L^1(\pa E_{j-1})}\leq C \sum_{j=i+1}^{k} \| \xi_j \|_{L^2(\pa E_{j-1})}\\
				&\leq C  \sum_{j=i+1}^{k} \| \nabla_{\pa E_{j-1}} \psi_j \|_{L^2(\pa E_{j-1})} \leq CL_0 (k-i-1) h,
			\end{split}
		\end{equation}
		where we have used~\eqref{zizzagna} and~\eqref{zizzagna2}. Hence, there exists a constant $L_{lip}>0$ such that
        \begin{equation}\label{12052025claimlego2}
			\| f_i- f_k \|_{L^1(\pa E_0)} \leq L_{lip} h\vert k-1-i \vert.
		\end{equation}

        	Combining 
		\eqref{12052025claimlego} and~\eqref{12052025claimlego2},  and applying the Arzelà-Ascoli Theorem, we conclude that there exists a subsequence $\{h_m\}_{m \in \N} $ such that
		$f_{h_m}(t) \rightarrow f^\beta(t)$ in $L^1(\pa E_0)$ for a.e. $t \in [0,T_0]$ as $m \rightarrow + \infty$, where
		\begin{equation}\label{tempidiconvenzioni}
			f^\beta \in \mathrm{Lip} ([0,T_0], L^1(\pa E_0)), \quad f^\beta \in L^\infty ([0,T_0], H^5(\pa E_0)).
		\end{equation}
		In the following, we omit the dependence on $m$  for this subsequence. By the Sobolev embedding Theorem, it follows that $f^\beta \in L^\infty([0,T_0], C^{3,\alpha}(\pa E_0))$. 
        Moreover, by Proposition~\ref{prop:interpolation}, we obtain
        \begin{equation}
            \| f^\beta(t)-f^\beta(s) \|_{H^2(\pa E_0)} \leq C \| f^\beta (t)- f^\beta(s)\|_{H^5(\pa E_0)}^{1/2} \| f(t)-f(s) \|^{1/2}_{L^1(\pa E_0)} \leq C \vert t-s \vert^{1/2},
        \end{equation}
        which, together with the Sobolev embedding theorem, yields $f^\beta \in C^{0,1/2}([0,T_0], C^{0,\alpha}(\pa E_0))$ for every $\alpha \in (0,1)$. Finally, using~\eqref{interHOLDER}, we deduce that  $ f^{\beta}  \in C^{0,\theta}([0,T_0], C^{3,\alpha}(\pa E_0)) $ for some $\theta \in (0,1)$. 
        
        Let $ \{ E^\beta_t \}_{t \in [0,T_0]} $ be defined  by
		\begin{equation}\label{27052025form2}
			E^\beta_t \Delta E_0 \subset \mathcal{I}_{\sigma_1}(\pa E_0) \text{ and } \pa E^\beta_t := \{  x+  f^\beta(t,x) \nu_{E_0}(x) :  x \in \pa E_0\}.
		\end{equation}
        Recalling that $u_{E_j}=u_{E_j}^{K_{el},h}\mrestr \Omega \setminus E_j$, where $u_{E_j}^{K_{el},h}$ is a minimizer of problem~\eqref{minelvinc}, we deduce from Lemma~\ref{lemmafacilespero} that
		\begin{equation}
			u_{E_{j_h}}^{K_{el},h} \rightarrow u_{E^\beta_t}^{K_{el},0} \text{ in } \mathfrak{C}_{K_{el}}^{3,\frac{1}{4}}(\Omega ,\R^2) \text{ as } h \rightarrow 0^+,
		\end{equation}
		where $ u^{K_{el},0}_{E^\beta_t}$ is a minimizer of~\eqref{minelvinc} corresponding to $h=0$.
        We show that $\{ E_t^\beta \}_{t \in [0,T_0]}$ is a distributional solution of 
		\begin{equation}\label{EQ1primasoluzione}
			\left\{
			\begin{aligned}
				& V_t= \Delta_{\pa E^\beta_t} \big(   H^\varphi_{E^\beta_t}-Q (E(u_{E^\beta_t}^{K_{el},0}))\big), \text{ on } \pa E_t^\beta  \\
				& E^\beta_0=E_0, \\
				&u_{E^\beta_t}^{K_{el},0}  \text{ solution of~\eqref{minelvinc} for $h=0$.}
			\end{aligned}
			\right.
		\end{equation} 
        
        Let $\Psi_j : \pa E_0 \rightarrow \pa E_j$ be defined by $\Psi_j(x):= x+ f_j (x)\nu_{E_0}(x) .$  
		We consider $N_j : \pa E_0 \rightarrow \R^3$ given by
		$  N_j(x):=-(I +f_j(x)B_{E_0}(x))^{-1} \nabla_{\pa E_0} f_j(x)+ \nu_{E_0}(x).$
		We observe that 
		\begin{equation}
			\vert N_j \vert= \frac{ J_{\pa E_0} \Psi_j}{1+ H_{E_0} f_j+ K_{E_0} f_j^2}
		\end{equation}
        where $J_{\pa E_0} \Psi_j$ denotes the tangential Jacobian of $\Psi_j$.\\
        \textit{Claim:} The following formula holds:
		\begin{equation}\label{terranovacanel2}
			\lim_{h \rightarrow 0^+} \left \|  \frac{\psi_{j+1}}{h} \circ \Psi_j - \frac{f_{j+1}-f_j}{\vert N_j \vert h} \right\|_{L^2(\pa E_0)} =0.
		\end{equation} 
        The estimate~\eqref{zizzagna2} implies
		$$\| \psi_{j+1} \circ \Psi_j \|_{C^1(\pa E_0)} \leq C h^{\frac{1}{2}} \text{ and } \| \psi_{j+1} \circ \Psi_j \|_{L^2(\pa E_0)} \leq C h. $$
		From the bound $ \| f_j \|_{C^{1,\gamma}}(\pa E_0) \leq C$ and the previous estimate, we deduce 
		$$ \vert f_{j+1}(x)-f_j(x) \vert \leq C \vert \psi_{j+1} \circ \Psi_j (x)\vert \, \, \forall x \in \pa E_0 \text{ and }  \|f_{j+1}-f_j \|_{C^1(\pa E_0)} \leq C h^{\frac{1}{2}}.$$
		Let $G : \pa  E_0 \rightarrow \R$ be a function satisfying $ \| G \|_{C^1(\pa E_0)} \leq C h^{\gamma}$ for some $\gamma$.  By parametrizing $E_{j+1}\triangle E_j$ over $\pa E_0$ and by the coarea formula, we obtain
		\begin{align*}
				&\int_{\R^3} G \circ \pi_{\pa E_0}(x) \big(   \chi_{E_{j+1}}(x)- \chi_{E_{j}}(x) \big)\, dx \\
				&= \int_{\pa E_0} G(x) (f_{j+1}(x)-f_{j}(x)) (1+ f_j(x) H_{E_0}(x)+ f_j(x)^2 K_{E_0}(x))\, d \mathcal{H}^2_x+ o(h^2)\\
				&= \int_{\pa E_0} G(x) J_{\pa E_0} \Psi_j(x) \frac{f_{j+1}(x)- f_j(x)}{\vert N_j (x) \vert} \, d \mathcal{H}^2_x+ o(h^2).
		\end{align*}
		Let
		$ \Phi_{j,t} : \pa E^j \rightarrow \R^3$ be given by $\Phi_{j,t}(x):= x+ t \nu_{E_j}(x)$. Then we have
        	\begin{align*}
				&\int_{\R^3} G \circ \pi_{\pa E_0}(x) \big(  \chi_{E_{j+1}}(x)- \chi_{E_j}(x) \big) \, d x \\
				& = \int_{\pa E_j} \int_{0}^{\psi_{j+1}(x)} \big( G \circ \pi_{\pa E_0} ( \Phi_{j,t}(x)) -G \circ \pi_{\pa E_0}(x)+ G \circ \pi_{\pa E_0}(x)\big)J_{\pa E_j} \Phi_{j,t}(x) \, d t \,d \mathcal{H}^2_x \\
				& = \int_{\pa E_0} \psi_{j+1} \circ \Psi_j (x) G (x) J_{\pa E_0} \Psi_j(x)\, d\mathcal{H}_x^2+ o(h^2).
		\end{align*}
Comparing the two equalities above, we conclude that for all $G: \pa E_0 \rightarrow \R$ with $ \|G \|_{C^1(\pa E_0)} \leq Ch^{\gamma}$, it holds true
		\begin{equation}\label{caneterranoval3}
			\int_{\pa E_0}  G(x) J_{\pa E_0} \Psi_{j}(x) \bigg[ \psi_{j+1}\circ \Psi_j(x)-\frac{f_{j+1}(x)- f_j(x)}{\vert N_j (x) \vert}  \bigg]\, d \mathcal{H}^2_x=              o(h^2).
		\end{equation}
        We choose
		\begin{equation*}
			G(x):= \frac{1}{J_{\pa E_0} \Psi_{j}(x)}\bigg[ \psi_{j+1}\circ \Psi_j(x)-\frac{f_{j+1}(x)- f_j(x)}{\vert N_j (x) \vert}  \bigg].
		\end{equation*}  
		A straightforward computation yields  $ \| G \|_{C^1(\pa E_0) } \leq C h^\gamma$ for some $\gamma \in (0,1)$. Using this particular choice of $G$ in~\eqref{caneterranoval3} yields~\eqref{terranovacanel2}. 
        
        	We now return to proving that $E^\beta(t)$ is a solution of~\eqref{EQ1primasoluzione}. So far, we have established:
		\begin{equation}\label{27052025form1}
			\|f_j \|_{H^5(\pa E_0)} \leq C_0, \, \| f_j \|_{L^\infty(\pa E_0)} \leq \sigma_1 ,\,  \bigg\| \frac{f_{j+1}-f_j}{\vert N_j \vert h} \bigg\|_{L^2(\pa E_0)} \leq C \, \text{ for every } j h \leq T_0 .
		\end{equation} 
        Therefore, using~\eqref{27052025form1} together with~\eqref{terranovacanel2} and~\eqref{tempidiconvenzioni}, we conclude that
		\begin{equation}\label{19052025fonzi}
			\exists \, L^2(\pa E_0)-\lim_{h \rightarrow 0^+} \frac{\psi_{j+1}}{h} \circ \Psi_j(\cdot)= \frac{\pa_t f^\beta(t,\cdot)}{\vert N(t,\cdot)\vert },\,\,  \text{ for } t \in [0,T_0],
		\end{equation}
		where 
        $$  \vert N(t,x) \vert= \frac{J_{\pa E_0} \Psi_t(x)}{1+H_{E_0}(x) f^\beta(t,x) +K_{E_0}(x)(f^\beta(t,x))^2 } \text{ and } \Psi_t(x):= x+ f^\beta(t,x)\nu_{E_0}(x)$$
          for $x \in \pa E_0$. Let $l \in C^2_c(\R^3)$. Multiplying the Euler–Lagrange equation~\eqref{PeerGynt} by $l$ and integrating by parts yields
          	\begin{equation}\label{11052026tecnologia}
			\begin{split}
				\int_{\pa E_j} \frac{\psi_{j+1}(x)}{h} & l(x) \, d\mathcal{H}^2_x+  \int_{\pa E_j} \frac{\psi^2_{j+1}(x)}{h} \bigg( \frac{H_{E_j}(x)}{2}+ \psi_{j+1}(x) \frac{K_{E_j}(x)}{3}  \bigg)l(x)\, d \mathcal{H}^2_x\\
				&= \int_{\pa E_j}  - \div_{\pa E_j} \big( \mathcal{A}(\nu_E)\nabla_{\pa E_j}\psi_{j+1}  \big)(x) \Delta_{\pa E_j} l(x) \, d \mathcal{H}^2_x \\
				&\quad- \int_{\pa E_j}  Q(E(u_{E_{j+1}}^{K_{el}}))(x+ \psi_{j+1}(x)\nu_{E_j}(x)) \Delta_{\pa E_j} l(x) \, d \mathcal{H}^2_x
				\\ 
				&\quad+ \int_{\pa E_j} \widetilde{\mathcal{R}}_0(x) \Delta_{\pa E_j} l(x)\, d \mathcal{H}^2_x+ \int_{\pa E_j} H^\varphi_{E_j}(x) \Delta_{\pa E_j}l(x)\, d \mathcal{H}^2_x.
			\end{split}
		\end{equation}
        We observe that~\eqref{12052025form1} and~\eqref{zizzagna2} imply
        \begin{equation}\label{nanannananananananana}
                \lim_{h \rightarrow 0^+} \int_{\pa E_j} \frac{\psi^2_{j+1}(x)}{h} \bigg( \frac{H_{E_j}(x)}{2}+ \psi_{j+1}(x) \frac{K_{E_j}(x)}{3}  \bigg)l(x)\, d \mathcal{H}^2_x   =0.
        \end{equation}
        	From the previous estimates, we also have $ \| \psi_{j+1} \|_{C^2(\pa E_j)} \leq C h^\gamma.$ Hence, recalling~\eqref{nuovoresto}, we obtain
		\begin{equation}\label{centrodigravitapermanente}
			\| \widetilde{\mathcal{R}}_0 \|_{C^0(\pa E_j)} \leq C h^\gamma.
		\end{equation}
       Passing to the limit as $h \rightarrow 0^+$ in~\eqref{11052026tecnologia}, and using~\eqref{nanannananananananana},~\eqref{centrodigravitapermanente} together with a change of variables, we obtain that
        \begin{multline}
         \int_{\pa E_0} \frac{\pa_t f^\beta(t,x)}{\vert N(t,x)\vert } l(\Psi_t(x)) J_{\pa E_0}(\Psi_t(x))\, d \mathcal{H}^2_x \\
         = \int_{\pa E_0}  \Delta_{\pa E_t^\beta} \big( H^\varphi_{E^\beta_t}-Q( E(u_{E_t^\beta}^{K_{el}})  )  \big) l(\Psi_t(x)) J_{\pa E_0}(\Psi_t(x))\, d \mathcal{H}^2_x.
        \end{multline}

        To conclude we show that there exists $0 < T_s \leq T_0$ such that $\{ E_t^\beta \}_{t \in [0,T_s]}$ is solution of~\eqref{MAINEQ} by arguing as in Remarks~\ref{02052026falchi},~\ref{01052026remrkino}, and Lemma~\ref{02052026silvia}. First, we construct an extension $\tilde{u}_{E_t^\beta}$ of $u_{E_t^\beta}$ in the whole $\Omega$, and  prove that there exists $T_s \in (0,T_0] $ such that $ \| \tilde{u}_{E_t^\beta} \|_{C^{3,1/4}(\Omega)} < K_{el}$ for all $t \in [0,T_s]$. Then, we show (possibly after further reducing $T_s$) that $ \| \nabla^3 u_{E_t^\beta} \|_{H^1(\pa F)}< K_{el} $ for all $t \in [0,T_s]$. These two properties imply that the minimizer of problem~\eqref{minelast} coincides with the minimizer of problem~\eqref{minelvinc}, from which the claim follows.  We recall that $K_{el}$ is the constant provided by Remark~\ref{02052026falchi}, namely
        \begin{equation}\label{perlachinaValle}
            u_{E_0}^{K_{el}} \mrestr E_0 = u_{E_0}, \,\,\, \| u_{E_0}^{K_{el}} \|_{C^{3,\frac{1}{4}}(\Omega)}< K_{el}, \,\,\, \| \nabla^3 u_{E_0} \|_{H^1(\pa E_0)} < K_{el}.
        \end{equation}
        \textit{Claim:}   There exists $T_s \in (0,T_0]$ such that the functions \begin{equation}\label{estensioneu_E^beta}
			\tilde{u}_{E^\beta_t}(x)=\left\{
			\begin{aligned}
				& u_{E^\beta_t}(x) \text{ if } x \in \Omega \setminus {E^\beta_t} , \\
				&  \eta\bigg( \frac{ d_{E_0}(x)}{\sigma_{0}}  \bigg)u_{E^\beta_t} \bigg( \pi_{\pa E_0}(x)+ f^\beta(t,x)\nu_{E_0}(\pi_{\pa E_0}(x))   \bigg) \text{ if } x \in E^\beta_t
			\end{aligned}
			\right.
		\end{equation}
        satisfy $ \|\tilde{u}_{E^\beta_t} \|_{C^{3,1/4}(\Omega)} < K_{el} $ for $t \in [0, T_s]$,
       where $\eta \in C^{\infty}_c((-2,2))$ with $\eta \geq 0$ and $\eta \equiv 1$ in $(-1,1)$, and where $\sigma_0$ is defined in~\eqref{Laconstantesigmazerofin}. 

        By the definition of $ \tilde{u}_{E^\beta_t}$ and since $u_{E_t^\beta}$ solves a system of the form~\eqref{eqelliel}, we obtain
        \begin{align*}
                \| \tilde{u}_{E^\beta_t} \|_{C^{3,1/4}(\Omega)} &\leq C \| u_{E_t^\beta} \|_{C^{3,1/4}(\Omega \setminus E_t^\beta)} \leq C \big( \| w_0 \|_{C^{3,1/2}(\pa \Omega)}+ \| f^\beta(t,\cdot)\|_{C^{3,1/4}(\pa E_0)}  \big)\\
                & \leq C  \| w_0 \|_{C^{3,1/2}(\pa \Omega)} + C(K_{el}) \| f^\beta(t,\cdot) \|_{L^1(\pa E_0)}^\theta\\
                & \leq C \| w_0 \|_{C^{3,1/2}(\pa \Omega)} + C(K_{el}) t^\theta,
        \end{align*}
       where $\theta  \in (0,1)$, here  the third inequality follows by interpolation, and in the last inequality we used~\eqref{tempidiconvenzioni}. Therefore, the claim follows by taking $t\le T_s$ sufficiently small.
       \\ 
        \textit{Claim:} There exists $ T_s$ such that $ \| \nabla^3 u_{E_t^\beta} \|_{H^1(\pa F)} < K_{el}$ for all $t \in [0,T_s]$.

        Using a change of variable and the fractional interpolation inequality  combined with formulas ~\eqref{28042026meriggiare} and~\eqref{26042026onepiece1} (for $k=5$), we obtain
        \begin{align*}
                \| \nabla^3 u_{E_t^\beta} \|_{H^1(\pa F)} \leq & C \| \nabla^3 u_{E_t^\beta}(\cdot+ f^\beta(t,\cdot)\nu_{E_0}(\cdot)) \|_{H^1(\pa E_0)} \\
                \leq & C \| \nabla^3 u_{E_t^\beta}(\cdot+ f^\beta(t,\cdot)\nu_{E_0}(\cdot)) - \nabla^3 u_{E_0}\|_{H^1(\pa E_0)}+ C  \| \nabla^3 u_{E_0} \|_{H^1(\pa E_0)} \\
                \leq& C \| f^\beta(t,\cdot) \|_{L^2(\pa E_0)}^{1/3} \| f^\beta(t,\cdot)\|^{2/3}_{H^5(\pa E_0)}+ C \| \nabla^3 u_{E_0} \|_{H^1(\pa E_0)} \\
                \leq & C(K_{el}) t^{1/6}+ C \| \nabla^3 u_{E_0} \|_{H^1(\pa E_0)},  
        \end{align*}
        we recall that the constant arising from the change of variables depends only on the $C^1$ norm of $f^\beta (t,\cdot)$ which is bounded by a universal constant, and in last inequality we have used~\eqref{tempidiconvenzioni}. Hence, by~\eqref{perlachinaValle}, and taking $t$ sufficiently small, the claim follows.

        Therefore, we conclude that the family $\{ E_t^\beta \}_{t \in [0,T_s]}$, parametrized by the diffeomorphisms $\Psi_t(x)= x+ f^\beta(t,x)\nu_{E_0}(x)$, is a strong solution
to the anisotropic surface diffusion equation with elasticity.  More precisely, using the expansion of the anisotropic curvature from Corollary~\ref{zonzo} and the expansion of the Laplace-Beltrami operator as in~\cite{FJM2020}, we find that the function $f^\beta: [0,T_s] \times \pa E_0 \rightarrow \R$ is a strong solution to the
equation
\begin{equation}\label{130520262043zz}
    \pa_t u= -\Delta_{\pa E_0} \div_{\pa E_0 }\big( \mathcal{A}(\nu_{E_0}) \nabla_{\pa E_0} u \big)+ \widetilde{R}(\cdot,u,\overline{\nabla}u, \overline{\nabla}^2 u, \overline{\nabla}^3 u, \overline{\nabla}^4 u  ) \text{ on } \pa E_0 \times [0,T_s]
\end{equation}
with initial datum $u(0,\cdot)=0$ and where $\widetilde{R}$ is related to the Laplace-Beltrami of~\eqref{nuovoresto}. Where by strong solution we mean that $f^\beta \in L^\infty([0,T_s], H^5(\pa E_0)) \cap {\rm Lip}([0,T_s], L^1(\pa E_0))$ and satisfies equation~\eqref{130520262043zz}. By a standard Gr\"onwall argument, one deduces that the strong solution to~\eqref{130520262043zz} with zero initial datum is unique. This shows that the limiting flat flow coincides with the classical solution on the time interval $[0,T_s]$, which concludes the proof.
\end{proof}

	\begin{remark}
		\label{rem:thm2}
		We may quantify the statement of Theorem ~\ref{1THMMAIN1} as follows. Let $E_0\Subset \Omega$ be a open connected set of class $C^{6,1}$ that satisfies the UBC with radius $2\sigma_0$,
		and assume that the heightfunction $ \psi_1$, see~\eqref{12052025form-1}, satisfies 
		\[
		\|\nabla_{\pa E_0}\psi_1 \|_{L^2(\pa E_0)} \leq L_0 h \quad \text{and} \quad \| \nabla_{\pa E_0}\Delta_{\pa E_0}\psi_1 \|_{L^2(\Sigma_0)} \leq  L_0\sqrt{h}.
		\]
		Then, there exist constants  $K_0 = K_0(\sigma_0,L_0,K_{el})$ and $\beta_0=\beta_0(\sigma_0,K_0, K_{el})$ such that, if
		\[
		\|H_{ E_0}^\varphi  \|_{H^3(\pa E_0)} \leq K_0  \quad \text{and} \quad \|  \Delta_{\pa E_0}^2  H_{\pa E_0}^\varphi \|_{L^2(\pa E_0)} \leq  K_0 h^{-\frac14}
		\]
		then  the discrete constrained flat flow $\{E^{h,\beta}_t\}_{t\geq 0}$ with $\beta \leq \beta_0$ satisfies the UBC with radius $r_0(\sigma_0,L_0, K_{el})$, and 
		\[
		\|H_{E^{h,\beta}_t }^\varphi \|_{H^3(\pa E^{h,\beta}_t)} \leq  K_0  \quad \text{and} \quad \| \Delta_{\pa E^{h,\beta}_t}^2 H^\varphi_{E^{h,\beta}_t} \|_{L^2(\pa E^{h,\beta}_t)} \leq K_0 h^{-\frac14}
		\]
		for all $t \in [0,T_s]$, where $T_s = T_s(r_0,K_0.K_{el})$.  
	\end{remark}

\begin{remark}
		\label{rem:uniqueness-strong}
		The arguments in the proof of Theorem~\ref{1THMMAIN1} imply that if a constrained discrete flat flow $\{ E^{h, \beta}_t \}_{t \geq 0}$, starting from $E_0$ satisfies
		\[
		\|H_{E^{h,\beta}_t }^\varphi \|_{H^3(\pa E^{h,\beta}_t)} \leq  K_0  \quad \text{and} \quad \| \Delta_{\pa E^{h,\beta}_t}^2 H^\varphi_{E^{h,\beta}_t} \|_{L^2(\pa E^{h,\beta}_t)} \leq K_0 h^{-\frac14}
		\]
		for all $t \in [0,T_s]$, then the limiting flat flow coincides with the classical solution on the time interval $[0,T_s]$.  
	\end{remark}
\section{Consistency of flat flows}\label{sezioneconvglobaleee}
We are now in a position to show that flat flow solutions, as defined in Subsection~\ref{subsec:flatsol}, coincide with classical solutions whenever the latter exist.
The proof of this theorem is similar to those presented in~\cite{CFJKsd}[Theorem 1.2],~\cite{Kub2025}[Theorem 7.1],~\cite{KLMP2025}[Theorem 7.4]; therefore, we only report the main ideas here.
\begin{theorem}\label{convglobalsolu}
    Let $\Omega$ and $E_0$ be as in Definition~\ref{Omegaevrietadiriferimento}.
	Let $T_e>0$ and $\{E_t\}_{t \in [0,T_e)}$ be a classical solution of~\eqref{MAINEQ} with initial datum $E_0$. Then, for every $T < T_e$, there exist $\beta(T)>0$ and $K_{el}(T)>0$ such that, for all $\beta \in (0,\beta(T)]$, the constrained flat flow  $E^\beta_t$, starting from $E_0$, coincides with $E_t$ in $[0,T]$.  
\end{theorem}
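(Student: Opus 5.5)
The plan is a continuation argument: apply the short-time result, Theorem~\ref{1THMMAIN1}, repeatedly along the classical solution, using Remark~\ref{rem:uniqueness-strong} to identify the flat flow with $E_t$ on each subinterval. Uniform constants come from compactness of the classical trajectory on $[0,T]$.

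\textbf{Step 1: uniform bounds along the classical solution.} Fix $T<T_e$. Since $\{E_t\}_{t\in[0,T]}$ is classical and smooth, compactness gives a common radius $r_0>0$ for the \textsc{UBC}. It also gives uniform bounds on $\|H_{E_t}\|_{H^4(\pa E_t)}$ and $\|H^\varphi_{E_t}\|_{H^4(\pa E_t)}$, and $\min_t{\rm dist}(\pa E_t,\pa\Omega)>0$. Schauder estimates bound $\|u_{E_t}\|_{C^{3,1/4}}$ and $\|\nabla^3u_{E_t}\|_{H^1(\pa E_t)}$ uniformly in $t$. Using the extension of Remark~\ref{01052026remrkino}, I choose $K_{el}(T)$ larger than twice these bounds. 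Then, exactly as in Remark~\ref{02052026falchi}, the constrained minimizer $u^{K_{el}}_{E_t}$ coincides with $u_{E_t}$ for every $t\in[0,T]$ and every $h\le1$. I also fix $K_0$ so that $\|H_{E_t}\|_{H^3}\le K_0/2$ and $\|\Delta^2 H_{E_t}\|_{L^2}\le K_0/2\le K_0h^{-1/4}/2$ for all $t$.

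\textbf{Step 2: a uniform existence time.} The constants $T_s,\beta_0,\sigma_1$ of Theorem~\ref{1THMMAIN1}, in its quantified form Remark~\ref{rem:thm2}, depend only on $r_0$, $K_0$, $K_{el}$ and the $H^5$ and regularity data of the initial set. These are uniform along $\{E_t\}_{t\le T}$ by Step 1, so there is a single $\tau>0$ such that the theorem can be restarted from any $E_{t_0}$ with $t_0\le T$ and gives existence on $[t_0,t_0+\tau]$. I set $\beta(T):=\beta_0$ and partition $[0,T]$ into $N\approx T/\tau$ intervals $[t_i,t_{i+1}]$.

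\textbf{Step 3: induction.} On $[0,t_1]$, Theorem~\ref{1THMMAIN1} together with Remark~\ref{rem:uniqueness-strong} shows that the discrete flows converge in $L^\infty H^5$ to a strong solution of \eqref{130520262043zz}. By Gr\"onwall uniqueness this solution is $E_t$, so $E^\beta_t=E_t$ there.

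To pass from $[t_{i-1},t_i]$ to the next interval, I cannot simply restart the discrete scheme at $E_{t_i}$, because the discrete flat flow is a single sequence. Instead I must show that the discrete sets $E^{h,\beta}_{t}$, for $t$ near $t_i$, satisfy the hypotheses of Theorem~\ref{Bologna} relative to the reference surface $\pa E_{t_i}$ with the uniform constants. Concretely, I need them to be normal graphs over $\pa E_{t_i}$ with small $H^5$ height, and to satisfy $\|\nabla\Delta H\|_{L^2}\le K_0$ and $\|\Delta^2H\|_{L^2}\le K_0h^{-1/4}$. The first comes from the $L^\infty H^5$ convergence obtained on the previous interval combined with interpolation. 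The curvature bounds are propagated by the iteration Lemma~\ref{propdiiterazione} and Lemma~\ref{lemmahnegativa}, as in the proof of Theorem~\ref{1THMMAIN1}. The dissipation quantity in \eqref{ARSENIOLEIMI} grows at most like $e^{Mh k}$, so over a total time $T$ it stays bounded by $e^{MT}L_0^2h^2$. This bound is uniform because $K_{el}$ and $K_0$ were fixed in Step 1 and enlarged only finitely many times.

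\textbf{Main obstacle.} The hard step is this propagation across the junctions $t_i$ without the constants degenerating. In particular I must verify that the quantity $\frac{h}{8}\int\mathcal{A}\nabla\Delta\psi\cdot\nabla\Delta\psi$ at the restart step is controlled by the closeness of the discrete sets to the smooth $E_{t_i}$. I would try to get this by applying the first-step estimates \eqref{12052025form-1} with $E_{t_i}$-dependent but uniform constants, using the $\beta$-constraint to keep the discrete sets inside $\mathcal{I}_\beta(\pa E_{t_i})$. After finitely many steps this gives $E^\beta_t=E_t$ on $[0,T]$.
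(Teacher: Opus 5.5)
Your architecture matches the paper's. You fix $K_{el}$ from the elastic bounds of the classical solution on $[0,T]$, take $\beta\le\beta_0$, show that the single discrete flow satisfies the curvature bounds \eqref{19052025ancora1} on all of $[0,T]$, and conclude with Remark~\ref{rem:uniqueness-strong}. The paper does this without partitioning $[0,T]$, and it defers the proof of \eqref{19052025ancora1} to \cite{CFJKsd} (formula 5.18), \cite{Kub2025} and \cite{KLMP2025}. That propagation is exactly the step your proposal does not establish, and the justification you give for it is circular.

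The constant $M$ in Lemma~\ref{propdiiterazione}, like the constants of Theorem~\ref{Bologna} and Lemma~\ref{lemmahnegativa}, depends on $K_0$. The lemma only applies if the \emph{discrete} sets satisfy $\|\nabla\Delta H\|_{L^2}\le K_0$ and $\|\Delta^2H\|_{L^2}\le K_0h^{-1/4}$. Fixing $K_0$ from the classical sets $E_t$ says nothing about $E^{h,\beta}_t$. Iterating \eqref{ARSENIOLEIMI} up to time $t$ bounds the dissipation quantity by $e^{2M(K_0)t}L_0^2h^2$, and the curvature bound you then recover via Lemma~\ref{lemmahnegativa} grows with $e^{M(K_0)t}$. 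In Theorem~\ref{1THMMAIN1} this closes only because $T_0$ is chosen so small that $e^{2MT_0}\le 2$, cf.~\eqref{10052026suseguitemi}. For an arbitrary $T<T_e$, ``enlarging $K_0$ only finitely many times'' does not terminate: a larger $K_0$ gives a larger $M$, hence a larger $e^{MT}$, which demands a still larger $K_0$.

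You correctly flag the junctions as the main obstacle, but restarting does not repair this. If $\tau$ is chosen with $e^{2M\tau}\le 2$, then at $t_i$ you only know the dissipation quantity is at most twice its value at $t_{i-1}$. So the constants compound like $2^N$, and since $\tau$ shrinks as $K_0$ grows, $N$ is not fixed in advance.

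The remedy you suggest, reusing \eqref{12052025form-1} at $t_i$, does not apply. That estimate concerns a step taken from the smooth set $E_0$, with $L_0$ depending on its data. At $t_i$ the step is taken from the discrete set $E^{h,\beta}_{t_i}$, whose height function is controlled only through Theorem~\ref{Bologna} with $K_1=K_1(K_0)$, i.e.\ with the already degraded constant. Likewise, the $\beta$-constraint is imposed relative to the previous discrete set. It does not by itself keep later sets inside $\mathcal{I}_\beta(\partial E_{t_i})$; that too comes from the bounds being propagated.

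What is missing is a mechanism that ties the constants to the classical solution rather than to the bootstrap. One option is a continuity argument on all of $[0,T]$ in which the Gr\"onwall constant depends only on quantities inherited from closeness to $\{E_t\}$, with that closeness propagated by convergence plus Gr\"onwall uniqueness on the part already covered. Another, within your restart framework, is a proof that $h^{-2}\int|\nabla\xi|^2+h^{-1}\int\mathcal{A}\nabla\Delta\psi\cdot\nabla\Delta\psi$ at $t_i$ is bounded by a constant determined by $E_{t_i}$, up to $o(1)$. That requires an upper bound on the discrete velocities $\xi/h$ in $H^1$, which weak convergence of the height functions does not give.

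Re-centering at $\partial E_{t_i}$ is otherwise a sensible device. It avoids assuming, as the paper's sketch does, that $\{E_t\}_{t\le T}$ stays a small normal graph over $\partial E_0$. As written, however, your proposal only recovers Theorem~\ref{1THMMAIN1} on a short interval.
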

\begin{proof}
    Let $T< T_e$, since $E_t$ is regular in $[0,T]$, there exist positive constants $ K_2,\sigma_2$, and $\tilde K_{el}$ such that 
		\begin{equation}\label{14052026form1}
			E_t \in \mathfrak{H}^5_{K_2,\sigma_2}(E_0),\, \,\| \tilde{u}_{E_t}\|_{C^{3,1/4}(\Omega)} < \tilde K_{el},\,\, \| \nabla^3 u_{E_t} \|_{H^1(\pa E_t)} < \tilde K_{el},
		\end{equation}
        for all $t \in [0,T]$, where $\tilde{u}_{E_t}$ is  defined in~\eqref{estensioneu_E^beta} with $E^\beta_t$ replaced by $E_t$. Let $\beta_0,T_s$ be given by Theorem~\ref{1THMMAIN1} and Remark~\ref{rem:thm2} for 
		$K_{el}=4 \tilde{K}_{el} $, and 
		fix $\beta\leq  \beta_0$. Let $k_0 $ be the first natural number such that $T_0 \in [hk_0, h(k_0+1))$, and let $\{E^{h,\beta}_{hk}\}_{k \in \N}$ be a discrete constrained flat flow starting from $E_0$. By~\eqref{12052025form-1} with $K_0 > K_{el}$, we have
        \begin{equation}
            	\begin{split}
				&\pa E^{h,\beta}_{h}= \{  x +\psi_1(x)\nu_{E_0}(x): x \in \pa E_0 \},\\
				&\| \nabla_{\pa E_0} \psi_1 \|_{L^2(\pa E_0)} \leq L_0 h, \quad \| \psi_1 \|_{H^5(\pa E_0)} \leq L_0, 
				\\
				&  \|  H_{E_h^{h,\beta}}^{\varphi} \|_{H^3(\pa E_h^{h,\beta})} \leq K_0, \quad  \| \Delta_{\pa E^{h,\beta}}^2 H_{E_h^{h,\beta}}^\varphi \|_{L^2(\pa E_h^{h,\beta})} \leq K_0 h^{-1/4}.
			\end{split} 
        \end{equation}
         The conclusion of the theorem follows from Remark~\ref{rem:uniqueness-strong}. 
		together with the estimates
		\begin{equation}\label{19052025ancora1}
			\|H_{E^{h,\beta}_t}^{\varphi} \|_{H^3(\pa E^{h,\beta}_t )} \leq K_0, \quad  \| \Delta_{\pa E^{h,\beta}_t }^2 H_{E^h_t}^\varphi \|_{L^2(\pa E^{h,\beta}_t )} K_0 h^{-1/4},
		\end{equation}
        for every $t \in [0,T]$.
The proof of these inequalities proceeds analogously to that  in~\cite{Kub2025}[Theorem 7.1],~\cite{KLMP2025}[Theorem 7.4], and~\cite{CFJKsd}[Theorem 1.2 formula 5.18]. Therefore, we omit it here.
\end{proof}

\appendix
\section{} \label{appendix}
In this appendix, we prove Lemma~\ref{Lemmacommutatoredifficile}.  Let $\Sigma$ be a Riemannian manifold, and let ${E_1,E_2}$ be a local frame.
We adopt the Einstein summation convention, whereby repeated indices are implicitly summed over. We begin by recalling some definitions and properties of the covariant derivative; for further details, we refer the reader to~\cite[Chapter 4]{LeeBook} and~\cite[Chapters 4 and 5]{LeeBook2018}.

We recall that the second covariant derivative $\overline{\nabla}^2$ of a tensor $F \in T^{(h,k)}(T \Sigma)$ is a tensor of the type $T^{(h,k+2)}(T \Sigma)$ defined by
\begin{equation}
    \overline{\nabla}_{X,Y}^2 F (\dots)= \overline{\nabla}^2 F (\dots, Y,X)= \overline{\nabla}_X \big( \overline{\nabla}_Y F \big)- \overline{ \nabla}_{\overline{\nabla}_X Y}F.
\end{equation}
Taking the metric trace of $\overline{\nabla}^2$ with respect to the last two covariant indices yields the Laplace-Beltrami operator, namely 
\begin{equation}
        \Delta_{\Sigma} F:= {\rm tr}_g \big( \overline{\nabla}^2 F \big)= g^{i j} \overline{\nabla}_{E_i, E_j} F  .
\end{equation}
We also recall that, given a tensor $F \in T^{(h,k)} (T \Sigma)$ with $h,k \neq 0$, the covariant derivative $\overline{\nabla}$ commutes with the trace taken over any pair of indices consisting of one covariant and one contravariant index. Hence,
\begin{equation}
    \overline{\nabla}_X({\rm tr}\,\, F)= {\rm tr} \big( \overline{\nabla}_X F \big) \text{ for all } X \in \mathfrak{X}(\Sigma).
\end{equation}
Furthermore, if $F \in T^{(h,k)}(T \Sigma)$ and $G \in T^{(l,m)}(T \Sigma) $, then 
\begin{equation}
    \overline{\nabla} (F \otimes G)= (\overline{\nabla} F) \otimes G + F \otimes (\overline{\nabla} G). 
\end{equation}
Moreover, for $\omega \in \mathfrak{X}^* (\Sigma)$ and $X,Y\in \mathfrak{X}(\Sigma)$ we have that
\begin{equation}\label{ZZZZZZZZZZZZas}
    \overline{\nabla}_Y \omega(X)= \big( \overline{\nabla}_Y \omega \big) (X)+ \omega \big( \overline{\nabla}_Y X  \big).
\end{equation}
Finally, we recall that we adopt the notation $S \star T$ from~\cite{Hamilton1982, Mantegazza2002} to denote a tensor obtained by contracting
some indexes of tensors $S$ and $T$ using the coefficients of the metric tensor $g_{ij}$.
\begin{lemma}
  Let $\Sigma$ be a Riemannian manifold, and let $X$ be a $C^3$ vector field on $\Sigma$. Then
   \begin{equation} \label{LapDivXDivLapX}
       \Delta_{\Sigma} \div_{\Sigma} X= \div_{\Sigma} \Delta_{\Sigma} X+r_1(B_\Sigma \star \overline{\nabla}X, \overline{\nabla} B_\Sigma \star X, \overline{\nabla}^2X)
    \end{equation} 
    where $r_1$ is a smooth function satisfying $r_1(\cdots)=0$ if $X=0$ and $ X \mapsto r_1(B_\Sigma \star \overline{\nabla}X, \overline{\nabla} B_\Sigma \star X,\overline{\nabla}^2X) $ is a linear function.
    \end{lemma}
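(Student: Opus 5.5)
We will work in a local frame and use the Ricci identity for commuting covariant derivatives. Write $\div_\Sigma X = \overline{\nabla}_i X^i$ (indices raised and lowered with $g$). Then
\[
\Delta_\Sigma \div_\Sigma X = g^{jk}\overline{\nabla}_j\overline{\nabla}_k\overline{\nabla}_i X^i,
\]
since $\overline{\nabla}$ commutes with traces. On the other side,
\[
\div_\Sigma \Delta_\Sigma X = \overline{\nabla}_i\big(g^{jk}\overline{\nabla}_j\overline{\nabla}_k X^i\big) = g^{jk}\overline{\nabla}_i\overline{\nabla}_j\overline{\nabla}_k X^i,
\]
using $\overline{\nabla} g=0$. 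The strategy is therefore to move the outer index $i$ past the inner derivatives $j,k$ one at a time. Each swap produces a Riemann curvature term.

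\emph{First commutation.} Commute the two outer derivatives of the $(1,1)$-tensor $\overline{\nabla}X$:
\[
\overline{\nabla}_i\overline{\nabla}_j(\overline{\nabla}_k X^i) = \overline{\nabla}_j\overline{\nabla}_i(\overline{\nabla}_k X^i) + R\star\overline{\nabla}X .
\]

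\emph{Second commutation.} Inside, commute $\overline{\nabla}_i\overline{\nabla}_k X^i = \overline{\nabla}_k\overline{\nabla}_i X^i + R\star X$ (this is the Ricci term $\mathrm{Ric}_{kl}X^l$). Applying $\overline{\nabla}_j$ to this identity gives
\[
\overline{\nabla}_j\overline{\nabla}_i\overline{\nabla}_k X^i = \overline{\nabla}_j\overline{\nabla}_k\overline{\nabla}_i X^i + \overline{\nabla}R\star X + R\star \overline{\nabla}X .
\]

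\emph{Collecting.} Tracing with $g^{jk}$ yields
\[
\div_\Sigma\Delta_\Sigma X - \Delta_\Sigma\div_\Sigma X = R\star\overline{\nabla}X + \overline{\nabla}R\star X .
\]
Every term here is linear in $X$ and vanishes when $X=0$.

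Next we translate curvature into the second fundamental form. By the Gauss equation, recalled after \eqref{ILGRANDERIEMANN} (see~\cite{Mantegazza2002}), $R = B_\Sigma\star B_\Sigma$, and hence $\overline{\nabla}R = B_\Sigma\star\overline{\nabla}B_\Sigma$. Substituting,
\[
r_1 = B_\Sigma\star B_\Sigma\star\overline{\nabla}X + B_\Sigma\star\overline{\nabla}B_\Sigma\star X .
\]
This has the announced structure $r_1(B_\Sigma\star\overline{\nabla}X,\overline{\nabla}B_\Sigma\star X,\overline{\nabla}^2X)$, where the remaining factor of $B_\Sigma$ is absorbed into the smooth coefficients of $r_1$. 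The result is linear in $X$, and the dependence on $\overline{\nabla}^2X$ is allowed but trivial.

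The main obstacle is bookkeeping rather than any conceptual difficulty. One must keep track of the sign and index conventions in \eqref{ILRiemann}, for the $(1,1)$-tensor $\overline{\nabla}X$ as well as for the vector field $X$. One must also check that the Levi-Civita correction terms $\overline{\nabla}_{\overline{\nabla}_XY}$ appearing in the definition of $\overline{\nabla}^2$ are handled consistently. This is best done by computing in normal coordinates at a point, where the Christoffel symbols vanish but their derivatives remain; those derivatives are exactly what produces the $R$ and $\overline{\nabla}R$ terms.
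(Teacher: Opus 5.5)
Your proof is correct and follows essentially the same route as the paper: commute the divergence derivative past the two Laplacian derivatives using the curvature commutation identity, which produces $R\star\overline{\nabla}X$ and $\overline{\nabla}R\star X$, and then rewrite $R$ and $\overline{\nabla}R$ in terms of $B_\Sigma$ via the Gauss equation. Your tensorial Ricci-identity bookkeeping is cleaner than the paper's general-frame computation, which carries Lie-bracket and $\overline{\nabla}_{E_i}E_j$ correction terms and then appeals to frame-independence. It also shows directly that no $\overline{\nabla}^2 X$ term actually appears in $r_1$.
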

\begin{proof}
We first compute  
\begin{align}
    \div_{\Sigma} \Delta_{\Sigma} X&= {\rm tr} \left( \overline{\nabla} g^{ij} \otimes \big( \overline{\nabla}_{E_i} \overline{\nabla}_{E_j} X- \overline{\nabla}_{ \overline{\nabla}_{E_i} E_j  }X  \big)   \right) + {\rm tr} \left( g^{ij} \overline{\nabla} \big( \overline{\nabla}_{E_i} \overline{\nabla}_{E_j} X- \overline{\nabla}_{  \overline{\nabla}_{E_i} E_j} X   \big)   \right)\notag\\
    &={\rm tr} \left( g^{ij} \overline{\nabla} \big( \overline{\nabla}_{E_i} \overline{\nabla}_{E_j} X \big)   \right) + {\rm tr} \left( \overline{\nabla} g^{ij} \otimes \big( \overline{\nabla}_{E_i} \overline{\nabla}_{E_j} X- \overline{\nabla}_{ \overline{\nabla}_{E_i} E_j  }X  \big)   \right) \label{13032026fz1}\\
    &  \quad-  {\rm tr} \left( g^{ij} \overline{\nabla} \big(  \overline{\nabla}_{  \overline{\nabla}_{E_i} E_j} X   \big)   \right)    \notag.
\end{align}  
We observe that, in the formula above, the leading term is
$ {\rm tr} \left( g^{ij} \overline{\nabla} \big( \overline{\nabla}_{E_i} \overline{\nabla}_{E_j} X   \big)   \right), $
 since it is the only term containing third-order covariant derivatives, whereas all the remaining terms are of lower order.

We have
\begin{align}
    \Delta_{\Sigma} \div_{\Sigma} X &=  g^{ij} \overline{\nabla}_{E_i E_j} {\rm tr} \big( \overline{\nabla} X \big)  \notag\\
    & = {\rm tr} \left( g^{ij} \overline{\nabla}_{E_i} \overline{\nabla}_{E_j} \overline{\nabla} X- g^{ij} \overline{\nabla}_{ \overline{\nabla}_{E_i} E_j   } \overline{\nabla}X   \right).\label{1203primadorm-1}
\end{align}
Moreover, the following identity holds:
\begin{equation}\label{1203primdorm2}
 \overline{\nabla}_{E_k} \overline{\nabla} X- \overline{\nabla} \big(\overline{\nabla}_{E_k} X \big)= R(E_k,\cdot) X + \overline{\nabla}_{[E_k, \cdot]} X - \overline{\nabla}_{\overline{\nabla}_{E_k} \cdot} X,
\end{equation}
where $R$ denotes the Riemann curvature tensor; see~\eqref{ILRiemann}.
Indeed, recall that $\overline{\nabla} X$ is a $(1,1)$ tensor. Given $ \omega \in \mathfrak{X}^*(\Sigma)$ and $ Y \in \mathfrak{X}(\Sigma)$, we have
\begin{align*}
    \overline{\nabla}_{E_k} \big( \overline{\nabla} X \big)(\omega,Y)&= \omega \big( \overline{\nabla}^2_{E_k,Y} X    \big)= \omega \big( \overline{\nabla}_{E_k} \big( \overline{\nabla}_Y X \big) - \overline{\nabla}_{\overline{\nabla}_{E_k} Y} X  \big)\\
    &= \omega \big( R(E_k,Y)X+ \overline{\nabla}_Y \big( \overline{\nabla}_{E_k} X  \big)  +\overline{\nabla}_{[E_k,Y]}X - \overline{\nabla}_{\overline{\nabla}_{E_k} Y} X \big)
\end{align*}
where we used the definition of the Riemann curvature tensor; see~\eqref{ILRiemann}. Since $\omega \in \mathfrak{X}^*(\Sigma)$ and $ Y \in \mathfrak{X}(\Sigma
)$ are arbitrary, identity~\eqref{1203primdorm2} follows.
Applying~\eqref{1203primdorm2} twice, we obtain
\begin{align}
     \overline{\nabla}_{E_i} \overline{\nabla}_{E_j} \overline{\nabla}  X&= \overline{\nabla}_{E_i} \overline{\nabla}  \big( \overline{\nabla}_{E_j} X \big)+ \overline{\nabla}_{E_i} R(E_j,\cdot)X+ \overline{\nabla}_{E_i} \overline{\nabla}_{ [E_j,\cdot]  } X - \overline{\nabla}_{E_i}\overline{\nabla}_{\overline{\nabla}_{E_j} \cdot} X \notag\\
     &= \overline{\nabla} \big( \overline{\nabla}_{E_i} \overline{\nabla}_{E_j} X \big)+ R(E_i,\cdot) \overline{\nabla}_{E_j} X+ \overline{\nabla}_{ [E_i,\cdot]} \overline{\nabla}_{E_j}X - \overline{\nabla}_{\overline{\nabla}_{E_i} \cdot} \overline{\nabla}_{E_j}X\label{13032026matt1}\\
     &\quad + \overline{\nabla}_{E_i} R(E_j,\cdot) X+ \overline{\nabla}_{E_i} \overline{\nabla}_{[E_j,\cdot] } X - \overline{\nabla}_{E_i}\overline{\nabla}_{\overline{\nabla}_{E_j} \cdot} X.\notag
\end{align}
Combining~\eqref{13032026matt1} with~\eqref{1203primadorm-1}, we obtain
\begin{align}
        \Delta_{\Sigma} \div_{\Sigma} X&={\rm tr} \bigg( g^{ij} \overline{\nabla} \big( \overline{\nabla}_{E_i} \overline{\nabla}_{E_j} X \big)   \bigg)+ {\rm tr} \bigg( g^{ij} R(E_i,\cdot) \overline{\nabla}_{E_j}X   \bigg) \label{13032026fz2}\\
        & \quad+ {\rm tr} \bigg( g^{ij} \overline{\nabla}_{[E_i,\cdot]}  \overline{\nabla}_{E_j}X   \bigg) - {\rm tr} \bigg( g^{ij} \overline{\nabla}_{\overline{\nabla}_{E_i} \cdot} \overline{\nabla}_{E_j}X   \bigg)\notag\\
        &\quad +{\rm tr} \bigg( g^{ij} \overline{\nabla}_{E_i} R(E_j,\cdot) X   \bigg) + {\rm tr} \bigg( g^{ij} \overline{\nabla}_{E_i}\overline{\nabla}_{[E_j,\cdot]}  X   \bigg)\notag\\
        & \quad  - {\rm tr} \bigg( g^{ij} \overline{\nabla}_{E_i}\overline{\nabla}_{ \overline{\nabla}_{E_j} \cdot  }  X   \bigg)- {\rm tr} \bigg( g^{ij} \overline{\nabla}_{ \overline{\nabla}_{E_i} E_j  }  X   \bigg).\notag
\end{align}

We show~\eqref{LapDivXDivLapX}.
Combining~\eqref{13032026fz1} and~\eqref{13032026fz2}, we obtain
\begin{align}
     \Delta_{\Sigma} \div_{\Sigma} X- \div_{\Sigma} \Delta_{\Sigma} X &=  {\rm tr} \bigg( g^{ij} R(E_i,\cdot) \overline{\nabla}_{E_j}X   \bigg) \label{13032026dif}\\
        & \quad +{\rm tr} \bigg( g^{ij} \overline{\nabla}_{[E_i,\cdot]}  \overline{\nabla}_{E_j}X   \bigg)  - {\rm tr} \bigg( g^{ij} \overline{\nabla}_{\overline{\nabla}_{E_i} \cdot} \overline{\nabla}_{E_j}X   \bigg)\notag\\
        &\quad + {\rm tr} \bigg( g^{ij} \overline{\nabla}_{E_i} R(E_j,\cdot) X   \bigg) + {\rm tr} \bigg( g^{ij} \overline{\nabla}_{E_i}\overline{\nabla}_{[E_j,\cdot]}  X   \bigg)\notag\\
        & \quad  - {\rm tr} \bigg( g^{ij} \overline{\nabla}_{E_i}\overline{\nabla}_{ \overline{\nabla}_{E_j} \cdot  }  X   \bigg)- {\rm tr} \bigg( g^{ij} \overline{\nabla}_{ \overline{\nabla}_{E_i} E_j  }  X   \bigg) \notag\\
        & \quad + {\rm tr} \bigg( \overline{\nabla} g^{ij} \otimes \big( \overline{\nabla}_{E_i} \overline{\nabla}_{E_j} X- \overline{\nabla}_{ \overline{\nabla}_{E_i} E_j  }X  \big)   \bigg) \notag\notag\\
        & \quad-  {\rm tr} \bigg( g^{ij} \overline{\nabla} \big(  \overline{\nabla}_{  \overline{\nabla}_{E_i} E_j} X   \big)   \bigg)=:a. \notag
\end{align}
We observe that the left-hand side of~\eqref{13032026dif} is independent of the choice of local chart and the local frame. Consequently, the right-hand side is also independent of these choices; in particular, it does not depend on $E_i, E_j$. 
Therefore, we deduce that $a=a(R \star \overline{\nabla}X, \overline{\nabla} R \star X, \overline{\nabla}^2X)$, where $a$ is a smooth function satisfying $a(0,0,\cdot,\cdot)=0$. Finally, recalling  that the Riemann curvature tensor can be expressed in terms of the second fundamental form, we conclude that the function $a(R \star \overline{\nabla}X, \overline{\nabla} R \star X, \overline{\nabla}^2X)$ can be rewritten as $r_1(B_\Sigma \star \overline{\nabla}X, \overline{\nabla} B_\Sigma \star X, \overline{\nabla}^2X)$. It is also immediate that $r_1$ is linear in $X$.
\end{proof}

\begin{lemma}
    Let $\Sigma$ be a Riemmanian manifold. Let $X$ be a $C^2$ vector field and let $A$ be a $C^2$ $(1,1)$-tensor field. Then
    \begin{equation}\label{LapAXugLapA_x+ALapx+rest}
        \Delta_{\Sigma}  \big(AX \big)= (\Delta_{\Sigma}A)X+ A \Delta_{\Sigma}X+ b( A \star X \star B_\Sigma, \overline{\nabla }A \star \overline{\nabla} X),
    \end{equation}
   where $b$ is a smooth vector field satisfying $b(\cdots)=0$ whenever $X=0$, and  $X \mapsto b( A \star X \star B_\Sigma, \overline{\nabla }A \star \overline{\nabla} X) $ is a linear function. 
\end{lemma}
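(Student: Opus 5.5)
The plan is to treat $AX$ as the contraction of the tensor product $A\otimes X$ over one covariant and one contravariant index. Covariant differentiation is a derivation with respect to tensor products and commutes with contractions, so the Leibniz rule for $\Delta_\Sigma$ will follow by differentiating twice and taking the metric trace.

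First, for any $Y\in\mathfrak{X}(\Sigma)$, the tensor-product rule together with \eqref{ZZZZZZZZZZZZas} gives
\[
\overline{\nabla}_Y(AX)=(\overline{\nabla}_Y A)X+A\,\overline{\nabla}_Y X .
\]
We then differentiate once more in the direction $Z$. We subtract the term $\overline{\nabla}_{\overline{\nabla}_Z Y}$ that appears in the definition of $\overline{\nabla}^2_{Z,Y}$; it splits in the same way as above, so those pieces cancel. This yields
\[
\overline{\nabla}^2_{Z,Y}(AX)=(\overline{\nabla}^2_{Z,Y}A)X+(\overline{\nabla}_Y A)(\overline{\nabla}_Z X)+(\overline{\nabla}_Z A)(\overline{\nabla}_Y X)+A\,\overline{\nabla}^2_{Z,Y}X .
\]
Setting $Z=E_i$, $Y=E_j$ and contracting with $g^{ij}$, the first and last terms become $(\Delta_\Sigma A)X$ and $A\Delta_\Sigma X$. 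The two middle terms combine into $2g^{ij}(\overline{\nabla}_{E_i}A)(\overline{\nabla}_{E_j}X)$. This is a contraction $\overline{\nabla}A\star\overline{\nabla}X$, and it is linear in $X$.

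The only real subtlety is why a curvature term $A\star X\star B_\Sigma$ should appear at all. In the exact intrinsic computation above no such term arises. It enters only through the paper's conventions. Here $A$ is often $\mathcal{A}(\nu_E)$, which is built from the ambient projection $P_{\pa E}$, and the vector fields are viewed as $\R^3$-valued maps via Remark~\ref{08remakino}. Converting between the intrinsic $\overline{\nabla}$ and the tangential derivative $\nabla_{\pa E}$ of the ambient extension produces normal components proportional to $B_\Sigma$, since $\nabla_X N_E=B_E X$. Such a term is of the form $A\star X\star B_\Sigma$, linear in $X$, and vanishes when $X=0$.

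So the main obstacle is bookkeeping: choosing a consistent identification so that every discrepancy is absorbed into a remainder $b$ of the stated form. I would carry out the computation intrinsically and note that the remainder vanishes identically there. I would then remark that, under the ambient identification, the correction is exactly the $B_\Sigma$-term, which is linear in $X$ and zero when $X=0$. This gives \eqref{LapAXugLapA_x+ALapx+rest}.
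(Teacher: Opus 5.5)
Your proof is correct, and it takes a cleaner route than the paper's.

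\paragraph{How the two routes differ.} The paper argues by brute force in a local frame. It writes $A=A^i_j\varepsilon^j\otimes E_i$ and expands $(\overline{\nabla}^2_{E_m,E_k}A)X$, $\overline{\nabla}^2_{E_m,E_k}(AX)$ and $A\,\Delta_\Sigma X$ into derivatives of the coefficients, of the coframe $\varepsilon^j$ and of the frame $E_i$. It then asserts that after cancellations (in particular of the second derivatives of $X$), and by chart-independence, what remains has the form $b(A\star X\star B_\Sigma,\overline{\nabla}A\star\overline{\nabla}X)$. It adds that lower-order terms $A\star X$, $\overline{\nabla}A\star X$, $A\star\overline{\nabla}X$ are absorbed into the notation.

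You instead use, at the tensor level, two facts the paper recalls at the start of the appendix but applies only coefficient by coefficient. These are that $\overline{\nabla}$ is a derivation on tensor products and that it commutes with contractions. This gives the exact identity
\[
\Delta_\Sigma(AX)=(\Delta_\Sigma A)X+A\,\Delta_\Sigma X+2g^{ij}(\overline{\nabla}_{E_i}A)(\overline{\nabla}_{E_j}X).
\]
So $b$ is explicit, of the form $\overline{\nabla}A\star\overline{\nabla}X$, linear in $X$, with no curvature term and no lower-order terms. This is a special case of the statement, which only \emph{permits} dependence on $A\star X\star B_\Sigma$, so you are finished at that point.

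\paragraph{What each approach buys.} Your argument is shorter and makes precise what the frame computation only sketches, namely exactly what survives the cancellations. The frame computation's only advantage is that it matches the style of the other two appendix lemmas, where curvature genuinely enters through commuting covariant derivatives.

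\paragraph{What to cut.} Your last two paragraphs are unnecessary and should be dropped. The lemma is intrinsic, so no ambient identification is involved. Your explanation of the $B_\Sigma$-term is also not quite right. If one computes componentwise in $\R^3$ with $\Delta_{\pa E}$ acting on scalar components, the product rule is again exact. A $B_\Sigma$-term would therefore only come from mixing the two conventions.

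Finally, the phrase ``the remainder vanishes identically'' should refer only to a would-be curvature part. The cross term $2g^{ij}(\overline{\nabla}_{E_i}A)(\overline{\nabla}_{E_j}X)$ \emph{is} the remainder $b$, and it does not vanish.
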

\begin{proof}
   We may assume, without loss of generality, that $X$ and $A$ are smooth. Let  $\{\varepsilon^i\}$ be the dual frame of $\{ E_i\}$. We divide the proof into several steps.\\
    \textit{Step 1:} We first compute  $\overline{\nabla}^2_{E_m,E_k} A$, $ (\overline{\nabla}^2_{E_m,E_k} A )X$ and $\big(\Delta_{\Sigma} A \big) X$. We write $A= A_j^i \varepsilon^j \otimes E_i$, where the coefficients $A_j^i$ are smooth functions. Then we have
    \begin{equation}
        \overline{\nabla}_{E_k} \big(  A^i_j \varepsilon^j \otimes E_i \big) = \overline{\nabla}_{E_k} \big(  A^i_j \big) \varepsilon^j \otimes E_i+ A_j^i \overline{\nabla}_{E_k} \varepsilon^j   \otimes E_i+ A_j^i  \varepsilon^j   \otimes \overline{\nabla}_{E_k} E_i  .
    \end{equation}
    Moreover, we get
    \begin{equation*}
    \begin{split}
     \overline{\nabla}_{E_m}   \overline{\nabla}_{E_k} \big(  A^i_j \varepsilon^j \otimes E_i \big) 
     &= \overline{\nabla}_{E_m} \overline{\nabla}_{E_k} A_j^i \varepsilon^j \otimes E_i + \overline{\nabla}_{E_k} A_j^i \overline{\nabla}_{E_m} \varepsilon^j \otimes E_i + \overline{\nabla}_{E_k}A_j^i \varepsilon^j \otimes \overline{\nabla}_{E_m} E_i\\
     & \quad +A^i_j \overline{\nabla}_{E_m} \big( \overline{\nabla}_{E_k} \varepsilon^j \big) \otimes E_i+ A^i_j \overline{\nabla}_{E_k} \varepsilon^j \otimes \overline{\nabla}_{E_m} E_i\\
     & \quad+ A^i_j \overline{\nabla}_{E_m} \varepsilon^j \otimes \overline{\nabla}_{E_k} E_i + A^i_j \varepsilon^j \otimes \overline{\nabla}_{E_m} \overline{\nabla}_{E_k} E_i\\
     & \quad+ \overline{\nabla}_{E_m} A^i_j \overline{\nabla}_{E_k}\varepsilon^j \otimes  E_i + \overline{\nabla}_{E_m} A^i_j \varepsilon^j \otimes \overline{\nabla}_{E_k}  E_i.
     \end{split}
    \end{equation*}
    Next,  we have
    \begin{equation*}
        \overline{\nabla}_{  \overline{\nabla}_{E_m} E_k  } A = \big(\overline{\nabla}_{  \overline{\nabla}_{E_m} E_k  } A_j^i \big) \varepsilon^j \otimes E_i + A_j^i  \big(  \overline{\nabla}_{  \overline{\nabla}_{E_m} E_k  } \varepsilon^j \big) \otimes E_i+ A_j^i \varepsilon^j \otimes \big(  \overline{\nabla}_{  \overline{\nabla}_{E_m} E_k  } E_i \big).
    \end{equation*}
    Combining the above expressions, we obtain the expression of $\overline{\nabla}^2_{E_m,E_k} A$. Consequently, we have
    \begin{equation}
        \begin{split}
            (\overline{\nabla}^2_{E_m, E_k} A) X&= X^j \overline{\nabla}_{E_m} \overline{\nabla}_{E_k} A^i_j  E_i + \overline{\nabla}_{E_k} A^i_j \big( \overline{\nabla}_{E_m} \varepsilon^j \big) (X) E_i+ X^j\overline{\nabla}_{E_k} A^i_j  \overline{\nabla}_{E_m} E_i \\
            & \quad +A^i_j \overline{\nabla}_{E_m} \big( \overline{\nabla}_{E_k} \varepsilon^j \big)(X) E_i+ A^i_j \big( \overline{\nabla}_{E_k} \varepsilon^j \big) (X) \overline{\nabla}_{E_m} E_i\\
            &\quad+ A^i_j \big(\overline{\nabla}_{E_m} \varepsilon^j \big) (X)\overline{\nabla}_{E_k} E_i+ A^i_j X^j \overline{\nabla}_{E_m} \overline{\nabla}_{E_k} E_i \\
            &\quad+ \overline{\nabla}_{E_m} A^i_j \big(\overline{\nabla}_{E_k}\varepsilon^j \big) (X)  E_i + X^j\overline{\nabla}_{E_m} A^i_j \overline{\nabla}_{E_k}  E_i \\
            & \quad - X^j\overline{\nabla}_{ \overline{\nabla}_{E_m} E_k  }A^i_j E_i-A^i_j \big( \overline{\nabla}_{ \overline{\nabla}_{E_m} E_k} \varepsilon^j   \big)(X) E_i - A^i_j X^j \overline{\nabla}_{ \overline{\nabla}_{E_m} E_k} E_i.
            \end{split}
    \end{equation}
    Contracting the above expression with $g^{mk}$ and summing, we obtain $\big(\Delta_{\pa E} A \big) X$.\\
    \textit{Step 2:} We  compute $\overline{\nabla}^2_{E_m,E_k} \big( AX \big)$ and $ \Delta_{\pa E} \big( AX \big)$. 
    Recall that $A^i_j X^j E_i= A^i_j \varepsilon^j(X) E_i$. Applying the Leibniz rule together with~\eqref{ZZZZZZZZZZZZas}, we obtain
    \begin{equation}
    \begin{split}
        \overline{\nabla}_{E_k} \big( A^i_j \varepsilon^j(X) E_i  \big)&= \overline{\nabla}_{E_k} A^i_j X^j E_i + A^i_j \big(\overline{\nabla}_{E_k} \varepsilon^j\big)(X)E_i\\
        & \quad + A^i_j \varepsilon^j \big(  \overline{\nabla}_{E_k} X \big) E_i + A^i_j \varepsilon^j(X) \overline{\nabla}_{E_k} E_i.
    \end{split}
    \end{equation}
   Then, we obtain
    \begin{align*}
            \overline{\nabla}_{E_m} \big(  \overline{\nabla}_{E_k} \big( A^i_j \varepsilon^j(X)E_i  \big)  \big)= & \big( \overline{\nabla}_{E_m} \overline{\nabla}_{E_k} A^i_j \big) X^j E_i + \overline{\nabla}_{E_k} A^i_j \big(\overline{\nabla}_{E_m} \varepsilon_j \big)(X) E_i\\
            & \quad +\overline{\nabla}_{E_k} A^i_j \varepsilon_j \big (\overline{\nabla}_{E_m} X \big)E_i+\overline{\nabla}_{E_k} A^i_j X^j \overline{\nabla}_{E_m} E_i\\
            & \quad + \overline{\nabla}_{E_m} A^i_j \big(\overline{\nabla}_{E_k} \varepsilon^j  \big) (X) E_i + A^i_j \big( \overline{\nabla}_{E_m} \big( \overline{\nabla}_{E_k} \varepsilon^j  \big)  \big) (X) E_i \\
            & \quad+ A^i_j \big(\overline{\nabla}_{E_k} \varepsilon^j \big) ( \overline{\nabla}_{E_m} X) E_i +A^i_j \big( \overline{\nabla}_{E_k} \varepsilon^j \big)(X) \overline{\nabla}_{E_m}E_i\\
            &\quad+\overline{\nabla}_{E_m} A_j^i \varepsilon^j \big( \overline{\nabla}_{E_k} X  \big) E_i+ A^i_j \big( \overline{\nabla}_{E_m} \varepsilon^j \big) (\overline{\nabla}_{E_k} X) E_i\\
            & \quad+A^i_j \varepsilon^j \big( \overline{\nabla}_{E_m} \overline{\nabla}_{E_k} X  \big) E_i  +  A^i_j \varepsilon^j (\overline{\nabla}_{E_k} X) \overline{\nabla}_{E_m} E_i\\
            &\quad +\overline{\nabla}_{E_m} A^i_j  \varepsilon^j (X) \overline{\nabla}_{E_k} E_i + A^i_j \big( \overline{\nabla}_{E_m} \varepsilon^j \big)(X) \overline{\nabla}_{E_k} E_i \\
            & \quad+ A^i_j \varepsilon^j \big(  \overline{\nabla}_{E_m} X\big) \overline{\nabla}_{E_k} E_i+ A^i_j \varepsilon^j (X) \overline{\nabla}_{E_m} \overline{\nabla}_{E_k} E_i.
    \end{align*}
    Moreover, we have that
    \begin{align*}
        \overline{\nabla}_{ \overline{\nabla}_{E_m} E_k  } \big( AX \big)&= \overline{\nabla}_{ \overline{\nabla}_{E_m} E_k  } \big( A^i_j \varepsilon^j(X)E_i \big) \\
        &=  X^j\overline{\nabla}_{ \overline{\nabla}_{E_m} E_k  } A^i_j E_i+ A^i_j X^j \overline{\nabla}_{ \overline{\nabla}_{E_m} E_k  } E_i \\
        & \quad + A^i_j \big(\overline{\nabla}_{ \overline{\nabla}_{E_m} E_k  } \varepsilon^j \big) (X) E_i + A^{i}_j \varepsilon^j \big( \overline{\nabla}_{ \overline{\nabla}_{E_m} E_k  } X \big) E_i.
    \end{align*}
    From the above equations, we obtain the expression for $ \overline{\nabla}^2_{E_m,E_k} \big( AX \big)$. We conclude by using that $$ \Delta_{\Sigma} \big( A X \big) = g^{mk} \overline{\nabla}^2_{E_m,E_k} \big( AX \big).$$
    \textit{Step 3:} We compute $A \Delta_{\Sigma}X $. 
    We have 
    \begin{equation}
        \begin{split}
            \Delta_{\Sigma} X=  g^{mk} \big( \overline{\nabla}_{E_m} \overline{\nabla}_{E_k} X-  \overline{\nabla}_{\overline{\nabla}_{E_m}E_k} X \big).
        \end{split}
    \end{equation}
    Hence, we  get
    \begin{equation}
        \begin{split}
            A \Delta_{\Sigma} X=  g^{mk} \big( A^i_j \varepsilon^j\big(\overline{\nabla}_{E_m} \overline{\nabla}_{E_k} X \big) - A^i_j  \varepsilon^j \big( \overline{\nabla}_{\overline{\nabla}_{E_m} E_k} X \big)  \big)E_i.
        \end{split}
    \end{equation}
    \textit{Step 4:}
    Formula~\eqref{LapAXugLapA_x+ALapx+rest} now follows from Steps 1- 3. Indeed, several cancellations occur in the expression $\Delta_{\Sigma}  \big(AX \big)- (\Delta_{\Sigma}A)X- A \Delta_{\Sigma}X,$
    in particular the second order covariant derivatives of $X$ cancel. Moreover, we note that the above expression is independent of the particular choice of the local chart.   We also note that the remainder term $b$ should, in principle, contain additional terms of the form $A \star X, \, \overline{\nabla}A \star X, \, A\star \overline{\nabla} X $, but since they are lower-order terms, we can omit them in the remainder notation.
\end{proof}
We conclude this appendix by computing the commutator of the Laplace–Beltrami operator and the gradient on a manifold.
\begin{lemma}
    Let $\Sigma$ be a Riemannian manifold and let $f \in C^3(\Sigma)$. Then
    \begin{equation}\label{Ketteratura}
        \Delta_{\Sigma} {\rm grad\,} f = {\rm grad\,} \Delta_{\Sigma} f+ a(\overline{\nabla}f \star B_\Sigma,\overline{\nabla}^2f)
    \end{equation}
    where $a$ is smooth function such that $a(\cdots)=0$ whenever $f=0$ and $ f \rightarrow a(\overline{\nabla}f \star B_\Sigma,\overline{\nabla}^2f)$ is a linear function.
\end{lemma}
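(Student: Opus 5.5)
We will prove \eqref{Ketteratura} in the same way as the two preceding lemmas of this appendix: expand both sides in a local frame $\{E_1,E_2\}$ with dual coframe $\{\varepsilon^i\}$, show that the third-order terms agree, and identify what is left over as a curvature term. The cleanest formulation works with the $1$-form $\overline{\nabla} f$, which is identified with ${\rm grad}\, f$ through $g$. Since $\overline{\nabla} g=0$, raising an index commutes with $\overline{\nabla}$, so it is enough to prove
\[
\Delta_\Sigma \overline{\nabla} f = \overline{\nabla}\Delta_\Sigma f + \text{(lower order)}.
\]

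\textbf{Step 1: the Ricci identity.} Write $\Delta_\Sigma \overline{\nabla} f = g^{mk}\overline{\nabla}^3_{E_m,E_k,\cdot} f$ and $\overline{\nabla}\Delta_\Sigma f = g^{mk}\overline{\nabla}^3_{\cdot,E_m,E_k} f$. The second identity uses that $\overline{\nabla}$ commutes with the metric trace, and that $\overline{\nabla} g^{mk}=0$ once the trace is written invariantly. Since the Hessian $\overline{\nabla}^2 f$ is symmetric, the two expressions differ only in the order of the first two covariant derivatives. Applying the commutation formula $\overline{\nabla}^2_{X,Y}T-\overline{\nabla}^2_{Y,X}T = R(X,Y)\cdot T$ to the $2$-tensor $T=\overline{\nabla}^2 f$ is not what we need; the relevant object is the $1$-form $\overline{\nabla} f$. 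Its commutator,
\[
\overline{\nabla}^3_{X,Y,Z}f-\overline{\nabla}^3_{Y,X,Z}f = -\overline{\nabla} f\big(R(X,Y)Z\big),
\]
is derived exactly as \eqref{1203primdorm2}, namely from the definition \eqref{ILRiemann} of $R$ together with \eqref{ZZZZZZZZZZZZas}.

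\textbf{Step 2: trace and collect.} Contracting the commutator with $g^{mk}$ produces a Ricci term,
\[
\Delta_\Sigma \overline{\nabla} f - \overline{\nabla}\Delta_\Sigma f = {\rm Ric}(\,\cdot\,,{\rm grad}\,f) = R\star\overline{\nabla} f.
\]
The left-hand side of \eqref{Ketteratura} is frame-independent, so the same is true of the remainder. Note that no $\overline{\nabla}^2 f$ term survives in the invariant computation. We nevertheless keep the $\overline{\nabla}^2 f$ slot in $a$, because this is the form the paper uses: if one computes in a frame without the invariant shortcut, the connection terms $\overline{\nabla}_{\overline{\nabla}_{E_m}E_k}$ and the Lie brackets $[E_m,\cdot]$ produce terms of the form $\Gamma\star\overline{\nabla}^2 f$. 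These terms cancel invariantly, but they are harmless to keep in the notation, as in the proof of \eqref{LapDivXDivLapX}.

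\textbf{Step 3: conclusion.} By the Gauss equation, as recalled after \eqref{ILGRANDERIEMANN}, we have $R=B_\Sigma\star B_\Sigma$. Hence $R\star\overline{\nabla} f$ can be written as $a(\overline{\nabla} f\star B_\Sigma,\overline{\nabla}^2 f)$ with $a$ smooth, and the dependence on $B_\Sigma$ is absorbed into the smooth coefficients, since $B_\Sigma$ is bounded. Every term is linear in $f$, so $a$ vanishes when $f=0$ and $f\mapsto a$ is linear.

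The main obstacle is Step 1: getting the commutation formula with the correct index placement, as in \eqref{1203primdorm2}, and justifying that the trace commutes with $\overline{\nabla}$. I would first verify both points carefully in normal coordinates at a point, where $\Gamma=0$, and only then rely on frame-independence to pass back to an arbitrary frame.
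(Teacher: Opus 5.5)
Your proof is correct, and it reaches the result by a cleaner, invariant route than the paper. The underlying idea is the same: commute the derivatives, recognize the commutator as curvature, then apply the Gauss equation.

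The paper works in a local frame. It expands both sides (see \eqref{Letteratura1} and \eqref{Letteratura2}), isolates the top-order terms $\overline{\nabla}_{E_m}\overline{\nabla}_{E_k}\overline{\nabla}_{E_j}f$ and $\overline{\nabla}_{E_j}\overline{\nabla}_{E_m}\overline{\nabla}_{E_k}f$, and swaps them via Lie brackets. Everything left over (derivatives of $g^{ij}$ and of the frame, and second derivatives of the frame fields, read as curvature) is declared lower order, with frame-independence invoked to give the remainder its stated form.

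You instead use the symmetry of the Hessian in its two inner slots together with the Ricci identity for the $1$-form $\overline{\nabla} f$. Tracing gives the exact Bochner identity
\[
\Delta_{\Sigma}\,{\rm grad}\, f={\rm grad}\,\Delta_{\Sigma} f+\mathrm{Ric}({\rm grad}\, f),
\]
with $\mathrm{Ric}$ viewed as a $(1,1)$-tensor. On a surface this is just $\kappa_1\kappa_2\,{\rm grad}\, f$. The sign is right for the paper's convention for $R$, as a check on the unit sphere with $f$ a linear function confirms.

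Your route gives more than the paper's. The remainder is explicit and contains no $\overline{\nabla}^2 f$ at all, so that slot of $a$ is vacuous, as you note. You also never have to track the non-tensorial frame terms whose cancellation the paper only asserts. The paper's frame bookkeeping has the advantage of matching the other two commutator lemmas in the appendix, keeping the three proofs uniform, but it only identifies the structure of the remainder.

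Your last step, writing $R=B_\Sigma\star B_\Sigma$ and absorbing one factor of $B_\Sigma$ into the coefficients of $a$, is exactly what the paper does at the end of the proof of \eqref{LapDivXDivLapX}. Boundedness of $B_\Sigma$ is not needed for that algebraic rewriting, only for the later estimates.
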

\begin{proof}
    We begin by computing $ \Delta_{\Sigma} {\rm grad\,} f$.
    We have 
    \begin{align}
            \Delta_{\Sigma} {\rm grad\,} f =&  g^{mk} \bigg[ \overline{\nabla}_{E_m} \overline{\nabla}_{E_k} \big( g^{ij} \overline{\nabla}_{E_j} f E_i  \big) - \overline{\nabla}_{ \overline{\nabla}_{E_m} E_k  } \big( g^{ij} \overline{\nabla}_{E_j} f E_i  \big)    \bigg]\notag\\
            = & g^{mk}\bigg[   \overline{\nabla}_{E_m} \overline{\nabla}_{E_k} g^{ij} \overline{\nabla}_{E_j} f E_i+ \overline{\nabla}_{E_k} g^{ij} \overline{\nabla}_{E_m} \overline{\nabla}_{E_j}f E_i+ \overline{\nabla}_{E_k} g^{ij} \overline{\nabla}_{E_j} f \overline{\nabla}_{E_m} E_i \label{Letteratura1}\\
            & \quad\quad +\overline{\nabla}_{E_m} g^{ij} \overline{\nabla}_{E_k} \overline{\nabla}_{E_j} f E_i + g^{ij} \overline{\nabla}_{E_m} \overline{\nabla}_{E_k} \overline{\nabla}_{E_j} f E_i + g^{ij} \overline{\nabla}_{E_k} \overline{\nabla}_{E_j} f \overline{\nabla}_{E_m} E_i   \notag\\
            & \quad \quad + \overline{\nabla}_{E_m} g^{ij}\overline{\nabla}_{E_j} f \overline{\nabla}_{E_k} E_i+ g^{ij} \overline{\nabla}_{E_m} \overline{\nabla}_{E_j} f \overline{\nabla}_{E_k} E_i + g^{ij} \overline{\nabla}_{E_j} f \overline{\nabla}_{E_m} \overline{\nabla}_{E_k} E_i  \notag\\
            & \quad \quad - \overline{\nabla}_{ \overline{\nabla}_{E_m} E_k  } g^{ij} \overline{\nabla}_{E_j} f E_i- g^{ij}  \overline{\nabla}_{ \overline{\nabla}_{E_m} E_k  } \overline{\nabla}_{E_j}f E_i- g^{ij} \overline{\nabla}_{E_j} f \overline{\nabla}_{\overline{\nabla}_{E_m} E_k} E_i \bigg].\notag
    \end{align}

    Now, we compute  $ {\rm grad\, }  \Delta_{\Sigma} f$.
    Using again  formula~\eqref{nabladeE=grad}, we obtain
    \begin{align}
               {\rm grad\, }  \Delta_{\Sigma} f =& g^{ij} \overline{\nabla}_{E_j} \bigg[ g^{mk} \overline{\nabla}^2_{E_m,E_k} f \bigg] E_i\notag\\
            =& g^{ij} \bigg[    \overline{\nabla}_{E_j} g^{mk} \big[  \overline{\nabla}^2_{E_m,E_k} f \big]+ g^{mk} \overline{\nabla}_{E_j} \overline{\nabla}_{E_m} \overline{\nabla}_{E_k} f- g^{mk} \overline{\nabla}_{E_j} \overline{\nabla}_{ \overline{\nabla}_{E_m} E_k  } f\bigg] E_i.\label{Letteratura2}
    \end{align}
    
    We observe that, in formulas~\eqref{Letteratura1} and~\eqref{Letteratura2}, the highest-order terms are
    \begin{equation}
        \overline{\nabla}_{E_m} \overline{\nabla}_{E_k} \overline{\nabla}_{E_j} f \text{ and }\overline{\nabla}_{E_j} \overline{\nabla}_{E_m} \overline{\nabla}_{E_k} f.
    \end{equation}
    Since 
    \begin{equation}
        \overline{\nabla}_{E_m} \overline{\nabla}_{E_k} \overline{\nabla}_{E_j} f = \overline{\nabla}_{E_j} \overline{\nabla}_{E_m} \overline{\nabla}_{E_k} f- [ E_m, E_j] \big(   \overline{\nabla}_{E_k} f \big)- \overline{\nabla}_{E_k} \big( [E_k,E_j] f \big),
    \end{equation}
    and since~\eqref{Letteratura1} contains the second derivatives of $E_i$
 (that is curvature terms), we deduce~\eqref{Ketteratura}.
\end{proof}
\section*{Conflict of interest}
    The authors states that there is no conflict of interest.
\section*{Acknowledgments} 
Andrea Kubin was supported by the Academy of Finland grant 314227. 
Anna Kubin research has been supported by the Austrian Science Fund (FWF) through grants 10.55776/F65, 10.55776/P35359, 10.55776/Y1292.

\end{document}